\documentclass{article}

\usepackage[margin = 1.3in]{geometry}

\usepackage{tikz}
\usepackage{tikz-cd}
\usepackage{url, hyperref}
\usepackage{float}

\tikzset{->-/.style={decoration={
  markings,
  mark=at position .45 with {\arrow{>}}},postaction={decorate}}}
\usepackage{amsmath}
\usepackage{amssymb}
\usepackage{enumitem}
\usepackage{graphicx}
\usepackage{mathdots}
\usepackage{color}
\usepackage{diagbox}
\usepackage{array, makecell}
\usepackage{rotating}
\usepackage{amsthm}
\usepackage{dsfont}
\usepackage{adjustbox}
\usetikzlibrary{arrows}

\theoremstyle{definition}
\newtheorem{definition}{Definition}[section]
\newtheorem{example}[definition]{Example}
\newtheorem{remark}[definition]{Remark}
\newtheorem{notation}[definition]{Notation}

\theoremstyle{theorem}
\newtheorem{theorem}[definition]{Theorem}
\newtheorem{proposition}[definition]{Proposition}
\newtheorem{corollary}[definition]{Corollary}
\newtheorem{lemma}[definition]{Lemma}

\def\O{\mathcal{O}}
\def\Q{\mathbb{Q}}
\def\Z{\mathbb{Z}}

\def\R{\mathbb{R}}
\def\H{\mathsf{H}}

\def\P{\mathbb{P}}
\def\M{\mathcal{M}}
\def\R{\mathcal{R}}
\def\CH{\mathrm{CH}}
\def\Sym{\mathrm{Sym}}
\def\RH{\mathcal{RH}}
\def\Sym{\mathrm{Sym}}
\def\det{\mathrm{det}}
\def\Gm{\mathbb{G}_m}
\def\GL{\mathrm{GL}}
\def\GI{(\Gm \times \Gm) \rtimes \mu_2}
\def\GII{\Gm \times (\Gm \rtimes \mu_2)}
\def\PGL{\mathrm{PGL}}
\def\SL{\mathrm{SL}}
\def\cO{\mathcal{O}}
\def\uD{\underline{\Delta}}
\def\Pic{\mathrm{Pic}}
\def\cH{\mathcal{H}}
\def\cD{\mathcal{D}}
\def\cX{\mathcal{X}}
\def\cY{\mathcal{Y}}
\def\cL{\mathcal{L}}
\def\cV{\mathcal{V}}

\title{The Integral Chow Rings of the Moduli Stacks of Hyperelliptic Prym Pairs I}
\author{Alessio Cela and Alberto Landi}
\date{\vspace{-5ex}}

\begin{document}

\maketitle

\begin{abstract}

This paper is the first in a series dedicated to computing the integral Chow rings of the moduli stacks of Prym pairs. In this work, we compute the Chow ring for Prym pairs arising from a single pair of Weierstrass points and from at most $(g-1)/2 $ pairs when the genus $g$ of the curve is odd.

\end{abstract}

\tableofcontents

\section{Introduction}

Prym pairs have been extensively studied due to their deep connections with the theory of abelian varieties, curves, and admissible covers \cite{Mumford, Farkas, Bea, Beabis, DonagiI}. Roughly speaking, Prym pairs correspond to étale double covers of a smooth, projective curve, or equivalently, to non-trivial line bundles on the base curve that are torsion of order 2 in the Picard group.

The study of the Chow rings of the moduli stack of stable genus $g$ curves, $\mathcal{M}_g$, began with Mumford \cite{Mumford1983} and quickly attracted significant attention. Subsequent work, including \cite{Faber, Izadi, PenevVakil, SamHannah}, has computed these Chow rings up to genus 9.

On the other hand, the integral Chow ring of these moduli stacks are much less understood. Edidin and Graham introduced in \cite{EG98} the intersection theory of global quotient stacks with integer coefficients. It is a more refined invariant but as expected, the computations for the
Chow ring with integral coefficients of the moduli stack of curves are much harder than the ones
with rational coefficients. Vistoli \cite{Vis98} computed the Chow ring for $\mathcal{M}_2$, and only more recently has E. Larson \cite{Lar19}, and then Di Lorenzo and Vistoli \cite{DLVis-M2}, computed the Chow ring of the compactification $\overline{\mathcal{M}}_2$. The Chow ring of $\mathcal{M}_3$ is currently known with coefficients in $\mathbb{Z}[1/6]$ \cite{Per23}.

In contrast, the integral Chow ring of the stack $\cH_g$ of hyperelliptic curves is fully known \cite{EF09,FV11,DL18}, while in the pointed case, it has been partially computed \cite{Per22,Lan23,Lan24}. Similarly, the Chow rings of hyperelliptic curves with Weierstrass sections are also known \cite{EH22}.

Finally, the Chow ring of Prym pairs in genus 2 was computed in \cite{CIL24}. In this paper, we extend their work by computing the Chow ring for Prym pairs arising from a single pair of Weierstrass points, as well as for those arising from at most $(g-1)/2$ pairs when the genus $g$ is odd. More specifically, the moduli stack $\RH_g$ of hyperelliptic Prym pairs of genus $g$ is a disjoint union of moduli stacks $\RH_g^n$, indexed by $n \in {1, \ldots, \lfloor (g+1)/2 \rfloor}$. In this work, we focus on the cases $n=1$ for any $g$, and for odd $g$, we address the case where $n \leq (g-1)/2$.

\subsection*{Convention}
We work over an algebraically closed field $k$ of characteristic $0$ or bigger than $2g+2$.

In previous related works, starting from~\cite{Vis98}, the characteristic has always been assumed to be greater than $2g$ instead of $2g+2$. The assumption on the characteristic is used for the construction of the Chow envelope in \S\ref{sec: Chow envelope}, specifically in the proof of Lemma \ref{lem: Chow envelopes n>1}. 

For instance, when $g=2$ and with the notation of \cite[Lemma 3.2]{Vis98}, let $p$ be a point in $\Delta_r \smallsetminus \Delta_{r+1}$ be a point represented by a homogeneous polynomial $f \in K[x_0,x_1]$ of degree 6. Write $f = u^2 v$, where $v \in K[x_0, x_1]$ is square-free over $K$. Obviously, the degree of $u$ must be at most $r$, because otherwise $p$ would lie in $\Delta_{r+1}$.  Furthermore, if its degree is less than the characteristic, $v$ remains square-free over any field extension of $K$. As $v$ has degree less or equal than $6$, this is automatically true if $\mathrm{char}(k)\geq 7$, but it can fail if $\mathrm{char}(k)$ is only assumed to be greater than $4$ as the example illustrates.

\begin{example}\label{ex: characteristic}(Due to Bernardo Tarini)
Let $p$ be a prime and let $K=\overline{\mathbb{F}}_p(t)$. The binary form $f:=x(x^p+ty^p)\in K[x,y]$ corresponds to a point in $\P^{p+1}$ and splits as $f=x(x+t^{1/p} y)^p$ in $\overline{\mathbb{F}}_{p}(t^{1/p})$. By uniqueness of the factorization, $x^p+ty^p$ is irreducible over $\overline{\mathbb{F}}_p(t)$.
Taking $p=5$ and again with the notation of~\cite[Lemma 3.2]{Vis98}, this shows that $f\in\Delta_{2}$, but $f$ is not the image of any $(u,v)\in(\P^r\times\P^{6-2r})(K)$ under the morphism $\pi_r$, for any $r\in\{1,2,3\}$.
\end{example}

\subsection{Prym Pairs}

We begin by providing the precise definition of the stack $\RH_g$. 

\begin{definition}
    A Prym curve of genus $g$ is the datum of $(C, \eta, \beta)$ where $C$ is a smooth geometrically connected genus $g$ curve, $\eta \in \mathrm{Pic} (C)$ is non-trivial and $\beta: \eta^{\otimes 2} \to \mathcal{O}_C$ is an isomophism of invertible sheaves on $C$.
\end{definition}

\begin{definition}
    A family of Prym curves of genus $g$ is a smooth family of genus $g$ curves $f : C \to S$ with an invertible sheaf $\eta$ on $C$ and a isomorphism $\beta: \eta^{\otimes 2} \to \mathcal{O}_C$
    such that the restriction of these data to any geometric fiber of $f$ gives rise to a Prym curve.
\end{definition}

\begin{definition} \label{defn: Rg}
    The prestack $\R_g^{\mathrm{pre}}$ is defined over the Big étale site $\mathrm{Sch}_{\text{ét}}$
    as the category whose objects over a scheme $S$ are families $(C \to S, \eta, \beta)$ of genus $g$ Prym curves over $S$ and a morphism $(C \to S, \eta, \beta) \to (C' \to S', \eta', \beta')$ is the data of a cartesian diagram 
    \begin{equation}\label{eqn: morphism prestack}
    \begin{tikzcd}
    C\arrow{r}{\varphi} \rar\dar\drar[phantom, "\square"] & C' \arrow{d} \\
    S \arrow[r,"f"'] & S'
    \end{tikzcd}
    \end{equation}
    such that there exists a isomorphism $\tau: \varphi^* \eta' \to \eta$ \footnote{Observe that we are adopting the convention that the datum of $\tau$ is not included in the
    definition of a morphism in $\mathcal R_g^{\mathrm{pre}}$. }such that the diagram 
    \begin{equation}\label{eqn: diagram of sheaves}
    \begin{tikzcd}
    \varphi^* \eta'^{\otimes 2}\arrow{r}{\tau^{\otimes 2}} \arrow[d,"\varphi^*(\beta'^{\otimes 2})"'] &  \eta^{\otimes 2} \arrow{d}{\beta^{\otimes 2}} \\
    \varphi^* \mathcal{O}_{C'} \arrow{r} & \mathcal{O}_C
    \end{tikzcd}
    \end{equation}
    commutes.

    Finally, the moduli stack $\R_g$ is the stackification of $\R_g^{\mathrm{pre}}$.
\end{definition}

\begin{definition}
    The moduli stack $\RH_g$ of hyperelliptic Prym pairs of genus $g$ is the pullback of the hyperelliptic locus $\mathcal{H}_g \subseteq \mathcal{M}_g$ in the moduli stack of smooth genus $g$ curves under the natural forgetful map $\R_g \to \M_g$. 
\end{definition}

\subsection{Results}

In order to find presentations of the stack $\RH_g$, the main geometric input is \cite[Lemma 4.3]{Ver13}, which we recall here. Let $C$ be a smooth hyperelliptic curve of genus $g$ defined over our base field \footnote{ In \cite{Ver13} the field $k$ is assumed to be the field of complex numbers, but the result we are interested in remains true (with the same proof) over every algebraically closed field of characteristic different from $2$.} and assume $g \geq 2$.  Let $q : C \to \mathbb{P}^1$ be its associated double cover (well-defined up to an isomorphism of $\mathbb{P}^1$). Then the ramification divisor of $q$ is the set
$$
W = \{ w_1, \ldots , w_{2g+2} \}
$$
of the Weierstrass points of $C$. Let $H=q^* \mathcal{O}_{\mathbb{P}^1}(1)$ be the hyperelliptic line bundle of $C$ and for $n=1,\ldots, \lfloor (g+1)/2 \rfloor$ let $E_n$ be the family of effective reduced divisors supported on $2n$ distinct points of $W$. For $e \in E_n$ the line bundle $H^{\otimes n} \otimes \mathcal{O}_C(-e) $ is a non-trivial square root of $\mathcal{O}_C$, i.e. an element of $\mathrm{Pic}^0(C)[2] \smallsetminus \{ \mathcal{O}_C \}$. Let 
\[
\begin{tikzcd}
    \beta_n: E_n \arrow[r] & \mathrm{Pic}^0(C)[2] \smallsetminus \{ \mathcal{O}_C \}
\end{tikzcd}
\]
be the map sending $e$ to $H^{\otimes n} \otimes \mathcal{O}_C(-e)$ and $B_n= \beta_n(E_n)$.

\begin{lemma}\cite[Lemma 4.3]{Ver13}\label{lemma: idea}
    \begin{itemize}
        \item[(i)] The map $\beta_n$ is injective for $n \leq \lfloor \frac{g}{2} \rfloor$;
        \item[(ii)] for $n= \frac{g+1}{2}$ the map $\beta_n$ is $2:1$ over its image;
        \item[(iii)] we have a disjoint union $ \mathrm{Pic}^0(C)[2] \smallsetminus \{ \mathcal{O}_C \}= \bigsqcup_{1 \leq n \leq \frac{g+1}{2}} B_n$.
    \end{itemize}
\end{lemma}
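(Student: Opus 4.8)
The plan is to translate the whole statement into the classical combinatorial description of the $2$-torsion of the Jacobian of a hyperelliptic curve, after which (i)--(iii) become elementary counting. For an even-cardinality subset $T \subseteq W$ write $e_T = \sum_{w \in T} w$ and set $L_T := \mathcal{O}_C(\tfrac{|T|}{2}H - e_T)$. The single algebraic relation I will use throughout is $2 w_i \sim H$ (the fibre of $q$ over a branch point is $2w_i$, which is linearly equivalent to $H$). From it, $\deg L_T = 0$ and $L_T^{\otimes 2} \cong \mathcal{O}_C$, and for $e \in E_n$ one reads off directly that $\beta_n(e) = \mathcal{O}_C(nH - e) = L_{\mathrm{supp}(e)}$ with $|\mathrm{supp}(e)| = 2n$. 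Thus, identifying $E_n$ with the subsets $T \subseteq W$ of size exactly $2n$ via $e \mapsto \mathrm{supp}(e)$, the map $\beta_n$ is simply the restriction of $T \mapsto L_T$ to such subsets, and the Lemma reduces to understanding the fibres and image of $T \mapsto L_T$.

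The heart of the argument is to show that $T \mapsto L_T$ is a group homomorphism from $(\{\text{even subsets of } W\},\triangle)$ to $\Pic^0(C)[2]$ with kernel exactly $\{\emptyset, W\}$. The homomorphism property follows from $e_T + e_{T'} = e_{T \triangle T'} + 2\,e_{T \cap T'}$ together with $2\,e_{T\cap T'} \sim |T \cap T'|\,H$, which gives $L_T \otimes L_{T'} \cong L_{T \triangle T'}$. For the kernel I will use two facts: first, $\sum_{w \in W} w \sim (g+1)H$, so that $L_W \cong \mathcal{O}_C$; second, that for $0 \le m \le g$ one has $H^0(C, mH) = q^* H^0(\mathbb{P}^1, \mathcal{O}_{\mathbb{P}^1}(m))$. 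The latter comes from $h^0(mH) = m+1$ in this range (Riemann--Roch and $K_C = (g-1)H$), so every effective divisor in $|mH|$ is a pullback $q^* D$ and hence meets $W$ only in even multiplicities. Since a nonempty reduced $e_T$ with $T \ne W$ has $|T|/2 \le g$ and carries each of its Weierstrass points with multiplicity one, it cannot lie in $|\tfrac{|T|}{2}H|$; therefore $L_T \not\cong \mathcal{O}_C$. This pins down $\ker = \{\emptyset, W\}$, equivalently $L_T \cong L_{T'} \iff T' = T \text{ or } T' = W \setminus T$. I expect this kernel computation to be the only real obstacle: it is the one genuinely geometric input, and it is exactly the nontriviality of $\beta_n(e)$ asserted in the setup.

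With this dictionary the three assertions are bookkeeping. For (i), $L_T = L_{T'}$ with $|T| = |T'| = 2n$ gives $T = T'$ or $T' = W \setminus T$; the second forces $2n = 2g+2-2n$, i.e. $n = (g+1)/2 > \lfloor g/2\rfloor$, which is excluded, so $\beta_n$ is injective. For (ii) with $n = (g+1)/2$ (hence $g$ odd), both $T$ and $W \setminus T$ have size $g+1 = 2n$ and are distinct, so each nonempty fibre is precisely $\{T, W\setminus T\}$ and $\beta_n$ is $2:1$. For (iii), any nontrivial class is $L_T$ with $\emptyset \ne T \ne W$; replacing $T$ by $W \setminus T$ if needed we may assume $|T| \le g+1$, i.e. $|T| = 2n$ with $1 \le n \le \lfloor (g+1)/2\rfloor$, giving $\bigcup_n B_n = \Pic^0(C)[2]\setminus\{\mathcal{O}_C\}$. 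Disjointness follows from the kernel description: $L_T = L_{T'}$ with $|T| = 2n \le g+1$ and $|T'| = 2m \le g+1$ forces either $T = T'$ (so $n = m$) or $n + m = g+1$, and the latter combined with $n, m \le (g+1)/2$ again yields $n = m = (g+1)/2$. Hence the $B_n$ are pairwise disjoint and cover everything, completing the proof.
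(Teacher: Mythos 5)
The paper does not prove this lemma at all: it is quoted verbatim from Verra (\cite[Lemma 4.3]{Ver13}), with only a footnote noting that the same proof works in characteristic $\neq 2$. Your argument is the standard combinatorial description of $\mathrm{Pic}^0(C)[2]$ for a hyperelliptic curve via even subsets of the Weierstrass locus, which is exactly the classical proof underlying the cited result, and it is correct: the relations $2w_i \sim H$, $e_W \sim (g+1)H$, the homomorphism property of $T \mapsto L_T$ under symmetric difference, and the kernel computation via $H^0(C,mH) = q^*H^0(\mathbb{P}^1,\mathcal{O}(m))$ for $m \le g$ (Clifford plus Riemann--Roch) are all sound, and the three assertions do reduce to the bookkeeping you describe. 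The one place where you should add a line is the opening claim of (iii) that \emph{every} nontrivial $2$-torsion class is of the form $L_T$: this is surjectivity of $T \mapsto L_T$, which does not follow from the kernel computation alone but does follow immediately from it by counting, since the group of even subsets has order $2^{2g+1}$, the kernel has order $2$, and $\#\mathrm{Pic}^0(C)[2] = 2^{2g}$ in characteristic $\neq 2$. With that sentence inserted, the proof is complete.
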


It follows that 
$$
\RH_g = \bigsqcup_{1 \leq n \leq \frac{g+1}{2}} \RH_g^n
$$
where $\RH_g^n$ is the locus where $\eta= H^{\otimes n}(-e)$ for some $e \in E_n$.

In this paper, we provide explicit presentations of all $\RH_{g}^n$ as quotient stacks for $g \geq 2$ and $n = 1$, as well as for $g$ odd and $1 \leq n < (g+1)/2$. This, in particular, improves upon \cite[Theorem 4.4]{Ver13}. The remaining cases are deferred to a subsequent work currently in progress. Using these presentations, we also compute the integral Chow rings of these stacks.

\subsubsection{Case $g$ even and $n=1$}

Suppose that $g$ is even and that $n=1$. We start with some notation.

\begin{notation}
     Let $\GI \subseteq \GL_2$ be the subgroup of matrices preserving the set of lines $\{ k (1,0), k(0,1)\} \subseteq k^2$. We will denote by $V$ be the representation of $\GI$ induced by the standard representation of $\GL_2$ via the inclusion $\GI \subseteq \GL_2$.

    We will also denote by $\Gamma$ the representation of $\GI$ induced by the sign representation of $\mu_2$ via the quotient map $\GI \to \mu_2$.
\end{notation}  

Using the above notation, we can state our presentation of $\RH_g^1$. 

\begin{theorem}\label{thm: presentation g even n=1}
    Suppose the genus $g$ is even. Then, we have an isomorphism of algebraic stacks
    $$
    \RH_g^1 \cong \Bigg[ \frac{\Sym^{2g}(V^\vee) \otimes \det (V)^{\otimes g-1} \otimes \Gamma \smallsetminus \Delta}{ \GI }  \Bigg]
    $$
    where $\Delta$ is the locus of polynomials having either a root at $0$ or $\infty$, or having a double root.
\end{theorem}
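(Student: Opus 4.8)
The plan is to build an explicit equivalence exhibiting $\RH_g^1$ as a quotient of a space of residual binary forms by $\GI$, by peeling the geometric data of a hyperelliptic Prym pair apart in three stages: the marked pair of Weierstrass points (which reduces the structure group to $\GI$), the residual branch divisor (which supplies the representation $\Sym^{2g}(V^\vee)$ and the locus $\Delta$), and the twists coming from the double cover together with the square root defining $\eta$ (which supply $\det(V)^{\otimes g-1}\otimes\Gamma$). First I would use Lemma~\ref{lemma: idea}(i): since $g$ is even and $n=1\le\lfloor g/2\rfloor$, the map $\beta_1$ is injective, so over any base the Prym line bundle $\eta=H\otimes\mathcal{O}_C(-w_1-w_2)$ is equivalent to the datum of the unordered pair $\{w_1,w_2\}$ of Weierstrass sections. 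Hence an object of $\RH_g^1$ over $S$ is the same as a smooth hyperelliptic family $C\to S$ of genus $g$ equipped with an unordered pair of disjoint Weierstrass sections, together with the isomorphism $\beta$; I will keep track of $\beta$ separately, as it is responsible for a $\mu_2$ worth of automorphisms.

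Next I would pass to the associated double cover $q\colon C\to P=\mathbb{P}(\mathcal{E})$ and its branch divisor $B\subset P$, a reduced relative divisor of degree $2g+2$. The marked pair $\{w_1,w_2\}$ corresponds to a degree-$2$ subdivisor $B_0\subset B$ consisting of two distinct points, with residual part $B'$ of degree $2g$ disjoint from $B_0$. The pair $(P,B_0)$ is exactly a rank-$2$ bundle together with an unordered pair of distinct points of its projectivization; since $\GI$ is the stabilizer in $\GL_2$ of the unordered pair $\{[1:0],[0:1]\}$ and $\GL_2/\GI$ is the variety of such pairs, this datum is classified by a morphism $S\to B\GI$. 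Working over $B\GI$ I normalize $B_0=\{0,\infty\}$, the two coordinate points of the universal $\mathbb{P}^1$-bundle $\mathbb{P}(V)$, with the factor $\mu_2\subset\GI$ swapping them; this swap is precisely what will correspond to exchanging $w_1$ and $w_2$.

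It then remains to encode the residual divisor $B'$ and to reconstruct the cover and the Prym structure equivariantly. The divisor $B'$ is a section of $\mathcal{O}_{\mathbb{P}(V)}(2g)$ twisted by a character of $\GI$, i.e.\ a $\GI$-equivariant point of $\Sym^{2g}(V^\vee)$ tensored with that character, and the conditions that $C$ be smooth and that $B_0\cap B'=\emptyset$ translate verbatim into the requirement that $f$ have no multiple root and no root at $0$ or $\infty$, that is, $f\notin\Delta$. To fix the character I would write the total branch form as $x_0x_1\cdot f$ and observe that the line $\langle x_0x_1\rangle\subset\Sym^2(V^\vee)$ carries the representation $\det(V)^{-1}\otimes\Gamma$ of $\GI$ (the diagonal torus acts by $(\lambda\mu)^{-1}$ and the swap fixes $x_0x_1$, while $\det(V)$ and $\Gamma$ each change sign under the swap); combining this splitting with the $\det$-shift forced by normalizing the line bundle $L$ with $L^{\otimes 2}\cong\mathcal{O}_P(B)$ and by extracting the square root that produces $\eta=H\otimes\mathcal{O}_C(-w_1-w_2)$ yields $f\in\Sym^{2g}(V^\vee)\otimes\det(V)^{\otimes g-1}\otimes\Gamma$, as claimed. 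Essential surjectivity then follows from the standard reconstruction of a double cover from $(P,L,B)$, and full faithfulness from matching isomorphisms of the two descriptions.

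The \emph{main obstacle} is the precise bookkeeping of the twists and of the automorphisms, where the two candidate exponents $g-1$ and $g+1$ both survive the coarse checks. Concretely, the scalar $\lambda\cdot\mathrm{Id}$ acts on $\Sym^{2g}(V^\vee)\otimes\det(V)^{\otimes g-1}\otimes\Gamma$ by $\lambda^{-2}$, so the central $\Gm$ acts with kernel exactly $\mu_2$, and $-\mathrm{Id}\in\GI$ acts trivially (as $(-1)^{-2g}=1$, $\det(-\mathrm{Id})=1$ and $\Gamma(-\mathrm{Id})=1$); this produces a $\mu_2$-gerbe that must be identified with the automorphisms of the Prym pair generated by the hyperelliptic involution and the sign ambiguity of $\beta$. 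Distinguishing $g-1$ from $g+1$, and confirming the $\Gamma$-factor, therefore requires carrying out the square-root computation for $\eta$ and tracking exactly how it interacts with the swap $\mu_2$; this is the delicate step of the argument, and it is where I would be most careful.
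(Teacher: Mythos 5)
Your proposal takes essentially the same route as the paper: Lemma~\ref{lemma: idea} reduces an object of $\RH_g^1$ to a hyperelliptic curve with a marked unordered pair of Weierstrass points, the group $\GI$ appears as the stabilizer of $\{0,\infty\}$ inside $\underline{\mathrm{Aut}}(\P^1,\cO(g+1))=\GL_2/\mu_{g+1}\cong\GL_2$ (Lemma~\ref{lemma: group G}), and the twist is read off by factoring the degree-$(2g+2)$ branch form as $XY\cdot f$ --- which is exactly the paper's Equation~\eqref{eqn: map to Hg g even}; like the paper, you defer the verification that the resulting functor is an equivalence (the paper cites \cite{CIL24}). One remark: the ``delicate step'' you leave open, distinguishing $\det(V)^{\otimes g-1}$ from $\det(V)^{\otimes g+1}$, is already settled by the computation you yourself describe: since $XY\circ A^{-1}=\det(A)^{-1}\Gamma(A)\,XY$ on both components of $\GI$ and the total branch form transforms in $\Sym^{2g+2}(V^\vee)\otimes\det(V)^{\otimes g}$, the residual form must transform by $f\mapsto\det(A)^{g-1}\Gamma(A)\,(f\circ A^{-1})$, so the exponent $g-1$ follows with no further square-root analysis. (A minor point: the generic $\mu_2$-stabilizer $\{\pm\mathrm{Id}\}$ corresponds to the hyperelliptic involution alone; the sign ambiguity of $\tau$ contributes no automorphisms, since $\tau$ is not part of the data of a morphism in Definition~\ref{defn: Rg}.)
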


The proof is similar to the case $g=2$ treated in \cite[Section 2.4]{CIL24} and it is presented in \S\ref{subsec:presn=1}.

In \S\ref{Sec: Chow n=1}, we use the machinery of equivariant intersection theory and the above presentation to compute the integral Chow ring of $\RH_g^1$ for even $g$. Before stating the result, we require some further notation.

\begin{notation}
    Following \cite{Lar19}, we set 
    \begin{align*}
        &\beta_i= c_i(V) \in \CH^*(B (\GI)), \\
        & \gamma= c_1(\Gamma) \in \CH^*(B (\GI)),
    \end{align*}
    where we denoted the vector bundles over $B(\GI)$ associated to the representations $V$ and $\Gamma$ using the same symbols.
\end{notation}

\begin{theorem}\label{thm: Chow for g even n=1}
    Suppose the genus $g$ is even. Then, we have
    $$
    \CH^*(\RH_g^1)=\frac{\mathbb Z[\beta_1, \beta_2, \gamma]}{(2\beta_1, 2\gamma, 4g \beta_2, \gamma (\gamma+\beta_1), \beta_1(\beta_1 + \gamma))}.
    $$    
\end{theorem}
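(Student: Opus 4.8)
The plan is to combine the presentation of Theorem~\ref{thm: presentation g even n=1} with equivariant excision. Write $W = \Sym^{2g}(V^\vee)\otimes\det(V)^{\otimes g-1}\otimes\Gamma$ for the ambient representation, so that $\RH_g^1 = [(W\smallsetminus\Delta)/\GI]$. Since $W$ is a representation, homotopy invariance gives $\CH^*_{\GI}(W) = \CH^*(B(\GI))$, and the localization sequence of \cite{EG98} reads
$$\CH^*_{\GI}(\Delta)\xrightarrow{\ i_*\ }\CH^*(B(\GI))\longrightarrow\CH^*(\RH_g^1)\longrightarrow 0.$$
By the projection formula the image of $i_*$ is an ideal $I$, so the whole problem splits into (a) determining $\CH^*(B(\GI))$ and (b) finding generators of $I$; then $\CH^*(\RH_g^1) = \CH^*(B(\GI))/I$. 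For (a), the swap matrix has determinant $-1$, and conjugating by $\mathrm{diag}(1,-1)$ exhibits an isomorphism $V\otimes\Gamma\cong V$ of $\GI$-representations. Comparing total Chern classes yields $2\gamma = 0$ (from $c_1$) and $\gamma(\gamma+\beta_1) = 0$ (from $c_2$); that these generate all relations is \cite{Lar19}, so $\CH^*(B(\GI)) = \Z[\beta_1,\beta_2,\gamma]/(2\gamma,\gamma(\gamma+\beta_1))$. In particular $\CH^2(B(\GI)) = \Z\beta_1^2\oplus\Z\beta_2\oplus(\Z/2)\beta_1\gamma$, which lets me read off relations degree by degree.

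For (b) I decompose $\Delta = \Delta_W\cup\Delta_{\mathrm{disc}}$, where $\Delta_W = \{a_0a_{2g}=0\}$ is the locus with a root at $0$ or $\infty$ and $\Delta_{\mathrm{disc}}$ is the discriminant (double-root) hypersurface. Taking the disjoint union of Chow envelopes of the two pieces — this is where \S\ref{sec: Chow envelope} enters, and where the characteristic hypothesis guarantees square-freeness of the residual form — the surjectivity built into an envelope lets me replace $I$ by the ideal generated by the pushforwards of module generators of the equivariant Chow rings of the envelopes. For $\Delta_{\mathrm{disc}}$ I use the parametrization by the double root: the incidence variety $\{(\ell,f):\ell^2\mid f\}$ is the total space of the subbundle $S = \cO_{\P(V^\vee)}(-2)\otimes\Sym^{2g-2}(V^\vee)\otimes\det(V)^{g-1}\otimes\Gamma$ of the pullback of $W$ over $\P(V^\vee)$, mapping birationally onto $\Delta_{\mathrm{disc}}$ via $\Phi$. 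Since $\CH^*_{\GI}(\P(V^\vee))$ is free over $\CH^*(B(\GI))$ on $1,\zeta$ (with $\zeta = c_1(\cO(1))$ and $\zeta^2 = \beta_1\zeta - \beta_2$), this contributes the two pushforwards $\Phi_*(1)=\pi_*(c_2(Q))$ and $\Phi_*(\zeta)=\pi_*(\zeta\,c_2(Q))$, where $Q$ is the rank-two quotient of the pullback of $W$ by $S$. For $\Delta_W$ I use the envelope induced from the maximal torus $\Gm\times\Gm$; its module generators push forward to $[\Delta_W]$ and to the codimension-two class $[\Delta_0\cap\Delta_\infty]$.

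The heart of the argument, and the step I expect to be the main obstacle, is the honest evaluation of these codimension-two pushforwards together with the integral bookkeeping that turns them into the stated relations. A Chern-root computation gives $[\Delta_W] = -2\beta_1$ — the relevant line bundle is $\det(V)^{-2}$, and here the evenness of $g$ is essential, as it is what makes the $\mu_2$-twist trivial — hence the relation $2\beta_1$; the divisor class $[\Delta_{\mathrm{disc}}] = \Phi_*(1) = -(4g-2)\beta_1$ is then redundant modulo $2\beta_1$. In codimension two one finds, modulo $(2\beta_1)$ and the relations of $B(\GI)$, that $\Phi_*(\zeta)\equiv -4g(g-1)\beta_2$ and $[\Delta_0\cap\Delta_\infty]\equiv\beta_1(\beta_1+\gamma)+4g^2\beta_2$. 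The parity of $g$ is crucial again: it kills the even multiple of $\beta_1^2$ coming from $\Phi_*(\zeta)$ and makes the coefficients of $\beta_1^2$ and $\beta_1\gamma$ in $[\Delta_0\cap\Delta_\infty]$ odd. Because $\gcd(2g,g-1)=1$ for $g$ even, the ideal generated by these two classes (together with $2\beta_1$) equals the one generated by $\{4g\beta_2,\ \beta_1(\beta_1+\gamma)\}$, which gives exactly the claimed presentation.

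The delicate points I will need to control are the $\gamma$-corrections to the Chern classes of the induced rank-two (swap) bundles appearing on the Weierstrass side — invisible after restriction to $B(\Gm\times\Gm)$ but detected on $B\mu_2$, where $\beta_1\mapsto\gamma$ and $\beta_2\mapsto 0$ — and the verification that the disjoint union of envelopes genuinely dominates every subvariety of $\Delta$ (including the two-double-root and triple-root strata of $\Delta_{\mathrm{disc}}$), so that $i_*$ is surjective onto $I$ and no relation is omitted. Once these are in place, the computation above shows that $I=(2\beta_1, 4g\beta_2, \beta_1(\beta_1+\gamma))$ on top of the relations of $B(\GI)$, yielding the presentation in the statement.
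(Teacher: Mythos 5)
Your overall architecture — excision against $\Delta$, splitting $\Delta$ into the root-at-$0$-or-$\infty$ locus and the discriminant, pushing forward module generators of envelopes, then integral bookkeeping — is the paper's strategy (the paper first projectivizes, which only changes the bookkeeping by the relation $\xi_{2g}=(g-1)\beta_1+\gamma$), and your codimension-one and codimension-two classes $-2\beta_1$, $\beta_1(\beta_1+\gamma)+4g^2\beta_2$, $-4g(g-1)\beta_2$ all match the paper's Lemmas \ref{lemma: class of uD1}, \ref{lemma: class of uD11} and Proposition \ref{prop: image i_2} after substitution. However, there are two genuine gaps. First, your generating set for the image of the pushforward from $\Delta_W$ is too small as argued. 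The envelope $\Delta_0\sqcup\Delta_\infty$ has equivariant Chow ring $\Z[t_1,t_2]$, which requires \emph{two} module generators $1$ and $t_1$ over $\CH^*(B(\GI))$; the pushforward of $t_1$ is the paper's class ${i_1}_*(\alpha)=\beta_1(\beta_1+\gamma)-4g\beta_2$, and it is \emph{not} the class $[\Delta_0\cap\Delta_\infty]$. Indeed, since $a_{2g}$ restricted to $\Delta_0$ has weight $(g+1)t_1-(g-1)t_2$, one finds $[\Delta_0\cap\Delta_\infty]\equiv\pm 2g\,{i_1}_*(\alpha)$ modulo the ideal $([\Delta_W])$, so your two classes only account for $2g$ times the missing generator. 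You are saved by an accident of the numerics: ${i_1}_*(\alpha)$ happens to lie in the ideal generated by $2\beta_1$, $\beta_1(\beta_1+\gamma)+4g^2\beta_2$ and $4g(g-1)\beta_2$ (subtract the last two to get $\beta_1(\beta_1+\gamma)+4g\beta_2$, then use that $1-g$ is odd to isolate $\beta_1(\beta_1+\gamma)$, hence $4g\beta_2$), so the final ideal is correct — but your argument neither produces this generator nor checks its redundancy, and it would silently give a wrong (too large) quotient ring if the arithmetic were less favorable.

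Second, the incidence variety $\{(\ell,f):\ell^2\mid f\}$ is birational onto $\Delta_{\mathrm{disc}}$ but is \emph{not} a Chow envelope of it: over the subvariety of forms $u^2v$ with $u$ an irreducible quadratic (or over deeper multi-double-root strata), the fibre consists of conjugate points with no rational section, so no subvariety of your incidence variety maps birationally onto such strata. Consequently the image of $i_{2*}$ is a priori larger than the ideal generated by $\Phi_*(1)$ and $\Phi_*(\zeta)$. The paper handles this by taking the disjoint union of all $\pi_r:\P^r\times\P^{2g-2r}\to\P^{2g}$, $(f,h)\mapsto f^2h$, as the envelope (Lemma \ref{lem: Chow envelopes n>1}, which is also where the hypothesis $\mathrm{char}(k)>2g+2$ enters) and then invoking \cite[Proposition 4.2 and Lemma 4.3]{EF09} to show the images of ${\pi_r}_*$ for $r\geq2$ are contained in the ideal generated by ${\pi_1}_*(1)$ and ${\pi_1}_*(\xi_1)$. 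You flag this as a ``delicate point'' but supply no argument; without it the surjectivity of your two classes onto $\mathrm{im}(i_{2*})$ is unproved. Both gaps must be closed (by adding the generator ${i_1}_*(\alpha)$ and by replacing the birational parametrization with the genuine envelope plus the redundancy statement) before the computation of $I$ is complete.
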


Setting $g=2$, $\beta_i=\lambda_i$ for $i=1,2$, this recovers \cite[Theorem 4]{CIL24}. Finally, in \S\ref{subsec: interpretation generators n=1}, we give a geometric interpretation of the classes $\beta_1,\beta_2$ and $\gamma$ as Chern classes of certain natural vector bundles on $\RH_g^1$.

\subsubsection{Case $g$ odd and $n=1$}

Suppose that $g$ is odd and that $n=1$.

\begin{notation}\label{not: representations PGL2}
     Let $\GII \subseteq \Gm \times \PGL_2$ be the subgroup consisting of pairs $(t,[A])$ with $A$ a matrix preserving the set of lines $\{ k (1,0), k(0,1)\} \subseteq k^2$. For every $j \geq 1$, we will denote by $W_j$ the $2j+1$-dimensional representation of $\PGL_2$ where the underlying vector space is the space of homogeneous polynomials of degree $2j$ in two variables and 
     $$
     [A] \cdot f(X,Y)= \det (A)^j (f \circ A^{-1}) (X,Y).
     $$
     By an abuse of notation we will also denote by $W_j$ the induced representation of $\GII$ where $\Gm$ acts trivially and the action of $\Gm \rtimes \mu_2$ is given by the inclusion $\Gm \rtimes \mu_2 \subseteq \PGL_2$.

     We will also denote by $\Gamma$ the representation of $\GII$ induced by the sign representation of $\mu_2$ under the quotient map $\GII \to \mu_2$.

     Finally, we will denote by $\chi$ the standard $1$-dimensional representation of $\Gm$ and also, with an abuse of notation, the induced representation of $\GII$ via the projection onto the first factor $\GII \to \Gm$.
\end{notation}  

\begin{theorem}\label{thm: presentation g odd n=1}
    Suppose the genus $g$ is odd. Then, we have an isomorphism of algebraic stacks
    $$
    \RH_g^1 \cong \Bigg[ \frac{W_g \otimes \chi^{\otimes -2} \otimes \Gamma \smallsetminus \Delta}{ \GII }  \Bigg]
    $$
    where $\Delta$ is the locus of polynomials having either a root at $0$ or $\infty$, or having a double root.
\end{theorem}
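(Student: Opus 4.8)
The plan is to identify both sides with the same moduli of rigidified double-cover data, following the strategy of the even case in Theorem~\ref{thm: presentation g even n=1} and \cite[Section~2.4]{CIL24}, but with the projective group $\PGL_2$ in place of $\GL_2$, as in the standard presentation of $\mathcal{H}_g$ for odd $g$. First I would unwind an object of $\RH_g^1$ over a base $S$: a family $(C \to S, \eta, \beta)$ with $\eta = H(-e)$ for a reduced pair $e = w_i + w_j$ of Weierstrass points on each fiber. Taking the quotient by the hyperelliptic involution $\iota$ produces a conic bundle $P = C/\iota \to S$ with the degree-two cover $q \colon C \to P$, whose branch divisor $D \subset P$ is étale of relative degree $2g+2$, and the pair $e$ descends to a reduced relative degree-two subdivisor $e' = q(e) \subset D$. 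It is convenient to record the key simplification of the Prym line: since $H \cong \O_C(2w_i) \cong \O_C(2w_j)$ for any Weierstrass point, one has $\eta = H(-w_i - w_j) \cong \O_C(w_i - w_j)$, a $2$-torsion bundle that is manifestly \emph{antisymmetric} under exchanging the two marked points.

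I would then perform the reduction of structure group. The standard presentation realizes $\mathcal{H}_g$ for odd $g$ as a quotient by $\Gm \times \PGL_2$, with $\PGL_2$ the automorphisms of the conic bundle and $\Gm$ recording the scaling of the square-root section whose vanishing is $D$. The datum $e'$ rigidifies the conic: passing to the étale double cover of $S$ that splits $e'$ into two disjoint sections, the conic acquires a section, hence is identified with $\mathbb{P}^1$, with the two marked points placed at $0 = [0:1]$ and $\infty = [1:0]$. The residual symmetries are exactly the elements of $\PGL_2$ preserving the set $\{0,\infty\}$, namely the normalizer $\Gm \rtimes \mu_2$ of the diagonal torus, while the Galois involution of the double cover realizes the swap $0 \leftrightarrow \infty$. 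Together with the scaling $\Gm$ this produces the structure group $\GII = \Gm \times (\Gm \rtimes \mu_2)$, so that the rigidified data is classified by a $\GII$-torsor on $S$. Here it matters that a conic carrying a reduced degree-two divisor becomes split after an étale quadratic base change, so no residual Brauer obstruction survives.

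With $P$ rigidified, the branch divisor factors as $D = e' + (D - e')$, where $D - e'$ has relative degree $2g$ and is cut out by a binary form $h$ of degree $2g$; tracking $h$ against the $\det^g$-twisted action on degree-$2g$ forms restricted to $\Gm \rtimes \mu_2$ yields the representation $W_g$. The weight of the square-root section under the scaling $\Gm$ contributes the factor $\chi^{\otimes -2}$; the weight is even because $-1 \in \Gm$ is the hyperelliptic involution $\iota$, which must act trivially on the branch form, and equals $-2$ because the defining section lives in a tensor-square. Finally, the antisymmetry of $\eta \cong \O_C(w_i - w_j)$ noted above means that the Prym datum changes by a sign under the swap $0 \leftrightarrow \infty$; since $\eta$ is otherwise determined by the curve and the marked pair, it contributes no new coordinate but precisely the twist by the sign representation $\Gamma$ of the swap $\mu_2$. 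The open conditions that $D$ be reduced and that $e'$ be disjoint from $D - e'$ translate exactly into $h$ having no double root and no root at $0$ or $\infty$, that is $h \notin \Delta$.

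Assembling these identifications exhibits $\RH_g^1(S)$ as the groupoid of $\GII$-torsors on $S$ equipped with a $\GII$-equivariant map to $W_g \otimes \chi^{\otimes -2} \otimes \Gamma \smallsetminus \Delta$, which is by definition the quotient stack on the right-hand side. The main obstacle is pinning down the single factor $\Gamma$: one must verify that the sign by which $\eta$ together with its trivialization $\beta$ reacts to the swap is exactly $-1$, so that exactly one copy of $\Gamma$ appears and none is spuriously introduced by factoring out the quadratic form cutting $e'$. This bookkeeping interacts delicately with the convention (see the footnote to Definition~\ref{defn: Rg}) that the isomorphism $\tau$ is not part of a morphism, which is what prevents the square-root sign from contributing a separate generic automorphism and keeps the generic stabilizer equal to $\mu_2 = \langle \iota \rangle$; I would check this by matching stabilizers on both sides. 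It then remains to verify that all the local identifications are canonical, compatible with base change, and carry automorphisms to automorphisms, so that the constructed functor is an equivalence of stacks rather than merely a bijection on isomorphism classes—this is where the argument parallels, and must be checked against, \cite{CIL24}. Injectivity of $\beta_1$ from Lemma~\ref{lemma: idea}(i), valid since $1 \le \lfloor g/2 \rfloor$ for $g \ge 2$, guarantees that $e$ is recovered from $\eta$, so the correspondence is well defined in both directions.
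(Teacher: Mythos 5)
Your overall strategy is the same as the paper's: identify $\RH_g^1$ with the quotient of the space of degree-$2g$ binary forms avoiding $\Delta$ by the group of automorphisms of $(\P^1,\cO(g+1))$ fixing $\{0,\infty\}$, which for odd $g$ is $\GII$ via the isomorphism $\GL_2/\mu_{g+1}\cong\Gm\times\PGL_2$ of Lemma~\ref{lemma: group G}. You run the construction in the opposite direction (rigidify an object of $\RH_g^1$ and descend, rather than building the family over $X\smallsetminus\Delta$ and stackifying the induced map of prestacks as in \S\ref{subsec:presn=1}), but this is a difference of presentation, not of substance, and your reduction of structure group, the identification of $W_g$, the weight $-2$ of $\chi$, and the translation of the open conditions into $\Delta$ all match the paper.

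The one point where your justification would not survive being made precise is the factor $\Gamma$. You derive it from the ``antisymmetry'' of $\eta\cong\O_C(w_i-w_j)$ under the swap $w_i\leftrightarrow w_j$; but $\eta$ is $2$-torsion, so $\O_C(w_i-w_j)\cong\O_C(w_j-w_i)$ and the line bundle itself is \emph{symmetric} under the swap --- this argument produces no sign. The actual source of $\Gamma$ is Equation~\eqref{eqn: map to Hg g odd}: for an anti-diagonal representative $A$ of the swap one has $(XY)\circ A^{-1}=-XY/\det(A)$, so the natural action of $\underline{\mathrm{Aut}}(\P^1,\cO(g+1))$ on degree-$(2g+2)$ forms divisible by $XY$ differs from the $W_g$-action on the residual degree-$2g$ factor $f$ by exactly one sign on the non-identity component; equivalently, it is forced by requiring $f\mapsto XYf$ to be equivariant for the map to the standard presentation of $\cH_g$. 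You flag this as the main obstacle and propose to settle it by matching stabilizers, which would work, but as written the heuristic you offer for it is the one step that is actually wrong rather than merely unverified. With that repair, and with the descent/gluing checks you defer carried out as in \cite[Section 2.4]{CIL24}, the argument is sound.
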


The proof is presented in \S\ref{subsec:presn=1}. The idea, which applies to both $g$ even and odd, can be summarized as follows. Lemma \ref{lemma: idea} provides a map
\[
\begin{tikzcd}
    k[X,Y]_{2g} \arrow[r] & \RH_g^1
\end{tikzcd}
\]
obtained by associating to an homogeneous polynomial $f(X,Y)$ of degree $2g$ the following data:
\begin{itemize}
    \item the double cover $q:C \to \P^1$ of $\P^1$, ramified along the zeros of $f(X,Y)$ and the points $0$ and $\infty$;
    \item the $2$-torsion line bundle on $C$ given by $q^* \cO_{\P^1}(1)(-\sigma_0-\sigma_{\infty})$ where $\sigma_0,\sigma_\infty \in C$ are the points over $0,\infty \in \P^1$ respectively;
    \item the isomorphism $\eta^2 \xrightarrow{\sim} \cO_C$ is given by the section $XY \in H^0(\P^1,\cO_{\P^1}(2))$.
\end{itemize}
We will prove that this map is a $\GI$-torsor when 
$g$ is even and a 
$\GII$-torsor when $g$ is odd.

The computation of the integral Chow ring of $\RH_g^1$ for odd $g$ is presented in \S\ref{Sec: Chow n=1}. Before stating our result, we introduce some further notation and previous results.

First, recall that the Chow ring of the classifying stack $B \PGL_2$ is well-known \cite{pandharipande1996chow,Vez98,DL18}:

\begin{equation}
    \CH^*(B \PGL_2)= \frac{\Z[c_2,c_3]}{(2c_3)}.
\end{equation}

\begin{notation}
    In this paper, we will also denote by $c_i \in \CH^*(B \GII)$ ($i=2,3$) the pullback of the $c_i \in \CH^*(B \PGL_2)$  under $B(\GII) \to B\PGL_2$.
    
    Furthermore, we set 
    \begin{align*}
        &t= c_1(\chi) \in \CH^*(B (\GII)), \\
        & \gamma= c_1(\Gamma) \in \CH^*(B (\GII)).
    \end{align*}
\end{notation}

\begin{theorem}\label{thm: Chow for g odd n=1}
    Suppose the genus $g$ is odd. Then, we have
    $$
    \CH^*(\RH_g^1)=\frac{\mathbb Z[c_2, t, \gamma]}{( 2\gamma, 4 t, \gamma^2+ g c_2)}.
    $$
    
\end{theorem}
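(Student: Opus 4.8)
The plan is to compute the equivariant Chow ring of the quotient presentation from Theorem \ref{thm: presentation g odd n=1}. By definition, $\CH^*(\RH_g^1) = \CH^*_{\GII}(W_g \otimes \chi^{\otimes -2} \otimes \Gamma \smallsetminus \Delta)$, so the strategy is an excision sequence: start from $\CH^*(B\GII)$, which surjects onto $\CH^*$ of the total representation space, then quotient by the relations imposed by removing the discriminant locus $\Delta$. First I would compute $\CH^*(B\GII)$ explicitly. Since $\GII = \Gm \times (\Gm \rtimes \mu_2)$ and the $\Gm \rtimes \mu_2$ factor sits inside $\PGL_2$, I would combine the known ring $\CH^*(B\PGL_2) = \Z[c_2,c_3]/(2c_3)$ pulled back via $B\GII \to B\PGL_2$ with the classes $t = c_1(\chi)$ and $\gamma = c_1(\Gamma)$. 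The key structural facts to establish are the relation $2\gamma = 0$ (coming from $\gamma$ being the first Chern class of a sign/order-two line bundle) and a relation expressing $c_3$ and reconciling it with the torus and $\mu_2$ data — concretely, I expect $c_3$ and $\gamma$-type classes to satisfy an identity that, together with the $\PGL_2$-relation, lets me eliminate $c_3$ in favor of $c_2$, $t$, $\gamma$.

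Next I would carry out the excision. Removing $\Delta$ — polynomials with a root at $0$ or $\infty$, or a double root — means $\CH^*(\RH_g^1)$ is the quotient of $\CH^*(B\GII)$ by the ideal generated by the classes of the closure of (the irreducible components of) $\Delta$, via the localization sequence $\CH^*_{\GII}(\Delta) \to \CH^*_{\GII}(\text{total space}) \to \CH^*(\RH_g^1) \to 0$. The total-space Chow ring equals $\CH^*(B\GII)$ by homotopy invariance. The decisive computation is to identify the fundamental classes of the closures of the $\Delta$-strata as explicit polynomials in $c_2, t, \gamma$. The stratum of polynomials vanishing at $0$ or $\infty$ should contribute a class governing the $4t$ relation, while the double-root (discriminant) locus should produce the relation $\gamma^2 + g c_2$; the factor $g$ is expected to enter through the degree $2g$ of the binary forms, i.e. from how the twist $\chi^{\otimes -2}$ and $\det$-weights scale the top Chern class / discriminant class with the dimension.

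The main obstacle will be computing the class of the double-root locus and extracting the precise relation $\gamma^2 + g c_2 = 0$ together with the $\mu_2$-twist bookkeeping. The discriminant of a binary form of degree $2g$ is a class in $\CH^1_{\GII}$ of the representation, and I would compute it by degenerating to the torus: restrict to a maximal torus (or to the subgroup $\Gm \times \Gm$ before adjoining $\mu_2$), where the representation $W_g$ splits into weight spaces and the discriminant factors as a product over pairs of roots. Pushing this product forward and tracking the $\mu_2$-action (which permutes the two lines and hence acts on the weights) should yield $\gamma^2 + g c_2$ after identifying $\gamma^2$ with the relevant symmetric combination of torus characters and $c_2$ with the product of the two root-difference characters. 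The $4t$ relation similarly comes from the class of the ``root at $0$ or $\infty$'' locus, where the twist by $\chi^{\otimes -2}$ shifts the relevant weight; I must be careful that the $\mu_2$ swapping $0 \leftrightarrow \infty$ merges these into a single $\GII$-invariant locus, explaining the coefficient $4$ rather than $2$.

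Finally I would assemble these relations and verify that the quotient $\Z[c_2,t,\gamma]/(2\gamma, 4t, \gamma^2 + g c_2)$ is exactly the cokernel, i.e. that no further relations survive and that the excised classes generate precisely this ideal; checking that the localization sequence's left-hand map has image exactly this ideal (and that the sequence is right-exact with the claimed generators) is the last step.
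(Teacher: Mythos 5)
Your overall frame --- the quotient presentation, homotopy invariance, and the localization sequence for the complement of $\Delta$ --- is indeed the paper's strategy (the paper first projectivizes along the $\Gm$-torsor, Lemma~\ref{lemma: reduction to prjective spaces}, which is where the substitution $\xi_{2g}\equiv 2t+\gamma$ enters; your direct affine version is equivalent), and your description of $\CH^*(B\GII)$, including eliminating $c_3$ via $\gamma^3+c_2\gamma+c_3=0$, matches Theorem~\ref{thm: Chow BGII}. But there is a genuine gap at the decisive step: the excision sequence requires the \emph{full} image of ${i}_*:\CH^*_{\GII}(\uD)\to\CH^*_{\GII}(\P^{2g})$, i.e.\ the ideal generated by pushforwards of \emph{all} classes supported on $\uD$, not merely the fundamental classes of the strata closures. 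Since the double-root locus $\uD_2$ is singular, its equivariant Chow groups are accessed through the Chow envelope $\sqcup_r\,\pi_r:\P^r\times\P^{2g-2r}\to\P^{2g}$, $(f,h)\mapsto f^2h$ (this is exactly where the hypothesis $\mathrm{char}=0$ or $>2g+2$ is used), and the image needs generators in codimensions $1$, $2$ and $4$: ${\pi_1}_*(1)=2(2g-1)\xi_{2g}$, ${\pi_1}_*(\tau)=2\xi_{2g}^2-2g(g-1)c_2$, and ${\pi_2}_*(\xi_2^2)$, whose integral computation occupies Proposition~\ref{prop: image i_2} and Lemma~\ref{lemma: pi2(xi2)} (a two-step argument, after $\otimes\Z[1/2]$ via $\SL_2$ and after $\otimes\Z_{(2)}$ via odd-degree covers and $\GL_3$-counterparts). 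Your proposed torus degeneration of the discriminant only produces the codimension-one class (essentially ${\pi_1}_*(1)$), so as sketched you cannot bound the image of the localization map and the ``no further relations'' step cannot be closed.

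The second problem is a misattribution that would change the answer. The relation $\gamma^2+gc_2$ does \emph{not} come from the double-root locus: it comes from $\uD_1$, specifically from the codimension-two stratum $\uD_{1,1}$ of forms vanishing at \emph{both} $0$ and $\infty$, whose class is $\xi_{2g}^2+\gamma\xi_{2g}+g^2c_2+\gamma^2$ (Lemma~\ref{lemma: class of uD11}); substituting $\xi_{2g}\equiv2t+\gamma$ this gives $\gamma^2+g^2c_2$ modulo $(2\gamma,4t)$. To improve $g^2$ to $g$ one needs the extra relation $2gc_2$, which in the paper is $-{i_1}_*(\alpha)$ for a class $\alpha$ on $\uD_1$ that is an additional module generator of $\CH^*_{\GII}(\uD_1)$ beyond fundamental classes (Corollary~\ref{cor: Chow ring of uD1}, Lemma~\ref{lemma: pushforward of alpha}); it can alternatively be extracted from ${\pi_1}_*(\tau)$ on the double-root side, but either way it is not the class of a stratum closure. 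This is not cosmetic: $2gc_2\notin(2\gamma,4t,\gamma^2+g^2c_2)$ for odd $g\geq3$ (set $\gamma=t=0$; one would need $g\mid2$), so a proof recording only the fundamental classes you list would yield a strictly larger ring than the theorem asserts. Your bookkeeping for $4t$ is correct --- $[\uD_1]=2\xi_{2g}$ because $\mu_2$ swaps the two hyperplanes, and the twist $\chi^{\otimes-2}\otimes\Gamma$ gives $\xi_{2g}\equiv2t+\gamma$ --- but the remaining generators of the ideal require the missing pushforward classes above.
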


In \S\ref{subsec: interpretation generators n=1}, we give a geometric interpretation of the classes $c_2,t$ and $\gamma$ as Chern classes of certain natural vector bundles on $\RH_g^1$.

\subsubsection{Case $1 < n < (g+1)/2$ and $g$ odd}

Next, we address the case $1 < n < (g+1)/2$ with $g$ odd. Most of the results we derive also hold for $n=1$, so we will assume $1 \leq n < (g+1)/2$ and $g$ odd throughout this section, specifying when the condition $n \neq 1$ is required.

Let $\mathcal{H}_g$ denote the moduli stack of hyperelliptic curves of genus $g$, and, for $a \geq 1$ let $\mathcal{D}_{a}$ denote the moduli stack parametrizing pairs $(\mathcal{P} \to S, D_a \subseteq \mathcal{P})$, where $\mathcal{P} \to S$ is a Brauer--Severi scheme of relative dimension $1$, and $D_a$ is a Cartier divisor that is finite and \'etale over $S$ of degree $a$. 

As explained in \cite[Theorem 4.1]{AV04} and \cite[Remark 2.1]{EH22}, there exists a map 
\begin{equation}\label{eqn: map in EH}
\begin{tikzcd}
    \mathcal{H}_g \cong \bigg[ \frac{\chi^{\otimes -2} \otimes W_{g+1} \setminus \Delta}{\mathbb{G}_m \times \mathrm{PGL}_2} \bigg] \arrow[r] & \mathcal{D}_{2g+2} \cong \bigg[ \frac{\chi^{\otimes -1} \otimes W_{g+1} \setminus \Delta}{\mathbb{G}_m \times \mathrm{PGL}_2} \bigg] \cong \bigg[ \frac{\P(W_{g+1}) \setminus \underline{\Delta}}{ \mathrm{PGL}_2} \bigg],
\end{tikzcd}
\end{equation}
which, in terms of objects, records the $2g+2$ branch points of the double cover associated with the hyperelliptic curve. In terms of quotient stacks, this map is the base change of the morphism $B \Gm \to B\Gm$ given by $t \mapsto t^2$ via $[ (\chi^{\otimes -1} \otimes W_{g+1} \setminus \Delta) /\mathbb{G}_m \times \mathrm{PGL}_2 ] \to B\Gm$. In particular, $\cH_g$ is the $\mu_2$-root-stack over $\cD_{2g+2}$ obtained by adding a square root of the line bundle $\cO_{\P(W_{g+1})}(-1)$. The loci $\Delta$ and $\underline{\Delta}$ denote the locus of singular polynomials in the affine and projective spaces, respectively.

Let $\mathcal{D}_{a,b}$ denote the open substack of $\mathcal{D}_a \times_{B \mathrm{PGL}_2} \mathcal{D}_b$ parametrizing tuples $(\mathcal{P} \to S, D_a, D_b)$, where $D_a$ and $D_b$ are disjoint divisors. By definition, we have isomorphisms

\begin{equation}\label{eqn: presentation Dab}
\mathcal{D}_{2a,2b} \cong \bigg[ \frac{(\chi^{(1)})^{-1} \otimes W_a \times (\chi^{(2)})^{-1} \otimes W_b \setminus \Delta}{\mathbb{G}_m^{(1)} \times \mathbb{G}_m^{(2)} \times \mathrm{PGL}_2} \bigg] \cong \bigg[ \frac{\P (W_a) \times \P( W_b) \setminus \underline{\Delta}}{\mathrm{PGL}_2} \bigg]
\end{equation}

where $\Delta$ and $\underline{\Delta}$ correspond to the locus of pairs of polynomials $(f, g)$ such that the product $fg$ is singular, respectively seen in the affine and projective cases. Here, $\mathbb{G}_m^{(1)}$ and $\mathbb{G}_m^{(2)}$ are both equal to $\mathbb{G}_m$, but the superscript indicates that $\mathbb{G}_m^{(i)}$ acts only on the $i$-th term of the product. Similarly, $\chi^{(i)}$ represents the character $\chi$, with the superscript specifying which $\mathbb{G}_m$ acts on it.

The reason why we introduced the stack $\mathcal{D}_{a,b}$ is explained in the following.

\begin{lemma}\label{lemma: cartesian diagram for RH}
    There is a Cartesian diagram
    \[
    \begin{tikzcd}
            \RH_g^n \arrow[d]\arrow[r] \arrow[dr, phantom, "\square"] & \cH_g \arrow[d]\\
            \cD_{2n,2g+2-2n}\arrow[r] & \cD_{2g+2}
        \end{tikzcd}
    \]
    where the bottom horizontal map corresponds to taking the sum of $D_{2n}$ and $D_{2g+2-2n}$, while the left vertical map remembers the degree $2n$ divisor $e$ from Lemma \ref{lemma: idea} and the complement $W \smallsetminus e$ of $e$ in the Weierstrass divisor. 

    In particular, with the notation in Equation \eqref{eqn: presentation Dab}, the map $\RH_g^n \to \mathcal{D}_{2n,2g+2-2n}$ is the $\mu_2$-root-gerbe associated to the line bundle $\cO_{\P(W_{2n})}(-1)\boxtimes\cO_{\P(W_{2g+2-2n})}(-1) $.
\end{lemma}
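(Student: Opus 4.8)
The plan is to construct the two maps emanating from $\RH_g^n$, verify that the square commutes, and then identify $\RH_g^n$ with the fibre product $\cX := \cH_g \times_{\cD_{2g+2}} \cD_{2n,2g+2-2n}$; the concluding ``in particular'' will then fall out formally from the fact, recalled in \eqref{eqn: map in EH}, that $\cH_g \to \cD_{2g+2}$ is a $\mu_2$-root-gerbe. Throughout I keep the standing assumptions $g$ odd and $1 \leq n < (g+1)/2$.

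First I would make the left vertical map precise. Given a family $(C\to S,\eta,\beta)$ in $\RH_g^n$, the relation $\eta \cong \H^{\otimes n}\otimes\cO_C(-e)$ determines $e$ \emph{uniquely}, because $\beta_n$ is injective for $n \leq \lfloor g/2\rfloor$ by Lemma \ref{lemma: idea}(i) and our range of $n$ lies in this region; this is exactly what makes ``remembering $e$'' well defined. Writing $q\colon C\to\mathcal P = C/\iota$ for the quotient by the hyperelliptic involution (a Brauer--Severi scheme over $S$), I set $D_{2n} := q(e)$ and $D_{2g+2-2n} := q(W\smallsetminus e)$. The point to check is that in families these are relative Cartier divisors, finite and \'etale over $S$ of degrees $2n$ and $2g+2-2n$, and disjoint: \'etaleness and reducedness of the Weierstrass locus is where the hypothesis on $\mathrm{char}(k)$ is used, while disjointness is immediate from $e\cap(W\smallsetminus e)=\varnothing$. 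This yields an object of $\cD_{2n,2g+2-2n}$, and commutativity of the square is then clear, since both composites send $(C,\eta,\beta)$ to the full branch divisor $D_{2n}+D_{2g+2-2n}=q(e)+q(W\smallsetminus e)=q(W)$ of $C$ in $\cD_{2g+2}$.

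The heart of the argument is that the induced functor $\RH_g^n\to\cX$ is an equivalence, and this is the step I expect to be the main obstacle. I would build a quasi-inverse: from $(C/S,(D_{2n},D_{2g+2-2n}))\in\cX$ take $e$ to be the reduced ramification divisor of $q$ over $D_{2n}$, set $\eta:=\H^{\otimes n}\otimes\cO_C(-e)$, and observe that $\eta^{\otimes 2}=q^\ast\big(\cO_{\mathcal P}(2n)\otimes\cO_{\mathcal P}(-D_{2n})\big)$ is the pullback of a line bundle of relative degree $0$, hence admits an isomorphism $\beta\colon\eta^{\otimes 2}\to\cO_C$ fppf-locally on $S$. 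The apparent ambiguity in $\beta$ is precisely what the convention of Definition \ref{defn: Rg} eliminates: since the datum $\tau$ is \emph{not} part of a morphism, any two choices of $\beta$ differing by a unit become \emph{uniquely} isomorphic in $\RH_g^n$, so $\beta$ descends to a well-defined object of the stackification. Concretely I would verify the equivalence fibrewise over $\cD_{2n,2g+2-2n}$: both $\RH_g^n$ and $\cX$ are $\mu_2$-gerbes whose band is generated by the hyperelliptic involution $\iota$ --- on the $\RH_g^n$ side, $\iota^\ast\eta\cong\eta$ because $\H$ and $e$ are $\iota$-invariant, so $\iota$ lifts to an automorphism of the Prym pair, and the $\Gm$ of choices of $\beta$ collapses to this single class $\iota$ by the $\tau$-convention --- and the functor matches these bands. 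The delicacy here is that the computation of automorphisms must be carried out through the convention of Definition \ref{defn: Rg} rather than by a naive line-bundle count.

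Finally, granting that the square is Cartesian, the last assertion follows formally. By \eqref{eqn: map in EH} the map $\cH_g\to\cD_{2g+2}$ is the $\mu_2$-root-gerbe of $\cO_{\P(W_{g+1})}(-1)$, and the root-gerbe construction is stable under base change, so $\cX$ is the root-gerbe of the pullback of $\cO_{\P(W_{g+1})}(-1)$ along the bottom horizontal map. Using the presentation \eqref{eqn: presentation Dab}, that map is induced by the multiplication morphism $([f],[h])\mapsto[f\cdot h]$ of the two projective factors into $\P(W_{g+1})$, under which the tautological subbundle at $[fh]$ is the line spanned by $fh$, namely the image of the tensor of the lines spanned by $f$ and $h$. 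Hence the pullback of $\cO(-1)$ is the external tensor product $\cO(-1)\boxtimes\cO(-1)$ of the two tautological subbundles, which is the line bundle $\cO_{\P(W_{2n})}(-1)\boxtimes\cO_{\P(W_{2g+2-2n})}(-1)$ of the statement, and $\RH_g^n=\cX$ is its $\mu_2$-root-gerbe.
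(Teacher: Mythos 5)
Your argument is correct in outline, but it proves the Cartesianity by a genuinely different mechanism than the paper. The paper's proof is much shorter: it constructs the same map $\RH_g^n \to \cD_{2n,2g+2-2n}\times_{\cD_{2g+2}}\cH_g$ over $\cH_g$, observes it is essentially surjective, and then concludes it is an isomorphism because both sides are representable, finite and \'etale over $\cH_g$ of the \emph{same degree} $\binom{2g+2}{2n}$ (the degree of $\RH_g \to \cH_g$ being imported from \cite[Proposition 11]{CIL24}). Your route instead works over $\cD_{2n,2g+2-2n}$: you exhibit both sides as $\mu_2$-gerbes banded by the hyperelliptic involution and invoke the fact that a band-compatible morphism of gerbes is an equivalence (equivalently, you build a quasi-inverse). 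What the paper's approach buys is brevity, at the cost of leaning on an external degree count; what yours buys is self-containedness and the fact that it establishes the gerbe structure of $\RH_g^n \to \cD_{2n,2g+2-2n}$ directly rather than deducing it afterwards by base change. Your analysis of the $\beta$-ambiguity via the $\tau$-convention of Definition \ref{defn: Rg}, and your identification of the pullback of $\cO_{\P(W_{g+1})}(-1)$ under the multiplication map as the external tensor product of tautological subbundles, are both correct.

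One point in your quasi-inverse needs more care than you give it: for $g$ odd the hyperelliptic bundle $\H=q^*\cO_{\mathcal P}(1)$ need not exist globally on a family (only $\H^{\otimes 2}=q^*\omega_{\mathcal P/S}^{-1}$ does, since $\mathcal P$ is a Brauer--Severi scheme), so ``set $\eta:=\H^{\otimes n}\otimes\cO_C(-e)$'' only defines $\eta$ \'etale-locally on $S$ when $n$ is odd; two local choices differ by the pullback of a $2$-torsion line bundle from the base, so they do glue in the stackification, but this gluing of $\eta$ (not just of $\beta$) must be checked. Your fibrewise gerbe comparison sidesteps this, since local non-emptiness and local connectedness are \'etale-local statements on the base, so the gap is one of exposition rather than substance.
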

\begin{proof}
    Given an object $(C/S, \eta, \beta) \in \RH_g^n(S)$, we can associate two well-defined disjoint Weierstrass divisors, one of degree $2n$ and another of degree $2g + 2 - 2n$. These are obtained by considering the full Weierstrass divisor and identifying a subset of degree $2n$ using Lemma \ref{lemma: idea}. This construction defines the map $\RH_g^n \to \cD_{2n,2g+2-2n}$ in the statement.  

    We thus have an induced map  
    $
    \RH_g^n \to \cD_{2n,2g+2-2n} \times_{\cD_{2g+2}} \cH_g,
    $  
    over $\cH_g$, which is clearly essentially surjective. This map is an isomorphism because both stacks are representable, étale, and finite over $\cH_g$, with the same degree (see \cite[Proposition 11]{CIL24}).   
\end{proof}

In Proposition \ref{prop: Chow of root gerbes}, we explain how to derive the Chow ring of $\mu_n$-root-gerbes. Using the second presentation of $\mathcal{D}_{2n,2g+2-2n}$ in Equation \eqref{eqn: presentation Dab}, in \S\ref{sec: Chow n>1}, we compute the Chow ring of $\mathcal{D}_{2n,2g+2-2n}$ and finally put everything together, obtaining the Chow ring of $\RH_g^n$. The results are as follows.

\begin{notation}
    We denote by $\xi_{2j} \in \CH^*_{\PGL_2}(\P(W_j))$ the $\PGL_2$-equivariant first Chern class of $\cO_{\P(W_j)}(1)$.
\end{notation}

\begin{theorem}\label{thm: Chow Dab}
    Suppose $a,b>1$. Then 
    $$
    \CH^*(\cD_{2a,2b})= \frac{\Z[c_1,c_2,c_3,\xi_{2a},\xi_{2b}]}{I}
    $$
    where $I$ is the ideal generated by the following relations:
    \begin{align*}
         \bullet & \ c_1, \ 2 c_3 \ \mathrm{ coming \ from \ } B\PGL_2 \\
        \bullet & \ 2(2a-1)\xi_{2a} \\
        \bullet & \ 2\xi_{2a}^2-a(2a-2) c_2 \\
        \bullet & \ \xi_{2a}^4+c_2 \xi_{2a}^2+c_3 \xi_{2a}+3a^2(a-1)^2c_2^2 \\
        \bullet & \ 2(2b-1)\xi_{2b} \\
        \bullet & \ 2\xi_{2b}^2-b(2b-2) c_2 \\
        \bullet & \ \xi_{2b}^4+c_2 \xi_{2b}^2+c_3 \xi_{2b}+3b^2(b-1)^2 c_2^2 \\
        \bullet & \  2b \xi_{2a}+2a \xi_{2b} \\
        \bullet & \ 2 \xi_{2a} \xi_{2b} -2abc_2 \\
        \bullet & \ (2a-1)(2b-1)\xi_{2a}\xi_{2b}+b(2b-1) \xi_{2a}^2+a(2a-1) \xi_{2b}^2+ ab (a+b-1) c_2 \\
        \bullet & \ \xi_{2a} \xi_{2b}(\xi_{2a}+\xi_{2b})+c_2(b\xi_{2a}+a\xi_{2b})+c_2(b^2 \xi_{2a}+a^2\xi_{2b})-ab c_3 \\
        \bullet & \ \xi_{2a}^2 \xi_{2b}^2+c_2(b^2\xi_{2a}^2+a^2\xi_{2b}^2)-c_2(b(2b-1)\xi_{2a}^2+a(2a-1)\xi_{2b}^2)-(2a-1)(2b-1)c_2 \xi_{2a} \xi_{2b} \\
        &\ +c_3(b \xi_{2a}+a \xi_{2b})+abc_2^2-ab(a+b-1)c_2^2.
        \end{align*}        
\end{theorem}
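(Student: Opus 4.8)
The plan is to use the second presentation in~\eqref{eqn: presentation Dab}, realizing $\cD_{2a,2b}$ as the open substack $[(\P(W_a)\times\P(W_b))\setminus\underline{\Delta}/\PGL_2]$, and to compute its Chow ring by localization: start from the $\PGL_2$-equivariant Chow ring of the ambient double projective bundle and successively excise the components of $\underline{\Delta}$. The key geometric observation is that $fg$ is singular precisely when $f$ is non-squarefree, $g$ is non-squarefree, or $f$ and $g$ share a root, so that $\underline{\Delta}=\underline{\Delta}_a\cup\underline{\Delta}_b\cup R$, where $\underline{\Delta}_a=\{\mathrm{disc}(f)=0\}$, $\underline{\Delta}_b=\{\mathrm{disc}(g)=0\}$ and $R=\{\mathrm{res}(f,g)=0\}$ are all divisors. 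Since each piece has codimension one, every excision is governed by the right-exact localization sequence $\CH^{*-1}(Z)\xrightarrow{i_*}\CH^*(X)\to\CH^*(X\setminus Z)\to 0$, so the new relations produced at each stage are exactly the generators of the image ideal $\mathrm{im}(i_*)$.

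First I would record $\CH^*_{\PGL_2}(\P(W_a)\times\P(W_b))$. Using the rank-three representation $W_1$ to define $c_1,c_2,c_3$ on $B\PGL_2$ (so that $c_1=0$ and $2c_3=0$), the projective bundle formula applied twice presents this ring as $\CH^*(B\PGL_2)[\xi_{2a},\xi_{2b}]$ modulo the two Chern relations $\sum_i c_i(W_a)\,\xi_{2a}^{2a+1-i}=0$ and $\sum_i c_i(W_b)\,\xi_{2b}^{2b+1-i}=0$; this requires expressing the Chern classes of the representations $W_a,W_b$ in terms of $c_2,c_3$, which I would settle once and for all in a preliminary lemma. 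Excising $\underline{\Delta}_a$ and $\underline{\Delta}_b$ then reduces to the single-form computation of $\CH^*(\cD_{2a})$ and $\CH^*(\cD_{2b})$: the resolution $\{(f,p):p\text{ a double root of }f\}\to\underline{\Delta}_a$ fibers over the universal conic, and pushing forward its classes yields exactly the pure relations $2(2a-1)\xi_{2a}$, $2\xi_{2a}^2-a(2a-2)c_2$ and $\xi_{2a}^4+c_2\xi_{2a}^2+c_3\xi_{2a}+3a^2(a-1)^2c_2^2$ (and symmetrically in $b$), simultaneously absorbing the high-degree projective bundle relation.

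The crux is the excision of the resultant divisor $R$. Its class is $2b\,\xi_{2a}+2a\,\xi_{2b}$ (the resultant of forms of degrees $2a$ and $2b$ has bidegree $(2b,2a)$), which gives the degree-one mixed relation; but $\mathrm{im}(i_{R*})$ is strictly larger and accounts for the remaining mixed relations in degrees $2,2,3,4$. To compute it I would introduce the incidence variety $\tilde{R}=\{(f,g,p)\in\P(W_a)\times\P(W_b)\times\mathcal{C}:f(p)=g(p)=0\}$, where $\mathcal{C}$ is the universal conic; it projects to $\mathcal{C}$ as the fiberwise product of the two hyperplane sub-bundles of forms vanishing at $p$, so its Chow ring is accessible by the projective bundle formula over the known ring $\CH^*_{\PGL_2}(\mathcal{C})$. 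Pushing forward the module generators of $\CH^*(\tilde{R})$ along $\tilde{R}\to\P(W_a)\times\P(W_b)$ and invoking the projection formula then exhibits the image ideal as generated by a short explicit list, which I expect to match the stated mixed relations.

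The main obstacle is twofold. First, the resolution $\tilde{R}\to R$ is not an isomorphism over the locus where $f$ and $g$ share more than one root, so I must control this excess and relate $\mathrm{im}(i_{R*})$ honestly to the image of the map from $\tilde{R}$, checking that the relevant classes on $R$ do lift. Second, I must prove completeness, i.e. that the stated relations generate the \emph{entire} kernel and not merely a subideal. I expect to settle this by the standard bookkeeping device of exhibiting an explicit finite $\CH^*(B\PGL_2)$-module basis for the proposed quotient and matching its ranks in each degree against those forced by the chain of localization sequences; verifying that these ranks agree, together with the surjectivity built into each localization step, will force the presentation to be exact. This rank-matching is the delicate part of the argument.
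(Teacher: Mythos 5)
Your skeleton matches the paper's: the same presentation of $\cD_{2a,2b}$, the same three-component decomposition of $\uD$ (non-squarefree $f$, non-squarefree $g$, common factor), excision producing the image ideals, and the observation that the projective-bundle relations get absorbed. The first two components and the degree-one class of the resultant divisor are handled essentially as in the paper. But there is a genuine gap at the crux, namely the resultant component $R=\uD_{1,2}$. Your incidence variety $\tilde R=\{(f,g,p):f(p)=g(p)=0\}$ is \emph{not} a Chow envelope of $R$: over a non-closed field $K$ a pair $(f,g)$ may share an irreducible common factor of degree $r\geq 2$ with no $K$-rational root, so such points of $R$ admit no lift to $\tilde R$, and the pushforward from $\tilde R$ alone does not generate $\mathrm{im}(i_{R*})$ integrally. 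One is forced to use the whole tower of correspondences $M_r:\P^r\times(\P^{2a-r}\times\P^{2b-r})\to\P^{2a}\times\P^{2b}$, $(h,f,g)\mapsto(hf,hg)$, for all $r$, and the real content of the theorem is showing that the images of all the higher ${M_r}_*$ are already contained in the ideal generated by ${M_1}_*$, ${M_2}_*$ and the pushforwards from $\uD_1,\uD_2$. You flag this only as an ``excess'' to be controlled, with no strategy; in the paper this step consumes most of the argument and requires splitting into $\otimes\,\Z[1/2]$ (where one passes to $\SL_2$ and its maximal torus) and $\otimes\,\Z_{(2)}$ (where one needs $\GL_3$-counterparts, the classes $[W_{m;r,\ell}]$, a lexicographic induction on $(a,b,r)$, and, when $a$ and $b$ are both even, a restriction to a residual gerbe $BG'$ with $G'\cong\mu_2^{\times 2}$ to exclude one last potential family of extra relations).

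Your fallback for completeness --- matching ranks of an explicit $\CH^*(B\PGL_2)$-module basis against ``ranks forced by the localization sequences'' --- is not a workable substitute. The localization sequence identifies the kernel of restriction with $\mathrm{im}(i_*)$ exactly, so completeness \emph{is} the containment $\mathrm{im}(i_*)\subseteq I$; there is no independent source for the graded ranks of a ring with this much $2$- and $(2a-1)$-torsion, and the paper never argues by dimension count. To repair the proposal you would need to replace $\tilde R$ by the full envelope $\bigsqcup_r M_r$ and supply an argument that the higher ${M_r}_*$ contribute nothing new modulo the listed generators.
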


\begin{theorem}\label{thm: Chow n>1}
    Suppose $1< n < (g+1)/2$ and that $g$ is odd. Then 
    $$
    \CH^*(\RH_g^n)= \frac{\Z[c_1,c_2,c_3,t,\xi_{2n},\xi_{2g+2-2n}]}{I+(\xi_{2n}+\xi_{2g+2-2n}+2t)}
    $$
    where $I$ is the ideal generated by the relations listed in Theorem \ref{thm: Chow Dab} where we take $a=n$ and $b=g+1-n$.
\end{theorem}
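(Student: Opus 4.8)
The plan is to realize $\RH_g^n$ as a $\mu_2$-root gerbe over $\cD_{2n,2g+2-2n}$ and then combine the general Chow-ring formula for root gerbes (Proposition \ref{prop: Chow of root gerbes}) with the explicit computation of $\CH^*(\cD_{2n,2g+2-2n})$ provided by Theorem \ref{thm: Chow Dab}. The essential geometric input is already isolated in Lemma \ref{lemma: cartesian diagram for RH}: the structure map $\RH_g^n \to \cD_{2n,2g+2-2n}$ is precisely the $\mu_2$-root gerbe associated with the line bundle $L = \cO_{\P(W_{2n})}(-1)\boxtimes\cO_{\P(W_{2g+2-2n})}(-1)$. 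Thus no further analysis of the geometry of $\RH_g^n$ is needed; everything is reduced to an algebraic manipulation of Chow rings.

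First I would check that the hypotheses of Theorem \ref{thm: Chow Dab} are satisfied. Setting $a=n$ and $b=g+1-n$, the assumption $1<n<(g+1)/2$ gives $a=n>1$ and $b=g+1-n>n>1$, so both indices exceed $1$ and Theorem \ref{thm: Chow Dab} applies verbatim, yielding $\CH^*(\cD_{2n,2g+2-2n}) = \Z[c_1,c_2,c_3,\xi_{2n},\xi_{2g+2-2n}]/I$ with $I$ the ideal in that statement.

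Next I would invoke Proposition \ref{prop: Chow of root gerbes}. For a $\mu_2$-root gerbe $\pi\colon \cG \to \cX$ associated with a line bundle $L$, it expresses $\CH^*(\cG)$ as $\CH^*(\cX)[t]/(2t-\pi^*c_1(L))$, where $t=c_1(M)$ is the first Chern class of the universal square root $M$ of $L$; under the presentation of $\RH_g^n$ this class is exactly $c_1(\chi)$. It then remains to compute $c_1(L)$ in terms of the generators: since $\xi_{2n}=c_1(\cO_{\P(W_n)}(1))$ and $\xi_{2g+2-2n}=c_1(\cO_{\P(W_{g+1-n})}(1))$, we obtain $c_1(L)=-\xi_{2n}-\xi_{2g+2-2n}$, so the single new relation reads $2t+\xi_{2n}+\xi_{2g+2-2n}=0$. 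Substituting the presentation of $\CH^*(\cD_{2n,2g+2-2n})$ produces exactly the claimed ring.

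Granting the two cited results, the argument is almost formal; the only points requiring genuine care are (i) correctly matching the universal-root class $t$ of the gerbe with the character class $c_1(\chi)$ in the presentation, and fixing its sign and normalization so that the relation appears as $\xi_{2n}+\xi_{2g+2-2n}+2t$ rather than with the opposite sign, and (ii) confirming that the root-gerbe formula introduces \emph{no} relations on $t$ beyond $2t=\pi^*c_1(L)$—in particular no spurious torsion—which is where I would be most careful, checking it against the degenerate model $B\mu_2$ (where $c_1(L)=0$ forces $2t=0$, consistent with $\CH^*(B\mu_2)=\Z[t]/(2t)$). All the genuine difficulty of this section is concentrated in Theorem \ref{thm: Chow Dab}, which we are allowed to assume; once it is in hand, Theorem \ref{thm: Chow n>1} follows immediately by the above root-gerbe argument.
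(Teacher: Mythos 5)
Your proposal is correct and follows exactly the paper's own (very brief) proof: combine Lemma \ref{lemma: cartesian diagram for RH}, Theorem \ref{thm: Chow Dab} with $a=n$, $b=g+1-n$ (both $>1$ under the hypothesis), and Proposition \ref{prop: Chow of root gerbes} applied to $\cL=\cO_{\P(W_{n})}(-1)\boxtimes\cO_{\P(W_{g+1-n})}(-1)$, yielding the single extra relation $\xi_{2n}+\xi_{2g+2-2n}+2t$. Your sign and normalization checks are sound and consistent with the paper.
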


The interpretation of these classes in geometric terms is presentated in \S\ref{subsec: interpretation generators n>1}

The computation of $\CH^*(\cD_{2a,2b})$ differs slightly when $a$, $b$, or both are equal to $1$, and as a consequence, the computation of $\CH^*(\RH_g^1)$ is also affected. In any case, it is possible to derive the Chow ring of $\RH_g^1$ for odd $g$ using this strategy, and the result matches that stated in Theorem \ref{thm: Chow for g odd n=1} after the substitutions $\xi_2=\gamma$ and $\xi_{2g}=2t-\gamma$. See also Remark \ref{rmk: a=1 case} below.

Finally, from Lemma \ref{lemma: cartesian diagram for RH}, we also obtain a presentation of $\RH_g^n$ as a quotient stack.

\begin{theorem}\label{thm: presentation n>1}
    Suppose $1 \leq n < (g+1)/2$ and that $g$ is odd. Then there is an isomorphism of stacks
    $$
    \RH_g^n \cong \bigg[ \frac{(\chi^{(1)})^{-1} \otimes W_{n} \times \chi^{(1)} \otimes (\chi^{(2)})^{-2} \otimes W_{g+1-n} \setminus \Delta}{\mathbb{G}_m^{(1)} \times \mathbb{G}_m^{(2)} \times \mathrm{PGL}_2} \bigg]
    $$
    where the convention with superscripts is as in Equation \eqref{eqn: presentation Dab}.
\end{theorem}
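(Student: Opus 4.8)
The goal is to establish Theorem \ref{thm: presentation n>1}, an explicit quotient-stack presentation of $\RH_g^n$. The plan is to extract it directly from the Cartesian diagram in Lemma \ref{lemma: cartesian diagram for RH} together with the root-gerbe description stated there, bypassing any fresh geometric analysis. Concretely, Lemma \ref{lemma: cartesian diagram for RH} tells us that $\RH_g^n \to \cD_{2n,2g+2-2n}$ is the $\mu_2$-root-gerbe associated to the line bundle $\cO_{\P(W_{2n})}(-1)\boxtimes\cO_{\P(W_{2g+2-2n})}(-1)$, and Equation \eqref{eqn: presentation Dab} gives the presentation
$$
\cD_{2n,2g+2-2n} \cong \bigg[ \frac{(\chi^{(1)})^{-1} \otimes W_{n} \times (\chi^{(2)})^{-1} \otimes W_{g+1-n} \setminus \Delta}{\mathbb{G}_m^{(1)} \times \mathbb{G}_m^{(2)} \times \mathrm{PGL}_2} \bigg].
$$
So the entire content of the theorem is to identify the $\mu_2$-root-gerbe over this quotient stack, for the specified line bundle, as the quotient stack appearing in the statement.

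The key mechanism I would use is the standard ``root-stack via $\Gm$-pullback'' principle, which is exactly the one invoked in Equation \eqref{eqn: map in EH}: if $[X/G] \to B\Gm$ classifies a line bundle $\cL$, then the $\mu_2$-root-gerbe (or root-stack) that adjoins a square root of $\cL$ is the fiber product $[X/G]\times_{B\Gm, (t\mapsto t^2)} B\Gm$. First I would pin down which $\Gm$-quotient produces the line bundle $\cO_{\P(W_{2n})}(-1)\boxtimes\cO_{\P(W_{2g+2-2n})}(-1)$ on $\cD_{2n,2g+2-2n}$. On the presentation above, the two characters $\chi^{(1)}$ and $\chi^{(2)}$ of $\Gm^{(1)}$ and $\Gm^{(2)}$ cut out the two tautological bundles $\cO_{\P(W_n)}(-1)$ and $\cO_{\P(W_{g+1-n})}(-1)$; I would verify that the combined character $\chi^{(1)}\otimes\chi^{(2)}$ (i.e.\ the anti-diagonal weight) corresponds to the external tensor product, so that the line bundle to be square-rooted is classified by a map to $B\Gm$ built from this combined character.

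Next I would form the root-gerbe as the fiber product against $t\mapsto t^2$. Introducing a new copy $\Gm^{(2)}$ as the square-root parameter and absorbing the old combined $\Gm$ into it produces a group $\Gm^{(1)}\times\Gm^{(2)}\times\PGL_2$ acting on the same open locus $W_n\times W_{g+1-n}\smallsetminus\Delta$, but with the weights redistributed: the new $\chi^{(2)}$ enters with a squared weight on the second factor because of the doubling map $t\mapsto t^2$, while the first factor retains $(\chi^{(1)})^{-1}$. Matching weights carefully should yield exactly
$$
(\chi^{(1)})^{-1} \otimes W_{n} \times \chi^{(1)} \otimes (\chi^{(2)})^{-2} \otimes W_{g+1-n},
$$
which is the representation in the statement. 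The complement $\Delta$ is unchanged since the doubling map is an isomorphism on the underlying geometry and only rescales the $\Gm$-action. I would then record that this change of presentation is an isomorphism of stacks, completing the argument.

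The main obstacle, and the step deserving the most care, is the bookkeeping of the $\Gm$-weights under the root-stack construction: one must track precisely how the square-root map $t\mapsto t^2$ interacts with the two existing scaling characters and confirm that the resulting weights are $(\chi^{(1)})^{-1}$ on $W_n$ and $\chi^{(1)}\otimes(\chi^{(2)})^{-2}$ on $W_{g+1-n}$, rather than some other distribution differing by a reparametrization of the torus. This is a purely combinatorial check, but sign and exponent errors are easy to make; I would double-check it against the $n=1$ case, where the substitutions $\xi_2=\gamma$ and $\xi_{2g}=2t-\gamma$ mentioned after Theorem \ref{thm: Chow n>1} must be consistent with the resulting presentation, and against Theorem \ref{thm: presentation g odd n=1}, whose representation $W_g\otimes\chi^{\otimes-2}\otimes\Gamma$ should emerge as the specialization.
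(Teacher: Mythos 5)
Your proposal is correct and follows essentially the same route as the paper: both realize $\RH_g^n \to \cD_{2n,2g+2-2n}$ as the base change along the squaring map $B\Gm \to B\Gm$ of the morphism classifying $\cO_{\P(W_{n})}(-1)\boxtimes\cO_{\P(W_{g+1-n})}(-1)$, and then recompute the torus weights. The one bookkeeping step you flag as delicate is made precise in the paper by the explicit Cartesian square of classifying stacks $B(\Gm\times\Gm)\to B(\Gm\times\Gm)$ induced by $(\lambda,t)\mapsto(\lambda,\lambda^{-1}t^{2})$, which yields exactly the weights $(\chi^{(1)})^{-1}$ on $W_n$ and $\chi^{(1)}\otimes(\chi^{(2)})^{-2}$ on $W_{g+1-n}$.
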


The proof is presented in \S\ref{sec: presentation n>1}.

\subsection{Acknowledgments}
We would like to thank Dan Edidin, Andrea Di Lorenzo, Eric Larson, Hannah Larson, Aitor Iribar López, Michele Pernice, Bernardo Tarini, and Angelo Vistoli for several fruitful discussions. We are especially grateful to Dan Edidin and Andrea Di Lorenzo for discussions about their previous work. We also thank Jarod Alper, Martin Bishop, Felix Janda, and Minseon Shin for discussions regarding the Chow ring of $\mu_n$-root gerbes in \S\ref{sec: Chow root gerbes}. Finally, we thank Bernardo Tarini again for pointing out Example~\ref{ex: characteristic}. A.C. acknowledges support from SNF grant P500PT-222363.

\section{Presentations}\label{sec:pres}

\subsection{Presentation of $\RH_g^1$}\label{subsec:presn=1}

In this section, we prove Theorems \ref{thm: presentation g even n=1} and \ref{thm: presentation g odd n=1}. The strategy is similar to that used in \cite[Theorem 2]{CIL24} for the case $g=2$. We define the map that gives the isomorphism. The verification that this map is indeed an isomorphism is analogous to the argument in \cite[Sections 2.3 and 2.4]{CIL24} and is therefore not repeated here.

The first step is realizing that the two groups $\GI $ and $\GII$ naturally arise in the same way as follows.

Let
$$
G \subseteq \frac{\GL_2}{\mu_{g+1}}= \underline{\mathrm{Aut}}(\P^1,\cO(g+1))
$$
be the subgroup of automorphisms of $(\P^1,\cO(g+1))$ fixing the set $\{ 0, \infty \} \subseteq \P^1$. 

\begin{lemma}\label{lemma: group G}
    The group $G$ is isomorphic to $\GI$ when $g$ is even and to $\GII$ when $g$ is odd.
\end{lemma}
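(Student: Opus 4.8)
The plan is to compute the group $G$ explicitly by unwinding the definition $G \subseteq \GL_2/\mu_{g+1}$, where $\GL_2/\mu_{g+1} = \underline{\mathrm{Aut}}(\P^1, \cO(g+1))$, and then identifying the subgroup that fixes the set $\{0,\infty\} \subseteq \P^1$. First I would recall that $\GL_2$ acts on $\P^1$ through its quotient $\PGL_2 = \GL_2/\Gm$, and that the automorphisms of $\P^1$ preserving the set $\{0,\infty\}$ form the subgroup of $\PGL_2$ coming from matrices that either preserve or swap the two coordinate lines $k(1,0)$ and $k(0,1)$ in $k^2$. Let $N \subseteq \GL_2$ denote this normalizer of the diagonal torus, namely the matrices preserving the set of lines $\{k(1,0), k(0,1)\}$; this is exactly the group $\GI$ from the notation preceding Theorem \ref{thm: presentation g even n=1}. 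Then $G$ is the image of $N$ under the quotient map $\GL_2 \to \GL_2/\mu_{g+1}$, so $G \cong N/(N \cap \mu_{g+1}) = N/\mu_{g+1} = \GI/\mu_{g+1}$.

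Next I would carry out the identification of $\GI/\mu_{g+1}$ in the two parity cases. Here $\mu_{g+1}$ sits inside $\GL_2$ as scalar matrices $\lambda I$ with $\lambda^{g+1} = 1$, which lie in the diagonal torus and hence in $\GI$. The structure of $\GI$ is $(\Gm \times \Gm) \rtimes \mu_2$, where the two copies of $\Gm$ are the diagonal entries and $\mu_2$ is generated by the swap $\begin{pmatrix} 0 & 1 \\ 1 & 0 \end{pmatrix}$; the scalar $\mu_{g+1}$ sits diagonally inside $\Gm \times \Gm$. The whole computation then reduces to understanding the quotient of the diagonal torus $\Gm \times \Gm$ by the diagonally-embedded $\mu_{g+1}$, which is a purely toric calculation on cocharacter lattices. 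Explicitly, writing the cocharacter lattice of $\Gm \times \Gm$ as $\Z^2$, quotienting by $\mu_{g+1}$ corresponds to saturating the sublattice generated by $\Z^2$ together with $\frac{1}{g+1}(1,1)$; the resulting torus and the way the residual $\mu_2$-swap acts on it is where the parity of $g$ enters. I expect that for $g$ even one recovers $(\Gm \times \Gm) \rtimes \mu_2 = \GI$ again (the class of $\det$ and a complementary coordinate give a clean splitting), while for $g$ odd the extra $2$-torsion from $g+1$ being even forces the semidirect product structure to reorganize as $\Gm \times (\Gm \rtimes \mu_2) = \GII$.

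The cleanest way to make the second paragraph rigorous, and the route I would actually take, is to exhibit explicit isomorphisms rather than argue abstractly on lattices. For $g$ even, I would check directly that the quotient map $\GI \to G$ is already an isomorphism: since $g+1$ is odd, $\mu_{g+1} \cap \SL_2 = \{1\}$ interacts with the structure so that a suitable change of coordinates (for instance, using the pair $(\det, \text{first diagonal entry})$ or a root of $\det$) splits off the scalar $\mu_{g+1}$ and identifies $G \cong \GI$. For $g$ odd, $g+1$ is even, and I would produce the isomorphism $G \cong \GII = \Gm \times (\Gm \rtimes \mu_2)$ by matching the first $\Gm$ factor with the determinant character (landing in the $\Gm$ of $\GII$, on which $\mu_2$ acts trivially) and the $\Gm \rtimes \mu_2$ factor with the residual $\PGL_2$-part coming from the torus normalizer. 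This is consistent with the presentations in Theorems \ref{thm: presentation g even n=1} and \ref{thm: presentation g odd n=1}, where $\GI$-representations are built from $\det(V)$ and $V$ while the $\GII$-case is phrased in terms of $\chi$ (the $\Gm$-character) and $W_g$ (a $\PGL_2$-representation). The main obstacle I anticipate is bookkeeping the semidirect-product action carefully through the quotient: one must verify that the $\mu_2$-swap descends compatibly and that the claimed splitting is genuinely a group isomorphism and not merely an isomorphism of underlying varieties, so the key step is to track how conjugation by the swap acts on the quotient torus and confirm that the parity of $g+1$ produces exactly the two stated semidirect-product structures.
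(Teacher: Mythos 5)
Your overall strategy is correct and, once unwound, lands on essentially the same isomorphisms as the paper, but you get there by a different (more self-contained) route. The paper's proof is a one-liner: it quotes \cite[Proposition 4.4]{AV04}, which gives $\GL_2/\mu_{g+1}\cong\GL_2$ via $A\mapsto\det(A)^{g/2}A$ for $g$ even and $\GL_2/\mu_{g+1}\cong\Gm\times\PGL_2$ via $A\mapsto(\det(A)^{(g+1)/2},[A])$ for $g$ odd, and simply restricts these to $G$. Your plan instead first identifies $G$ with $N/\mu_{g+1}$, where $N=\GI$ is the full preimage in $\GL_2$ of the stabilizer of $\{0,\infty\}$ in $\PGL_2$ (correct, since $N$ already contains all scalars), and then computes this quotient directly. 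The cocharacter-lattice argument you sketch does work and is where the parity genuinely enters: on the enlarged lattice $\Z^2+\Z\cdot\tfrac{1}{g+1}(1,1)$ the swap involution is $\GL_2(\Z)$-conjugate to the standard swap exactly when $g+1$ is odd, and to $\mathrm{diag}(1,-1)$ when $g+1$ is even, which together with the splitting of the extension by $\mu_2$ (the image of the swap matrix still has order $2$) yields $\GI$ and $\GII$ respectively. What your approach buys is independence from \cite{AV04}; what it costs is that the two computations you defer (``I expect'', ``I would check'') are precisely the content of the lemma, so they must actually be carried out. One sentence should be corrected: the quotient map $\GI\to G=\GI/\mu_{g+1}$ is \emph{not} an isomorphism --- it is a degree-$(g+1)$ isogeny with kernel $\mu_{g+1}$. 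What is true, and what your parenthetical about $\det$ is gesturing at, is that $\GI/\mu_{g+1}$ is \emph{abstractly} isomorphic to $\GI$ when $g$ is even, via the well-defined map $A\mapsto\det(A)^{g/2}A$ (well-defined because $\zeta^{g}\det(A)^{g/2}\cdot\zeta A=\zeta^{g+1}\det(A)^{g/2}A$ for $\zeta\in\mu_{g+1}$); writing this map down, together with its odd-genus analogue $A\mapsto(\det(A)^{(g+1)/2},[A])$, is the cleanest way to close your argument and is exactly what the paper does.
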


\begin{proof}

The required isomorphism is obtained by restricting to $G$ the isomorphisms in \cite[Proposition 4.4]{AV04} 

\begin{equation}\label{eqn: iso of groups}
 \frac{\GL_2}{\mu_{g+1}} \xrightarrow{\sim} 
\begin{cases}
 \GL_2 &\text{when $g$ is even},\\
 \Gm \times \PGL_2  &\text{when $g$ is odd }\, 
\end{cases} 
\end{equation}
given by 
$$
 A \mapsto 
 \begin{cases}
 \det(A)^{\frac{g}{2}} A &\text{when $g$ is even},\\
 (\det(A)^{\frac{g+1}{2}},[A])  &\text{when $g$ is odd. }\, 
\end{cases} 
$$
\end{proof}

Denote by $X$ the affine space of homogeneous polynomials of degree $2g$ and by $\Delta \subseteq X$ the subscheme of those polynomials having a root at $0$ or $\infty$, or a double root. Depending on the parity of $g$, the group $G$ acts on $X$ via the isomorphism in Lemma \ref{lemma: group G} and the actions described in Theorems \ref{thm: presentation g even n=1} and \ref{thm: presentation g odd n=1} respectively. Let $\mathcal{X}^{\mathrm{pre}}=[ (X \smallsetminus \Delta) /G ]^{\mathrm{pre}}$ be the quotient prestack. There is a natural map
\begin{equation}\label{eqn: def morphism}
\begin{tikzcd}
    \alpha^{\mathrm{pre}}: \mathcal{X}^{\mathrm{pre}} \arrow[r] & \RH_g^1
\end{tikzcd}
\end{equation}
defined as follows. 

First we define it on objects. Given a morphism $F:S \to X \smallsetminus \Delta$, we map $F$ to the triple $(C/S,\eta,\beta)$ where 
\begin{equation}\label{eqn: def C}
C = \underline{\mathrm{Spec}}_{\mathbb{P}^1_S}(\mathcal{O}_{\mathbb{P}^1_S} \oplus \mathcal{O}_{\mathbb{P}^1_S}(-g-1)),
\end{equation}
with the $\mathcal{O}_{\mathbb{P}^1_S}$-algebra structure on $\mathcal{O}_{\mathbb{P}^1_S} \oplus \mathcal{O}_{\mathbb{P}^1_S}(-g-1)$ given by 
\[
\begin{tikzcd}
    \mathcal{O}_{\mathbb{P}^1_S}(-g-1) \otimes \mathcal{O}_{\mathbb{P}^1_S}(-g-1) \arrow[r,"XY.F"] & \mathcal{O}_{\mathbb{P}^1_S};
\end{tikzcd}
\]
here $XY$ is the homogenous monic polynomial of degree $2$ with zeros $0$ and $\infty$. The curve $C$ comes with a map
\begin{equation}\label{eqn: map q}
\begin{tikzcd}
    q: C\arrow[r] & \P_S^1,
\end{tikzcd}
\end{equation}
which admits two sections $\sigma_0,\sigma_{\infty}: S \to C$, corresponding to the $0$ and $\infty$ section of $\mathbb{P}^1_S \to S$. We set 
\begin{equation}\label{eqn: def eta}
    \eta= q^* \O_{\P_S}(1) \otimes \O_C(-\sigma_0-\sigma_\infty)
\end{equation}
and
\begin{equation}\label{eqn: def trivialization}
\begin{tikzcd}
    \eta^{\otimes 2} \arrow[r,"\sim"] & \O_C
\end{tikzcd}
\end{equation}
is given by $XY \in H^0(C,q^* (\O_{\P^1_S}(2) \otimes \O_C(-\sigma_0-\sigma_\infty)))$.

Now, we define the morphism \eqref{eqn: def morphism} at the level of morphisms. A morphism 
\[
\begin{tikzcd}
    (F: S \to X \smallsetminus \Delta) \arrow[r] & (F': S' \to X \smallsetminus \Delta)
\end{tikzcd}
\]
in $\mathcal{X}^{\mathrm{pre}}$ is by definition the data of a morphisms $f: S \to S'$ and $h:S \to G$ such that $F' \circ f = h \cdot F$. 

Let $(C/S, \eta,\beta)$ and $(C'/S', \eta',\beta')$ be the objects in $\mathcal{X}^{\mathrm{pre}}$ associated to $F$ and $F'$ respectively. Call  
\[
\begin{tikzcd}
    \phi= f \times h: \P_S^1 \arrow[r] & \P_{S'}^1.
\end{tikzcd}
\]
Here, we view $h$ as an automorphism of $\P^1$ via the isomorphism \eqref{eqn: iso of groups}. We have a morphism
\begin{equation}\label{eqn: def map on morphism part 1}
\begin{tikzcd}
    \varphi: C \arrow[r] & C'
\end{tikzcd}
\end{equation}
given by
$
\varphi^{\#}: \phi^* \O_{\P_{S'}^1}(-g-1) \to \O_{\P_{S}^1}(-g-1)
$
defined at $(s,\ell) \in \P_S^1=S \times \P^1$ by 
$$
 (\varphi^{\#}(\phi^*(v))) (x,\ell)= h(s)^{-1}. \ v (f(s),g(s). \ell)
$$

Again, $G$ acts on $(\P^1, \O(-g-1))$ via the isomorphism in \eqref{eqn: iso of groups}. Then, the morphism
\[
\begin{tikzcd}
    \phi^*(\O_{\P_{S'}^1} \oplus \O_{\P_{S'}^1}(-g-1)) \arrow[r] & \O_{\P_{S}^1} \oplus \O_{\P_{S}^1}(-g-1)
\end{tikzcd}
\]
induced by $\varphi^{\#}$ is an homomorphism of $\O_{\P_S^1}$-algebras. This depends on the action of $G$ on $X$ and reduces to the following two statements:

\begin{itemize}
    \item when $g$ is even, an element $A= \begin{pmatrix}
    a & 0 \\
    0 & b 
    \end{pmatrix}$ or $A=
    \begin{pmatrix}
    0 & a \\
    b & 0 
    \end{pmatrix}\in \GI$ corresponds via the isomorphism \eqref{eqn: iso of groups} to $\det (A)^{-g/(2g+2)}A \in\GL_2/\mu_{g+1}$ which acts on the space of homogeneous polynomials of degree $2g+2$ by precomposition with its inverse. We have
    \begin{equation}\label{eqn: map to Hg g even}
    \bigg( XY f(X,Y) \bigg) \circ \bigg( \det (A)^{-g/(2g+2)} A  \bigg) ^{-1} = XY \bigg( A \cdot f(X,Y) \bigg);
    \end{equation}
    \item when $g$ is odd, an element $(t,[A]) \in \GII$ corresponds via the isomorphism \eqref{eqn: iso of groups} to 
    $ t^{1/(g+1)} \det(A)^{1/2} A \in \GL_2/ \mu_{g+1}$ which again acts on the space of homogeneous polynomials of degree $2g+2$ by precomposition with its inverse. We have 
    \begin{equation}\label{eqn: map to Hg g odd}
    \bigg( XY f(X,Y) \bigg) \circ \bigg( t^{1/(g+1)} \det(A)^{-1/2} A \bigg) ^{-1} = XY \bigg( (t,[A]) \cdot f(X,Y) \bigg)
    \end{equation}
\end{itemize}

To conclude the definition of $\alpha^{\mathrm{pre}}$ on morphisms we need to provide, étale locally on $S$, a morphism $\tau$ between $\varphi^* \eta'$ and $\eta$ making the diagram \eqref{eqn: diagram of sheaves} to commute. This is done as in \cite[Lemma 23]{CIL24}.

The stackification of the map \eqref{eqn: def morphism} is the required isomorphism $\mathcal{X} \xrightarrow{\sim} \RH_g^1$. We refer to \cite[Section 2.4.]{CIL24} for the verification of this statement.

\subsection{Presentation of $\RH_g^n$ for $1<n<(g+1)/2$ and $g$ odd}\label{sec: presentation n>1}

In this section, we prove Theorem \ref{thm: presentation n>1}. It is worth noting that this result will not be used elsewhere in this work.

It follows from Equations \eqref{eqn: map in EH} and \eqref{eqn: presentation Dab} and Lemma \ref{lemma: cartesian diagram for RH} that there is a cartesian diagram

\[
    \begin{tikzcd}
    \RH_g^n \arrow[r]\arrow[d] \arrow[dr, phantom, "\square"] & \cH_g = \bigg[ \frac{\chi^{ \otimes -2} \otimes W_{g+1} \smallsetminus \Delta}{\Gm \times \PGL_2} \bigg] \arrow[r]\arrow[d] \arrow[dr, phantom, "\square"] & B \Gm \arrow[d]\\
    \mathcal{D}_{2n,2g+2-2n} = \bigg[ \frac{(\chi^{(1)})^{-1} \otimes W_{2n} \times (\chi^{(2)})^{-1} \otimes W_{2g+2-2n} \setminus \Delta}{\mathbb{G}_m^{(1)} \times \mathbb{G}_m^{(2)} \times \mathrm{PGL}_2} \bigg] \arrow[r] & \mathcal{D}_{2g+2} = \bigg[ \frac{\chi^{\otimes -1} \otimes W_{g+1} \setminus \Delta}{\mathbb{G}_m \times \mathrm{PGL}_2} \bigg] \arrow[r] & B\Gm
    \end{tikzcd}
\]
where the rightmost map is induced by the squaring map $\Gm \to \Gm$. 

\begin{lemma} 
    The following diagram
    \[
    \begin{tikzcd}
    B(\Gm \times \Gm) \arrow[r,"{(\lambda,t) \mapsto t}"'] \arrow[d, "{(\lambda,t) \mapsto (\lambda, \lambda^{-1}t^2)}"'] 
    & B\Gm \arrow[d, "{t \mapsto t^2}"'] \\
    B(\Gm \times \Gm) \arrow[r, "{(\lambda,t) \mapsto \lambda t}"'] 
    & B\Gm
    \end{tikzcd}
    \]
    is Cartesian.
\end{lemma}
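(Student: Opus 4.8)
The claim is that the top-left corner $B(\Gm\times\Gm)$ is the $2$-fibre product of the top-right $B\Gm$ and the bottom-left $B(\Gm\times\Gm)$ over the bottom-right $B\Gm$, with the indicated structure maps. Since every group here is a torus, hence a special group, the plan is to argue directly with line bundles: for a torus $T$ the stack $BT$ is the fibred category of $T$-torsors, so $B\Gm$ classifies line bundles, $B(\Gm\times\Gm)$ classifies ordered pairs of line bundles, and pushforward of torsors along a character becomes the corresponding monomial in tensor powers. Concretely, $t\mapsto t^2$ sends $\mathcal L$ to $\mathcal L^{\otimes 2}$; the multiplication $(\lambda,t)\mapsto\lambda t$ sends $(\mathcal M,\mathcal N)$ to $\mathcal M\otimes\mathcal N$; the projection $(\lambda,t)\mapsto t$ sends $(\mathcal A,\mathcal B)$ to $\mathcal B$; and $(\lambda,t)\mapsto(\lambda,\lambda^{-1}t^2)$ sends $(\mathcal A,\mathcal B)$ to $(\mathcal A,\mathcal A^{-1}\otimes\mathcal B^{\otimes 2})$. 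The two composites around the square then agree up to the canonical isomorphism $\mathcal B^{\otimes 2}\cong\mathcal A\otimes(\mathcal A^{-1}\otimes\mathcal B^{\otimes 2})$, so the square $2$-commutes and the universal property produces a comparison functor $\Phi$ out of $B(\Gm\times\Gm)$.

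Next I would describe the $2$-fibre product $\mathcal P$ explicitly: an object over a scheme $S$ is a triple $(\mathcal L,(\mathcal M,\mathcal N),\sigma)$ with $\mathcal L$ a line bundle, $(\mathcal M,\mathcal N)$ a pair of line bundles, and an isomorphism $\sigma\colon\mathcal L^{\otimes 2}\xrightarrow{\sim}\mathcal M\otimes\mathcal N$; morphisms are compatible triples of isomorphisms. In these terms the comparison functor reads $\Phi(\mathcal A,\mathcal B)=(\mathcal B,(\mathcal A,\mathcal A^{-1}\otimes\mathcal B^{\otimes 2}),\sigma_{\mathrm{can}})$, and I would write down the quasi-inverse $\Psi(\mathcal L,(\mathcal M,\mathcal N),\sigma)=(\mathcal M,\mathcal L)$. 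One composite $\Psi\circ\Phi$ is the identity on the nose; for $\Phi\circ\Psi$ the key point is that $\sigma$ recovers $\mathcal N$ canonically as $\mathcal M^{-1}\otimes\mathcal L^{\otimes 2}$, and this supplies the required natural isomorphism to the identity. Checking that $\Phi$ and $\Psi$ are mutually inverse on morphisms is the same bookkeeping, so $\Phi$ is an equivalence compatible with the two projections and the square is Cartesian.

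A slicker, essentially equivalent route is purely group-theoretic. For homomorphisms $u\colon H\to L$ and $v\colon K\to L$ of affine algebraic groups one has $BH\times_{BL}BK\cong[L/(H\times K)]$ for the action $(h,k)\cdot l=u(h)\,l\,v(k)^{-1}$. Here $H=\Gm$ with $u(s)=s^2$, $K=\Gm\times\Gm$ with $v(\lambda,t)=\lambda t$, and $L=\Gm$, so the difference homomorphism $(s,\lambda,t)\mapsto s^2\lambda^{-1}t^{-1}$ is a surjection of tori, hence faithfully flat. Thus the action is transitive, giving $[L/(H\times K)]\cong B(\mathrm{Stab})$ with stabiliser the fibre product $H\times_L K=\{(s,\lambda,t):s^2=\lambda t\}\cong\Gm\times\Gm$, via $(s,\lambda)\mapsto(s,\lambda,s^2\lambda^{-1})$.

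In both approaches the only genuine work is bookkeeping: one must verify that, under the identification $\mathrm{Stab}\cong\Gm\times\Gm$ above (equivalently, under $\Phi$), the two projections match the stated homomorphisms $(\lambda,t)\mapsto t$ and $(\lambda,t)\mapsto(\lambda,\lambda^{-1}t^2)$. Setting $(\lambda,t)=(\lambda,s)$ this is immediate, and it is the only place where I would expect a possible index or sign slip (the global line-bundle-versus-dual convention is irrelevant, since it flips all four maps consistently). Conceptually there is no obstacle: the statement is simply the assertion that $1\to\Gm\times\Gm\to\Gm\times(\Gm\times\Gm)\to\Gm\to 1$ is a short exact sequence of tori with the displayed kernel, from which the Cartesian property of the classifying stacks follows.
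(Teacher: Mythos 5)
Your proposal is correct, and your second (group-theoretic) route is exactly the paper's argument: the paper's entire proof is the single sentence ``The corresponding diagram of groups is cartesian.'' What you add, and what the paper leaves implicit, is the justification that cartesianness at the level of groups actually yields cartesianness of the classifying stacks. This implication is not formal: for instance, the diagram of groups with $\mu_2\hookrightarrow\Gm$ and the trivial group mapping to $\Gm$ is cartesian (the intersection is trivial), yet $B\mu_2\times_{B\Gm}\mathrm{Spec}(k)\cong\Gm/\mu_2\cong\Gm$ is not the classifying stack of the trivial group. The missing hypothesis is precisely the one you verify: in the identification $BH\times_{BL}BK\cong[L/(H\times K)]$, the difference homomorphism $(s,\lambda,t)\mapsto s^2\lambda^{-1}t^{-1}$ must be surjective (here it is a split surjection of tori, hence faithfully flat), so that the action on $L$ is transitive and the fibre product is $B(H\times_L K)$ with $H\times_L K=\{(s,\lambda,t):s^2=\lambda t\}\cong\Gm\times\Gm$, after which matching the two projections with the maps $(\lambda,t)\mapsto t$ and $(\lambda,t)\mapsto(\lambda,\lambda^{-1}t^2)$ is the bookkeeping you describe. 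Your first route, via the moduli description of $B\Gm$ and $B(\Gm\times\Gm)$ as classifying line bundles and pairs of line bundles, with the explicit equivalence $\Phi(\mathcal A,\mathcal B)=(\mathcal B,(\mathcal A,\mathcal A^{-1}\otimes\mathcal B^{\otimes 2}),\sigma_{\mathrm{can}})$ and quasi-inverse $\Psi$, is a genuinely different and entirely elementary verification that does not appear in the paper; it buys a self-contained check with no appeal to quotient-stack generalities, at the cost of some $2$-categorical bookkeeping on morphisms, which you handle correctly. In short: your proof is complete and, if anything, more careful than the paper's.
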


\begin{proof}
    The corresponding diagram of groups is cartesian.
\end{proof}

\begin{proof}[Proof of Theorem \ref{thm: presentation n>1}]
    Since the map $\cD_{2n,2g+2-2n} \to B \Gm$ factors through $B(\Gm \times \Gm) \to B\Gm$ $(\lambda,t) \mapsto \lambda t$, the map $\RH_g^1 \to \cD_{2n,2g+2-2n}$ is the base change of the map $B(\Gm \times \Gm) \to B(\Gm \times \Gm) $ in the above lemma. Theorem \ref{thm: presentation n>1} follows.
\end{proof}

\section{Preliminary computational lemmas}\label{sec: computational Lemmas}

This section presents several computational lemmas that will be useful later. Specifically, it includes results on the relationship between $\mathrm{GL}_N$-equivariant Chow rings and the associated torus equivariant Chow rings, the Chow rings of $\mu_n$-root gerbes, and computations of the Chow rings of the classifying stacks $BG$ and certain $G$-spaces for the groups $G$ appearing in our presentations. 

For a first reading, the results (though not the definitions or notation) in this section can (and perhaps should) be skipped. Readers may proceed to the subsequent sections and refer back here as needed.  

\subsection{Recovering $\mathrm{GL}_n$-equivariant classes from $\mathrm{T}$-equivariant computations}
    Recall that a linear group $G$ is called special if every $G$-torsor is Zariski-locally trivial. The most common examples are tori and $\mathrm{GL}_n$ for every $n$. One key feature is that the $G$-equivariant Chow ring of a smooth $G$-algebraic space $X$ is a direct summand (as a $\mathrm{CH}_G^*(X)$-module) of the maximal torus equivariant Chow ring of $X$ (see~\cite[Proposition 2.2]{EF09}). The most basic case is the Chow ring of the classifying stack of $\mathrm{GL}_n$, where the maximal torus $T\cong\Gm^n$ consists of diagonal matrices, and inclusion is equivalent to inclusion of the ring of symmetric functions $c_1,\ldots,c_n$ into the polynomial ring in the $n$ variables $t_1,\ldots,t_n$. We write an explicit basis for $\mathrm{CH}^*(B\mathrm{T})$ as a $\mathrm{CH}^*(B\mathrm{GL}_n)$-module.
    \begin{lemma}\label{lem:basisGLn}
        Let $n\geq2$. The ring $\mathrm{CH}^*(B\mathrm{T})=\mathbb{Z}[t_1,\ldots,t_n]$ is a free $\mathrm{CH}^*(B\mathrm{GL}_n)=\mathbb{Z}[c_1,\ldots,c_n]$-module of rank $n!$. A base consists of monomials
        \[
            \prod_{i=1}^{n}t_i^{d_i}\text{ with }0\leq d_i\leq n-i\text{ for every }1\leq i\leq n.
        \]
    \end{lemma}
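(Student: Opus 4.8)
The plan is to reduce the statement to a classical fact about symmetric polynomials and then prove that fact by an explicit tower of partial symmetrizations. First I would note that lifting a map $S \to B\mathrm{GL}_n$ (i.e.\ a rank $n$ bundle $E$ on $S$) to $B\mathrm{T}$ is the same as choosing an ordered splitting $E = L_1 \oplus \cdots \oplus L_n$ into line bundles; consequently $t_i = c_1(L_i)$ and, under the pullback $\mathrm{CH}^*(B\mathrm{GL}_n) \to \mathrm{CH}^*(B\mathrm{T})$, one has $c_i \mapsto e_i(t_1,\ldots,t_n)$, the $i$-th elementary symmetric polynomial. Thus $\mathrm{CH}^*(B\mathrm{GL}_n) = \mathbb{Z}[c_1,\ldots,c_n]$ is identified with the subring $\mathbb{Z}[e_1,\ldots,e_n]$ of symmetric polynomials inside $\mathrm{CH}^*(B\mathrm{T}) = \mathbb{Z}[t_1,\ldots,t_n]$, and the lemma becomes the assertion that $\mathbb{Z}[t_1,\ldots,t_n]$ is free over its subring of symmetric polynomials with the stated staircase basis.

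To prove this, write $R = \mathbb{Z}[t_1,\ldots,t_n]$ and, for $1 \le j \le n$, let $\Sigma^{(j)} \le \Sigma_n$ be the subgroup permuting only the variables $t_j,\ldots,t_n$, so that $\Sigma^{(1)} = \Sigma_n$ and $\Sigma^{(n+1)} = \{1\}$. These yield a tower of invariant subrings
\[
\mathbb{Z}[e_1,\ldots,e_n] = R^{\Sigma^{(1)}} \subseteq R^{\Sigma^{(2)}} \subseteq \cdots \subseteq R^{\Sigma^{(n+1)}} = R.
\]
At the $j$-th step I would check that $R^{\Sigma^{(j+1)}} = R^{\Sigma^{(j)}}[t_j]$: the elementary symmetric functions of $t_{j+1},\ldots,t_n$ are recovered from those of $t_j,\ldots,t_n$ together with $t_j$ via the recursion $e_k(t_j,\ldots,t_n) = e_k(t_{j+1},\ldots,t_n) + t_j\,e_{k-1}(t_{j+1},\ldots,t_n)$. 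Moreover $t_j$ is a root of the monic polynomial $\prod_{i=j}^n (X - t_i)$ of degree $n-j+1$, whose coefficients lie in $R^{\Sigma^{(j)}}$; hence $R^{\Sigma^{(j+1)}}$ is generated as an $R^{\Sigma^{(j)}}$-module by $1, t_j, \ldots, t_j^{\,n-j}$. Multiplying these generators up the tower shows that $R$ is generated over $R^{\Sigma^{(1)}}$ by the monomials $\prod_{i=1}^n t_i^{d_i}$ with $0 \le d_i \le n-i$, of which there are $\prod_{i=1}^n (n-i+1) = n!$.

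It remains to upgrade generation to freeness, which is the only real content. Here I would pass to fraction fields: $\mathrm{Frac}(R)/\mathrm{Frac}(R^{\Sigma^{(1)}}) = \mathbb{Q}(t_1,\ldots,t_n)/\mathbb{Q}(e_1,\ldots,e_n)$ is the splitting field of the separable polynomial $\prod_{i=1}^n(X - t_i)$ and is Galois with group $\Sigma_n$, while $\mathrm{Frac}(R^{\Sigma^{(j)}}) = \mathrm{Frac}(R)^{\Sigma^{(j)}}$ for each $j$ (an invariant fraction is a ratio of invariants). The Galois correspondence then gives $[\mathrm{Frac}(R)^{\Sigma^{(j+1)}} : \mathrm{Frac}(R)^{\Sigma^{(j)}}] = [\Sigma^{(j)} : \Sigma^{(j+1)}] = n-j+1$, so $1, t_j, \ldots, t_j^{\,n-j}$ are linearly independent over $\mathrm{Frac}(R^{\Sigma^{(j)}})$, hence over $R^{\Sigma^{(j)}}$ since $R$ is a domain. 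Therefore each step of the tower is free of rank $n-j+1$, and composing them shows that the $n!$ staircase monomials form an honest $R^{\Sigma^{(1)}}$-basis of $R$. The step I expect to require the most care is exactly this passage from generation to $\mathbb{Z}$-linear independence; it can alternatively be obtained geometrically by realizing $B\mathrm{T} \to B\mathrm{GL}_n$ as (an affine-space bundle over) the complete flag bundle of the universal bundle $E$ and applying the projective bundle formula to the iterated tower of $\mathbb{P}^{n-1}, \mathbb{P}^{n-2}, \ldots, \mathbb{P}^{0}$-bundles, which yields freeness over $\mathbb{Z}$ directly and makes the exponent ranges $0 \le d_i \le n-i$ visible as the successive fiber dimensions.
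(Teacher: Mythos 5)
Your proof is correct, and its generation half coincides with the paper's argument: the paper uses exactly the same tower $\Z[c_1,\ldots,c_n]\subset\Z[c_1,\ldots,c_n][t_1]\subset\cdots\subset\Z[t_1,\ldots,t_n]$, the same recursion $e_k(t_{j},\ldots,t_n)=e_k(t_{j+1},\ldots,t_n)+t_je_{k-1}(t_{j+1},\ldots,t_n)$ to identify each stage with the previous one adjoined $t_j$, and the same monic polynomial $\prod_{i\geq j}(x-t_i)$ to bound the module generators at each step. Where you diverge is in how freeness and the rank $n!$ are established: the paper simply cites Demazure for the fact that $\CH^*(BT)$ is free of rank $n!$ over $\CH^*(B\GL_n)$, and then concludes by the standard fact that $n!$ generators of a free module of rank $n!$ over a commutative ring must form a basis. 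You instead prove linear independence directly, either by passing to fraction fields and invoking the Galois correspondence for $\Q(t_1,\ldots,t_n)/\Q(e_1,\ldots,e_n)$ to compute the degree of each step of the tower, or geometrically via the flag-bundle/iterated projective bundle formula. Your route is more self-contained (no external reference and no appeal to the surjective-endomorphism-of-a-free-module lemma), at the cost of a slightly longer argument; the paper's route is shorter but leans on the literature for precisely the part you identify as "the only real content." Both are valid, and all the steps you flag as delicate (e.g.\ $\mathrm{Frac}(R^{\Sigma^{(j)}})=\mathrm{Frac}(R)^{\Sigma^{(j)}}$, and descending linear independence from the fraction field to the subring) go through as you describe.
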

    \begin{proof}
        The freeness and rank part is well known, see for instance~\cite[Theorem 6.2]{Dem73}; therefore, it is enough to show that the elements in the statement generate $\mathrm{CH}^*(BT)$, as their number matches the rank. Also the basis is already know, but we give a proof here as we have not found a suitable reference; usually, the basis is given in terms of Schubert polynomials and over a field. Consider the sequence of extensions of rings
        \[
            \Z[c_1,\ldots,c_n]\subset\mathbb{Z}[c_1,\ldots,c_n][t_1]\subset\mathbb{Z}[c_1,\ldots,c_n][t_1,t_2]\subset\ldots\subset\mathbb{Z}[c_1,\ldots,c_n][t_1,\ldots,t_n]=\mathbb{Z}[t_1,\ldots,t_n].
        \]
        For every $0\leq r\leq s$ and $s>0$, denote by $c_r^{(s)}$ the elementary symmetric polynomial of degree $r$ in the $s$ variables $t_{n-s+1},\ldots,t_n$, with $c_0^{(s)}=1$ for every $s$. The identity $c_r^{(s)}=c_r^{(s+1)}-t_{n-s}c_{r-1}^{(s)}$ shows that $\Z[c_1^{(n-m)},\ldots, c_{n-m}^{(n-m)}] \subseteq \Z[c_1,\ldots,c_n][t_1,\ldots, t_m]$ for $m=1,\ldots, n$. In particular, the monic polynomial $\prod_{i=m+1}^n(x-t_i)$ with root $t_{m+1}$ has coefficients in $\mathbb{Z}[c_1,\ldots,c_n][t_1,\ldots,t_m]$ and $1,t_m,\ldots, t_m^{n-m}$ generate $\Z[c_1,\ldots,c_n][t_1,\ldots,t_m]$ as a $\Z[c_1,\ldots,c_n][t_1,\ldots,t_{m-1}]$-module for $m=1,\ldots,n$ (where we set $\Z[c_1,\ldots,c_n][t_1,\ldots,t_{m-1}]:=\Z[c_1,\ldots,c_n]$ for $m=1$). This concludes.
    \end{proof}
    \begin{notation}
        For every $\underline{d}=(d_1,\ldots,d_n)$ multi-index of length $n$ satisfying
        \begin{equation}\label{eq:multiindex}
            0\leq d_i\leq n-i\text{ for every }1\leq i\leq n.
        \end{equation}
        we set $\underline{t}^{\underline{d}}:=\prod t_i^{d_i}$ and $|\underline{d}|=\sum d_i$.
    \end{notation}
    \begin{lemma}\label{lemma:consbasisalgebraicversion}
        Let $R$ a $\mathbb{Z}[c_1,\ldots,c_n]$-positively-graded-algebra and $J\subset R$ an ideal. Let $\tilde{J}$ be the extension of $J$ in $\tilde{R}:=R\otimes_{\mathbb{Z}[c_1,\ldots,c_n]}\mathbb{Z}[t_1,\ldots,t_n]$. Fix an integer $k\geq0$, and suppose given $\alpha_{\underline{d}},\beta_{\underline{d}}\in R$ for every $\underline{d}$ satisfying~\eqref{eq:multiindex}, with $\alpha_{\underline{d}}$ of degree $k-|\underline{d}|$.
        \begin{enumerate}
            \item As $R$-modules, we have 
            \[
                \widetilde{J}=\bigoplus_{\underline{d}}\underline{t}^{\underline{d}}J\subset\widetilde{R}.
            \]
            \item In $\widetilde{R}$, we have
            \begin{align*}
                \sum\underline{t}^{\underline{d}}\alpha_{\underline{d}}=\sum\underline{t}^{\underline{d}}\beta_{\underline{d}}\mod\widetilde{J}
            \end{align*}
            if and only if, in $R$,
            \[
                \alpha_{\underline{d}}\equiv\beta_{\underline{d}}\mod J\qquad\text{for every }\underline{d}.
            \]
        \end{enumerate}
        In particular, applying the first point to the ideal $J=R$, we get a decomposition of $\widetilde{R}$ as a free $R$-module.
    \end{lemma}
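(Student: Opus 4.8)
The statement to prove is Lemma~\ref{lemma:consbasisalgebraicversion}, whose backbone is the freeness assertion from Lemma~\ref{lem:basisGLn}: the monomials $\underline{t}^{\underline{d}}$ for $\underline{d}$ satisfying~\eqref{eq:multiindex} form a basis of $\mathbb{Z}[t_1,\ldots,t_n]$ as a $\mathbb{Z}[c_1,\ldots,c_n]$-module. My plan is to reduce everything to this fact by base change. Writing $P=\mathbb{Z}[c_1,\ldots,c_n]$ and $S=\mathbb{Z}[t_1,\ldots,t_n]$, Lemma~\ref{lem:basisGLn} gives a direct-sum decomposition $S=\bigoplus_{\underline{d}}P\,\underline{t}^{\underline{d}}$ of $P$-modules. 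Since $\widetilde{R}=R\otimes_P S$ and tensoring with the $R$-module $R$ (over $P$) is exact on the free, hence flat, module $S$, I get a decomposition of $R$-modules
\[
    \widetilde{R}=R\otimes_P S=\bigoplus_{\underline{d}}\bigl(R\otimes_P P\,\underline{t}^{\underline{d}}\bigr)=\bigoplus_{\underline{d}}\underline{t}^{\underline{d}}R.
\]
This already yields the final ``in particular'' clause, that $\widetilde{R}$ is free over $R$ on the basis $\{\underline{t}^{\underline{d}}\}$. The key point I would emphasize is that the $\underline{t}^{\underline{d}}$ remain $R$-linearly independent after base change precisely because $S$ is $P$-\emph{free} (not merely torsion-free), so no flatness subtlety beyond the basic one is needed.

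\textbf{Part (1).} To identify $\widetilde{J}$ I would use that $\widetilde{J}$ is by definition the image of $J\otimes_P S\to R\otimes_P S=\widetilde{R}$, equivalently the $\widetilde{R}$-ideal (indeed the $R$-submodule, since $S$ is generated over $P$ by the $\underline{t}^{\underline{d}}$) generated by $J$. Because $S$ is $P$-free on $\{\underline{t}^{\underline{d}}\}$, the natural map $J\otimes_P S\to \widetilde{R}$ is identified with $\bigoplus_{\underline{d}}(J\otimes_P P\,\underline{t}^{\underline{d}})=\bigoplus_{\underline{d}}\underline{t}^{\underline{d}}J$, and this sits inside $\widetilde{R}=\bigoplus_{\underline{d}}\underline{t}^{\underline{d}}R$ summand-by-summand as $\underline{t}^{\underline{d}}J\subseteq\underline{t}^{\underline{d}}R$. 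Hence $\widetilde{J}=\bigoplus_{\underline{d}}\underline{t}^{\underline{d}}J$ as claimed. The only thing to check carefully is that $J\otimes_P S\to\widetilde{R}$ is injective; this is immediate from freeness of $S$ over $P$, since tensoring the inclusion $J\hookrightarrow R$ with the free module $S$ stays injective.

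\textbf{Part (2).} Given the decomposition in Part~(1), the congruence $\sum\underline{t}^{\underline{d}}\alpha_{\underline{d}}\equiv\sum\underline{t}^{\underline{d}}\beta_{\underline{d}}\pmod{\widetilde{J}}$ means the element $\sum\underline{t}^{\underline{d}}(\alpha_{\underline{d}}-\beta_{\underline{d}})$ lies in $\widetilde{J}=\bigoplus_{\underline{d}}\underline{t}^{\underline{d}}J$. Reading off the $\underline{d}$-component of this direct-sum decomposition of $\widetilde{R}=\bigoplus_{\underline{d}}\underline{t}^{\underline{d}}R$ shows this is equivalent to $\alpha_{\underline{d}}-\beta_{\underline{d}}\in J$ for every $\underline{d}$ separately, which is exactly the asserted equivalence. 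The grading hypothesis (that $\alpha_{\underline{d}}$ has degree $k-|\underline{d}|$) plays no role in the proof of the equivalence itself; it is recorded only to pin down the degrees in the applications, so I would either use it solely to confirm that both sides are homogeneous of degree $k$ or simply remark that it is not needed for the logical equivalence.

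\textbf{Main obstacle.} There is no serious obstacle: the entire statement is formal once Lemma~\ref{lem:basisGLn} provides a $P$-basis of $S$. The one point requiring a little care is the bookkeeping in Part~(1)---namely checking that the extended ideal $\widetilde{J}$ is exactly the direct sum of the copies $\underline{t}^{\underline{d}}J$ and not something larger---but this follows cleanly from the flatness (indeed freeness) of $S$ over $P$, which guarantees that base change commutes with the formation of the submodule $J\subseteq R$.
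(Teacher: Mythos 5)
Your proof is correct and follows essentially the same route as the paper: both arguments rest on the freeness of $\mathbb{Z}[t_1,\ldots,t_n]$ over $\mathbb{Z}[c_1,\ldots,c_n]$ from Lemma~\ref{lem:basisGLn}, base-change to get the decomposition $\widetilde{R}=\bigoplus_{\underline{d}}\underline{t}^{\underline{d}}R$, identify $\widetilde{J}$ with $J\otimes\widetilde{R}$ via flatness so that it inherits the summand-by-summand description, and deduce Part (2) by comparing components. The only cosmetic difference is that the paper phrases the flatness step as $\widetilde{J}=J\otimes_R\widetilde{R}$ with $\widetilde{R}$ free over $R$, while you work with $J\otimes_P S$ directly; these are the same computation.
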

    \begin{proof}
        By Lemma~\ref{lem:basisGLn}, we know that $\Z[t_1,\ldots, t_n]$  is free as a $\Z[c_1,\ldots, c_n]$-module, and we have an explicit basis. Applying the functor $\otimes_{\Z[c_1,\ldots, c_n]}\Z[t_1,\ldots, t_n]$ to $R$, this induces a decomposition $\widetilde{R}=\oplus\underline{t}^{\underline{d}}R$, and the map $R\rightarrow\widetilde{R}$ identifies the first with the corresponding summand. This also show that $\widetilde{R}$ is flat over $R$. Consequently, we can identify $\widetilde{J}=J\otimes_R\widetilde{R}$, which inherits the stated decomposition. The second part follows from the first. Indeed, the quotient of $\widetilde{R}$ by $\widetilde{J}$ inherits an induced decomposition.
    \end{proof}
    The following is a translation in terms of Chow rings.
    \begin{proposition}\label{prop:generalizeddecomposition}
        Let $X$ be a smooth $k$-scheme with a $\mathrm{GL}_n$-action, and let $\mathbb{G}_m^n\cong T\subset\mathrm{GL}_n$ be the maximal torus consisting of diagonal matrices. Let $J\subset\mathrm{CH}_{\mathrm{GL}_n}^*(X)$ be an ideal, and let $\widetilde{J}$ be the extension of $J$ in $\mathrm{CH}_T^*(X)$. Fix an integer $k\geq0$, and suppose given $\alpha_{\underline{d}},\beta_{\underline{d}}\in\mathrm{CH}^*_{\mathrm{GL}_n}(X)$ for every $\underline{d}$ satisfying~\eqref{eq:multiindex}, with $\alpha_{\underline{d}}$ of degree $k-|\underline{d}|$.
        \begin{enumerate}
            \item We have
            \[
                \widetilde{J}=\bigoplus_{\underline{d}}\underline{t}^{\underline{d}}J.
            \]
            \item We have
            \begin{align*}
                \sum\underline{t}^{\underline{d}}\alpha_{\underline{d}}=\sum\underline{t}^{\underline{d}}\beta_{\underline{d}}\mod\widetilde{J}
            \end{align*}
            if and only if
            \[
                \alpha_{\underline{d}}\equiv\beta_{\underline{d}}\mod J\qquad\text{for every }\underline{d}.
            \]
        \end{enumerate}
        In particular, applying the first point to the ideal $J=\mathrm{CH}_{\mathrm{GL}_n}^*(X)$, we get a decomposition of $\mathrm{CH}_T^*(X)$ as a free $\mathrm{CH}_{\mathrm{GL}_n}^*(X)$-module.
    \end{proposition}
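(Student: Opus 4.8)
The plan is to deduce Proposition~\ref{prop:generalizeddecomposition} from its purely algebraic counterpart, Lemma~\ref{lemma:consbasisalgebraicversion}, by specializing the abstract graded $\mathbb{Z}[c_1,\ldots,c_n]$-algebra $R$ to the equivariant Chow ring $\mathrm{CH}^*_{\mathrm{GL}_n}(X)$ and identifying its base change to $\mathbb{Z}[t_1,\ldots,t_n]$ with the torus-equivariant Chow ring $\mathrm{CH}^*_T(X)$. The two numbered assertions then transfer verbatim, provided the single geometric input below holds. So the real content is not another combinatorial computation but the identification
\[
    \mathrm{CH}^*_T(X)\;\cong\;\mathrm{CH}^*_{\mathrm{GL}_n}(X)\otimes_{\mathbb{Z}[c_1,\ldots,c_n]}\mathbb{Z}[t_1,\ldots,t_n],
\]
where the $\mathbb{Z}[c_1,\ldots,c_n]$-module structure on $\mathrm{CH}^*_{\mathrm{GL}_n}(X)=\mathrm{CH}^*_{\mathrm{GL}_n}(X)$ comes from the ring map $\mathrm{CH}^*(B\mathrm{GL}_n)\to\mathrm{CH}^*_{\mathrm{GL}_n}(X)$ and the map $B T\to B\mathrm{GL}_n$ realizes $\mathbb{Z}[t_1,\ldots,t_n]$ over $\mathbb{Z}[c_1,\ldots,c_n]$.

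The first step is to establish this isomorphism. Since $X$ is smooth, I would use the standard approximation of the classifying stacks by open subsets of representations (the Totaro/Edidin--Graham construction): choose a representation $\mathbb{V}$ of $\mathrm{GL}_n$ with an open $U\subseteq\mathbb{V}$ on which $\mathrm{GL}_n$ acts freely and whose complement has high codimension, so that $\mathrm{CH}^*_{\mathrm{GL}_n}(X)$ and $\mathrm{CH}^*_T(X)$ agree in any fixed degree with the ordinary Chow rings of the quotients $(X\times U)/\mathrm{GL}_n$ and $(X\times U)/T$. The morphism
\[
    (X\times U)/T\longrightarrow (X\times U)/\mathrm{GL}_n
\]
is a $\mathrm{GL}_n/T$-bundle, i.e. a flag-bundle for the full flag variety of $\mathrm{GL}_n$. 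The projective-bundle/flag-bundle formula (in the relative, integral-coefficient form valid over a smooth base) then gives $\mathrm{CH}^*\big((X\times U)/T\big)$ as a free module over $\mathrm{CH}^*\big((X\times U)/\mathrm{GL}_n\big)$, generated exactly by the monomials $\underline{t}^{\underline{d}}$ of Lemma~\ref{lem:basisGLn}. This is precisely the statement that $\mathrm{CH}^*_T(X)$ is the base change of $\mathrm{CH}^*_{\mathrm{GL}_n}(X)$ along $\mathbb{Z}[c_1,\ldots,c_n]\hookrightarrow\mathbb{Z}[t_1,\ldots,t_n]$, matching the module structure used in Lemma~\ref{lemma:consbasisalgebraicversion}. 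One must check that the $t_i$ appearing here are the equivariant Chern roots pulled back from $\mathrm{CH}^*(BT)$, which is immediate from the construction, and that the extension-of-ideals operation $J\mapsto\widetilde{J}$ in the proposition coincides with the algebraic extension $J\mapsto J\otimes_R\widetilde{R}$ of the lemma; this is true because flatness of $\widetilde{R}$ over $R$ (established in the proof of Lemma~\ref{lemma:consbasisalgebraicversion}) identifies the two.

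Once the isomorphism $\mathrm{CH}^*_T(X)\cong\widetilde{R}$ is in place, I would simply invoke Lemma~\ref{lemma:consbasisalgebraicversion} with $R=\mathrm{CH}^*_{\mathrm{GL}_n}(X)$: part~(1) of that lemma gives $\widetilde{J}=\bigoplus_{\underline{d}}\underline{t}^{\underline{d}}J$, which is part~(1) here, and part~(2) transfers identically. The degree hypothesis on $\alpha_{\underline{d}}$ is used only to make the two numbered statements well-posed graded-wise, exactly as in the lemma, so no further bookkeeping is needed. The final ``in particular'' clause is the case $J=\mathrm{CH}^*_{\mathrm{GL}_n}(X)$.

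The main obstacle is the geometric identification in the second paragraph: one needs the flag-bundle formula with \emph{integral} coefficients and over a possibly singular (but of finite type) quotient base, together with the stabilization argument showing the answer is independent of the chosen approximation $U$. The module structure and the explicit basis must be tracked carefully so that they genuinely match the $\mathbb{Z}[c_1,\ldots,c_n]$-structure of Lemma~\ref{lem:basisGLn} rather than merely agreeing abstractly; this is where I would spend the most care, since everything downstream is formal once the base change is correctly set up. The cleanest route is to cite~\cite[Proposition 2.2]{EF09} or the analogous flag-bundle statement already used in the paper for the existence of the free-module structure, and then to verify only that the specific monomial basis and the ideal-extension operation are compatible.
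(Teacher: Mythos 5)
Your proposal is correct and takes essentially the same route as the paper: both deduce the statement from Lemma~\ref{lemma:consbasisalgebraicversion} applied to $R=\mathrm{CH}^*_{\mathrm{GL}_n}(X)$, $\widetilde{R}=\mathrm{CH}^*_T(X)$, the only input being the base-change isomorphism $\mathrm{CH}^*_{\mathrm{GL}_n}(X)\otimes_{\mathrm{CH}^*(B\mathrm{GL}_n)}\mathrm{CH}^*(BT)\cong\mathrm{CH}^*_T(X)$. The sole difference is that the paper simply cites Brion's Theorem 6.7 for this isomorphism, whereas you sketch its proof via the Totaro/Edidin--Graham approximation and the integral flag-bundle formula, which is a valid (and standard) way to establish the same fact.
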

    \begin{proof}
        Apply the Lemma above to $R=\mathrm{CH}_{\mathrm{GL}_n}^*(X)$, and $\widetilde{R}=\mathrm{CH}^*_{\mathrm{T}}(X)$; this is justified by the isomorphism of $\mathrm{CH}^*_{\mathrm{GL}_n}(X)$-modules
        \begin{equation}\label{eq:isoBri}
        \begin{tikzcd}
            \mathrm{CH}_{\mathrm{GL}_n}^*(X)\otimes_{\mathrm{CH}^*(\mathrm{BGL}_n)}\mathrm{CH}^*(\mathrm{BT})\arrow[r,"\cong"] & \mathrm{CH}^*_{\mathrm{T}}(X),
        \end{tikzcd}
        \end{equation}
        see~\cite[Theorem 6.7]{Bri97}.
    \end{proof}

\subsection{The Chow ring of $\mu_n$-root gerbes}\label{sec: Chow root gerbes}

Recall that, given an algebraic stack $\mathcal{Y}$ and a line bundle $\mathcal{L}$ on $\mathcal{Y}$, the $\mu_n$-root gerbe $ \pi: \mathcal{X} = \mathcal{Y}(\sqrt[n]{\mathcal{L}}) \to \cY$ is defined as the base change of the map $\mathcal{Y} \to B\Gm$ associated with $\mathcal{L}$, via the morphism $B\Gm \to B\Gm$ given by $t \mapsto t^n$.

In this section we compute the Chow ring of $\mathcal{X}$ from that of $\mathcal{Y}$. We assume that $\cY$ is a smooth algebraic stack stratified by locally closed substacks which are each isomorphic to the quotient stack of an algebraic group acting on an algebraic space, so that the Chow rings of $\cY$ and $\cX$ is well-defined \cite[Theorem 1.1.12]{kresch1999cycle}

\begin{proposition}\label{prop: Chow of root gerbes}
    Let $\cY$, $\cL$ and $\pi: \cX= \cY(\sqrt[n]{\mathcal{L}}) \to \cY$ be as above. Then, we have an isomorphism of rings
    $$
    \CH^*(\cX) \cong \frac{\CH^*(\cY)[t]}{(c_1(\cL)-nt)}.
    $$7
\end{proposition}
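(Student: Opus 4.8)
The plan is to realize the $\mu_n$-root gerbe $\pi:\cX\to\cY$ as a projective bundle so that the projective bundle formula of Edidin--Graham applies directly. Starting from the defining Cartesian square
\[
\begin{tikzcd}
\cX \arrow[r]\arrow[d,"\pi"'] \arrow[dr,phantom,"\square"] & B\Gm \arrow[d,"{t\mapsto t^n}"] \\
\cY \arrow[r,"\cL"'] & B\Gm,
\end{tikzcd}
\]
I would first observe that the map $t\mapsto t^n$ on $B\Gm$ exhibits the source as $[\,\mathrm{pt}/\mu_n\,]$ sitting over the target; more usefully, $B\Gm\xrightarrow{t\mapsto t^n}B\Gm$ is the quotient of the $\Gm$-action $s\cdot z=s^n z$ on a point, and the total space of the universal quotient is an $\mathbb{A}^1\smallsetminus\{0\}$ bundle. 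The cleanest route is to present $\cX$ as the stack of $n$-th roots of $\cL$ and to use that this is canonically the quotient $[\,(\cV\smallsetminus 0)/\Gm\,]$ where $\cV$ is the total space of $\cL$ and $\Gm$ acts through its $n$-th power.

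The key computational input is the following standard identification. Let $L\to\cY$ be the $\Gm$-torsor (complement of the zero section in the total space of $\cL$), with $c_1(\cL)$ its Euler class. Then $\cX$ is the quotient of $L$ by the $\Gm$-action scaled so that $\mu_n\subset\Gm$ acts trivially, and concretely $\cX=[\,L/\Gm^{(n)}\,]$ where $\Gm^{(n)}$ denotes $\Gm$ acting via its $n$-th power. From the Leray--Hirsch / projective-bundle machinery for such gerbes one gets that $\CH^*(\cX)$ is generated over $\CH^*(\cY)$ by the first Chern class $t$ of the tautological line bundle $\cO_{\cX}$ whose $n$-th power is $\pi^*\cL$, subject to the single relation $\pi^*c_1(\cL)=nt$. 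The first step I would carry out is therefore to set up this presentation and identify $t$ explicitly: $t=c_1(\cM)$ where $\cM$ is the universal $n$-th root, satisfying $\cM^{\otimes n}\cong\pi^*\cL$ by construction, which immediately forces the relation $nt=\pi^* c_1(\cL)$ in $\CH^*(\cX)$.

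Next I would prove that $\CH^*(\cX)$ is generated as a $\CH^*(\cY)$-algebra by $t$ and that the relation $(c_1(\cL)-nt)$ generates all relations. For surjectivity and freeness I would use the approximation of $\cY$ by quotients $[U/G]$ from the assumed stratified presentation, reducing to the case where $\cY$ is (an approximation of) a global quotient and $\cL$ is equivariant; there the root gerbe becomes a genuine scheme-level $\mu_n$-gerbe and one can invoke the computation for weighted projective / $\mu_n$-gerbe Chow groups. The concrete mechanism is to write $B\Gm\xrightarrow{n}B\Gm$ as the map induced by $\Gm\xrightarrow{n}\Gm$ and to use that $\CH^*(B\Gm)=\Z[t]\to\Z[s]=\CH^*(B\Gm)$ sends $s\mapsto nt$; pulling this back along $\cL:\cY\to B\Gm$ and applying compatibility of Chow pullback with the Cartesian square gives
\[
\CH^*(\cX)\cong\CH^*(\cY)\otimes_{\Z[s]}\Z[t]=\frac{\CH^*(\cY)[t]}{(c_1(\cL)-nt)},
\]
where $\Z[s]\to\CH^*(\cY)$ is the map $s\mapsto c_1(\cL)$.

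The main obstacle is justifying that Chow groups satisfy base change along this particular non-representable, non-flat square, i.e.\ that the tensor-product formula genuinely computes $\CH^*(\cX)$ and not merely a quotient admitting a surjection. I expect to handle this by exhibiting $\CH^*(\cX)$ as a \emph{free} $\CH^*(\cY)$-module of rank $n$ with basis $1,t,\dots,t^{n-1}$: the freeness is what upgrades the evident surjection $\CH^*(\cY)[t]/(c_1(\cL)-nt)\twoheadrightarrow\CH^*(\cX)$ to an isomorphism, since both sides are then free of the same rank over $\CH^*(\cY)$ and the map respects the grading. To establish freeness I would again reduce via the stratification and the equivariant approximation to the smooth quotient-stack case, where $\cX\to\cY$ is a $\mu_n$-gerbe and the projective-bundle/Leray--Hirsch argument of Edidin--Graham produces exactly the basis $1,t,\dots,t^{n-1}$. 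Once freeness is in hand, the stated isomorphism follows formally.
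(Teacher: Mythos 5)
Your setup is consistent with the paper's: the presentation $\cX\cong[(\cL\smallsetminus 0)/\Gm]$ with $\Gm$ acting through its $n$-th power, the universal root $\cM$ with $\cM^{\otimes n}\cong\pi^*\cL$, and the resulting relation $nt=\pi^*c_1(\cL)$ are all correct. The gap is in the step you rely on to upgrade the surjection to an isomorphism: $\CH^*(\cX)$ is \emph{not} a free $\CH^*(\cY)$-module of rank $n$ with basis $1,t,\dots,t^{n-1}$, and neither is the proposed answer $\CH^*(\cY)[t]/(c_1(\cL)-nt)$, because the relation $nt-c_1(\cL)$ is not monic in $t$. The simplest counterexample is $\cY=\mathrm{Spec}(k)$ and $\cL=\cO$, where $\cX=B\mu_n$ and $\CH^*(B\mu_n)=\Z[t]/(nt)$: this has a nonzero $n$-torsion summand in every positive degree, so it is not even finitely generated over $\Z$, and $1,t,\dots,t^{n-1}$ do not generate it as a module ($t^n$ is not in their span). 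A $\mu_n$-root gerbe is a weighted $\P^0$-bundle, not a $\P^{n-1}$-bundle, so the Leray--Hirsch/projective-bundle mechanism you invoke does not produce that basis, and the rank-comparison argument collapses. A secondary issue is that the surjectivity of $\CH^*(\cY)[t]\to\CH^*(\cX)$ is asserted as ``evident'' but itself requires an input such as the K\"unneth isomorphism $\CH^*(\cY\times B\Gm)\cong\CH^*(\cY)[t]$.

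The paper avoids both problems with a single torsor argument: the map $\rho:\cX\to\cY\times B\Gm$ given by $\pi$ and the root $\cM$ is a $\Gm$-torsor (this is checked by a Cartesian diagram built from the tautological torsors $\cV\to\cX$ and $\cV\to\cY$), so $\CH^*(\cX)$ is exactly the quotient of $\CH^*(\cY\times B\Gm)$ by the ideal generated by the Euler class of the associated line bundle; K\"unneth for $B\Gm$ identifies $\CH^*(\cY\times B\Gm)$ with $\CH^*(\cY)[t]$ and lets one compute that Euler class as $c_1(\cL)-nt$ by restricting to $\cY\times\mathrm{Spec}(k)$ and to $\{y\}\times B\Gm$ (where $\rho$ becomes $B\mu_n\to B\Gm$). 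If you want to keep your outline, replace the freeness/rank argument with this torsor-plus-excision computation, which gives generators and the exact ideal of relations simultaneously.
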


The authors have been informed of an independent direct proof of Proposition \ref{prop: Chow of root gerbes} by J. Alper, M. Bishop, F. Janda, and M. Shin.

One possible approach is to observe that $\mu_n$-root gerbes are a special case of weighted projective bundles, specifically $\cX = \P(\cL)$, where $\mathbb{G}_m$ acts on $\cL$ with weight $-n$, and to apply the weighted projective bundle formula \cite[Theorem 3.12]{AreObAbr}.

Here, we present another self-contained proof of the statement. First, we need a lemma.

\begin{lemma} \label{lemma: Kunneth for Gm} \cite[Lemma 10]{Oe18} 
    Let $\cY$ be as above and endow it with the trivial $\Gm$-action. Then, the cup product map
    $$
    \CH^*(B\Gm) \otimes_{\Z} \CH^*(\cY) \to \CH^*(\cY \times B\Gm)
    $$
    is an isomorphism.
\end{lemma}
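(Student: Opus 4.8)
The plan is to deduce the statement from the projective bundle formula via finite-dimensional approximation of $B\Gm$. Recall that $\CH^*(B\Gm) = \Z[s]$, where $s = c_1$ of the universal line bundle sits in degree $1$, and that the cup product map sends $s^i \otimes \alpha$ to $\mathrm{pr}_{B\Gm}^*(s^i) \cdot \mathrm{pr}_{\cY}^*(\alpha)$. I approximate $B\Gm$ by $\P^N = (\mathbb{A}^{N+1} \smallsetminus 0)/\Gm$, where $\Gm$ acts by scaling. By the Edidin--Graham/Totaro stabilization (valid in Kresch's framework for the stacks considered here), for each fixed degree $k$ and all $N \gg 0$ one has $\CH^k(\cY \times B\Gm) \cong \CH^k(\cY \times \P^N)$, under an identification sending $s$ to the hyperplane class $h = c_1(\cO_{\P^N}(1))$ pulled back to $\cY \times \P^N$.

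Next I would apply the projective bundle formula to the projection $\cY \times \P^N \to \cY$, which is the projectivization of the trivial rank-$(N+1)$ bundle. Since the higher Chern classes of a trivial bundle vanish, the formula asserts that $\CH^*(\cY \times \P^N)$ is a free $\CH^*(\cY)$-module on $1, h, \ldots, h^N$, with the single relation $h^{N+1} = 0$; that is, $\CH^*(\cY \times \P^N) \cong \CH^*(\cY)[h]/(h^{N+1})$. If the projective bundle formula requires justification for a $\cY$ that is only stratified by (rather than globally isomorphic to) a quotient stack, I would reduce to the global quotient case by Noetherian induction on the stratification, comparing the localization exact sequences for $\cY \times \P^N$ and for the open and closed pieces of the stratification via the five lemma; on each stratum the equivariant projective bundle formula is available.

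Finally, I would assemble and pass to the limit. Combining the two steps, in each degree $\leq N$ the cup product map is an isomorphism onto $\CH^*(\cY)[h]/(h^{N+1})$, and the transition maps as $N$ increases are the evident truncations, which are isomorphisms in each fixed degree once $N$ is large. Taking $N \to \infty$ therefore yields $\CH^*(\cY \times B\Gm) \cong \CH^*(\cY)[s] = \CH^*(\cY) \otimes_{\Z} \Z[s] = \CH^*(\cY) \otimes_{\Z} \CH^*(B\Gm)$, and by construction this identification is precisely the cup product map, as desired.

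The main obstacle I anticipate is purely foundational: making the projective bundle formula and the degreewise stabilization fully rigorous in the stacky setting of Kresch's intersection theory, especially for a $\cY$ that is merely stratified by quotient stacks. This is exactly where the localization-plus-five-lemma induction above is needed, together with care that the approximation is compatible with the ring structure and that $s$ corresponds to $h$. Once these are in place, the rest of the argument is formal.
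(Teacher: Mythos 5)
The paper contains no internal proof to compare against: the lemma is quoted verbatim from \cite[Lemma 10]{Oe18}. Your strategy is the standard one for this statement (and the natural one given the tools the paper itself uses elsewhere): approximate $B\Gm$ by $\P^N=(\mathbb{A}^{N+1}\smallsetminus 0)/\Gm$, apply the projective bundle formula for the trivial bundle $\cY\times\P^N\to\cY$, and stabilize degree by degree. The approximation step is sound in Kresch's framework: $\cY\times\P^N$ is the complement of the zero section of the rank-$(N+1)$ bundle $\cY\times[\mathbb{A}^{N+1}/\Gm]$ over $\cY\times B\Gm$, so homotopy invariance together with the excision sequence gives $\CH^k(\cY\times B\Gm)\cong\CH^k(\cY\times\P^N)$ for $k\leq N$, compatibly with the ring structure and with $s\mapsto h$; the passage to the limit is then formal, as you say.

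The one genuine flaw is your proposed justification of the projective bundle formula over a $\cY$ that is only stratified by quotient stacks. Localization sequences in this theory are only right exact, $\CH^*(\mathcal{Z})\to\CH^*(\cY)\to\CH^*(\mathcal{U})\to 0$, so a five-lemma chase on the stratification can only deliver \emph{surjectivity} of $\bigoplus_{i=0}^{N}\CH^{*-i}(\cY)\to\CH^*(\cY\times\P^N)$, never injectivity: an element of the middle group killed on $\mathcal{U}$ lifts to $\mathcal{Z}$, but $\CH^*(\mathcal{Z})\to\CH^*(\cY)$ need not be injective, and the chase breaks down. Injectivity requires a separate argument, and the standard fix needs no stratification at all: the projection $\pi:\cY\times\P^N\to\cY$ is representable and projective, so proper pushforward and the projection formula are available, and $\pi_*(h^i\cdot\pi^*\alpha)=\alpha\cdot\pi_*(h^i)$ with $\pi_*(h^i)=0$ for $i<N$ and $\pi_*(h^N)=1$ (the Segre class identities for the trivial bundle). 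Given a relation $\sum_{i=0}^{N}\pi^*(\alpha_i)\,h^i=0$, pushing forward gives $\alpha_N=0$; multiplying by $h$ and pushing forward again gives $\alpha_{N-1}=0$, and so on by descending induction. With this substitution for the injectivity half (keeping your Noetherian induction for surjectivity, where the equivariant projective bundle formula on each quotient-stack stratum is indeed available), your proof is complete.
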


\begin{proof}[Proof of Proposition \ref{prop: Chow of root gerbes}]
    Let $\cV = \cL \smallsetminus 0$ denote the complement of the zero section of $\cL$. Then, by definition, we have $\cX \cong [\cV / \Gm]$, where the action is given by $t \cdot v = t^{-n} v$ for $t \in \Gm$ and $v \in \cV$. Similarly, $\cY \cong [\cV / \Gm]$, where the action is given by $t \cdot v = t v$ for $t \in \Gm$ and $v \in \cV$.  

    We claim that the morphism 
    \[
    \begin{tikzcd}
        \rho: \cX \arrow[r] & \cY \times B \Gm
    \end{tikzcd}
    \]
    obtained from $\pi$ and the $n$-th root $\cL^{1/n}$ of $\cL$ is a $\Gm$-torsor associated to $c_1(\cL)-nt$. The statement then follows from Lemma \ref{lemma: Kunneth for Gm}. The fact that it is a $\Gm$-torsor follows from the following Cartesian diagram
    \[
        \begin{tikzcd}
        \cV \arrow[r]\arrow[d] \arrow[dr, phantom, "\square"] & \cY \arrow[r]\arrow[d] \arrow[dr, phantom, "\square"] & \mathrm{Spec}(k) \arrow[d]\\
        \cX \arrow[r] & \cY \times B\Gm \arrow[r] & B\Gm
        \end{tikzcd}
    \]
    where all the maps $\cV \to \cX=[\cV / \Gm] $, $\cV \to \cY= [\cV / \Gm]$ and $\mathrm{Spec}(k) \to B\Gm$ are the tautological $\Gm$-torsors. Finally, let $\mathcal{M}$ denote the line bundle associated to $\rho$. By Lemma \ref{lemma: Kunneth for Gm}, we have 
    \[
    c_1(\mathcal{M}) = c_1(\mathcal{M} _{| \mathcal{Y} \times \mathrm{Spec}(k)}) + c_1(\mathcal{M} _{| \{ y \} \times B\mathbb{G}_m}),
    \]
    where $y$ is any $k$-point of $\mathcal{Y}$.

    By the above Cartesian diagram, $c_1(\mathcal{M} _{| \mathcal{Y} \times \mathrm{Spec}(k)}) = c_1(\mathcal{L})$, and the restriction of $\rho$ to $\{y\} \times B\mathbb{G}_m$ is simply $B\mu_n \to B\mathbb{G}_m$. This concludes the proof.
\end{proof}

\subsection{ Chow rings of the classifying stacks $B\mathrm{G}$ and of some $\mathrm{G}$-spaces }

The Chow ring of $\PGL_2$ is already known \cite{pandharipande1996chow,Vez98, DL18}:
\begin{equation}\label{eqn: Chow PGL2}
    \CH^*(B \PGL_2)=\frac{\Z[c_2,c_3]}{(2 c_3)}
\end{equation}
where the classes $c_i$ ($i=2,3$) are the Chern classes of the representation $W_1$. Equivalently, since $W_1$ is isomorphic to the adjoint representation $\mathsf{sl}_2$ of $\PGL_2$, these are also the Chern classes of $\mathsf{sl}_2$ (as in \cite{Vez98}). Finally, there is an isomorphism of stacks $B \PGL_2 \cong [ \mathcal{S}/ \GL_3]$ where $\mathcal{S}$ is the space of smooth degree $2$ homogeneous polynomials in two variables (see \cite[Proposition 1.5]{DL18}). Under this isomorphism the classes $c_i$ are the pullback from $B \GL_3$ of the Chern classes of the standard $\GL_3$-representation (see \cite[ Lemma 1.3]{DL18}).

In the sequel, we will use any of these three interpretations without further reference.

We will also need to know the $\PGL_2$-equivariant Chow ring of products of $\P^1$.

\begin{notation}
    We will always interpret $\P^1$ as the space of linear forms in two variables (up to scalar) and a matrix $[A] \in \PGL_2$ acts on it by precomposition with its inverse.
\end{notation}

The $\PGL_2$-equivariant Chow ring of $\P^1$ is computed in \cite[Lemma 5.1]{GV08}:

\begin{equation}\label{eqn: PGL2 Chow of P1}
    \CH^*_{\PGL_2}(\P^1)= \frac{\Z[c_2,c_3,\tau]}{(c_3, \tau^2+c_2)}= \Z[\tau]
\end{equation}

where $\tau=c_1^{\PGL_2}(\cO_{\P^1}(2))$. The $\PGL_2$-linearization on $\cO_{\P^1}(2)$ is obtained by requiring that the inclusion of bundles $\cO(-2) \subseteq  W_1 \otimes \cO_{\P^1}$ on $\P^1$ is $\PGL_2$-equivariant.

For products of projective lines we have the following:

\begin{lemma}\label{lem: PGL2 Chow prod proj lines}
    The $\PGL_2$-equivarinat Chow ring of $(\P^1)^m$ is
    $$
        \mathrm{CH}_{\PGL_2}((\P^1)^m)=\frac{\mathbb{Z}[c_2,c_3,\tau_1,[\Delta_{1,2}],\ldots,[\Delta_{1,m}]]}{(c_3,\tau_1^2+c_2,[\Delta_{1,2}]^2-[\Delta_{1,2}]\tau_1,\ldots,[\Delta_{1,m}]^2-[\Delta_{1,m}]\tau_1)}.
    $$
    where we denoted by $[\Delta_{i,j}]$ the pullback of the class of the diagonal in $\P^1 \times \P^1$ under the $(i,j)$-projection ($i\neq j$), and by $\tau_i$ the pullback of $\tau$ under the $i$-th projection.

    Moreover, if $m>2$ and $i,j,l$ are all distinct, we have 
    \begin{equation}\label{eq: relation diagonals}
        [\Delta_{i,j}]=[\Delta_{l,i}]+[\Delta_{l,j}]-\tau_l.
    \end{equation}
\end{lemma}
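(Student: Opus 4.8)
The plan is to compute $\mathrm{CH}^*_{\PGL_2}((\P^1)^m)$ by induction on $m$, using the projective bundle formula at each step, and to keep track of the generators and relations carefully. The base cases $m=1,2$ are already available: $m=1$ is Equation~\eqref{eqn: PGL2 Chow of P1}, and $m=2$ should follow from the projective bundle formula applied to the projection $\P^1\times\P^1\to\P^1$, realizing the second factor as $\P(W_1\otimes\cO)$ over the first. Let me first set up the inductive step. Write $(\P^1)^m=(\P^1)^{m-1}\times\P^1$ and view the last projection $p:(\P^1)^m\to(\P^1)^{m-1}$ as a $\P^1$-bundle, namely $\P(W_1\otimes\cO_{(\P^1)^{m-1}})$. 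The class $\tau_m=c_1^{\PGL_2}(\cO_{\P^1_{(m)}}(2))$ is a relative hyperplane-type class, so by the projective bundle formula $\mathrm{CH}^*_{\PGL_2}((\P^1)^m)$ is a free module over $\mathrm{CH}^*_{\PGL_2}((\P^1)^{m-1})$ on $\{1,\tau_m\}$, with $\tau_m$ satisfying a quadratic relation $\tau_m^2+c_2=0$ (the same relation as in the base case, since $W_1$ is pulled back from $B\PGL_2$).

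The key point is then to re-express this presentation in terms of the stated generators $\tau_1$ and the diagonal classes $[\Delta_{1,j}]$. The plan is to prove by induction the relation~\eqref{eq: relation diagonals} together with the crucial identity $\tau_j=\tau_i$ for all $i,j$ once pulled back appropriately — indeed each $\tau_j$ is the pullback of the single class $\tau\in\mathrm{CH}^*_{\PGL_2}(\P^1)$, but they are pulled back along \emph{different} projections, so they need not be literally equal as classes on $(\P^1)^m$. I would resolve this by showing that $\tau_j-\tau_i$ is expressible through diagonal classes; concretely, the diagonal $\Delta_{i,j}\subset\P^1\times\P^1$ satisfies $[\Delta_{i,j}]^2=[\Delta_{i,j}]\tau_i$ (self-intersection equals the pullback of the Euler class, which is $\tau$) and this is symmetric in $i,j$, forcing $[\Delta_{i,j}]\tau_i=[\Delta_{i,j}]\tau_j$. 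To get the linear relation~\eqref{eq: relation diagonals}, I would compute intersection numbers or use the geometric fact that the three diagonals $\Delta_{l,i},\Delta_{l,j},\Delta_{i,j}$ in $(\P^1)^3$ satisfy a linear dependence modulo $\tau_l$; this is most cleanly seen by restricting to fibers and using that on $\P^1\times\P^1$, the class of the diagonal is $\mathrm{pt}\times\P^1+\P^1\times\mathrm{pt}$ in the non-equivariant setting, with the equivariant correction governed by $\tau$.

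The main substantive work is the change of generators: showing that the polynomial ring $\Z[c_2,c_3,\tau_1,[\Delta_{1,2}],\ldots,[\Delta_{1,m}]]$ modulo the stated ideal surjects onto, and has the correct rank as a module to match, the presentation produced by iterating the projective bundle formula. I would argue that $\{1,\tau_2\}\times\cdots$ — more precisely the monomials $\prod_j\tau_j^{\epsilon_j}$ with $\epsilon_j\in\{0,1\}$ — form a basis over $\mathrm{CH}^*(B\PGL_2)/(c_3)$, of rank $2^m$ (before imposing $\tau_j^2=-c_2$), and then check that rewriting each $\tau_j$ for $j\geq 2$ as $\tau_1+([\Delta_{1,j}]\text{-correction})$ via $\tau_j=2[\Delta_{1,j}]-\tau_1$ (the equivariant avatar of the diagonal decomposition) gives a change of basis with unipotent (hence invertible) transition matrix. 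The relations $[\Delta_{1,j}]^2-[\Delta_{1,j}]\tau_1$ then encode exactly $\tau_j^2+c_2=0$ after this substitution.

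\textbf{The main obstacle} I anticipate is pinning down the precise equivariant relationship between $\tau_j$ and the diagonal classes — that is, verifying the substitution $\tau_j=2[\Delta_{1,j}]-\tau_1$ (or whatever the correct equivariant normalization turns out to be) rather than merely its non-equivariant shadow. The $\PGL_2$-linearizations on $\cO_{\P^1}(2)$ and on $\cO(\Delta)$ must be matched exactly, including any torsion ambiguity coming from the factor of $2$ in $\CH^*(B\PGL_2)$, and getting the coefficients right in~\eqref{eq: relation diagonals} requires care. I would handle this by restricting along the section $\P^1\hookrightarrow\P^1\times\P^1$ (the diagonal embedding) and along a constant section, computing the two restrictions of each candidate identity, and invoking that the restriction maps to these substacks are jointly injective on the relevant degrees — thereby reducing the equivariant identity to the already-known computation on a single $\P^1$ together with the self-intersection formula for the diagonal.
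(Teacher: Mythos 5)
There is a genuine gap, and it sits at the heart of your strategy. You propose to realize $(\P^1)^m\to(\P^1)^{m-1}$ as a projective bundle and conclude that $\CH^*_{\PGL_2}((\P^1)^m)$ is free over $\CH^*_{\PGL_2}((\P^1)^{m-1})$ on $\{1,\tau_m\}$. First, the bundle you name is wrong: $W_1$ is the $3$-dimensional representation (degree-$2$ binary forms), so $\P(W_1\otimes\cO)$ is a $\P^2$-bundle; in fact $\P^1$ with its $\PGL_2$-action is not the projectivization of any $\PGL_2$-representation (the paper relies on this distinction throughout \S\ref{sec: GL3-counterparts}). More importantly, even granting that for $m\geq2$ the projection is a projective bundle (it acquires a section via a diagonal), the basis cannot be $\{1,\tau_m\}$: the correct relative hyperplane class is essentially $[\Delta_{1,m}]$, and the relation $2[\Delta_{1,m}]=\tau_1+\tau_m$, i.e.\ $\tau_m=2[\Delta_{1,m}]-\tau_1$, shows that your proposed change of basis has transition matrix with determinant $2$, not a unipotent one. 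Concretely, already for $m=2$ the class $[\Delta_{1,2}]$ is \emph{not} an integral polynomial in $c_2,c_3,\tau_1,\tau_2$; the subring generated by the $\tau_j$'s has index $2$ where it matters, and this $2$ is invisible rationally but essential here because $\CH^*(B\PGL_2)$ has $2$-torsion. For the same reason your claim that $[\Delta_{1,j}]^2-[\Delta_{1,j}]\tau_1$ ``encodes exactly'' $\tau_j^2+c_2=0$ fails: substituting $\tau_j=2[\Delta_{1,j}]-\tau_1$ into $\tau_j^2+c_2$ yields $4([\Delta_{1,j}]^2-[\Delta_{1,j}]\tau_1)$, so the projective-bundle relation in your generators only recovers the needed relation up to multiplication by $4$, which is not enough in a ring with $2$-torsion.

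The paper avoids all of this by not using a projective bundle decomposition at all. It runs the excision (localization) sequence for the union of the diagonals: for $m=2$ the complement of $\Delta_{1,2}$ has quotient $B\Gm$, and for $m\geq3$ the complement of all diagonals has trivial quotient, so the diagonal classes are forced to appear as generators via the pushforward $\delta_*$ and the projection formula; the relations $[\Delta_{1,j}]^2=[\Delta_{1,j}]\tau_1$ come from restricting $2[\Delta_{1,j}]=\tau_1+\tau_j$ to the diagonal (using torsion-freeness of $\CH^*_{\PGL_2}(\P^1)$ in degree $1$), the linear relation \eqref{eq: relation diagonals} is found by writing $\tau_1$ as a combination of diagonal classes and restricting, and completeness of the relations is checked by pushing candidate relations forward along coordinate-forgetting projections. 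Your ideas for deriving \eqref{eq: relation diagonals} and the self-intersection identity by restriction to diagonals are sound and close to the paper's, but the module-theoretic backbone of your argument (freeness on $\{1,\tau_m\}$ and the unipotent change of basis) is false as stated and would produce the wrong ring, namely the proper subring generated by the $\tau_j$'s.
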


\begin{proof}
    We can assume $m\geq2$ and work by induction on $m$. Let $m=2$. As $\Delta_{1,2}\cong\P^1$ via the diagonal embedding $\delta: \P^1 \to (\P^1)^2$, and $[(\P^1\times\P^1\setminus\Delta_{1,2})/\PGL_2]\cong B\Gm$, the localization sequence gives
    \begin{equation}\label{eq: P1xP1 localization sequence}
    \begin{tikzcd}
        0\arrow[r] & \CH_{\PGL_2}^*(\P^1)\arrow[r,"\delta_*"] & \CH_{\PGL_2}^*(\P^1\times\P^1)\arrow[r,"\phi"] & \CH^*(B\Gm)\arrow[r] & 0.
    \end{tikzcd}
    \end{equation}
    The first map is injective because the composite $\P^1\xrightarrow{\delta}\P^1\times\P^1\xrightarrow{\text{pr}_1}\P^1$ is an isomorphism.
    Notice that $2[\Delta_{1,2}]=\tau_1+\tau_2$, as the associated line bundles are isomorphic $\PGL_2$-equivariantly. Since $\tau_1^2=-c_2$, we have $\phi(\tau_1^2)=-\phi(c_2)=\lambda^2$ for any generator $\lambda$ of $\Pic(B\Gm)$, by~\cite[page 5]{Vez98}. Therefore, $\phi(\tau_1)$ is a generator of $\CH^*(B\Gm)$. Since every homogeneous element of $\CH_{\PGL_2}^*(\P^1)$ is a power of $\delta^*(\tau_1)$ multiplied by an integer, the projection formula shows that the image of $\delta_*$ is generated by $[\Delta_{1,2}]$ as a $\Z[c_2,\tau_1]$-module. Then, equation~\eqref{eq: P1xP1 localization sequence} shows that $\CH^*_{\PGL_2}(\P^1\times\P^1)$ is generated as a ring by the classes in the statement. Now, we pass to the relations, the first two being already known. Restricting the equation $2[\Delta_{1,2}]=\tau_1+\tau_2$ to $\Delta_{1,2}$ and using the torsion freeness of $\CH_{\PGL_2}^*(\P^1)$ in degree 1, we get the relation $[\Delta_{1,2}]_{|\Delta_{1,2}}=\tau$, and the projection formula gives the third relation in the statement. We are left with showing these are the only relations.

    Notice that $\CH_{\PGL_2}^*(\P^1\times\P^1)$ is generated as a $\Z[c_2]$-module by 1, $\tau_1$, $[\Delta_{1,2}]$ and $[\Delta_{1,2}]\tau_1$. Therefore, any new homogeneous relation is of the form $ac_2^k+bc_2^{k-1}\tau_1+dc_2^{k-1}[\Delta_{1,2}]+ec_2^{k-2}[\Delta_{1,2}]\tau_1$, with $a,b,d,e\in\Z$. Pushing forward along the first projection onto $\P^1$, we get $dc_2^{k-1}+ec_2^{k-2}\tau=0$, implying $d=e=0$. Then, restricting to $[\Delta_{1,2}]$ we get that $a=b=0$; this concludes the case $m=2$.

    Now assume $m\geq3$ and that the statement is true for product of less than $m$ projective lines. Let $\widetilde{\Delta}\subset(\P^1)^m$ be the union of all the diagonals $\Delta_{i,j}$. Then, the localization sequence together with the surjection $\oplus_{i<j}\CH_{\PGL_2}^*(\Delta_{i,j})\rightarrow\CH_{\PGL_2}^*(\widetilde{\Delta})$ yield the exact sequence
    \[
    \begin{tikzcd}
        \oplus_{i<j}\CH_{\PGL_2}^*(\Delta_{i,j})\arrow[r,"\sum\delta_{i,j}"] & \CH_{\PGL_2}^*((\P^1)^m)\arrow[r,"\phi"] & \CH^*_{\PGL_2}((\P^1)^m \smallsetminus \widetilde{\Delta})\arrow[r] & 0.
    \end{tikzcd}
    \]
    Notice that the last Chow ring is isomorphic to $\mathbb{Z}$, concentrated in degree 0, as the quotient $((\P^1)^m\setminus\widetilde{\Delta})/\PGL_2$ is an open subscheme of an affine space, as $m\geq3$. Using the projection formula as above, it follows that the Chow ring is generated as a $\Z[c_2,\tau_1]$-algebra by the classes $[\Delta_{i,j}]$.
    
    Now, we prove that $[\Delta_{i,j}]=[\Delta_{l,i}]+[\Delta_{l,j}]-\tau_l$; for this, it is enough to show that $[\Delta_{2,3}]=[\Delta_{1,2}]+[\Delta_{1,3}]-\tau_1$ in $\CH_{\PGL_2}^*((\P^1)^3)$. Since $\phi(\tau_1)=0$, we can write $\tau_1=a[\Delta_{1,2}]+b[\Delta_{1,3}]+c[\Delta_{2,3}]$ with $a,b,c\in\Z$. Restricting to $\Delta_{1,2}$ and $\Delta_{1,3}$, we get $a=b=1$, $c=-1$, as wanted.
    
    Using the just proved equation~\eqref{eq: relation diagonals} with $l=1$, we get that the $\PGL_2$-equivariant Chow ring of $(\P^1)^m$ is generated as a ring by the elements in the statement. We are left with showing that the ideal of relations is generated by the classes in the statement. We do as for the case $m=2$. We know that $\CH_{\PGL_2}^*((\P^1)^m)$ is generated as a $\Z[c_2]$-module by classes of the form $\tau_1^{\epsilon}\prod_{i\in I}[\Delta_{1,i}]$, with $I\subset\{2,\ldots,m\}$ and $\epsilon=0,1$; hence, any new homogenous relation is of the form
    \[
        \theta:=ac_2^k+bc_2^{k-1}\tau_1+\sum_Id_Ic_2^{k-|I|}\prod_{i\in I}[\Delta_{1,i}]+\sum_Ie_Ic_2^{k-1-|I|}\tau_1\prod_{i\in I}[\Delta_{1,i}]
    \]
    with $I\subset\{2,\ldots,m\}$ and the coefficients in $\Z$ (if an exponent of $c_2$ is negative we set the relative coefficient to 0 ). For every $J\subset\{2,\ldots,m\}$, let $p_J:(\P^1)^m\rightarrow(\P^1)^{m-|J|}$ be the projection that forgets the $j$-th variable for every $j\in J$. Then, $p_{J*}(\prod_{i\in I}[\Delta_{1,i}])\not=0$ if and only if $J\subset I$, in which case it is the product $\prod_{i\in I\setminus J}[\Delta_{1,u_{J}(i)}]$; here $u_{J}:\{2,\ldots,m\}\setminus J\xrightarrow{\sim}\{2,\ldots,m-|J|\}$ is the natural relabelling due to forgetting some components (if $I=J=\{2,\ldots,m\}$ then the product is empty and we set it to be 1). Moreover, $p_{J*}(\tau_1)=p_{J*}p_J^*(\tau_1)=0$, as $1\not\in J$. Therefore,
    \[
        p_{J*}(\theta)=\sum_{J\subset I}d_Ic_2^{k-|I|}\prod_{i\in I\setminus J}[\Delta_{1,u_J(i)}]+\sum_{J\subset I}e_Ic_2^{k-1-|I|}\tau_1\prod_{i\in I\setminus J}[\Delta_{1,u_J(i)}]\in\CH_{\PGL_2}^*((\P^1)^{m-|J|}).
    \]
    By induction, this is 0 if and only if $d_I=e_I=0$ for every $I$ containing $J$. Varying $J$, we obtain $\theta=ac_2^k+bc_2^{k-1}\tau_1$, and restricting to $\Delta_{1,2}$ we get $a=b=0$, again by induction. This concludes.
\end{proof}

The Chow ring of the classifying stack $B \GI$ has already been computed in \cite{Lar19}.

\begin{theorem}\cite[Theorem 5.2.]{Lar19}\label{thm: chow BG}
    The Chow ring of $B(\GI)$ is given by
    $$
    \CH^*(B(\GI)) \cong \frac{\Z[\beta_1, \beta_2, \gamma]}{(2\gamma, \gamma(\gamma +\beta_1))}.
    $$
\end{theorem}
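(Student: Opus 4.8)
The plan is to isolate the two relations from a single representation-theoretic identity, and then to upgrade this to a complete presentation by an equivariant localization computation.

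For the relations, the point I would first check is that there is an isomorphism of $\GI$-representations $V \cong V \otimes \Gamma$. On the diagonal subtorus $\Gm \times \Gm \subseteq \GI$ the character $\Gamma$ is trivial, so both sides restrict to the same sum of two characters; on the anti-diagonal involution $\left(\begin{smallmatrix}0&1\\1&0\end{smallmatrix}\right)$ the linear map $\mathrm{diag}(1,-1)$ intertwines the two actions, which suffices to produce a global isomorphism. Comparing total Chern classes and using $c_1(V \otimes \Gamma) = \beta_1 + 2\gamma$ and $c_2(V \otimes \Gamma) = \beta_2 + \beta_1 \gamma + \gamma^2$, the equalities $c_1(V) = c_1(V \otimes \Gamma)$ and $c_2(V) = c_2(V \otimes \Gamma)$ give exactly $2\gamma = 0$ and $\gamma(\gamma + \beta_1) = 0$. (The relation $2\gamma = 0$ also follows at once from $\Gamma^{\otimes 2}$ being trivial.) This yields a ring map $\Z[\beta_1,\beta_2,\gamma]/(2\gamma,\, \gamma(\gamma+\beta_1)) \to \CH^*(B(\GI))$, and the remaining task is to show it is an isomorphism.

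For that I would realize $B(\GI)$ geometrically. The group $\GI$ is precisely the stabilizer in $\GL_2$ of the point $[XY]$ in $\P(\Sym^2 V^\vee) = \P^2$, the space of binary quadratic forms up to scale. Since $\PGL_2$ acts $2$-transitively on $\P^1$, the group $\GL_2$ acts transitively on the squarefree locus $\P^2 \smallsetminus C$, where $C$ is the discriminant conic of double lines, with stabilizer $\GI$; hence $B(\GI) \cong [(\P^2 \smallsetminus C)/\GL_2]$, and under this identification $\beta_1,\beta_2$ are the pullbacks of $c_1,c_2$ from $B\GL_2$. The projective bundle formula gives $\CH^*_{\GL_2}(\P^2) = \Z[\beta_1,\beta_2][H]/(f)$ with $f$ the Chern polynomial of $\Sym^2 V^\vee$, free of rank $3$ on $1,H,H^2$, and the localization sequence
\[ \CH^*_{\GL_2}(C) \xrightarrow{i_*} \CH^*_{\GL_2}(\P^2) \xrightarrow{j^*} \CH^*(B(\GI)) \to 0 \]
presents $\CH^*(B(\GI))$ as the quotient by the ideal $\mathrm{im}(i_*)$. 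Using the Veronese identification $C \cong \P(V^\vee)$, $\ell \mapsto \ell^2$, one has $\CH^*_{\GL_2}(C) = \Z[\beta_1,\beta_2][h]/(h^2 - \beta_1 h + \beta_2)$, and $i^* H = 2h$ up to a weight correction; since $i^*$ hits $\beta_1,\beta_2$ and $2h$ but not $h$, the ideal $\mathrm{im}(i_*)$ is generated by $i_*(1) = [C]$ and $i_*(h)$.

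It then remains to compute these two pushforwards. The discriminant is a semi-invariant of weight $\det^{-2}$, so $[C] = 2H + 2\epsilon\,\beta_1$ for an explicit $\epsilon \in \{\pm 1\}$; setting $\gamma := H + \epsilon\,\beta_1$ identifies the $2$-torsion generator with $c_1(\Gamma)$ and turns the relation $[C] = 0$ into $2\gamma = 0$, while $i_*(h) = 0$ becomes $\gamma(\gamma + \beta_1) = 0$ after rewriting in the new variable. In particular $\CH^*(B(\GI))$ is generated by $\beta_1,\beta_2,\gamma$, so the map of the first step is surjective, and a rank count over $\Z[\beta_1,\beta_2]$ degree by degree (matching the free part against $\Z[\beta_1,\beta_2]$ and the $2$-torsion part against the powers of $\gamma$) shows it is injective. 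The main obstacle is the honest equivariant bookkeeping in this last step: computing $[C]$ and especially $i_*(h)$ via the self-intersection formula with the correct normal-bundle weights, and verifying that the resulting $2$-torsion generator is exactly $\gamma = c_1(\Gamma)$. The representation-theoretic identity of the first step serves as a useful independent check that the two relations come out correctly.
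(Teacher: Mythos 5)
The paper does not prove this statement: it is quoted verbatim from \cite[Theorem 5.2]{Lar19}, so there is no in-paper argument to compare against. Your proposal is, however, a correct reconstruction, and it follows exactly the strategy the paper \emph{does} use for the companion computation of $\CH^*(B(\GII))$ in Theorem \ref{thm: Chow BGII}: realize the classifying stack as the quotient of the squarefree locus in $\P(\Sym^2 V^\vee)$ (there, $\P(W_1)$) by the ambient group, and run the excision sequence against the Veronese conic $C\cong\P^1$ of double lines. Two remarks. First, your observation that one genuinely needs $i_*(h)$ in addition to $i_*(1)=[C]$ is the correct point of divergence from the $\GII$ case: there $i^*\colon\CH^*_{\PGL_2}(\P^2)\to\CH^*_{\PGL_2}(\P^1)=\Z[\tau]$ is surjective, so $j_*(1)$ alone generates the image, whereas for $\GL_2$ the pullback only hits $2h$ plus a weight, so the projection formula forces you to compute $i_*(h)$ separately — and note that the self-intersection formula only determines $2\,i_*(h)$, so you must compute $i_*(h)$ honestly (e.g.\ by restricting to the maximal torus of $\GL_2$, where $\CH^*_{\GL_2}\hookrightarrow\CH^*_T$, in the spirit of Proposition \ref{prop:generalizeddecomposition}). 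Second, your final ``rank count'' is fine but is cleaner stated as: after substituting $H=\gamma+\beta_1$, the projective-bundle relation $f(H)=(H-\beta_1)(H^2-2\beta_1H+4\beta_2)$ becomes $\gamma(\gamma-\beta_1)(\gamma+\beta_1)+4\beta_2\gamma$, which lies in $(2\gamma,\gamma(\gamma+\beta_1))$, so the presentation collapses to exactly the stated one. The opening identity $V\cong V\otimes\Gamma$ is correct (the intertwiner $\mathrm{diag}(1,-1)$ works) and gives the two relations directly; it is a nice consistency check not present in the paper or, as far as the citation indicates, in \cite{Lar19}.
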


For $B(\GII)$ we have the following.

\begin{theorem}\label{thm: Chow BGII}
        The Chow ring of $B(\GII)$ is 
        $$
            \mathrm{CH}^*(B(\GII))=\frac{\mathbb{Z}[c_2,c_3,\gamma,t]}{(2\gamma,\gamma^3+c_2\gamma+c_3,2c_3)}=\frac{\mathbb{Z}[ c_2,\gamma, t]}{(2\gamma)}.
        $$
    \end{theorem}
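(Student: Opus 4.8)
The plan is to reduce the computation to the classifying stack of the normalizer $N := \Gm \rtimes \mu_2$ of a maximal torus of $\PGL_2$, and then to obtain $\CH^*(BN)$ from a single localization sequence on $\P(W_1)=\P^2$. First I would exploit the product decomposition $\GII = \Gm \times N$, where $N$ is the second factor embedded in $\PGL_2$ as in Notation \ref{not: representations PGL2}. Then $B(\GII)\cong BN \times B\Gm$, and since $\CH^*(B\Gm)=\Z[t]$ with $t=c_1(\chi)$, Lemma \ref{lemma: Kunneth for Gm} applied with $\cY = BN$ gives a ring isomorphism $\CH^*(B(\GII))\cong \CH^*(BN)\otimes_\Z \Z[t]$. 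As the classes $c_2,c_3,\gamma$ are pulled back from $BN$ (the map $B(\GII)\to B\PGL_2$ factors through the projection to $BN$) and $t$ comes from $B\Gm$, it suffices to prove $\CH^*(BN)=\Z[c_2,c_3,\gamma]/(2\gamma,\ \gamma^3+c_2\gamma+c_3,\ 2c_3)$.

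Next I would identify $BN$ geometrically. The group $N$ is precisely the $\PGL_2$-stabilizer of the point $[XY]\in\P(W_1)$, equivalently of an unordered pair of distinct points of $\P^1$; since $\PGL_2$ acts transitively on such pairs, $\P(W_1)\smallsetminus\uD$ is a single orbit isomorphic to $\PGL_2/N$, so $BN\cong[(\P(W_1)\smallsetminus\uD)/\PGL_2]$, where $\uD\subseteq\P(W_1)=\P^2$ is the discriminant conic. I would then run the $\PGL_2$-equivariant localization sequence
\[
\CH^*_{\PGL_2}(\uD)\xrightarrow{\ i_*\ }\CH^*_{\PGL_2}(\P(W_1))\xrightarrow{\ j^*\ }\CH^*(BN)\to 0 .
\]
The middle term is given by the projective bundle formula over $B\PGL_2$: with $\xi:=\xi_2=c_1^{\PGL_2}(\cO_{\P(W_1)}(1))$ and $c_1(W_1)=0$, and using \eqref{eqn: Chow PGL2}, one gets $\CH^*_{\PGL_2}(\P(W_1))=\Z[c_2,c_3,\xi]/(2c_3,\ \xi^3+c_2\xi+c_3)$. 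The conic $\uD$ is the image of the squaring Veronese $\P^1\to\P^2$, $\ell\mapsto \ell^2$, so $\uD\cong\P^1$ equivariantly and $\CH^*_{\PGL_2}(\uD)=\Z[\tau]$ by \eqref{eqn: PGL2 Chow of P1}.

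The heart of the argument is computing the image of $i_*$. Restricting the tautological subbundle along the Veronese gives $\cO_{\P(W_1)}(-1)|_{\uD}\cong\cO_{\P^1}(-2)$, hence $i^*\xi=\tau$; in particular $i^*$ is surjective. Since $\uD$ is the discriminant, a degree-$2$ divisor, and $\CH^1_{\PGL_2}(\P(W_1))=\Z\xi$, we have $i_*1=[\uD]=2\xi$. By the projection formula together with the surjectivity of $i^*$, the image of $i_*$ equals the full ideal generated by $2\xi$, so $\CH^*(BN)\cong\Z[c_2,c_3,\xi]/(2c_3,\ \xi^3+c_2\xi+c_3,\ 2\xi)$. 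I would then check $\xi=\gamma$: the fibre of $\cO_{\P(W_1)}(-1)$ over $[XY]$ is the $N$-representation on the line $k\cdot XY\subseteq W_1$, on which the diagonal torus acts trivially and the swap by $-1$, i.e. it is the sign representation $\Gamma$; thus $-\xi=\gamma$, and as $2\gamma=0$ we may write $\xi=\gamma$. This yields the first presentation after tensoring with $\Z[t]$. For the second, use $\gamma^3+c_2\gamma+c_3=0$ to eliminate $c_3=-\gamma^3-c_2\gamma$; the relation $2c_3=0$ then reads $-2\gamma^3-2c_2\gamma=0$, which is automatic from $2\gamma=0$, leaving $\CH^*(B(\GII))=\Z[c_2,\gamma,t]/(2\gamma)$.

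The main obstacle is the determination of $\mathrm{im}(i_*)$: one must verify both that $i_*1=2\xi$ \emph{exactly}, with no correction coming from $B\PGL_2$ (guaranteed by $\CH^1_{\PGL_2}(\P(W_1))=\Z\xi$), and that $i^*$ is surjective, so that $i_*$ hits the entire ideal $(2\xi)$ rather than a smaller subgroup. Both hinge on the explicit description of $\uD$ as the squaring Veronese and the resulting identity $i^*\xi=\tau$, which is the step where the bookkeeping must be done carefully.
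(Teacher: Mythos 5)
Your proposal is correct and follows essentially the same route as the paper: reduce via the Künneth lemma to $B(\Gm\rtimes\mu_2)$, realize it as the quotient of the non-discriminant locus in $\P(W_1)$ by $\PGL_2$, and use the localization sequence with the squaring Veronese $\P^1\xrightarrow{\sim}\uD$ to see that the image of $i_*$ is the ideal $(2\xi)$. The only cosmetic difference is that you identify $\xi$ with $\gamma$ by computing the $N$-representation on the fibre of $\cO(-1)$ at $[XY]$, whereas the paper invokes uniqueness of the degree-one $2$-torsion class; both are fine.
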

    \begin{proof}
        By Lemma~\ref{lemma: Kunneth for Gm},
        $$
        \mathrm{CH}^*(B(\GII))= \mathrm{CH}^*(B(\Gm \rtimes \mu_2))[t],
        $$
        so it is enough to compute $\mathrm{CH}^*(B(\Gm \rtimes \mu_2))$. Call $\mathsf{H}=\Gm \rtimes \mu_2$. Consider $\P^2= \P(W_1)$ with $ \PGL_2$ acting on it. Let $U \subseteq \P^2$ the locus of polynomials with no repeated roots and let $j:Z \hookrightarrow \P^2$ be its complement (taken with the reduced structure). Then $U$ is preserved by the $\PGL_2$ action and $\PGL_2$ acts transitively on it with stabilizers exactly $\mathsf{H}$. The excision sequence then yields 
        \[
        \begin{tikzcd}
            \CH^*_{\PGL_2}(Z) \arrow[r,"j_*"] & \CH^*_{\PGL_2}(\P^2) \arrow[r] & \CH^*(B \mathsf{H}) \arrow[r] & 0.
        \end{tikzcd}
        \]
        The squaring map $ \P^1 \xrightarrow{\sim} Z$ is a $\PGL_2$-equivariant isomorphism and the induced pullback 
        \[
        \begin{tikzcd}
            \CH^*_{\PGL_2}(\P^2)= \frac{\Z[c_2, c_3,\xi]}{ (2 c_3, \xi^3+c_2 \xi +c_3)}=\Z[c_2,\xi] \arrow[r] & \CH^*_{\PGL_2}(\P^1)= \Z[\tau]
        \end{tikzcd}
        \]
        is surjective as it maps $\xi \mapsto \tau$. Here we denoted $\xi= c_1^{\PGL_2}(\cO_{\P^2}(1))$. It follows that
        the image of $j_*$ is generated by $[Z]= 2 \xi \in \CH^*_{\PGL_2}(\P^2)$ and thus
        $$
        \CH^*(B \mathsf{H})=\frac{\Z[c_2,\xi]}{(2 \xi)}.
        $$
        Finally, $\xi$ agrees with the pullback of $\gamma$ from $B \mu_2$ as it is the unique $2$-torsion class of degree $1$ (note that the pullback of $\gamma$ to $B \mathsf{H}$ is non-zero as $\mathsf{H} \to \mu_2$ is surjective). This concludes the proof.
    \end{proof}

    We will also need the $\GII$-equivariant Chow ring of $\P^1$. The action of $\Gm$ is trivial and the action of $\Gm \rtimes \mu_2$ is given by the inclusion $\Gm \rtimes \mu_2 \subseteq \PGL_2$.

    \begin{lemma}
        The $\GII$-equivariant Chow ring of $\P^1$ is 
        $$
        \CH^*_{\GII}(\P^1)= \frac{\Z[c_2,c_3,\tau,\gamma,t]}{(2\gamma, \tau^2+c_2,\gamma(\gamma+\tau),c_3)}= \frac{\Z[\tau,\gamma,t]}{(2 \gamma, \gamma(\gamma+\tau))}.
        $$
        Moreover, we have
        \begin{equation}\label{eqn: class X,Y}
        [\{ X, Y\}]=\tau+ \gamma \in \CH^*_{\GII}(\P^1).
        \end{equation}
    \end{lemma}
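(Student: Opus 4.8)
The plan is to peel off the trivial first $\Gm$-factor and then present $\P^1$ with its $\mathsf{H}:=\Gm\rtimes\mu_2$-action as a homogeneous family over $B\mathsf{H}$, so that the whole computation takes place inside a $\PGL_2$-equivariant Chow ring of a product of projective spaces, exactly in the spirit of the proof of Theorem~\ref{thm: Chow BGII}. Since the first $\Gm$ acts trivially, Lemma~\ref{lemma: Kunneth for Gm} gives $\CH^*_{\GII}(\P^1)=\CH^*_{\mathsf{H}}(\P^1)[t]$, so it is enough to compute $\CH^*_{\mathsf{H}}(\P^1)$ and to show it equals $\Z[\tau,\gamma]/(2\gamma,\gamma(\gamma+\tau))$. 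As recalled in the proof of Theorem~\ref{thm: Chow BGII}, $\mathsf{H}$ is the $\PGL_2$-stabilizer of the squarefree quadratic $XY$ lying in the locus $U\subseteq\P(W_1)=\P^2$ of squarefree binary quadratics; hence $[\P^1/\mathsf{H}]\cong[(U\times\P^1)/\PGL_2]$ for the diagonal action, and I would identify $\CH^*_{\mathsf{H}}(\P^1)=\CH^*_{\PGL_2}(U\times\P^1)$. Under this identification the restriction of $\xi:=c_1^{\PGL_2}(\cO_{\P^2}(1))$ is the class $\gamma$ (this is precisely the identification $\xi=\gamma$ of Theorem~\ref{thm: Chow BGII}), while $\tau$ comes from the second factor.

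Next I would compute $\CH^*_{\PGL_2}(\P^2\times\P^1)$ via the projective bundle formula applied to $\P(W_1\otimes\cO_{\P^1})\to\P^1$: using $c_1(W_1)=0$ together with $c_3=0$ and $c_2=-\tau^2$ from~\eqref{eqn: PGL2 Chow of P1}, this yields
\[
\CH^*_{\PGL_2}(\P^2\times\P^1)=\Z[\tau,\xi]/(\xi^3-\tau^2\xi).
\]
The complement $Z=\P^2\smallsetminus U$ of double lines is $\PGL_2$-equivariantly isomorphic to $\P^1$ through the squaring map $s$, for which $s^*\xi=\tau$ and $[Z]=2\xi$, so I would feed the localization sequence
\[
\CH^*_{\PGL_2}(Z\times\P^1)\xrightarrow{i_*}\CH^*_{\PGL_2}(\P^2\times\P^1)\to\CH^*_{\PGL_2}(U\times\P^1)\to0.
\]
By Lemma~\ref{lem: PGL2 Chow prod proj lines}, $\CH^*_{\PGL_2}(Z\times\P^1)\cong\CH^*_{\PGL_2}(\P^1\times\P^1)$ is generated as a module over $\mathrm{im}(i^*)=\Z[\tau,\xi]$ by $1$ and the diagonal class $[\Delta]$, so by the projection formula $\mathrm{im}(i_*)$ is the ideal generated by $i_*(1)=[Z\times\P^1]=2\xi$ and $i_*[\Delta]$. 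Since $2[\Delta]=i^*\xi+i^*\tau$, the projection formula gives $2\,i_*[\Delta]=2\xi^2+2\xi\tau$, and torsion-freeness of $\CH^2$ forces $i_*[\Delta]=\xi(\xi+\tau)$. Thus $\mathrm{im}(i_*)=(2\xi,\xi(\xi+\tau))$, the relation $\xi^3-\tau^2\xi$ becomes redundant, and putting $\xi=\gamma$ gives $\CH^*_{\mathsf{H}}(\P^1)=\Z[\tau,\gamma]/(2\gamma,\gamma(\gamma+\tau))$; adjoining $t$ and reintroducing $c_2=-\tau^2,\ c_3=0$ recovers both presentations in the statement.

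For the class $[\{X,Y\}]$, under the identification of the first paragraph the two points $\{X,Y\}$ (the roots of $XY$) correspond to the incidence locus $I=\{(q,\ell):\ell\mid q\}\subseteq U\times\P^1$, which is the restriction of $\widetilde I=\{(q,\ell):q(\ell)=0\}\subseteq\P^2\times\P^1$. Since $\widetilde I$ is the vanishing of a form of bidegree $(1,2)$ in the coordinates of $\P^2$ and $\P^1$, one reads off $[\widetilde I]=\xi+\tau$ (recall $\tau=c_1^{\PGL_2}(\cO_{\P^1}(2))$ accounts for the degree $2$ in the second factor); restricting to $U\times\P^1$ and using $\xi=\gamma$ yields $[\{X,Y\}]=\tau+\gamma$.

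The main obstacle is the pushforward step, namely producing $\mathrm{im}(i_*)$ in closed form. Two points require care: transporting classes across the Veronese identification $Z\cong\P^1$ so that $i^*\xi=\tau$ and $2[\Delta]=i^*\xi+i^*\tau$ hold on the nose, and using the freeness of $\CH^2_{\PGL_2}(\P^2\times\P^1)$ to divide the relation $2\,i_*[\Delta]=2\xi(\xi+\tau)$ by $2$. A secondary check is that $\{1,[\Delta]\}$ generate $\CH^*_{\PGL_2}(Z\times\P^1)$ over $\mathrm{im}(i^*)$, guaranteeing that exactly the two generators $2\xi$ and $\xi(\xi+\tau)$ of the image ideal appear; everything else is formal.
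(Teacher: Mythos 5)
Your argument is correct, but it follows a genuinely different route from the paper's. The paper works directly with the $\H=\Gm\rtimes\mu_2$-orbit stratification of $\P^1$ itself: it excises the invariant pair $\{X,Y\}$ (whose quotient is $B\Gm$) from its complement (whose quotient is $B\mu_2$), obtains $[\{X,Y\}]=\tau+\gamma$ as the first Chern class of $\cO_{\P^1}(2)\otimes\Gamma$ (since $XY$ is a section of $W_1\otimes\Gamma$), derives the relation $\gamma(\gamma+\tau)=0$ by push--pull from the vanishing of $\gamma$ on $B\Gm$, and rules out further relations by restricting putative relations to the two strata. You instead transport the whole problem to $\PGL_2$-equivariant geometry via $[\P^1/\H]\cong[(U\times\P^1)/\PGL_2]$, compactify to $\P^2\times\P^1$, and stratify the $\P^2$-factor by the double-line locus $Z\cong\P^1$; this is exactly the mechanism of the paper's proof of Theorem~\ref{thm: Chow BGII}, extended by a $\P^1$-factor, and it leans on Lemma~\ref{lem: PGL2 Chow prod proj lines} for the module generators $1,[\Delta]$ of $\CH^*_{\PGL_2}(\P^1\times\P^1)$. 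I checked the points you flag: $i^*\xi=\tau_1$ and $2[\Delta]=\tau_1+\tau_2$ do hold on the nose with the paper's linearizations, $\CH^2_{\PGL_2}(\P^2\times\P^1)$ is free (basis $\tau^2,\tau\xi,\xi^2$) so dividing $2\,i_*[\Delta]=2\xi(\xi+\tau)$ by $2$ is legitimate, the projective-bundle relation $\xi(\xi-\tau)(\xi+\tau)$ is indeed absorbed by $\xi(\xi+\tau)$, and the incidence divisor has bidegree $(1,2)$ so $[\widetilde I]=\xi+\tau$ by injectivity of the forgetful map on $\CH^1$. What your route buys is that the localization sequence hands you the presentation as a quotient in one stroke, with the key pushforward pinned down by a torsion-freeness/divisibility argument, so no separate ``no further relations'' verification is needed; the cost is that you must import Lemma~\ref{lem: PGL2 Chow prod proj lines} and the identification $\xi|_U=\gamma$ from Theorem~\ref{thm: Chow BGII}, whereas the paper's stratification of $\P^1$ by $\H$-orbits is shorter and more self-contained, at the price of an explicit check that no relations were missed.
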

    \begin{proof}
        Let $\H=\Gm \rtimes \mu_2$. It is enough to compute the $\H$-equivariant Chow group of $\P^1$. The relations $c_3=0$ and $\tau^2+c_2=0$ already hold in the $\PGL_2$-equivariant Chow ring of $\P^1$ and persist here. The relations $2 \gamma=0$ and $\gamma^3+ \gamma c_2 + c_3 =0$ instead come from the Chow of the classifying stack $B\H$. Note that the relation $\gamma^3+ \gamma c_2 + c_3 =0$ is implied by the others in the statement.

        Next, we show that $\CH^*_{\H}(\P^1)$ is generated by $\tau$ and $\gamma$ as a $\Z$-algebra. Note that the action of $\H$ on $\P^1 \smallsetminus \{ X, Y \}$ is transitive with stabilizers $\mu_2$. This yields a commutative diagram
        \[
        \begin{tikzcd}
            \mathrm{CH}_{\H}^*(\{X,Y\})\arrow[r]\arrow[d,"\cong"] & \mathrm{CH}_{\H}^*(\P^1)\arrow[r] & \mathrm{CH}_{\H}^*(\P^1\setminus\{X,Y\})\arrow[r]\arrow[d,"\cong"] & 0\\
            \mathrm{CH}^*(\mathrm{B}\mathbb{G}_m) & \mathrm{CH}^*(\mathrm{B\H})\arrow[l]\arrow[r]\arrow[u] & \mathrm{CH}^*(\mathrm{B}\mu_2).
        \end{tikzcd}
        \]
        Since $c_2 \in \CH^*(B\H)$ maps to $c_2=-\tau^2 \in \CH^*_{\H}(\P^1)$ and, by \cite[Page 9]{Vez98}, to the square of a generator in $ \CH^*(B\Gm)$, we deduce that the restriction of $\tau \in \CH^*_{\H}(\P^1)$ to $\CH^*_{\H}(\{ X, Y \})= \CH^*(B \Gm)$ is a generator. The push-pull formula implies that the map $ \mathrm{CH}_{\H}^*(\{X,Y\}) \to \mathrm{CH}_{\H}^*(\P^1)$ is given by $\tau_{|\{ X, Y \}} \mapsto \tau \cdot [\{ X, Y \}]$. Also,  $\{ X , Y\}$ is the zero locus of a section of $\cO_{\P^1}(2) \otimes \Gamma$, and thus we have $[\{ X, Y \}]= \tau+\gamma \in \CH^*_\H(\P^1)$. This shows that $\CH^*_{\H}(\P^1)$ is generated as an $\mathbb{Z}$-algebra by $\tau$ and $\gamma$.

        The relation $\gamma(\gamma + \tau)$ arises from the fact that, since the restriction of $\gamma$ to $B\Gm$ is torsion, it must be zero. Consequently, it is also zero in $\mathrm{CH}^*_{\mathsf{H}}({X, Y})$, and thus its image $\gamma(\gamma + \tau)$ in $\mathrm{CH}^*_{\mathsf{H}}(\mathbb{P}^1)$ is also zero. 

        Finally, we show that there are no further relations. Suppose $p$ is a non-zero element of $\Z[\tau,\gamma]/(2 \gamma, \gamma (\gamma+\tau))$. Write it as a sum with coefficients in $\Z$ of monomials $\tau^{\ell}$ and $\gamma^k$. Also the non-zero coeffients of the monomials  $\gamma^k$ can be taken to be $1$. Suppose that $p$ is $0$ in $\CH^*_{\H}(\P^1)$. Then restricting to $\mathrm{CH}_{\H}^*(\{X,Y\})$ we see that the coefficient of each $\tau^{\ell}$ must vanish. Similarly, restricting to $\CH^*_{\H}(\P^1 \smallsetminus \{ X,Y \})$, one see that the remaining coefficients are also $0$. This concludes the proof.
        \end{proof}

        We will also need the class of $\{ X Y \} \subseteq \P(W_1)$. This is analogous to the computation of $[ \{ X Y \} ] \in \CH^*_{\GI}(\P(\Sym^2 V^\vee))$ in \cite[Lemma 33]{CIL24}.

        \begin{lemma}\label{lemma: class of XY for odd g}
            The class of $\{ X Y \} \subseteq \P(W_1)$ is 
            $$
             \xi^2+\xi \gamma +c_2+\gamma^2 \in \CH^*_{\GII}(\P(W_1)).
            $$
            where $\xi=c_1^{\GII}(\cO_{\P(W_1)}(1))$.
        \end{lemma}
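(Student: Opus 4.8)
The plan is to realize the point $\{XY\}$ as the zero locus of a regular equivariant section of a rank-$2$ bundle on $\P(W_1)$ and to read off its class as the top Chern class of that bundle, exactly in the spirit of the cited computation \cite[Lemma 33]{CIL24}. Since the first $\Gm$-factor of $\GII$ acts trivially on $W_1$, by Lemma~\ref{lemma: Kunneth for Gm} it suffices to work $\mathsf{H}$-equivariantly for $\mathsf{H}=\Gm\rtimes\mu_2$, and the resulting class will automatically involve no $t$; this is consistent with the stated answer.

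The first structural step is to check that the line $\langle XY\rangle\subseteq W_1$ is an $\mathsf{H}$-subrepresentation isomorphic to $\Gamma$. Indeed, the torus $\Gm\subseteq\mathsf{H}$ fixes $XY$ (it has weight $0$), while the generator of $\mu_2$, represented by $\left(\begin{smallmatrix}0&1\\1&0\end{smallmatrix}\right)$, acts on $W_1$ by $f\mapsto\det(A)(f\circ A^{-1})$ and hence sends $XY\mapsto -XY$. Therefore $\langle X^2,Y^2\rangle$ is a complementary $\mathsf{H}$-subrepresentation, $M:=W_1/\langle XY\rangle\cong\langle X^2,Y^2\rangle$ is an $\mathsf{H}$-equivariant rank-$2$ quotient, and composing the tautological inclusion with the projection yields a canonical equivariant section
\[
s:\ \cO_{\P(W_1)}(-1)\ \hookrightarrow\ W_1\otimes\cO_{\P(W_1)}\ \twoheadrightarrow\ M\otimes\cO_{\P(W_1)},
\]
i.e. a section of the equivariant bundle $\cO_{\P(W_1)}(1)\otimes M$ on $\P(W_1)$. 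By construction $s$ vanishes at $[v]$ precisely when $v\in\langle XY\rangle$, so its zero locus is the single reduced point $\{XY\}$, of codimension $2$, equal to the rank of the bundle. Hence $s$ is a regular section and
\[
[\{XY\}]=c_2\big(\cO_{\P(W_1)}(1)\otimes M\big)=\xi^2+\xi\,c_1(M)+c_2(M).
\]

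It then remains to compute $c_1(M)$ and $c_2(M)$ in $\CH^*(B\mathsf{H})=\Z[c_2,\gamma]/(2\gamma)$. Since $W_1\cong\Gamma\oplus M$ as $\mathsf{H}$-representations and $c_1(W_1)=0$ (recall $W_1$ is the adjoint representation $\mathsf{sl}_2$, so $c_1=0$), the identity $c(W_1)=(1+\gamma)\,c(M)$ gives, degree by degree, $c_1(M)=-\gamma=\gamma$ and $c_2(M)=c_2-\gamma^2=c_2+\gamma^2$, using $2\gamma=0$; one checks compatibly in degree $3$ that $\gamma\,c_2(M)=c_3$. Substituting produces $[\{XY\}]=\xi^2+\xi\gamma+c_2+\gamma^2$, as claimed.

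The only genuinely delicate points are the two structural verifications rather than the short Chern-class bookkeeping: that $\langle XY\rangle$ is a subrepresentation isomorphic to $\Gamma$ (this is exactly what produces the $\gamma$-contributions and is the precise analogue of \cite[Lemma 33]{CIL24}), and that $s$ is regular with reduced zero locus, so that the top Chern class computes $[\{XY\}]$ with the correct multiplicity. As an independent cross-check one may restrict to the maximal torus $T\subseteq\mathsf{H}$, which kills $\gamma$ and sends $c_2\mapsto c_2(W_1)|_T$, to pin down the coefficients of $\xi^2$ and $c_2$; and restrict to the disjoint $\mathsf{H}$-invariant line $\P(M)\subseteq\P(W_1)$, on which $[\{XY\}]$ must vanish, to force the coefficients of $\xi\gamma$ and $\gamma^2$ to be odd. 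Both computations recover the same expression.
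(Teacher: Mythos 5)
Your proof is correct and is essentially the paper's own argument: the paper realizes $\{XY\}$ as the zero locus of the canonical section of $\pi^*Q\otimes\cO_{\P(W_1)}(1)\otimes\Gamma^\vee$ where $Q=(W_1\otimes\Gamma)/\cO$, which is exactly your bundle $\cO_{\P(W_1)}(1)\otimes M$ since $Q\cong M\otimes\Gamma$, and then computes the same top Chern class. The only difference is cosmetic bookkeeping (twisting $W_1$ by $\Gamma$ to make $XY$ invariant versus observing directly that $\langle XY\rangle\cong\Gamma$ as a subrepresentation).
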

        \begin{proof}
            Note that $XY$ is a section of the bundle $W_1 \otimes \Gamma$ over $B\GII$. Define $Q$ as the quotient 
    \begin{equation}\label{eqn: quotient bundle}
    \begin{tikzcd}
        0 \arrow[r] & \mathcal{O}\arrow[r,"XY"] & W_1 \otimes \Gamma  \arrow[r] & Q \arrow[r] & 0.
    \end{tikzcd}
    \end{equation}
    Let $\pi: [\P(W_1)/ \GII] \to B (\GII)$ be the natural projection. There is a natural diagram on $\P (W_1)$
    $$
    \begin{tikzcd}
	&& 0 \\
	&& {\mathcal O} \\
	0 & {\mathcal O_{\P( W_1)} (-1)\otimes \Gamma} & {\pi^* W_1 \otimes \Gamma} \\
	&& {\pi^*Q} \\
	&& 0
	\arrow[from=3-1, to=3-2]
	\arrow[from=3-2, to=3-3]
	\arrow[from=1-3, to=2-3]
	\arrow["XY", from=2-3, to=3-3]
	\arrow[from=3-3, to=4-3]
	\arrow[from=4-3, to=5-3]
	\arrow["\psi"', from=3-2, to=4-3]
\end{tikzcd}
    $$
    and $\{XY\} \subset \P(W_1)$ is precisely the locus where
    $$
    \psi \in  H^0(\P(W_1), \pi^* Q \otimes \mathcal{O}_{\P(W_1)}(1) \otimes \Gamma^\vee)
    $$
     vanishes. Since the codimension of $\{XY\} \subseteq \P(W_1)$ is $2$, the expected one, we have 
    \begin{align*}
    [\{XY\}] = &c_2^{\GII} (\pi^* Q \otimes \mathcal{O}_{\P(W_1)}(1) \otimes \Gamma^\vee) \\
    = & c_2^{\GII}( \pi^* Q) +c_1^{\GII}(\pi^*Q)(\xi+\gamma)+(\xi+\gamma)^2 \\
    = & \xi^2+c_2+\gamma^2+\xi \gamma.
    \end{align*}
    This concludes the proof.
    \end{proof}

    \subsection{$\GL_3$-counterparts}\label{sec: GL3-counterparts}

    In \cite{DL18}, Di Lorenzo introduced the notion of $\GL_3$-counterparts.
    
    \begin{definition}\cite[Definition 1.3 and Definition 2.2]{DL18}
        Let $X$ be a scheme of finite type over $\mathrm{Spec}(k)$ endowed with a $\PGL_2$-action. Then a $\GL_3$-counterpart of $X$ is a scheme $Y$ endowed with a $\GL_3$-action and an isomorphism $ [Y/\GL_3] \cong [X/\PGL_2]$. Given two schemes $X$ and $X'$ with a $\PGL_2$-action, and a $\PGL_2$-equivariant morphism $f:X\rightarrow X'$, a $\GL_3$-counterpart of $f$ is the datum of $\GL_3$-counterparts $Y$ and $Y'$ of $X$ and $X'$ respectively, together with a $\GL_3$-equivariant morphism $g:Y\rightarrow Y'$ such that the diagram
        \[
        \begin{tikzcd}
            {[X/\PGL_2]}\arrow[r]\arrow[d,"\cong"'] & {[X'/\PGL_2]}\arrow[d,"\cong"]\\
            {[Y/\GL_3]}\arrow[r] & {[Y'/\GL_3]}
        \end{tikzcd}
        \]
        commutes.
    \end{definition}
    
    In \cite[Section 1]{DL18}, the author explicitly computed a $\GL_3$-counterpart of the quotient stack $[\P(W_m) / \PGL_2]$. Here, we recall his construction as well as the definition of certain natural substacks.  

    Roughly speaking, $[\P(W_m) / \PGL_2]$ parametrizes degree $2m$ divisors on a smooth rational curve. Let $\mathcal{S} \subseteq k[X_1,X_2,X_3]_2 \smallsetminus \{ 0\}$ be the subset of smooth degree $2$ polynomials. A matrix $A \in \GL_3$ acts on a point of $\mathcal{S}$ by precomposition with $A^{-1}$ and we have isomorphisms $[\mathcal{S}/\GL_3]\cong B \PGL_2 \cong \mathfrak{M}_0$, where $\mathfrak{M}_0$ denotes the moduli stack of smooth genus $0$ curves.

    For $m \in \Z_{\geq 1}$, let $V_m$ be the cokernel of the bundle map
    \begin{equation*}
    \begin{tikzcd}[row sep=tiny]
        \mathcal{S} \times k[X_1,X_2,X_3]_{m-2} \arrow[r] & \mathcal{S} \times k[X_1,X_2,X_3]_{m}\\
        (q,f) \arrow[r,mapsto] & (q,qf)
    \end{tikzcd}    
    \end{equation*}

    The action of $\GL_3$ on $\mathcal{S}$ extends to an action on the projective bundle $\P(V_m)$ by setting $A \cdot (q,[f])=(q \circ A^{-1}, [f \circ A^{-1}])$. Moreover, we have an isomorphism $[\P(V_m) /\GL_3] \cong [\P(W_m)/ \PGL_2]$ obtained by identifying the $(q,[f])$ of $\P(V_m)$ with the divisor given by the zero locus of $f$ in the conic defined by $q$ (see \cite[Proposition 3.2]{DL18}).

    \begin{notation}
        We denote by $\xi_{2m} \in \CH^*_{\GL_3}(\P(V_m))$ the $\GL_3$-equivariant first Chern class of $\cO_{\P(V_m)}(1)$. By an abuse of notation we will also denote with the same symbol its pullback to $\CH^*_{\Gm^3}(\P(V_m))$, $\Gm^3 \subseteq \GL_3$ is the maximal torus consisting of diagonal matrices.
    \end{notation}
    
    For $r,\ell \in \Z_{\geq 0}$ such that $r+\ell \leq m$, we have inclusions
    \[
    \begin{tikzcd}
        j_{m;r,\ell}: \P(V_{m-r-\ell}) \arrow[r,hookrightarrow] & \P(V_m)
    \end{tikzcd}
    \]
    given by $(q,[f]) \mapsto (q,[f \cdot X_0^r X_1^\ell])$. These are not $\GL_3$-equivariant, but only $\Gm^3$-equivariant. Thus we have $\Gm^3$-equivariant classes $[W_{m;r,\ell}] \in \CH^*_{\Gm^3}(\P(V_m))$ defined by the images of the maps $j_{m;r,\ell}$.

    We will need to compute some of these classes. A quadric form $q \in \mathcal{S}$ can be written as $\sum_{\underline{i}} a_{\underline{i}} X_1^{i_1} X_2^{i_2} X_3^{i_3}$ where $\underline{i}=(i_1,i_2,i_3) \in \Z_{\geq 0}^3$ is such that $|\underline{i}|=i_1+i_2+i_3=2$. Let $\mathcal{S}_{\underline{i}} \subseteq \mathcal{S}$ be the open locus where $a_{\underline{i}} \neq 0$. The restriction projective bundles $\P(V_m)$ to each $\mathcal{S}_{\underline{i}}$ is trivial and 
    \begin{equation}\label{eqn: Chow P(Vm) restricted to Si}
    \CH^*_{\Gm^3}(\P(V_m)_{|\mathcal{S}_{\underline{i}}})=\frac{\CH^*_{\Gm^3}(\P(V_m))}{(i_1 t_1+ i_2 t_2 + i_3 t_3).}
    \end{equation}
    See \cite[Lemmas 4.1 and 4.5]{DL18}. Moreover, by \cite[Lemma 4.6]{DL18}, the restriction of $[W_{m;r,0}]$ to $\P(V_m)_{|\mathcal{S}_{\underline{i}}}$ with $\underline{i}=(0,i_2,i_3)$ is given by

    \begin{equation}\label{eqn: restriction class W}
        [W_{m;r,0}]_{|\P(V_m)_{|\mathcal{S}_{\underline{i}}}}= \prod (\xi_{2m}- \underline{k} \cdot \underline{t}) 
    \end{equation}
where the product is over $\underline{k}$ such that $|\underline{k}|=m$, $k_j< i_j$ for some $j$ and $k_1<r$.

In our paper, we will need to explicitly compute $[W_{m;1,0}]$ and $[W_{m,2,0}]$. This is done by computing their restrictions over $\mathcal{S}_{0,2,0},\mathcal{S}_{0,1,1}$ and $\mathcal{S}_{0,0,2}$ using Equation \eqref{eqn: restriction class W} and then applying the following lemma.

\begin{lemma}\label{lemma: Pvmunicitymod}
    Let $k\leq2m$ and $\alpha\in\mathrm{CH}_{\Gm^3}^k(\P(V_m))$ such that is $0$ modulo $2t_2$, modulo $t_2+t_3$ and modulo $2t_3$. Then, $\alpha=0$.
    \end{lemma}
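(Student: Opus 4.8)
The plan is to translate the three vanishing hypotheses into ideal memberships in the Chow ring, reduce everything to the base $\CH^*_{\Gm^3}(\mathcal{S})$ via the projective bundle structure, and finish with an elementary unique-factorization computation in a polynomial ring. Set $R:=\CH^*_{\Gm^3}(\P(V_m))$ and $A:=\CH^*_{\Gm^3}(\mathcal{S})$. Since $\P(V_m)\to\mathcal{S}$ is a projective bundle with fibres $\P^{2m}$ (recall $\dim V_m=2m+1$), the equivariant projective bundle formula presents $R$ as a free $A$-module with basis $1,\xi_{2m},\dots,\xi_{2m}^{2m}$. By Equation~\eqref{eqn: Chow P(Vm) restricted to Si}, restriction to $\mathcal{S}_{0,2,0}$, $\mathcal{S}_{0,1,1}$ and $\mathcal{S}_{0,0,2}$ is exactly reduction modulo the ideals $(2t_2)$, $(t_2+t_3)$ and $(2t_3)$ of $R$, so the hypothesis says precisely that $\alpha$ lies in the intersection $(2t_2)\cap(t_2+t_3)\cap(2t_3)$ inside $R$. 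The goal is therefore to show this triple intersection vanishes.

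The first step is to push the problem down to $A$. Each of the three ideals is generated by an element of $A$, so in the free decomposition $R=\bigoplus_{j=0}^{2m}A\,\xi_{2m}^{j}$ we have $(2t_2)_R=\bigoplus_j (2t_2)_A\,\xi_{2m}^{j}$, and similarly for the other two generators. Writing $\alpha=\sum_{j=0}^{k}a_j\,\xi_{2m}^{j}$ with $a_j\in A$ homogeneous of degree $k-j$ — this is where the hypothesis $k\le 2m$ is convenient, as it lets me stay within the free basis $1,\dots,\xi_{2m}^{2m}$ without having to invoke the defining relation of the bundle — membership of $\alpha$ in all three ideals forces each coefficient $a_j$ to lie in $(2t_2)_A\cap(t_2+t_3)_A\cap(2t_3)_A$. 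Thus it suffices to prove that this triple intersection is zero in $A$.

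The second step is to identify $A$ explicitly. Using Proposition~\ref{prop:generalizeddecomposition} together with $\CH^*_{\GL_3}(\mathcal{S})=\CH^*(B\PGL_2)=\Z[c_2,c_3]/(2c_3)$, in which the first Chern class $c_1$ of the standard representation maps to $0$, one gets $A=\Z[t_1,t_2,t_3]/(c_1,2c_3)$, where $c_1=t_1+t_2+t_3$ and $c_3=t_1t_2t_3$. Eliminating $t_1=-t_2-t_3$ yields the clean presentation $A\cong\Z[t_2,t_3]/\big(2t_2t_3(t_2+t_3)\big)$. Now I would pull the three ideals back along the surjection $\Z[t_2,t_3]\twoheadrightarrow A$: since the defining relation $2t_2t_3(t_2+t_3)$ already lies in each of $(2t_2)$, $(t_2+t_3)$ and $(2t_3)$, the preimages are exactly $(2t_2)$, $(t_2+t_3)$ and $(2t_3)$ in $\Z[t_2,t_3]$. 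As $2$, $t_2$, $t_3$, $t_2+t_3$ are pairwise non-associate primes of the UFD $\Z[t_2,t_3]$, the intersection of these principal ideals is generated by their least common multiple $2t_2t_3(t_2+t_3)$ — which is precisely the defining relation of $A$. Hence the triple intersection is zero in $A$, so every $a_j=0$ and therefore $\alpha=0$.

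The genuinely non-formal point, and the step I expect to be the main obstacle, is the explicit computation of $A$ and the recognition that the three pulled-back ideals intersect in exactly the defining relation of $A$; once this coincidence is seen, everything else is bookkeeping with the free-module structure and the localization formula~\eqref{eqn: Chow P(Vm) restricted to Si}. Particular care is needed to check that $c_1\mapsto 0$ and to pin down the relation $2t_2t_3(t_2+t_3)$ with the correct factors of $2$: those factors, coming from the even weights $2t_2$ and $2t_3$, are essential, since over $\Q$ the analogous intersection $(t_2)\cap(t_2+t_3)\cap(t_3)=\big(t_2t_3(t_2+t_3)\big)$ would be nonzero. It is exactly the interplay between these even weights and the relation $2c_3=0$ (the $2$-torsion in $\CH^*(B\PGL_2)$) that makes the intersection collapse integrally.
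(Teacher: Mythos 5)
Your proof is correct and follows essentially the same route as the paper: both reduce to the presentation $\Z[t_2,t_3]/(2t_2t_3(t_2+t_3))$ coming from $t_1+t_2+t_3=0$ and $2c_3=0$ (the projective-bundle relation being irrelevant in degrees $\le 2m$) and conclude by unique factorization in $\Z[t_2,t_3]$, identifying the triple intersection with the ideal generated by the defining relation $2t_2t_3(t_2+t_3)$. The only cosmetic difference is that you first split off the powers of $\xi_{2m}$ via the free-module basis and invoke the lcm description of an intersection of principal ideals in a UFD, whereas the paper keeps $\xi_{2m}$ in the ring and runs the divisibility chase $\alpha=2t_2\alpha_1=2t_3\alpha_2=(t_2+t_3)\beta$ step by step.
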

    \begin{proof}
        Since the relation coming from the pojective bundle formula is a monic polynomial of degree $2m+1$ in $\xi_{2m}$, the relations involving classes of degree less than or equal to $2m$ are generated by $t_1+t_2+t_3=0$ and $2t_1t_2t_3=0$. Replacing $t_1$ with $-t_2-t_3$, we can represent  $\alpha=\alpha(\xi_m,t_2,t_3)$ as a polynomial in $\xi_m$, $t_2$ and $t_3$ in $\Z[\xi_{2m},t_2,t_3]/(2 t_2 t_3 (t_2+t_3))$. By assumption, we can write $\alpha=2t_2\alpha_1= 2t_3\alpha_2$ in such a ring. Therefore, $2t_2t_3(t_2+t_3)$ divides $2t_2\alpha_1-2 t_3\alpha_2$ in $\mathbb{Z}[\xi_{2m},t_2,t_3]$. It follows that $t_3$ divides $\alpha_1$ and, in $\Z[\xi_{2m},t_2,t_3]/(2 t_2 t_3 (t_2+t_3))$, we have $\alpha=2t_2t_3\alpha_{1,2}=(t_2+t_3)\beta$ for some $\beta$. Using the same reasoning above, we get that $t_2+t_3$ divides $\alpha_{1,2}$ in $\Z[\xi_{2m},c_2,c_3]$. Therefore, $\alpha=2t_2t_3(t_2+t_3)\gamma=0$ in $\CH^k_{\Gm^3}(\P(V_m))$. 
    \end{proof}

    \begin{proposition}\label{prop: computation W[m;1,0,0]}
        Let $m\geq2$. The $\Gm^3$-equivariant class of $[W_{m;1,0}]$ is
        \begin{align*}
            [W_{m;1,0}]=(\xi_{2m}-mt_3)(\xi_{2m}-(2m-1)t_2-(m-1)t_3)+m(m-1)t_2(t_2+t_3).
        \end{align*}
    \end{proposition}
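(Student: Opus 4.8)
The plan is to pin down the codimension-$2$ class $[W_{m;1,0}]$ by computing its restrictions to the three open charts $\P(V_m)_{|\mathcal{S}_{0,2,0}}$, $\P(V_m)_{|\mathcal{S}_{0,1,1}}$, $\P(V_m)_{|\mathcal{S}_{0,0,2}}$, and then invoking the uniqueness statement of Lemma~\ref{lemma: Pvmunicitymod}. First I would record that $[W_{m;1,0}]$ is the class of the image of $j_{m;1,0}:\P(V_{m-1})\hookrightarrow\P(V_m)$; since $\dim V_m-\dim V_{m-1}=2$, this class sits in codimension $2$, hence has degree $2\leq2m$, so Lemma~\ref{lemma: Pvmunicitymod} applies to it and to any degree-$2$ comparison class.

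Next I would specialize Equation~\eqref{eqn: restriction class W} with $r=1$, so that the constraint $k_1<r$ forces $k_1=0$, to each of the three indices $\underline{i}$ with $i_1=0$ (using the description \eqref{eqn: Chow P(Vm) restricted to Si} of the restricted Chow ring as the quotient by $i_1t_1+i_2t_2+i_3t_3$). For $\underline{i}=(0,2,0)$ the surviving exponent vectors are $\underline{k}\in\{(0,0,m),(0,1,m-1)\}$, giving
\[
[W_{m;1,0}]_{|\mathcal{S}_{0,2,0}}=(\xi_{2m}-mt_3)(\xi_{2m}-t_2-(m-1)t_3)\pmod{2t_2};
\]
for $\underline{i}=(0,1,1)$ they are $\underline{k}\in\{(0,0,m),(0,m,0)\}$, giving $(\xi_{2m}-mt_3)(\xi_{2m}-mt_2)\pmod{t_2+t_3}$; and for $\underline{i}=(0,0,2)$ they are $\underline{k}\in\{(0,m,0),(0,m-1,1)\}$, giving $(\xi_{2m}-mt_2)(\xi_{2m}-(m-1)t_2-t_3)\pmod{2t_3}$.

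Then I would verify that the proposed closed form $R:=(\xi_{2m}-mt_3)(\xi_{2m}-(2m-1)t_2-(m-1)t_3)+m(m-1)t_2(t_2+t_3)$ reduces to each of these. Modulo $2t_2$ the correction term vanishes (as $m(m-1)$ is even) and $-(2m-1)t_2\equiv t_2\equiv-t_2$, so $R$ collapses to the first restriction; modulo $t_2+t_3$ the correction term vanishes and substituting $t_3=-t_2$ turns the two factors into $\xi_{2m}+mt_2$ and $\xi_{2m}-mt_2$, matching the second restriction; and modulo $2t_3$ I would expand both $R$ and the target, use $m(m-1)t_3^2\equiv0$ together with $t_3\xi_{2m}\equiv-t_3\xi_{2m}$, and check agreement. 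These are elementary congruence manipulations.

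Finally, since $[W_{m;1,0}]$ and $R$ are both degree-$2$ classes with identical restrictions modulo $2t_2$, $t_2+t_3$ and $2t_3$, their difference is $0$ modulo each, and Lemma~\ref{lemma: Pvmunicitymod} forces $[W_{m;1,0}]=R$. The main place to be careful—and essentially the only subtlety—is the reduction modulo $2t_3$, where the coefficient of the mixed term $t_2t_3$ has a parity depending on $m$ and one must exploit $t_3\equiv-t_3$ to reconcile the sign of the $\xi_{2m}t_3$ term; everything else is bookkeeping of which exponent vectors $\underline{k}$ satisfy the constraints of \eqref{eqn: restriction class W}.
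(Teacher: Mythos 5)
Your proposal is correct and follows exactly the paper's argument: compute the restrictions of $[W_{m;1,0}]$ to the three strata $\mathcal{S}_{0,2,0}$, $\mathcal{S}_{0,1,1}$, $\mathcal{S}_{0,0,2}$ via Equation~\eqref{eqn: restriction class W}, check that the proposed closed form reduces to each, and conclude by Lemma~\ref{lemma: Pvmunicitymod}. The three restrictions you list match those in the paper, and your congruence checks (which the paper leaves as ``a computation'') are accurate, including the parity bookkeeping modulo $2t_3$.
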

    \begin{proof}From Equation \eqref{eqn: restriction class W}, we have
        \begin{align*}
            [W_{m;1,0}]|_{\mathcal{S}_{0,2,0}}=&(\xi_{2m}-m t_3)(\xi_{2m}-t_2-(m-1)t_3)\\
            [W_{m;1,0}]|_{\mathcal{S}_{0,1,1}}=&(\xi_{2m}-mt_2)(\xi_{2m}-mt_3)\\
            [W_{m;1,0}]|_{\mathcal{S}_{0,0,2}}=&(\xi_{2m}-mt_2)(\xi_{2m}-(m-1)t_2-t_3).
        \end{align*}
        A computation shows that the stated expression for $[W_{m;1,0,0}]$ restricts to the above on the strata $\mathcal{S}_{\underline{i}}$. The conclusion follows from Lemma \ref{lemma: Pvmunicitymod}.
    \end{proof}

    \begin{remark}\label{rmk: computation W[m;1,0,0]}
    We can rewrite
        \[
        [W_{m;1,0}]=\xi_{2m}^2+(2m-1)t_1\xi_{2m}+(m^2c_2+m(2m-1)t_1^2).
        \]
        Indeed,
        \[
        [W_{m;1,0}]=\xi_{2m}^2-(2m-1)(t_2+t_3)\xi_{2m}+m(3m-2)t_2t_3+m(m-1)(t_2^2+t_3^2).
        \]
        By \cite[Lemma 4.4]{DL18}, we have $t_1+t_2+t_3=c_1=0$, and thus the coefficient relative to $\xi_{2m}$ is $(2m-1)t_1\xi_{2m}$. 
        Next, notice that $t_2t_3=(-t_1-t_3)(-t_1-t_2)=t_1^2+c_2$ and that $t_1^2+t_2^2+t_3^2=c_1^2-2c_2=-2c_2$. Therefore, the constant term can be rewritten as
        \begin{align*}
            &m(3m-2)(t_1^2+c_2)+m(m-1)(-2c_2-t_1^2)\\
            =&m(3m-2)c_2+m(2m-1)t_1^2-2m(m-1)c_2\\
            =&m^2c_2+m(2m-1)t_1^2.
        \end{align*}
    \end{remark}
    
    \begin{proposition}\label{prop: computation W[m;2,0,0]}
    The $\Gm^3$-equivariant class of $[W_{m;2,0}]$ is given as follows:
        \begin{itemize}
            \item If $m$ is odd, 
        \begin{align*}
            [W_{m;2,0}]=&(\xi_{2m}-(m-1)t_2-mt_3)(\xi_{2m}-t_1-(m-1)t_2-(m-1)t_3)\\
            &\cdot(\xi_{2m}-mt_2-(m-1)t_3)(\xi_{2m}-t_1-(m-2)t_2-(m-2)t_3)\\
            &+2t_2t_3 \bigg( m(m-1)\xi_{2m}^2-\frac{m^{2}(m-1)^2}{2} t_2^{2} \bigg).
        \end{align*}
        \item If $m$ is even, 
        \begin{align*}
            [W_{m;2,0}]=&(\xi_{2m}-mt_2-mt_3)(\xi_{2m}-t_1-(m-2)t_2-(m-1)t_3)\\
            &\cdot(\xi_{2m}-(m-1)t_2-(m-1)t_3)(\xi_{2m}-t_1-(m-1)t_2-(m-2)t_3)\\
            &+2t_2t_3 \bigg( m(m-1)\xi_{2m}^2-\frac{m^{2}(m-1)^2}{2}t_2^{2} \bigg).
        \end{align*}
        \end{itemize}
    \end{proposition}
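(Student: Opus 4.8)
The plan is to follow verbatim the strategy of Proposition~\ref{prop: computation W[m;1,0,0]}: compute the three restrictions of $[W_{m;2,0}]$ to the strata $\mathcal{S}_{0,2,0},\mathcal{S}_{0,1,1}$ and $\mathcal{S}_{0,0,2}$ by means of Equation~\eqref{eqn: restriction class W}, check that the stated closed form restricts to them, and then invoke the uniqueness Lemma~\ref{lemma: Pvmunicitymod} (applicable since $\deg[W_{m;2,0}]=4\le 2m$ for $m\ge 2$). Recall that, by Equation~\eqref{eqn: Chow P(Vm) restricted to Si}, restricting to these three strata amounts to reducing modulo $2t_2$, modulo $t_2+t_3$, and modulo $2t_3$ respectively, and that throughout we may use $t_1=-t_2-t_3$ and $2t_2t_3(t_2+t_3)=0$, exactly as in the proof of the uniqueness lemma.

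First I would record the restrictions. For $r=2$ the product in Equation~\eqref{eqn: restriction class W} runs over $\underline k$ with $|\underline k|=m$, $k_1\in\{0,1\}$, and $k_j<i_j$ for some $j$; enumerating these gives
\begin{align*}
[W_{m;2,0}]|_{\mathcal{S}_{0,2,0}}&=(\xi_{2m}-mt_3)(\xi_{2m}-t_2-(m-1)t_3)(\xi_{2m}-t_1-(m-1)t_3)(\xi_{2m}-t_1-t_2-(m-2)t_3),\\
[W_{m;2,0}]|_{\mathcal{S}_{0,1,1}}&=(\xi_{2m}-mt_2)(\xi_{2m}-mt_3)(\xi_{2m}-t_1-(m-1)t_2)(\xi_{2m}-t_1-(m-1)t_3),\\
[W_{m;2,0}]|_{\mathcal{S}_{0,0,2}}&=(\xi_{2m}-mt_2)(\xi_{2m}-(m-1)t_2-t_3)(\xi_{2m}-t_1-(m-1)t_2)(\xi_{2m}-t_1-(m-2)t_2-t_3),
\end{align*}
the third being the image of the first under $t_2\leftrightarrow t_3$, as expected from the symmetry of $W_{m;2,0}$ in $X_2,X_3$.

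Next I would verify the match. On $\mathcal{S}_{0,2,0}$ and $\mathcal{S}_{0,0,2}$ the correction term $2t_2t_3(\cdots)$ vanishes (being divisible by $2t_2$, resp. $2t_3$), so it suffices to check that the product of the four linear factors in the stated formula agrees with the above products modulo $2t_2$, resp. $2t_3$; this is a direct expansion after substituting $t_1=-t_2-t_3$, and it is precisely here that the two groupings of factors — one for $m$ odd, one for $m$ even — are arranged so that all discrepancies come out as multiples of $2t_2$ (resp. $2t_3$). On $\mathcal{S}_{0,1,1}$, where $t_1\equiv 0$ and $t_3\equiv -t_2$, the four linear factors collapse, for either parity, to $\xi_{2m}(\xi_{2m}+t_2)\,\xi_{2m}(\xi_{2m}-t_2)=\xi_{2m}^2(\xi_{2m}^2-t_2^2)$, whereas the true restriction becomes $(\xi_{2m}^2-m^2t_2^2)(\xi_{2m}^2-(m-1)^2t_2^2)$. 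The correction term, reduced modulo $t_2+t_3$, equals $-2m(m-1)t_2^2\xi_{2m}^2+m^2(m-1)^2t_2^4$, and since $m^2+(m-1)^2-1=2m(m-1)$ this is exactly the difference (true restriction minus collapsed product); this explains both the role and the particular shape of the correction term. Applying Lemma~\ref{lemma: Pvmunicitymod} then concludes.

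The hard part will be the parity bookkeeping on $\mathcal{S}_{0,2,0}$ and $\mathcal{S}_{0,0,2}$: a single product of four factors cannot restrict correctly modulo both $2t_2$ and $2t_3$ unless its factors are grouped according to the parity of $m$, and confirming this requires a careful term-by-term expansion using $t_1=-t_2-t_3$. A minor point worth recording is that the chosen correction term is only symmetric in $t_2,t_3$ modulo the relation $2t_2t_3(t_2+t_3)=0$ (together with $4\mid m^2(m-1)^2$), so one should note that the manifestly asymmetric choice with $t_2^2$ represents the same class as its $t_2\leftrightarrow t_3$ symmetrization.
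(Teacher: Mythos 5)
Your proposal is correct and follows exactly the paper's own (very terse) proof: compute the restrictions to $\mathcal{S}_{0,2,0}$, $\mathcal{S}_{0,1,1}$, $\mathcal{S}_{0,0,2}$ via Equation~\eqref{eqn: restriction class W}, check the stated formula agrees, and conclude by Lemma~\ref{lemma: Pvmunicitymod}. Your listed restrictions and the reconciliation on each stratum (including the role of the correction term on $\mathcal{S}_{0,1,1}$ via $m^2+(m-1)^2-1=2m(m-1)$) check out; if anything, the verification modulo $2t_2$ and $2t_3$ is even easier than you anticipate, since for each parity the four linear factors match the four factors of the true restriction one by one.
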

    \begin{proof}
        As done for $[W_{m;1,0}]$, the proof consists of computing the restrictions of $[W_{m;2,0}]$ to the open strata $\mathcal{S}_{\underline{i}}$ for $\underline{i} = (0,2,0), (0,1,1), (0,0,2)$ and verifying that the stated formula agrees with these restrictions.
    \end{proof}
    
   For every $n,m$, the $\GL_3$-counterpart of the maps
    \[
    \begin{tikzcd}[row sep=tiny]
        \psi_{n}:\P(W_n)\arrow[r] & \P(W_{2n}), && f\arrow[r,mapsto] & f^2\\
        \psi_{n,m}:\P(W_n)\times\P(W_m)\arrow[r] & \P(W_{n+m}), && (f,g)\arrow[r,mapsto] & fg
    \end{tikzcd}
    \]
    are
    \[
    \begin{tikzcd}[row sep=tiny]
        \psi_{n}':\P(V_n)\arrow[r] & \P(V_{2n}), && (q,f)\arrow[r,mapsto] & (q,f^2)\\
        \psi_{n,m}':\P(V_n)\times_{\mathcal{S}}\P(V_m)\arrow[r] & \P(V_{n+m}), && (q,f,g)\arrow[r,mapsto] & (q,fg)
    \end{tikzcd}
    \]
    see~\cite[Subsection 3.4]{DL18}. This will be useful in~\S\ref{sec: Chow n>1}.
    
    When $m$ is odd, there is still an action of $\PGL_2$ on $\P^m$, viewing this projective space as parametrizing binary forms (up to scalar) of degree $m$, with the action given by precomposition with the inverse of the matrix. The key difference from the even case is that $\P^m$ is not the projectivization of a $\PGL_2$-representation. However, in~\cite[Section 5]{DL18}, Di Lorenzo showed that, even when $m$ is odd, $\P^m$ admits an easily describable $\GL_3$-counterpart, denoted $\mathcal{Y}_m$. Roughly speaking, the scheme $\mathcal{Y}_1$ is the closed subscheme of $\P(V_1)$ of pairs $(q,l)$ such that the line $L=\{l=0\}$ is tangent to the conic $Q=\{q=0\}$, while $\mathcal{Y}_{2s+1}$ is the closed subscheme of $\P(V_{2s+1})$ defined as the image of $\phi_s':\mathcal{Y}_1\times_{\mathcal{S}}\P(V_s)\rightarrow\P(V_{2s+1})$ sending $(q,l,f)$ to $(q,lf^2)$. Then, one gets $\GL_3$-counterparts to the squaring and multiplication maps in a similar way as we did above for the even case. For instance, the $\GL_3$-counterpart of
    \[
    \begin{tikzcd}
        \phi_{r,s}:\P^{2r+1}\times\P^{2s+1}\arrow[r] & \P(W_{r+s+1}), && (f,g)\arrow[r,mapsto] & fg
    \end{tikzcd}
    \]
    is $\phi_{r,s}':\mathcal{Y}_{2r+1}\times_{\mathcal{S}}\mathcal{Y}_{2s+1}\rightarrow\P(V_{r+s+1})$ induced by the following commutative diagram
    \[
    \begin{tikzcd}[row sep=large]
        \mathcal{Y}_{2r+1}\times_{\mathcal{S}}\mathcal{Y}_{2s+1}\arrow[r,hookrightarrow]\arrow[rrrd,dashed,"\phi_{r,s}'"] & \P(V_{2r+1})\times_{\mathcal{S}}\P(V_{2s+1})\arrow[rr,"\psi_{2r+1,2s+1}'"] && \P(V_{2r+2s+2})\\
        \mathcal{Y}_1\times_{\mathcal{S}}\P(V_r)\times_{\mathcal{S}}\mathcal{Y}_1\times_{\mathcal{S}}\P(V_s)\arrow[r]\arrow[u] & \P(V_1)\times_{\mathcal{S}}\P(V_{r+s})\arrow[rr,"\psi_{1,r+s}'"'] && \P(V_{r+s+1})\arrow[u,"\psi_{r+s+1}'"']
    \end{tikzcd}
    \]
    where the bottom left morphism is induced by $\psi_{r,s}'$ and $\phi_{1,1}':\mathcal{Y}_1\times_{\mathcal{S}}\mathcal{Y}_1\rightarrow\P(V_1)$ sending $(q,l_1,l_2)$ to $(q,l)$, where $l$ is the unique line in $\P^2$ passing through the two tangent points of $L_1$ and $L_2$ with $Q$. 

    We conclude the subsection by extending the definition of $j_{m;r,l}$ (for $0\leq r+l\leq m$) to the odd case.

    \begin{definition}\label{def: k morphism}
        Let $m\geq1$ and $0\leq r+l\leq m$. Define
        \[
        \begin{tikzcd}
            k_{m;r,l}:\mathcal{Y}_{2m+1-2r-2l}\arrow[r] & \mathcal{Y}_{2m+1}
        \end{tikzcd}
        \]
        as the map induced by the commutative diagram
        \[
        \begin{tikzcd}
            \mathcal{Y}_1\times_{\mathcal{S}}\P(V_{m-r-l})\arrow[rr,"\phi'_{m-r-l}"]\arrow[dd,"1\times j_{m;r,l}"] & &\P(V_{2m+1-2r-2l})\arrow[dd,"j_{2m+1;2r,2l}"]\arrow[r,hookleftarrow] & \mathcal{Y}_{2m+1-2r-2l}\arrow[dd,dashed,"k_{m;r,l}"]\\
            \\
            \mathcal{Y}_1\times_{\mathcal{S}}\P(V_{m})\arrow[rr,"\phi'_m"] & &\P(V_{2m+1})\arrow[r,hookleftarrow] & \mathcal{Y}_{2m+1}.
        \end{tikzcd}
        \]
    \end{definition}

    For instance, the image of $k_{m;1,0}$ is set-theoretically the locus of $(q,lf^2)\in\mathcal{Y}_{2m+1}$ where the degree $m$ polynomial $f$ in three variables is divisible by $X_0$. Notice that the maps are all $\Gm^3$-equivariant.

\section{The integral Chow rings for $n=1$}\label{Sec: Chow n=1}

In this section, we prove Theorems \ref{thm: Chow for g even n=1} and \ref{thm: Chow for g odd n=1}. While the overall proof strategy is the same, the computations differ between the two cases. We will handle both simultaneously, highlighting the distinctions at each step.

The first step is to projectivize. Define the natural projection maps as follows:

\begin{itemize}
    \item when $g$ is even, let 
    \[
    \begin{tikzcd}
        p: \Sym^{2g}(V^\vee) \otimes \det (V) ^{\otimes g-1} \otimes \Gamma \smallsetminus \Delta \arrow[r] & \P(\Sym^{2g} (V^\vee)) \smallsetminus \underline{\Delta};
    \end{tikzcd}
    \]
    \item when $g$ is odd, let
    \[
    \begin{tikzcd}
        p: W_g \otimes \chi^{\otimes - 2} \otimes \Gamma \smallsetminus \Delta \arrow[r] & \P(W_g) \smallsetminus \underline{\Delta}.
    \end{tikzcd}
    \]
\end{itemize}
In both cases, $p$ serves as the projection onto the corresponding projective space with the locus $\underline{\Delta}$ of polynomials having either a root at $0$ or $\infty$ or having a double root removed.

\begin{notation}\label{not: xi j various groups}
    When there is no risk of confusion, we will use $G$ to refer to the group $\GI$ or $\GII$ depending on the parity of $g$.

    When $g$ is even, we denote by $\P^{j}$ the space $\P(\Sym^{j}(V^\vee))$ and by $\xi_j =c_1^G(\cO_{\P^{j}}(1)) \in \CH^*_G(\P^{j})$ the first Chern class of the $G$-linearized $\cO_{\P^j}(1)$.

    When $g$ is odd, we denote by $\P^{2j}$ the space $\P(W_j)$ and define $\xi_{2j}=c_1^G(\cO_{\P^{2j}}(1)) \in \CH^*_G(\P^{2j})$ to be the first Chern class of the $G$-linearized $\cO_{\P(W_j)}(1)$. We will also denote by $\xi_{2j}$ the $\PGL_2$-equivariant Chern class of $\cO_{\P(W_j)}(1) \in \CH^*_{\PGL_2}(\P(W_j))$.

    Finally, when both the dimension of $\P^{2j+1}$ and $g$ are odd, the space $\P^{2j+1}$ will be the space of homogeneous polynomials of degree $2j+1$ (up to scalar) and an element $(t,[A])$ of $G$ acts on it by precomposition with the inverse of the matrix $A$. 
\end{notation}

\begin{remark}
        The classes $\xi_{2j} \in \CH^*_{\GL_3}(\P(V_j))$ and $\xi_{2j} \in \CH^*_{\PGL_2}(\P(W_j))$ correspond to each other under the isomorphism $[\P(V_j) /\GL_3] \cong [\P(W_j)/ \PGL_2]$. This correspondence justifies the use of the same name for these classes.
\end{remark}

It is immediate to observe the following.

\begin{lemma}\label{lemma: reduction to prjective spaces}
    The pullback $p^*$ in Chow induces isomorphisms:
    \begin{itemize}
        \item when $g$ is even, 
        $$
        p^*: \frac{\CH^*_{\GI}(\P^{2g} \smallsetminus \underline{\Delta})}{(-\xi_{2g}+(g-1) \beta_1+\gamma)} \xrightarrow{\sim} \CH^*_{\GI}(\Sym^{2g}(V^\vee) \otimes \det (V) ^{\otimes g-1} \otimes \Gamma \smallsetminus \Delta);
        $$
        \item when $g$ is odd, 
        $$
        p^*: \frac{\CH^*_{\GII}(\P^{2g} \smallsetminus \underline{\Delta})}{(\xi_{2g}+2t+\gamma)} \xrightarrow{\sim} \CH^*_{\GI}(W_g \otimes \chi^{\otimes -2} \otimes \Gamma \smallsetminus \Delta).
        $$
    \end{itemize}
\end{lemma}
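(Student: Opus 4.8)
The plan is to recognize $p$ as a $\Gm$-torsor and to apply the standard localization sequence for the complement of the zero section of a $G$-equivariant line bundle. In both parities the source of $p$ has the form $E \smallsetminus \Delta$, where $E = W \otimes L$ with $W$ the underlying $G$-representation ($\Sym^{2g}(V^\vee)$ when $g$ is even, $W_g$ when $g$ is odd) and $L$ a one-dimensional $G$-representation (namely $\det(V)^{\otimes g-1} \otimes \Gamma$, respectively $\chi^{\otimes -2} \otimes \Gamma$), so that $\P(E) = \P^{2g}$. Since the defining conditions for $\Delta$ (a root at $0$ or $\infty$, or a double root) are invariant under scaling, $\Delta$ is a $G$-stable cone; hence $\Delta \smallsetminus \{0\} = p^{-1}(\underline{\Delta})$ and therefore $E \smallsetminus \Delta = p^{-1}(\P^{2g} \smallsetminus \underline{\Delta})$ is exactly the restriction, over the open locus $\P^{2g} \smallsetminus \underline{\Delta}$, of the $\Gm$-bundle $E \smallsetminus \{0\} \to \P^{2g}$.

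Next I would identify this $\Gm$-bundle with the complement of the zero section of an explicit $G$-equivariant line bundle. The tautological sub-line-bundle of the trivial bundle with fibre $W$ is $\cO_{\P^{2g}}(-1)$, and twisting by $L$ shows that the tautological sub-line-bundle of $E = W \otimes L$ is $N := \cO_{\P^{2g}}(-1) \otimes L$, whose total space minus the zero section is precisely $E \smallsetminus \{0\}$. The $G$-equivariant localization sequence for the zero section $\P^{2g} \smallsetminus \underline{\Delta} \hookrightarrow N$, combined with homotopy invariance (the projection $N \to \P^{2g}$ induces an isomorphism on Chow groups), then exhibits $p^*$ as a surjection with kernel the ideal generated by the Euler class $c_1^G(N)$, giving
\[
\CH^*_G(E \smallsetminus \Delta) \cong \CH^*_G(\P^{2g} \smallsetminus \underline{\Delta}) / \big(c_1^G(N)\big).
\]
This is the same mechanism already used in the proof of Proposition \ref{prop: Chow of root gerbes}.

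It then remains to compute $c_1^G(N)$ and to match it with the relation in the statement. Using $c_1^G(\cO_{\P^{2g}}(-1)) = -\xi_{2g}$ and additivity of the first Chern class, I obtain $c_1^G(N) = -\xi_{2g} + (g-1)\beta_1 + \gamma$ when $g$ is even (as $c_1(\det V) = \beta_1$ and $c_1(\Gamma) = \gamma$), which is literally the relation displayed in the even case. When $g$ is odd I get $c_1^G(N) = -\xi_{2g} - 2t + \gamma$ (using $c_1(\chi^{\otimes -2}) = -2t$), whereas the statement lists $\xi_{2g} + 2t + \gamma$; these generate the same ideal, since $\gamma$ is pulled back from $B\mu_2$ and hence satisfies $2\gamma = 0$, so $-\gamma = \gamma$ and the two generators are negatives of one another. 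The only delicate points are bookkeeping ones — correctly pinning down the twisting line bundle $L$ and tracking signs — together with the observation that the $2$-torsion of $\gamma$ is what reconciles the apparent sign discrepancy in the odd case; everything else is the routine $\Gm$-torsor computation, which is why the authors regard the statement as immediate.
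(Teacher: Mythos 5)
Your argument is correct and is essentially the paper's own proof: the authors simply observe that $p$ is a $\Gm$-torsor with associated line bundle $\cO_{\P^{2g}}(-1)\otimes\det(V)^{\otimes g-1}\otimes\Gamma$ (resp.\ $\cO_{\P^{2g}}(-1)\otimes\chi^{\otimes -2}\otimes\Gamma$), which is exactly the identification you make before applying the standard localization/homotopy-invariance mechanism. Your observation that the odd-case generator differs from the displayed one only by a sign, reconciled by $2\gamma=0$, is a correct piece of bookkeeping that the paper leaves implicit.
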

\begin{proof}
    Depending on the parity of $g$, the morphism $p$ is a $\mathbb{G}_m$-torsor with associated line bundle $\O_{\P^{2g}}(-1) \otimes \mathrm{det}(V)^{\otimes g-1} \otimes \Gamma $ or $\O_{\P^{2g}}(-1) \otimes \chi^{\otimes -2} \otimes \Gamma$.    
\end{proof}

The excision sequence yields

\begin{equation}\label{eqn: excission n=1}
\begin{tikzcd}
    \CH^*_G(\underline{\Delta}) \arrow[r] & \CH^*_G(\P^{2g}) \arrow[r] & \CH^*_G(\P^{2g} \smallsetminus \underline{\Delta}) \arrow[r] & 0.
\end{tikzcd}
\end{equation}

Write $\uD= \uD_1 \cup \uD_2$, where $i_1: \uD_1 \hookrightarrow \P^{2g}$ corresponds to polynomials having a root at $0$ or at $\infty$ and $i_2: \uD_2 \hookrightarrow \P^{2g}$ to those having a double root. These two are both $G$-equivariant subspaces of $\P^{2g}$. Scheme-theoretically, $\uD_1$ has two irredubible components, each of which are hyperplanes in $\P^{2g}$ and are swapped by the action of $\mu_2$. In particular, the quotient $[\uD_1/G]$ is irreducible. We will denote by $\uD_{1,1}$ the intersection of the two components of $\uD_1$ and by $i_{1,1}$ its inclusion in $\P^{2g}$. 

The next lemma is clear.

\begin{lemma}
    The pushforward map
    \[
    \begin{tikzcd}
        \CH^*_G(\uD_1) \oplus \CH^*_G(\uD_2) \arrow[r] & \CH^*_G(\uD)
    \end{tikzcd}
    \]
    is surjective.
\end{lemma}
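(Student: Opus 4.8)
The plan is to reduce the claim to the elementary fact that an irreducible subvariety contained in a union of two closed subschemes must lie entirely in one of them. First I would recall that, by the Edidin--Graham approximation, the equivariant Chow groups $\CH^*_G(\uD)$, $\CH^*_G(\uD_1)$ and $\CH^*_G(\uD_2)$ are computed as ordinary Chow groups of the mixed quotients $(\uD \times U)/G$, $(\uD_1 \times U)/G$ and $(\uD_2 \times U)/G$, where $U$ is an open subset with free $G$-action inside a suitable representation whose complement has codimension larger than the degree we care about. Since $\uD_1$ and $\uD_2$ are $G$-invariant closed subschemes with $\uD = \uD_1 \cup \uD_2$, the union decomposition is preserved after mixing, that is $(\uD \times U)/G = (\uD_1 \times U)/G \cup (\uD_2 \times U)/G$. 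Hence it suffices to establish the analogous surjectivity for ordinary Chow groups of a scheme that is the union of two closed subschemes.

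At the level of cycles this is immediate. The group generated by classes $[W]$ of integral closed subvarieties $W \subseteq (\uD \times U)/G$ is all of $\CH_*((\uD \times U)/G)$; because $W$ is irreducible and is covered by the two closed sets $(\uD_1 \times U)/G$ and $(\uD_2 \times U)/G$, it must be contained in at least one of them, say $(\uD_1 \times U)/G$. Then $[W]$ is the pushforward under the closed immersion of the corresponding class in $\CH_*((\uD_1 \times U)/G)$. Thus every generator lies in the image of $i_{1*} \oplus i_{2*}$, which is exactly the surjectivity of $\CH^*_G(\uD_1) \oplus \CH^*_G(\uD_2) \to \CH^*_G(\uD)$.

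Equivalently, one may simply invoke the Mayer--Vietoris right-exact sequence for (equivariant) Chow groups
$$
\CH^*_G(\uD_1 \cap \uD_2) \longrightarrow \CH^*_G(\uD_1) \oplus \CH^*_G(\uD_2) \longrightarrow \CH^*_G(\uD) \longrightarrow 0,
$$
whose exactness on the right is precisely the assertion. I do not expect any genuine obstacle: the only points needing a word of justification are that the decomposition into two closed pieces survives the mixing construction defining equivariant Chow groups, and that only the underlying cycle structure matters, both of which are standard. This is why the statement can honestly be called clear.
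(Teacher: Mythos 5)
Your proof is correct and is exactly the standard argument the paper has in mind: the paper states the lemma without proof (``The next lemma is clear''), and the implicit content is your cycle-level observation --- an integral subvariety of $(\uD\times U)/G = ((\uD_1\times U)/G)\cup((\uD_2\times U)/G)$ lies in one of the two closed pieces, which is Fulton's Mayer--Vietoris right-exactness transported through the Edidin--Graham mixing construction. Both of your justificatory points check out (the union of $G$-invariant closed subschemes passes to the free quotient, and only the cycle structure matters), so there is nothing to add.
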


Thus we may replace $\CH^*_G(\uD)$ with $\CH^*_G(\uD_1) \oplus \CH^*_G(\uD_2)$ in the exact sequence \eqref{eqn: excission n=1} and study the images of $i_{1*}$ and $i_{2*}$ separately.

We start with the image of $i_{1*}$. Since $[(\uD_1 \smallsetminus \uD_{1,1}) / G]$ is isomorphic to $[\mathbb{A}^{2g-1} / (\Gm \times \Gm)]$ when $g$ is even (resp. $[\mathbb{A}^{2g-1} / \Gm] \times B\Gm$ when $g$ is odd), the excision sequence
\[
\begin{tikzcd}
    \CH^*_G(\uD_{1,1}) \arrow[r] & \CH^*_G(\uD_{1}) \arrow[r] & \CH^*_G(\uD_1 \smallsetminus \uD_{1,1}) \arrow[r] & 0
\end{tikzcd}
\]
shows that the ring $\CH^*_G(\uD_1)$ is generated as a $\CH^*(BG)$-algebra by the image of $\CH^*_G(\uD_{1,1})$ and

\begin{itemize}
    \item when $g$ is even, by classes $\alpha_1$ and $\alpha_2$ that restrict in $ \CH^*(\uD_1 \smallsetminus \uD_{1,1})$ to the first Chern classes $\chi_1$ and $\chi_2$ of the standard representations of each factor of $\Gm \times \Gm$;
    \item when $g$ is odd, by a class $\alpha$ that restricts in $ \CH^*(\uD_1 \smallsetminus \uD_{1,1})$ to the first Chern classes of the standard representation of $\Gm$.
\end{itemize}

Additionally, for $g$ even, we have $\beta_1=\alpha_1+\alpha_2$ in $\CH^*(\uD_1 \smallsetminus \uD_{1,1})$. Therefore, in this case, we do not need $\alpha_2$, and we will denote $\alpha_1$ simply by $\alpha$ for consistency.

\begin{lemma}
    The $G$-equivariant Chow ring of $\uD_1 \smallsetminus \uD_{1,1}$ is generated as a $\CH^*(BG)$-module by $1$ and the restriction of $\alpha$.
\end{lemma}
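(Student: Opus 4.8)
The plan is to upgrade the algebra-generation statement just established---that $\CH^*_G(\uD_1 \smallsetminus \uD_{1,1})$ is generated as a $\CH^*(BG)$-algebra by $\alpha$---to the desired module-generation statement. For this it suffices to exhibit a single quadratic relation $\alpha^2 = a\alpha + b$ with $a,b \in \CH^*(BG)$: once this is available, an immediate induction on the degree shows $\alpha^k \in \CH^*(BG)\cdot 1 + \CH^*(BG)\cdot\alpha$ for every $k$, and since every class is a $\CH^*(BG)$-combination of the powers $\alpha^k$, module-generation by $\{1,\alpha\}$ follows at once.

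To produce this relation I would pass to the quotient stack $[(\uD_1 \smallsetminus \uD_{1,1})/G]$, which by the preceding discussion is $[\mathbb{A}^{2g-1}/(\Gm \times \Gm)]$ when $g$ is even and $[\mathbb{A}^{2g-1}/\Gm]\times B\Gm$ when $g$ is odd. Geometrically $\uD_1\smallsetminus\uD_{1,1}$ is the disjoint union of the loci $D_0,D_\infty$ of forms vanishing at exactly one of $0,\infty$; these are swapped by $\mu_2\subseteq G$ and each is stabilized by a subgroup $H\cong\Gm\times\Gm$, so that $\uD_1\smallsetminus\uD_{1,1}\cong G\times_H D_0$ and hence $\CH^*_G(\uD_1\smallsetminus\uD_{1,1})\cong\CH^*_H(D_0)$ as $\CH^*(BG)$-modules, the structure being through the restriction $\CH^*(BG)\to\CH^*(BH)$. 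Since $D_0$ is the complement of an $H$-invariant hyperplane inside the invariant linear subspace of forms vanishing at $0$, it is a linear $H$-representation; thus the structure group reduces to the torus over this stratum and the tautological bundle splits into characters.

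The relation now drops out of the Whitney formula. When $g$ is even, the bundle attached to $V$ restricts to $L_1\oplus L_2$ with $c_1(L_i)=\alpha_i$ and $\alpha=\alpha_1$, whence $\beta_1=\alpha_1+\alpha_2$ and $\beta_2=\alpha_1\alpha_2$. Eliminating $\alpha_2=\beta_1-\alpha$ gives
\[
\beta_2 = \alpha(\beta_1-\alpha),\qquad\text{i.e.}\qquad \alpha^2=\beta_1\alpha-\beta_2.
\]
When $g$ is odd, the bundle attached to $W_1\cong\mathsf{sl}_2$ restricts to a sum of characters of the stabilizer torus $\overline{T}\subseteq\PGL_2$ with Chern roots $0,\alpha,-\alpha$, so that $c_2=-\alpha^2$, i.e. $\alpha^2=-c_2$. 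In both cases the right-hand side visibly lies in $\CH^*(BG)\cdot 1+\CH^*(BG)\cdot\alpha$, which completes the argument.

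The step demanding the most care is the odd case, where I must verify that the coefficient in $\alpha^2=-c_2$ is exactly $\pm1$ rather than a proper multiple; equivalently, that the weight of the root space of $\mathsf{sl}_2$ equals the \emph{generator} of the character lattice of $\overline{T}$ (and not twice it), so that $\alpha^2$ genuinely lies in the image of $\CH^*(BG)$. Here the previously recorded identity $c_2=-\tau^2$ in $\CH^*_{\Gm\rtimes\mu_2}(\P^1)$, together with the fact that $\tau$ restricts to a generator on the stabilizer torus, pins down the normalization and confirms that $-c_2$ restricts to $\alpha^2$.
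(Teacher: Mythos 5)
Your proposal is correct and follows essentially the same route as the paper: the paper's proof consists precisely of recording the quadratic relation $\alpha^2=\beta_1\alpha-\beta_2$ when $g$ is even and $\alpha^2=-c_2$ when $g$ is odd (the latter citing Vezzosi's computation for the normalization you worry about at the end), from which module generation by $1$ and $\alpha$ is immediate given the algebra-generation statement already in hand. Your derivation of these relations via the splitting of $V$, respectively $\mathsf{sl}_2$, into characters of the stabilizer torus is just a fuller account of the same computation.
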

\begin{proof}
    When $g$ is even, it is enough to observe that $\alpha^2=\chi_1^2=\beta_1 \chi_1-\beta_2$ in $\CH^*_G(\uD_1 \smallsetminus \uD_{1,1})$. If instead $g$ is odd, by \cite[page 11]{Vez98}, then $\alpha^2=-c_2$ in $\CH^*_G(\uD_1 \smallsetminus \uD_{1,1})$.
\end{proof}

The following is an immediate corollary. 

\begin{corollary}\label{cor: Chow ring of uD1}
    The $G$-equivariant Chow ring of $\uD_1$ is generated by $\CH^*_G(\uD_{1,1})$, $1$ and $\alpha$ as a $\CH^*(BG)$-module.
\end{corollary}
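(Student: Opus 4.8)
The plan is to read this off directly from the localization sequence already displayed together with the immediately preceding lemma, as it is a purely formal consequence. Write $\iota: \uD_{1,1} \hookrightarrow \uD_1$ for the closed immersion and recall the $G$-equivariant excision sequence
\[
\begin{tikzcd}
    \CH^*_G(\uD_{1,1}) \arrow[r,"\iota_*"] & \CH^*_G(\uD_{1}) \arrow[r,"r"] & \CH^*_G(\uD_1 \smallsetminus \uD_{1,1}) \arrow[r] & 0,
\end{tikzcd}
\]
where $r$ denotes restriction to the open complement. Exactness in the middle gives $\mathrm{im}\,\iota_* = \ker r$, so $\CH^*_G(\uD_1)$ is generated as a $\CH^*(BG)$-module by $\mathrm{im}\,\iota_*$ together with any classes in $\CH^*_G(\uD_1)$ whose images under $r$ generate $\CH^*_G(\uD_1 \smallsetminus \uD_{1,1})$ as a $\CH^*(BG)$-module: indeed, given $\delta \in \CH^*_G(\uD_1)$, one subtracts a suitable $\CH^*(BG)$-combination of such lifts to land in $\ker r = \mathrm{im}\,\iota_*$.

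It then remains to exhibit the lifts. By the preceding lemma, $\CH^*_G(\uD_1 \smallsetminus \uD_{1,1})$ is generated as a $\CH^*(BG)$-module by $1$ and the restriction of $\alpha$. Both $1$ and the class $\alpha$ constructed above are already defined on all of $\uD_1$ and restrict under $r$ precisely to these two generators, so they serve as the required lifts. Combining the two points yields that $\CH^*_G(\uD_1)$ is generated by $\CH^*_G(\uD_{1,1})$ (through $\iota_*$), by $1$, and by $\alpha$ as a $\CH^*(BG)$-module, which is the assertion.

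There is essentially no obstacle here: the one point to keep straight is that $\alpha$ was already produced as an honest class on all of $\uD_1$—not merely on the open stratum—in the discussion preceding the lemma, so it is legitimate to use it as a lift. Granting that, the corollary follows purely from exactness of the sequence.
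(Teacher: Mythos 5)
Your argument is correct and is exactly the reasoning the paper intends: the paper introduces $\alpha$ as a class on all of $\uD_1$ lifting a generator of the open stratum, proves the preceding lemma about $\CH^*_G(\uD_1\smallsetminus\uD_{1,1})$, and then declares the corollary "immediate," which is precisely the formal exactness argument you spell out. No gaps.
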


The map $i_{1,1}:  \uD_{1,1} \hookrightarrow \P^{2g}$, is the inclusion of a linear space. Consequently, the induced pullback $i_{1,1}^*$ is surjective, and by the push-pull formula, the image of the pushforward map $i_{1,1*}$ is the ideal generated by $i_{1,1*}(1)= [\uD_{1,1}] \in \CH^*_G(\P^{2g})$.

In conclusion, the image of $\CH^*_G(\uD_1) \to \CH^*_G(\P^{2g})$ is the ideal generated by $[\uD_{1,1}]$, $[\uD_1]$ and ${i_1}_*(\alpha)$.

Let
\[
\begin{tikzcd}
    \varphi: \P^1 \times \P^{2g-1} \arrow[r] & \P^{2g}   
\end{tikzcd} 
\] 
be the multiplication map $(F,G) \mapsto FG$.

\begin{lemma}\label{lemma: push of varphi}
    We have the following formulas for $\varphi_*$:
    \begin{itemize}
        \item when $g$ is even,  $\varphi_*(\xi_1)=\xi_{2g}$;
        \item when $g$ is odd,  $\varphi_*(\tau)=2 \xi_{2g}$.
    \end{itemize}
\end{lemma}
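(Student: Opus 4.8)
The plan is to compute $\varphi_*$ by factoring $\varphi$ through the incidence variety
\[
I = \{(F,D) \in \P^1 \times \P^{2g} : F \mid D\},
\]
the locus of pairs in which the linear form $F$ divides the degree $2g$ form $D$. The assignment $(F,D)\mapsto (F, D/F)$ identifies $I$ with $\P^1\times\P^{2g-1}$, and under this identification $\varphi$ becomes $\pi\circ j$, where $j\colon I \hookrightarrow \P^1\times\P^{2g}$ is the closed inclusion and $\pi\colon \P^1\times\P^{2g}\to\P^{2g}$ is the second projection. Moreover $\xi_1$ (resp. $\tau$) on the source is the restriction to $I$ of the class pulled back from the $\P^1$ factor. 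Hence, by the projection formula, $\varphi_*(\xi_1)=\pi_*(\xi_1\cdot[I])$ (resp. $\varphi_*(\tau)=\pi_*(\tau\cdot[I])$), so the computation reduces to determining $[I]\in\CH^1_G(\P^1\times\P^{2g})$ and then integrating over the $\P^1$ factor.

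To compute $[I]$, I would note that $I$ is the zero locus of a regular section of the line bundle $\mathcal{Q}\boxtimes\cO_{\P^{2g}}(1)$: the divisibility $F\mid D$ is the vanishing of the image of $D$ in the rank-one quotient $\mathcal{Q}$ on $\P^1$ whose fiber at $F$ is (degree $2g$ forms) modulo $F\cdot(\text{degree }2g-1\text{ forms})$. Thus $[I]=c_1(\mathcal{Q})+\xi_{2g}$. For $g$ even, $\mathcal{Q}$ fits in the $\GI$-equivariant cokernel sequence $0\to\cO_{\P^1}(-1)\otimes\Sym^{2g-1}V^\vee\to\Sym^{2g}V^\vee\otimes\cO\to\mathcal{Q}\to0$, giving $c_1(\mathcal{Q})=2g\,\xi_1+c$ with $c=c_1(\Sym^{2g}V^\vee)-c_1(\Sym^{2g-1}V^\vee)$. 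Applying $\pi_*$ and using $\pi_*(1)=0$, $\pi_*(\xi_1)=1$, and $\pi_*(\xi_1^2)=-c_1(V^\vee)=\beta_1$ (from the projective bundle relation on $\P(V^\vee)$), together with the fact that classes pulled back from $B\GI$ or from the $\P^{2g}$ factor leave $\pi_*$, yields $\varphi_*(\xi_1)=\xi_{2g}+\big(2g\,\beta_1+c\big)$. For $g$ odd the same scheme $I$ and the relation $\tau^2=-c_2$ (so $\pi_*(\tau^2)=0$) and $\pi_*(\tau)=2$ give $\varphi_*(\tau)=2\xi_{2g}+(\text{correction in }\CH^1(B\GII))$.

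The crux is that these corrections vanish. For $g$ even one uses $c_1(\Sym^{k}V^\vee)=\binom{k+1}{2}c_1(V^\vee)$, whence $c=2g\,c_1(V^\vee)=-2g\,\beta_1$, exactly cancelling the term $2g\,\beta_1$ and leaving $\varphi_*(\xi_1)=\xi_{2g}$. For $g$ odd the only possible corrections are multiples of $\gamma$ and $t$; the $\gamma$-part dies because $2\gamma=0$ in $\CH^*(B\GII)$ (Theorem~\ref{thm: Chow BGII}), while the $t$-part vanishes because the $\Gm$-factor carrying $\chi$ (hence $t$) acts trivially on $\P^1$, on $\P^{2g}=\P(W_g)$, and on the entire incidence construction, so $[I]$ is pulled back from the $(\Gm\rtimes\mu_2)$-equivariant Chow ring and carries no $t$-component (cf. Lemma~\ref{lemma: Kunneth for Gm}); this gives $\varphi_*(\tau)=2\xi_{2g}$. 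I expect the main obstacle to be precisely this bookkeeping of $G$-linearizations: equipping $\mathcal{Q}$ and $\cO(1)$ with the correct equivariant structures, and in the odd case working consistently with $\tau=c_1(\cO_{\P^1}(2))$ since $\cO_{\P^1}(1)$ is not $\GII$-equivariant, is what forces the clean cancellations above.
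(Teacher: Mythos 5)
Your argument is correct, and it is genuinely different from the one in the paper. The paper quotes the even case from \cite[Lemma 4.4]{Lar19}, and for odd $g$ it passes to the $\Gm\times\PGL_2$-equivariant setting, invokes torsion-freeness of $\CH^*_{\Gm\times\PGL_2}(\P^{2g})$ in degrees $\leq 2$ to work with $\Q$-coefficients, and pushes forward through the degree-$(2g)!$ symmetrization $\rho:(\P^1)^{2g}\to\P^{2g}$ via the identity $\rho_*(\tau_i)=2(2g-1)!\,\xi_{2g}$. You instead factor $\varphi$ as $\pi\circ j$ through the incidence divisor $I\subseteq\P^1\times\P^{2g}$, compute $[I]$ as a first Chern class, and apply the projection formula; the computation is integral throughout, handles both parities by one mechanism, and is self-contained. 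The delicate points are exactly the ones you isolate: for $g$ even, the cancellation $c_1(\Sym^{2g}V^\vee)-c_1(\Sym^{2g-1}V^\vee)=-2g\beta_1$ against $2g\,\pi_*(\xi_1^2)=2g\beta_1$ (consistent with the paper's relation $\xi_1^2-\beta_1\xi_1+\beta_2=0$); for $g$ odd, the observation that the only undetermined part of $[I]$ lies in $\Z t\oplus(\Z/2)\gamma\subseteq\CH^1(B\GII)$, with the $\gamma$-part killed by $2\gamma=0$ after multiplying by $\pi_*(\tau)=2$ and the $t$-part absent because the $\chi$-factor acts trivially on $\P^1\times\P(W_g)$ and on $I$, so $[I]$ is pulled back along $B\GII\to B(\Gm\rtimes\mu_2)$ (Lemma~\ref{lemma: Kunneth for Gm}). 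This last step is what lets you avoid pinning down the equivariant structure on the quotient line bundle $\mathcal{Q}$ in the odd case, where $\Sym^{2g-1}V^\vee$ is not a $\PGL_2$-representation. The trade-off is that the paper's symmetrization technique is reused verbatim for the degree-two pushforwards $\rho_*(\tau_i\tau_j)$ needed in Lemma~\ref{lemma: class of uD11}, whereas your divisor-class method would require a separate (codimension-two) argument there; as a proof of this particular lemma, however, your route is complete and arguably more elementary.
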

\begin{proof}
    The case $g$ even is \cite[Lemma 4.4]{Lar19}. We prove the case when $g$ is odd. Since $\tau \in \CH^*_G(\P^1 \times \P^{2g-1})$ is pulled back from $\CH^*_{\Gm \times \PGL_2}(\P^1 \times \P^{2g-1})$, we can compute the $\Gm \times \PGL_2$-equivariant pushforward $\varphi_*(\tau)$ and then restrict it to $\CH^*_G(\P^{2g})$. The advantage is that $\CH^*_{\Gm \times \PGL_2}(\P^{2g})$ is torsion free in degree $\leq 2$ and thus we can work with $\Q$ coefficients. Consider the commutative diagram of $\Gm \times \mathrm{PGL}_2$-equivariant multiplication maps
        \[
        \begin{tikzcd}
            \P^1 \times (\P^1)^{2g-1} \arrow[d,"="]\arrow[r,"\theta"] & \P^1 \times \P^{2g-1}\arrow[d,"\varphi"]\\
            (\P^1)^{2g}\arrow[r,"\rho"] & \P^{2g}
        \end{tikzcd}
        \]
        and notice that $\theta^*(\tau)=\tau_{1}$, where $\tau_i$ denotes the pullback of $\tau$ under the $i$-th projection. Then, we have that
        \begin{align*}
            \varphi_*(\tau)
            &=\frac{1}{(2g-1)!}\varphi_*(\theta_*\theta^*\tau) \\
            &=\frac{1}{(2g-1)!}\rho_*(\tau_{1}) \\
            &=2\xi_{2g},
        \end{align*}
            where in the last equality we have used the identity 
            \begin{equation}\label{eqn: rho(tau)}
                \rho_*(\tau_i)=\frac{\rho_*(\sum_j \tau_j)}{2g}=\frac{\rho_*(\rho^* (2 \xi_{2g}))}{2g}=2(2g-1)! \xi_{2g}
            \end{equation} 
            for all $i=1,\ldots, 2g$.
\end{proof}

\begin{lemma} \label{lemma: class of uD1}
    We have 
    \begin{equation*}
     [\uD_{1}]=
    \begin{cases}
        2 \xi_{2g}-2g \beta_1 &\text{when $g$ is even;}\\
        2\xi_{2g}  &\text{when $g$ is odd. }\, 
\end{cases} 
\end{equation*}
\end{lemma}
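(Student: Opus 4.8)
The plan is to realize $\uD_1$ as a pushforward along the multiplication map $\varphi\colon \P^1\times\P^{2g-1}\to\P^{2g}$ and then apply Lemma~\ref{lemma: push of varphi}. The key observation is that $\uD_1$, the locus of polynomials with a root at $0$ or $\infty$, is exactly the image of $\{X,Y\}\times\P^{2g-1}$ under $\varphi$: a polynomial divisible by $X$ (resp.\ $Y$) is precisely one vanishing at $0$ (resp.\ $\infty$). Moreover, writing $f=\sum_i a_i X^iY^{2g-i}$, the restriction of $\varphi$ to each of the two components $\{X\}\times\P^{2g-1}$ and $\{Y\}\times\P^{2g-1}$ is an isomorphism onto the hyperplane $\{a_0=0\}$, resp.\ $\{a_{2g}=0\}$, since $f=X\cdot(f/X)$ uniquely. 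Hence $\varphi$ restricts to a birational morphism onto $\uD_1=\{a_0=0\}\cup\{a_{2g}=0\}$, and therefore
\[
[\uD_1]=\varphi_*\big(\mathrm{pr}_1^*[\{X,Y\}]\big),
\]
where $\mathrm{pr}_1\colon\P^1\times\P^{2g-1}\to\P^1$ is the first projection.

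The second ingredient is the class $[\{X,Y\}]\in\CH^*_G(\P^1)$. When $g$ is odd this is $\tau+\gamma$ by \eqref{eqn: class X,Y}. When $g$ is even, $\{X,Y\}$ is the zero locus of the monomial $XY$, regarded as a $\GI$-equivariant section of $\cO_{\P^1}(2)\otimes\det(V)^\vee$ (possibly also twisted by $\Gamma$); taking first Chern classes yields $[\{X,Y\}]=2\xi_1-\beta_1$, up to a multiple of $\gamma$. Pulling back along $\mathrm{pr}_1$ gives $2\xi_1-\beta_1$ (even case) or $\tau+\gamma$ (odd case), in each case up to a multiple of $\gamma$.

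Finally I would compute the pushforward. The multiplication map is generically $2g$ to $1$: a general degree $2g$ form has $2g$ distinct roots, and a factorization $f=FG$ with $F$ linear amounts to choosing which root goes to $F$, so $\varphi_*(1)=2g$. Combining this with the projection formula (noting that $\beta_1$ and $\gamma$ are pulled back from $BG$), with Lemma~\ref{lemma: push of varphi}, and with the relation $2\gamma=0$, I obtain for $g$ even
\[
[\uD_1]=2\,\varphi_*(\xi_1)-\beta_1\,\varphi_*(1)=2\xi_{2g}-2g\beta_1,
\]
and for $g$ odd
\[
[\uD_1]=\varphi_*(\tau)+\gamma\,\varphi_*(1)=2\xi_{2g}+2g\gamma=2\xi_{2g},
\]
where $2g\gamma=g\cdot 2\gamma=0$; this same relation also absorbs the undetermined $\gamma$ contributions left over from the previous step.

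The main obstacle is not the pushforward mechanics but the bookkeeping in the even case: pinning down the exact $\GI$-linearization of the section $XY$ (hence the precise sign of the $\beta_1$ correction in $[\{X,Y\}]$) and confirming the birationality and degree $2g$ of $\varphi$ restricted to $\{X,Y\}\times\P^{2g-1}$. Once these are verified, the formula falls out of Lemma~\ref{lemma: push of varphi} together with the $2$-torsion of $\gamma$, which conveniently eliminates all $\gamma$-terms from the final answer.
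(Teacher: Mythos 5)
Your proposal is correct and follows essentially the same route as the paper: both write $[\uD_1]=\varphi_*([\{X,Y\}])$ and apply Lemma~\ref{lemma: push of varphi} together with $\varphi_*(1)=2g$ and the $2$-torsion of $\gamma$. The only difference is that the paper cites \cite[Lemma 32]{CIL24} for $[\{X,Y\}]=2\xi_1-(\beta_1+\gamma)$ in the even case rather than re-deriving it, and your observation that the undetermined $\gamma$-multiple is killed by $2g\gamma=0$ after pushforward correctly disposes of the one point you left open.
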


\begin{proof}
    We deal with the case $g$ even first. In this case, by \cite[Lemma 32]{CIL24}, $[\{ X, Y\}]= 2 \xi_1-(\beta_1+\gamma) \in  \CH^*_{\GI}(\P^1)$ and 
    $$
    [\uD_1]=\varphi_*([\{ X, Y\}])= \varphi_*(2 \xi_1-(\beta_1+\gamma))= 2 \xi_{2g}- 2g (\beta_1 +\gamma)
    $$
    where in the last equality we have used Lemma \ref{lemma: push of varphi}.

    The case when $g$ is odd is similar, but uses Equation \eqref{eqn: class X,Y}.
    
\end{proof}

\begin{lemma}\label{lemma: class of uD11}
    We have 
    \begin{equation*}
     [\uD_{1,1}]=
    \begin{cases}
        \xi_{2g}^2+ \xi_{2g}(\gamma-2g \beta_1) + 4 g^2 \beta_2 &\text{when $g$ is even;}\\
        \xi_{2g}^2+\gamma \xi_{2g} +g^2 c_2+\gamma^2  &\text{when $g$ is odd. }\, 
\end{cases} 
\end{equation*}
\end{lemma}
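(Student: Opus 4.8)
The plan is to compute $[\uD_{1,1}]$ as the class of an explicit complete intersection, exploiting the fact that $\uD_{1,1}$ is the locus of polynomials vanishing at both $0$ and $\infty$. Concretely, $\uD_{1,1} \subseteq \P^{2g}$ is the image of the multiplication map restricted to the single point $\{XY\} \in \P^2 = \P(W_1)$ (when $g$ is odd) or $\{X,Y\}\in\P^1$ (when $g$ is even); that is, $\uD_{1,1}$ consists of those degree-$2g$ forms divisible by $XY$. I would first identify $\uD_{1,1}$ as the zero locus of a regular section of an appropriate rank-$2$ $G$-equivariant bundle on $\P^{2g}$, so that its class is the top (second) Chern class of that bundle, and then expand the Chern class in terms of $\xi_{2g}$, $\gamma$, and either $\beta_1,\beta_2$ (even case) or $c_2$ (odd case).

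For the even case, I would follow \cite[Lemma 33]{CIL24} directly: there the analogous class $[\{XY\}]$ in $\CH^*_{\GI}(\P(\Sym^2 V^\vee))$ is computed, and pushing forward along the multiplication map $\varphi$ using the projection formula and Lemma \ref{lemma: push of varphi} yields the stated formula. Alternatively, and more self-containedly, $\uD_{1,1}$ is the vanishing locus of the two coefficients of $X^{2g}$ and $Y^{2g}$ in a degree-$2g$ form, which are sections of line bundles whose first Chern classes I can read off from the $\GI$-linearization; the product of these two classes gives $[\uD_{1,1}]$. Either route should reproduce $\xi_{2g}^2 + \xi_{2g}(\gamma - 2g\beta_1) + 4g^2\beta_2$ after substituting the relation $\xi_{2g} = (g-1)\beta_1 + \gamma$ coming from Lemma \ref{lemma: reduction to prjective spaces} where needed.

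For the odd case, the cleanest approach mirrors Lemma \ref{lemma: class of XY for odd g}, which already computes the class of $\{XY\} \subseteq \P(W_1)$ to be $\xi^2 + \xi\gamma + c_2 + \gamma^2$. Since $\uD_{1,1} \subseteq \P^{2g} = \P(W_g)$ is the preimage, under the multiplication structure, of this codimension-$2$ locus, I expect $[\uD_{1,1}]$ to be obtained by substituting the $\PGL_2$-equivariant hyperplane class $\xi = \xi_{2}$ with $\xi_{2g}$ scaled appropriately and carrying along the $\gamma$ and $c_2$ contributions; the factor $g^2$ multiplying $c_2$ should emerge from the way $\xi_{2}$ pulls back or pushes forward across the degree-$g$ multiplication, just as the factor of $2$ appeared in Lemma \ref{lemma: class of uD1} via Equation \eqref{eqn: rho(tau)}. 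I would realize $\uD_{1,1}$ as the zero locus of a section of a rank-$2$ bundle built from the quotient $Q$ of \eqref{eqn: quotient bundle}, compute its second Chern class equivariantly, and simplify using $c_1 = 0$ and the relations in $\CH^*_{\GII}(\P^{2g})$.

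The main obstacle will be correctly tracking the equivariant linearizations and the numerical factor $g^2$ in front of $c_2$ (resp. $4g^2\beta_2$ in the even case). Getting this factor right requires care: it comes from the interaction between the degree-$g$ multiplication map and the normalization of $\xi_{2g}$, and a naive computation on the point $\{XY\}$ in $\P(W_1)$ must be transported faithfully to $\P(W_g)$. I expect the bookkeeping of which twist by $\gamma$, $\chi$, or $\det(V)$ attaches to each coordinate section — rather than any deep geometric difficulty — to be where errors are most likely to creep in, so I would double-check the final expression by restricting to $\uD_{1,1}$ itself (where $i_{1,1}^*$ is surjective) and verifying consistency with $[\uD_1]$ from Lemma \ref{lemma: class of uD1}.
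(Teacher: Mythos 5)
Your primary route --- compute $[\{XY\}]$ in the degree-$2$ projective space and push it forward along the multiplication map --- is exactly what the paper does, and you correctly identify the two inputs (\cite[Lemma 33]{CIL24} for $g$ even, Lemma \ref{lemma: class of XY for odd g} for $g$ odd). But as written the plan has two concrete problems. First, the whole content of the lemma is the pushforward computation that you defer with ``I expect the factor $g^2$ to emerge'': the relevant map is $\mathrm{mult}\colon \P^{2g-2}\times\P^{2}\to\P^{2g}$ (not $\varphi\colon\P^1\times\P^{2g-1}\to\P^{2g}$, and Lemma \ref{lemma: push of varphi} does not supply what you need), and one must actually establish $\mathrm{mult}_*(\xi_2)=(2g-1)\xi_{2g}$ and $\mathrm{mult}_*(\xi_2^2)=\xi_{2g}^2-g(g-1)c_2$ in the odd case (the paper does this via the $(\P^1)^{2g}$ cover, working $\Gm\times\PGL_2$-equivariantly with $\Q$-coefficients using torsion-freeness in degree $\leq 2$), resp.\ the pushforwards $\mathrm{mult}_*(s^0_{2g-2}\otimes s^j_2)$ from \cite[Lemma 4.4]{Lar19} in the even case. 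Also note that $\uD_{1,1}$ is the \emph{image} of $\P^{2g-2}\times\{XY\}$, not a preimage, so the class is a pushforward, not a substitution of $\xi_2$ by a multiple of $\xi_{2g}$.

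Second, your ``more self-contained'' alternative for the even case --- writing $[\uD_{1,1}]$ as the product of the first Chern classes of the two line bundles cutting out $\{a_0=0\}$ and $\{a_{2g}=0\}$ --- fails as stated: the element of $\mu_2\subseteq G$ swaps $X$ and $Y$, hence swaps the coefficients $a_0$ and $a_{2g}$, so the two hyperplanes are not individually $G$-invariant and carry no $G$-linearization. The correct repair is to view $(a_0,a_{2g})$ as a section of the rank-$2$ equivariant bundle $\cO_{\P^{2g}}(1)\otimes V_0$ (with $V_0=kX^{2g}\oplus kY^{2g}$) and take $c_2$ of that bundle --- this is exactly what the paper does ``formally'' in the proof of Theorem \ref{thm: Chow for g even n=1} --- but then you must compute the Chern classes of the induced rank-$2$ representation $V_0$, which is a nontrivial step of the same flavour as Lemma \ref{lemma: class of XY for odd g}, not a product of two visible weights.
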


\begin{proof}
    Suppose $g$ is even. This relies on certain results about multiplication maps from \cite[Section 4]{Lar19} and 
    \cite[Section 3.2]{CIL24}. In particular, the classes $s_r^j$ are natural elements of $\CH^*(\P(\Sym^r(V^\vee)))$ for $j=0,\ldots,r$, starting with $s_r^0=1$ and $s_r^1= \xi_{r}$. Let $\mathrm{mult}: \P^{2g-2} \times \P^2 \to \P^{2g}$ be the multiplication map. Then, we have
    \begin{align*}
            [\uD_{1,1}]&= \mathrm{mult}_*(1 \otimes [\{XY\}]) \\
            &= \mathrm{mult}_*(s_{2g-2}^0 \otimes s_{2}^2)+(\gamma-\beta_1)\mathrm{mult}_*(s_{2g-2}^0 \otimes s_{2}^1)+2\beta_2 \varphi_*(s_{2g-2}^0 \otimes s_{2}^0)\\
            &=s_{2g}^2+(2g-1)(\gamma-\beta_1)s_{2g}^1+2g(2g-1)\beta_2s_{2g}^0\\
            &=\xi_{2g}^2-2g\beta_1\xi_{2g}+\gamma\xi_{2g}+4g^2\beta_2.
        \end{align*}
        The second equality uses the computation of  $[\{X,Y\}] \in \CH^*_G(\P^2)$ in \cite[Lemma 33]{CIL24}, the pushforwards $\mathrm{mult}_*(s_{2g-1}^\alpha \otimes s_{1}^\beta)$ are computed using \cite[Lemma 4.4]{Lar19}, and in the last equality we have used \cite[Lemma 4.2]{Lar19} to write $s_{2g}^2=\xi_{2g}^2-\beta_1 \xi_{2g}+(2g) \beta_2$.

        Suppose now that $g$ is odd. This time we denote  by $\mathrm{mult}: \P(W_{g-1}) \times \P(W_1) \to \P(W_{g})$ the multiplication map. As before and using Lemma \ref{lemma: class of XY for odd g}, we have
        $$
        [\uD_{1,1}]= \mathrm{mult}_*(1 \otimes [\{XY\}]) = \mathrm{mult}_*(\xi_2^2)+\gamma\cdot\mathrm{mult}_*(\xi_{2})+g(2g-1)(c_2+\gamma^2).
        $$
        We compute the pushforwards $\mathrm{mult}_*(\xi_2^2)$ and $\mathrm{mult}_*(\xi_{2g})$ first $\Gm \times \PGL_2$-equivariantly and then we restrict it to $\GII$. Since the Chow ring $\Gm \times \PGL_2$ has no torsion in degree $\leq 2$, we are allowed to use $\Q$ coefficients. We have a commutative diagram of $\Gm \times \PGL_2$-equivariant maps
        \[
        \begin{tikzcd}
            (\P^1)^{2g-2}\times(\P^1)^2\arrow[d,"="]\arrow[r,"\theta"] & \P^{2g-2}\times\P^2\arrow[d,"\mathrm{mult}"]\\
            (\P^1)^{2g}\arrow[r,"\rho"] & \P^{2g}
        \end{tikzcd}
        \]
        Using that $\theta^*(2\xi_2)=\tau_{2g-1}+\tau_{2g}$ (and thus that $\theta_*(\tau_{2g-1}+\tau_{2g})=4 (2g-2)! \xi_2$) and \eqref{eqn: rho(tau)}, we get that
        \begin{align*}
            \mathrm{mult}_*(\xi_2)&=\frac{1}{2}\mathrm{mult}_*(2\xi_2) \\
            &=\frac{1}{4}\frac{1}{(2g-2)!}\rho_*(\tau_{2g-1}+\tau_{2g})\\
            &=(2g-1)\xi_{2g}.
        \end{align*}
        Similarly, we have
        \begin{align*}
            \mathrm{mult}_*(\xi_2^2)&=\frac{1}{4}\mathrm{mult}_*(4\xi_2^2)\\
            &=\frac{1}{4}\frac{1}{(2g-2)!2}\rho_*((\tau_{2g-1}+\tau_{2g})^2)\\
            &=\frac{1}{8}\frac{1}{(2g-2)!}(2\rho_*(\tau_{2g}^2)+2\rho_*(\tau_{2g-1}\tau_{2g}))\\
            &=\xi_{2g}^2-g(g-1)c_2.
        \end{align*}

        where in the first equality we used that 
        $$
        8 (2g-2)! \xi_{2}^2 = \theta_*(\theta^*((2 \xi_2)^2))=\theta_*((\tau_{2g-1}+\tau_{2g})^2)
        $$
        and in the third equality the identity 
        \begin{equation}\label{eqn rho(taui tauj)}
        \rho_*(\tau_{2g-1}\tau_{2g})=\frac{\rho_*\rho^*((2 \xi_{2g})^2)-\rho_*(\sum_j \tau_j^2)}{2g (2g-1)}=4(2g-2)!\xi_{2g}^2+(2g)(2g-2)! c_2
        \end{equation}
        and $\tau_{i}^2=-c_2$ for all $i=1,\ldots, 2g$.

        Combining these two computations, we get the stated conclusion.
\end{proof}

Next, we identify the class $\alpha$ in Corollary \ref{cor: Chow ring of uD1} and compute its pushforward in $\P^{2g}$. Call
$$
j: \P^{2g-1} \times \{ X,Y\} \to \P^{2g-1} \times \P^1
$$
the $G$-equivariant map obtained from the inclusion $ \{ X, Y \} \hookrightarrow \P^1$ and let
$$
\psi: \P^{2g-1} \times \{ X, Y \} \to \uD_1
$$
be so that $i_1 \circ \psi= \varphi \circ j$. Form the cartesian diagram
\[
        \begin{tikzcd}
            {\left[\frac{\mathbb{A}^{2g-1}\times\{X,Y\}}{G}\right]}\arrow[d,"v"]\arrow[r,"\cong"] \arrow[dr, phantom, "\square"] & {\left[\frac{\uD_1\setminus \uD_{1,1}}{G}\right]}\arrow[d,"u"]\\
            {\left[\frac{\P^{2g-1}\times\{X,Y\}}{G}\right]}\arrow[r,"\psi"'] & {\left[\frac{\uD_1}{G}\right]}.
        \end{tikzcd}
        \]

\begin{lemma}
    We can choose 
    $$
    \alpha=
    \begin{cases}
        \psi_*( j^*(\xi_{1})) &\text{when $g$ is even;}\\
        \psi_*(j^*(\tau)) &\text{when $g$ is odd. }\, 
        \end{cases}
    $$
\end{lemma}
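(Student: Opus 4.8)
The goal is to identify the class $\alpha$, which by Corollary \ref{cor: Chow ring of uD1} is characterized up to $\CH^*(BG)$-multiples of $1$ by the property that its restriction to $\uD_1 \smallsetminus \uD_{1,1}$ equals the first Chern class of the standard representation of (the relevant factor of) the torus. The plan is therefore to compute the restriction of the candidate class $\psi_*(j^*(\xi_1))$ (respectively $\psi_*(j^*(\tau))$) to $\uD_1 \smallsetminus \uD_{1,1}$ and check that it agrees with $\alpha$ as specified before the lemma.

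The key computational device is the Cartesian diagram immediately preceding the statement. Since the square is Cartesian and $u$ is an open immersion, base change gives $u^* \psi_* = v_* (\text{res})^*$, where the restriction map sends $j^*(\xi_1)$ (resp. $j^*(\tau)$) to the corresponding tautological class on $\mathbb{A}^{2g-1} \times \{X,Y\}$. I would first observe that on the open stratum $\uD_1 \smallsetminus \uD_{1,1}$, the map $v: [(\mathbb{A}^{2g-1} \times \{X,Y\})/G] \to [(\P^{2g-1} \times \{X,Y\})/G]$ is the open inclusion of the affine space complement of $\uD_{1,1}$ inside $\P^{2g-1}$ (crossed with the fixed two points $\{X,Y\}$), so that $v_*$ followed by the identification is the pushforward along an open immersion — effectively, restriction of $\psi_*(j^*(\xi_1))$ to the open stratum is computed by restricting $\xi_1$ (resp. $\tau$) to $\{X,Y\}$ and transporting it through the isomorphism $[(\mathbb{A}^{2g-1} \times \{X,Y\})/G] \cong [(\uD_1 \smallsetminus \uD_{1,1})/G]$.

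Concretely, the two points $X, Y \in \P^1$ are exactly the points $0, \infty$ fixed set-theoretically by $G$ and permuted by $\mu_2$; the stabilizer datum shows $\{X,Y\}$ is the locus where the torus acts through the two standard characters. Restricting $\xi_1 = c_1^G(\cO_{\P^1}(1))$ (resp. $\tau = c_1^G(\cO_{\P^1}(2))$) to these two points and identifying $[\{X,Y\}/G]$ with the relevant $B\Gm$ (via the quotient description $[(\uD_1 \smallsetminus \uD_{1,1})/G] \cong [\mathbb{A}^{2g-1}/(\Gm \times \Gm)]$ for $g$ even, resp. $[\mathbb{A}^{2g-1}/\Gm] \times B\Gm$ for $g$ odd), one reads off that the restriction of $\psi_*(j^*(\xi_1))$ recovers precisely the first Chern class $\chi_1$ of the standard representation of the first factor (resp. the analogous class for $g$ odd). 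Thus the candidate class has the defining restriction property of $\alpha$, so we may take $\alpha$ to be this class.

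The main obstacle I anticipate is bookkeeping the linearizations correctly: one must verify that the $G$-equivariant restriction of $\cO_{\P^1}(1)$ (resp. $\cO_{\P^1}(2)$) to the point $X$ (or $Y$) gives exactly the standard character $\chi_1$ and not some twist by $\det(V)$, $\chi$, or $\gamma$. This is the same subtlety that produced the corrections $\gamma, \beta_1, t$ in Lemmas \ref{lemma: class of uD1} and \ref{lemma: class of uD11}, so care is needed to confirm that any such twist either vanishes on restriction to the stratum or is absorbed into the $\CH^*(BG) \cdot 1$ ambiguity allowed by Corollary \ref{cor: Chow ring of uD1}. Once the character computation is pinned down, the identification of $\alpha$ follows immediately, since the lemma only asserts that we \emph{can choose} $\alpha$ to be this pushforward, i.e. it suffices to match the restriction to the open stratum modulo the freedom already present in its definition.
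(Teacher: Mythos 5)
Your proposal is correct and follows essentially the same route as the paper: apply base change across the Cartesian square to reduce to showing that the restriction of $\xi_1$ (resp.\ $\tau$) to $\{X,Y\}$ is a generator, noting that the generation statement is insensitive to twists by characters pulled back from $BG$. The only difference is in how that last verification is settled: rather than tracking linearizations at the fixed points, the paper observes that the restriction of $\xi_1$ satisfies $(\xi_1-t_1)(\xi_1-t_2)=0$ in $\Z[t_1,t_2]$ and hence must equal one of the $t_i$ (and in the odd case uses $\tau^2=-c_2$ together with the fact that $c_2$ restricts, up to sign, to the square of a generator of $\CH^*(B\Gm)$), which sidesteps exactly the bookkeeping you flag as the main obstacle.
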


\begin{proof}
    Suppose $g$ is even. We want to show that $u^*(\psi_*(j^*(\xi_1)))=v^*(\xi_1)$ is a generator of $\CH^*_G(\{ X, Y\})= \CH^*(B (\Gm \times \Gm))$ as a $\CH^*(\GI)$-algebra. The image of $\xi_1$ under pullback map
    \[
    \begin{tikzcd}
        \CH^*_{\GI}(\P^1) \arrow[r] & \CH^*_{\GI}(\{X,Y\})=\CH^*(B (\Gm \times \Gm))= \Z[t_1,t_2]
    \end{tikzcd}
    \]
    satisfies the relation $0=\xi_1^2-\beta_1\xi_1+\beta_2=(\xi_1 -t_1)(\xi_1-t_2)$ and must thus be one of the $t_i$. Which of the two $t_i$ depends on which isomorphism $[\{ X,Y \}/ \GII] \cong B (\Gm \times \Gm)$ we choose.
    
    Suppose now that $g$ is odd. Then, we aim to show that $u^*(\psi_*(j^*(\tau)))=v^*(\tau)$ is a generator of $\CH^*_G(\{ X, Y\})= \CH^*(B \Gm)[t]$ as a $\CH^*(\Gm \rtimes \mu_2)[t]$-algebra. Here we identified $[\{X,Y\}/\Gm \rtimes \mu_2]$ with $B \Gm$ using the inclusion $X \in \{X,Y\}$. This follows from the fact that $\tau^2=-c_2 \in \CH^*_{\GII}(\P^1)$ and that (up to a sign) $c_2$ pulls backs to the square of a generator of $\CH^*(B\Gm)$ under $B \Gm \to B(\Gm \rtimes \mu_2)$ (See \cite[page 11]{Vez98}).
\end{proof}

\begin{remark}
    Suppose that $g$ is even. Then, we have $\psi^* i_1^* \xi_{2g}=\xi_{2g-1}+ j^* \xi_1 $, and thus 
    $$
    {i_1}_*(\alpha)= \xi_{2g} \cdot [\uD_1] - (i_1 \circ \psi)_*(\xi_{2g-1}).
    $$
    Thus, the image of $\CH^*_G(\uD_1) \to \CH^*_G(\P^{2g})$ is also the ideal generated by $[\uD_{1,1}]$, $[\uD_1]$ and $(i_1 \circ \psi)_*(\xi_{2g-1})$.
    
    This is closer to the strategy used in \cite{CIL24} for $g=2$. Here, we will directly compute ${i_1}_*(\alpha)$ avoiding the need to distinguish between the cases $g$ odd and $g$ even.
\end{remark}

We can finally compute ${i_1}_*(\alpha)$.
\begin{lemma}\label{lemma: pushforward of alpha}
    The pushforward of $\alpha$ to $\P^{2g}$ is given by:
    $$
    {i_1}_*(\alpha)=\begin{cases}
         (\beta_1 + \gamma) \xi_{2g}-4g \beta_2 &\text{when $g$ is even;}\\
        -2g c_2 &\text{when $g$ is odd. }\, 
        \end{cases}
    $$.
\end{lemma}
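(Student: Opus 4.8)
The plan is to reduce the computation of ${i_1}_*(\alpha)$ to a single pushforward along the multiplication map $\varphi$, exploiting the defining relation $i_1 \circ \psi = \varphi \circ j$. Writing $\beta = \xi_1$ when $g$ is even and $\beta = \tau$ when $g$ is odd, the definition of $\alpha$ together with functoriality of proper pushforward gives ${i_1}_*(\alpha) = {i_1}_*\psi_*(j^*\beta) = (\varphi \circ j)_*(j^*\beta) = \varphi_*(j_*j^*\beta)$. By the projection formula $j_*j^*\beta = \beta \cdot j_*(1) = \beta \cdot [\{X,Y\}]$, where $[\{X,Y\}]$ is viewed (up to the obvious transposition of factors) as a class pulled back from the $\P^1$-factor. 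Thus everything reduces to computing $\varphi_*\big(\beta \cdot [\{X,Y\}]\big)$ in $\CH^*_G(\P^{2g})$.

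For the substitution I would insert the classes of $\{X,Y\}$ already recorded: $[\{X,Y\}] = 2\xi_1 - (\beta_1 + \gamma)$ in $\CH^*_{\GI}(\P^1)$ (from \cite[Lemma 32]{CIL24}) in the even case, and $[\{X,Y\}] = \tau + \gamma$ (Equation \eqref{eqn: class X,Y}) in the odd case. Multiplying by $\beta$ and clearing the square of the degree-one generator via the Grothendieck relation on $\P^1$—namely $\xi_1^2 = \beta_1\xi_1 - \beta_2$ when $g$ is even and $\tau^2 = -c_2$ when $g$ is odd—turns the integrand into a $\CH^*(BG)$-linear combination of $1$ and $\beta$. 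Explicitly, for $g$ even one finds $\beta \cdot [\{X,Y\}] = (\beta_1 - \gamma)\xi_1 - 2\beta_2$, and for $g$ odd $\beta\cdot[\{X,Y\}] = \gamma\tau - c_2$.

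It then remains to apply $\varphi_*$ term by term. The classes $\beta_1, \beta_2, \gamma, c_2$ are pulled back from $BG$ and so factor through $\varphi_*$, leaving only $\varphi_*(\beta)$ and $\varphi_*(1)$ to be supplied. The first is Lemma \ref{lemma: push of varphi}: $\varphi_*(\xi_1) = \xi_{2g}$ (even) and $\varphi_*(\tau) = 2\xi_{2g}$ (odd). The second, $\varphi_*(1)$, is the only genuinely new geometric quantity: since $\varphi$ is the multiplication map $\P^1 \times \P^{2g-1} \to \P^{2g}$ between spaces of the same dimension, $\varphi_*(1) \in \CH^0_G(\P^{2g})$ is the degree of $\varphi$. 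A general squarefree binary form of degree $2g$ admits exactly $2g$ reduced factorizations into a linear form times a degree-$(2g-1)$ form (one per root), so $\deg \varphi = 2g$ and $\varphi_*(1) = 2g$. Plugging in yields ${i_1}_*(\alpha) = (\beta_1 - \gamma)\xi_{2g} - 4g\beta_2$ for $g$ even and $-2g c_2 + 2\gamma\xi_{2g}$ for $g$ odd.

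The final—and really the only delicate—step is a $2$-torsion simplification: the relation $2\gamma = 0$ in $\CH^*(BG)$ (Theorems \ref{thm: chow BG} and \ref{thm: Chow BGII}) gives $2\gamma\xi_{2g} = 0$ and $-\gamma\xi_{2g} = \gamma\xi_{2g}$, converting the two expressions above into the claimed $(\beta_1 + \gamma)\xi_{2g} - 4g\beta_2$ and $-2g c_2$, respectively. I expect the main obstacle to be purely bookkeeping: tracking the pulled-back classes correctly through the fibre square and using the right degree-one relation on $\P^1$ in each parity. No input beyond the cited lemmas and the elementary degree count for $\varphi$ is needed.
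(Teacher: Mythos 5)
Your proposal is correct and follows essentially the same route as the paper: rewrite ${i_1}_*(\alpha)$ as $\varphi_*(j_*j^*\beta)=\varphi_*(\beta\cdot[\{X,Y\}])$, substitute the known class of $\{X,Y\}$, reduce via the degree-one relation on $\P^1$ ($\xi_1^2=\beta_1\xi_1-\beta_2$, resp.\ $\tau^2=-c_2$), and push forward using Lemma \ref{lemma: push of varphi} together with $2\gamma=0$. The only presentational difference is that you make explicit the value $\varphi_*(1)=2g$ via the degree count, which the paper uses implicitly in the step $\varphi_*(-2\beta_2)=-4g\beta_2$.
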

\begin{proof}
    Suppose $g$ is even. Then, we have
    \begin{align*}
    {i_1}_*(\alpha)&= {i_1}_* (\psi_*(j^*(\xi_1))) \\
    &= \varphi_*(j_*(j^*\xi_1)) \\
    &= \varphi_*(\xi_1 \cdot [\{X,Y\}]) \\
    &= \varphi_*(  \beta_1 \xi_1 -\gamma \xi_1 -2 \beta_2)\\
    &=  (\beta_1 + \gamma) \xi_{2g}-4g \beta_2
    \end{align*}
    where in the fourth equality we used that $[\{X,Y\}]= 2 \xi_{1}-(\beta_1+\gamma) \in \CH^*_{\GI}(\P^1)$ (see \cite[Lemma 32]{CIL24}) and the projective bundle relation $\xi_1^2-\beta_1 \xi_1 +\beta_2=0$; in the last equality, we applied Lemma \ref{lemma: push of varphi}.

    When $g$ is odd the chain of equalities remains the same, except we replace $\xi_1$ with $\tau$, in the fourth equality use Equation \eqref{eqn: class X,Y} and the relation $\tau^2=-c_2 \in \CH^*_{\GII}(\P^1)$; in the last equality, use that $\gamma$ is $2$-torsion.
\end{proof}

Lemmas \ref{lemma: class of uD1}, \ref{lemma: class of uD11} and \ref{lemma: pushforward of alpha}  together give generators of the image of $\CH^*_G(\uD_1) \to \CH^*_G(\P^{2g})$. 

Next we deal with $\uD_2$. There is a well-known $G$-equivariant envelope (see \cite[Page 603]{EG98} for the definition) of $\uD_2$, namely the disjoint union of the collection of maps
$$
\pi_r: \P^r \times \P^{2g-2r} \to \P^{2g}
$$
given by $(f,g) \mapsto f^2 g$ for $r=1,\ldots,g$ (see \cite[Lemma 3.3]{Vis98},\cite[Proposition 4.1]{EF09}).

It follows that the image of $\CH^*_G(\uD_2) \to \CH^*_G(\P^{2g})$ is equal to the ideal generated by the images of the pushforwards along all $\pi_r$. For other groups $G$, this has been computed. In our case, we have the following.

\begin{proposition}\label{prop: image i_2}
    The image of ${i_2}_*: \CH^*_G(\uD_2) \to \CH^*_G(\P^{2g})$ is the ideal generated by:
    \begin{itemize}
        \item when $g$ is even, by the classes 
        \begin{itemize}
            \item ${\pi_1}_*(1)=2(2g-1) \xi_{2g}-2g(2g-1)\beta_1$,
            \item  ${\pi_1}_*(\xi_{1})=\xi_{2g}^2-\beta_1 \xi_{2g}-2g(2g-2)\beta_2$;
        \end{itemize}
        \item when $g$ is odd, by the classes
        \begin{itemize}
            \item ${\pi_1}_*(1)=2(2g-1) \xi_{2g}$,
            \item ${\pi_1}_*(\tau)=2 \xi_{2g}^2-2g(g-1) c_2$,
            \item ${\pi_2}_*(\xi_{2}^2)=\xi_{2g}^4+(c_2+ 2g(g-1)c_2 )\xi_{2g}^2+c_3 \xi_{2g}+g^2(g-1)^2c_2$
        \end{itemize}
    \end{itemize}
\end{proposition}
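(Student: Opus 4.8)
Since the maps $\pi_r:\P^r\times\P^{2g-2r}\to\P^{2g}$ for $r=1,\dots,g$ form a $G$-equivariant envelope of $\uD_2$ (see \cite{Vis98,EF09}), the image of $i_{2*}$ is the sum $\sum_{r=1}^{g}\mathrm{Im}(\pi_{r*})$, and by the projection formula each summand is, as an ideal of $\CH^*_G(\P^{2g})$, generated by the classes $\pi_{r*}(z)$ where $z$ runs over a system of $\CH^*_G(\P^{2g})$-module generators of $\CH^*_G(\P^r\times\P^{2g-2r})$ (pulled back through $\pi_r^*$). My first step is to cut the range of $r$ down. Writing a degree-$r$ form as $f=\ell f'$ with $\ell$ linear gives $\pi_r(\ell f',h)=\ell^2\,\pi_{r-1}(f',h)$, which assembles into a commutative square
\[
\begin{tikzcd}
\P^1\times\P^{r-1}\times\P^{2g-2r}\arrow[r,"m\times\mathrm{id}"]\arrow[d,"\mathrm{id}\times\pi_{r-1}"'] & \P^r\times\P^{2g-2r}\arrow[d,"\pi_r"]\\
\P^1\times\P^{2g-2}\arrow[r,"\pi_1"'] & \P^{2g}
\end{tikzcd}
\]
with $m$ the multiplication map. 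Hence $\pi_{r*}\big((m\times\mathrm{id})_*w\big)=\pi_{1*}\big((\mathrm{id}\times\pi_{r-1})_*w\big)\in\mathrm{Im}(\pi_{1*})$, so every module generator lying in the image of $(m\times\mathrm{id})_*$ contributes nothing beyond $\mathrm{Im}(\pi_{1*})$. The failure of $(m\times\mathrm{id})_*$ to be surjective is governed by $\deg m=r$, and I would track exactly which generators are missed, running the same diagram inductively and replacing the odd-dimensional factors by their $\GL_3$-counterparts $\mathcal{Y}_r$ together with the maps $\phi',\psi',k_{m;r,l}$ of \S\ref{sec: GL3-counterparts} (Definition \ref{def: k morphism}). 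The expected outcome is that for $g$ even everything reduces to $\mathrm{Im}(\pi_{1*})$, while for $g$ odd a single extra class at the level $r=2$ survives, the rest reducing to $\mathrm{Im}(\pi_{1*})+\mathrm{Im}(\pi_{2*})$.

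Next I reduce the module generators inside $\pi_1$ (and $\pi_2$). The relation $\pi_r^*\xi_{2g}=2\xi_r+\xi_{2g-2r}+(\text{character})$ eliminates $\xi_{2g-2r}$ in favour of $\pi_r^*\xi_{2g}$ and $\xi_r$, so the module generators become powers of $\xi_r$ (resp.\ of $\tau$ on the $\P^1$-factor). For $\pi_1$ the quadratic relation on $\P^1$ ($\xi_1^2=\beta_1\xi_1-\beta_2$ for $g$ even, $\tau^2=-c_2$ for $g$ odd) leaves only $1$ and $\xi_1$ (resp.\ $1$ and $\tau$), giving the generators $\pi_{1*}(1),\pi_{1*}(\xi_1)$ and $\pi_{1*}(1),\pi_{1*}(\tau)$. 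For $\pi_2$ in the odd case the surviving generator is $\pi_{2*}(\xi_2^2)$, and the delicate point is to show it is genuinely needed: a $\CH^*_G(\P^{2g})$-linear combination of $\pi_{1*}(1)=2(2g-1)\xi_{2g}$ and $\pi_{1*}(\tau)$ lands in $2\,\CH^*_G(\P^{2g})$, whereas the summand $c_3\xi_{2g}$ in $\pi_{2*}(\xi_2^2)$ is a nonzero $2$-torsion class and hence lies outside $2\,\CH^*_G(\P^{2g})$. This $2$-torsion bookkeeping is precisely why the even and odd cases carry a different number of generators, and I expect it to be the main obstacle of the proof.

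Finally I compute the pushforwards explicitly. For $g$ even I would follow \cite{Lar19,CIL24}: using the class $[\{X,Y\}]\in\CH^*_{\GI}(\P^1)$, the projective-bundle relation, and the pushforward formula of Lemma \ref{lemma: push of varphi} yields $\pi_{1*}(1)$ and $\pi_{1*}(\xi_1)$ directly. For $g$ odd, the classes $\pi_{1*}(1)$ and $\pi_{1*}(\tau)$ sit in degrees $\leq2$, where $\CH^*_{\Gm\times\PGL_2}(\P^{2g})$ is torsion-free, so I would compute them with $\Q$-coefficients after pulling back to $(\P^1)^{2g}$ and applying the identities \eqref{eqn: rho(tau)} and \eqref{eqn rho(taui tauj)}, exactly as in the proof of Lemma \ref{lemma: class of uD11}. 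The class $\pi_{2*}(\xi_2^2)$, carrying the $2$-torsion term $c_3\xi_{2g}$, must instead be computed integrally: here I would pass to the $\GL_3$-counterpart, identify the relevant $\Gm^3$-equivariant class with $[W_{g;2,0}]$ of Proposition \ref{prop: computation W[m;2,0,0]} (using $[W_{g;1,0}]$ of Proposition \ref{prop: computation W[m;1,0,0]} for comparison), and recover the $\GL_3$-equivariant answer from the torus computation via Proposition \ref{prop:generalizeddecomposition}. Substituting $t_1+t_2+t_3=0$ and $t_2t_3=t_1^2+c_2$ as in Remark \ref{rmk: computation W[m;1,0,0]} then produces the stated formula.
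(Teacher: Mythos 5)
Your computational core matches the paper's: the formulas for ${\pi_1}_*(1)$ and ${\pi_1}_*(\tau)$ in the odd case are obtained with $\Q$-coefficients by passing to $(\P^1)^{2g}$ exactly as in the proof of Lemma \ref{lemma: class of uD11} (the paper actually just quotes \cite[Proposition 5.2]{FV11}), and ${\pi_2}_*(\xi_2^2)$ is extracted from the identity $[W_{g;2,0}]={\pi_2'}_*([W_{1;1,0}])$ via Propositions \ref{prop: computation W[m;2,0,0]} and \ref{prop:generalizeddecomposition} --- this is precisely the paper's Lemma \ref{lemma: pi2(xi2)}. For $g$ even the paper is shorter than what you propose: since everything is pulled back from the $\GL_2$-equivariant setting, it simply restricts \cite[Proposition 4.2 and Lemma 4.3]{EF09} (note also that the class $[\{X,Y\}]$ you invoke belongs to the $\uD_1$ computation, not to ${\pi_1}_*$). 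The remark on the $2$-torsion term $c_3\xi_{2g}$ showing that ${\pi_2}_*(\xi_2^2)$ is not redundant is a nice sanity check but is not needed for the statement.

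The genuine gap is the containment $\mathrm{Im}({\pi_r}_*)\subseteq I$ for $r\geq 3$ when $g$ is odd. Your square $\P^1\times\P^{r-1}\times\P^{2g-2r}\to\P^r\times\P^{2g-2r}$ only shows that classes in the image of $(m\times\mathrm{id})_*$ land in $\mathrm{Im}({\pi_1}_*)$, and since $\deg m=r$ this map is surjective on Chow groups only after inverting $r$. Two-locally this disposes of odd $r$ (exactly the paper's move), but for even $r\geq 4$ it gives nothing, and ``tracking the missed generators'' inductively has no clear termination; moreover for odd $r$ the group $\GII$ does not act on $\P^r$ through a linear representation, so even exhibiting module generators of $\CH^*_{\GII}(\P^r\times\P^{2g-2r})$ is not free. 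The paper resolves all of this by a two-localization argument you do not supply: after inverting $2$ it identifies $\CH^*_{\GII}(\P^{2g}\smallsetminus\uD_2)[1/2]$ with $\CH^*_{\Gm\times\PGL_2}(\P^{2g}\smallsetminus\uD_2)[1/2]$ (injectivity requiring the detour through $\SL_2$ and \cite[Lemma 5.4]{FV11}) and then quotes \cite{GV08} and \cite[Corollary 6.3]{DL18}; while $2$-locally, even $r$ is handled by observing that $\P^r=\P(W_{r/2})$ so the relevant pushforwards are pulled back from the $\Gm\times\PGL_2$-equivariant setting, again by \cite[Corollary 6.3]{DL18}. Without these ingredients your argument does not establish that the three listed classes generate the whole image.
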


The proof is rather lengthy; let us first establish the following statement separately before delving into it. This can be regarded as an improvement of \cite[Corollary 6.3]{DL18}.

\begin{lemma}\label{lemma: pi2(xi2)}
     Let $m \in \Z_{\geq 0}$ and consider the $\PGL_2$-equivariant multiplication maps
     $$
     \pi_r: \P^r \times \P^{2m-r} \to \P^{2m}
     $$
     Then, modulo ${\pi_1}_*(1)$ and ${\pi_1}_*(\tau)$, the pushforward to $\CH_{\PGL_2}^4(\P^{2m})$ of $\xi_2$ under $\pi_2$ is given by
    $$
    {\pi_2}_*(\xi_2^2)=\xi_{2m}^4+(c_2+ 2m(m-1)c_2 )\xi_{2m}^2+c_3 \xi_{2m}+m^2(m-1)^2c_2^2 .
    $$
\end{lemma}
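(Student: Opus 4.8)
The plan is to compute the pushforward on the $\GL_3$-counterpart, where it reduces to the monomial classes $[W_{m;2,0}]$ already determined in Proposition~\ref{prop: computation W[m;2,0,0]}. First I would replace the $\PGL_2$-equivariant map $\pi_2\colon\P^2\times\P^{2m-4}\to\P^{2m}$ by its $\GL_3$-counterpart
\[
\pi_2'\colon\P(V_1)\times_{\mathcal{S}}\P(V_{m-2})\longrightarrow\P(V_m),\qquad(q,f,g)\longmapsto(q,f^2g),
\]
under which $\xi_2,\xi_{2m},c_2,c_3$ all correspond; since $[\P(V_m)/\GL_3]\cong[\P(W_m)/\PGL_2]$ it is enough to prove the identity in $\CH^4_{\GL_3}(\P(V_m))$. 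I would then factor $\pi_2'$ as the closed embedding $a\colon(q,f,g)\mapsto(q,f,f^2g)$ onto the incidence locus $I=\{(q,f,D):f^2\mid D\}$ inside $\P(V_1)\times_{\mathcal{S}}\P(V_m)$, followed by the projection $\mathrm{pr}$ onto $\P(V_m)$. Because $\xi_2=c_1(\cO_{\P(V_1)}(1))$ is pulled back from the first factor, the projection formula gives ${\pi_2'}_*(\xi_2^2)=\mathrm{pr}_*\!\big([I]\cdot\xi_2^2\big)$.

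The key simplification is that $V_1=\mathcal{S}\times k[X_1,X_2,X_3]_1$ is the trivial bundle of linear forms, so the fibre $\P(V_1)\cong\P^2$ has exactly three $\Gm^3$-fixed sections, the coordinate lines $X_1,X_2,X_3$. By Proposition~\ref{prop:generalizeddecomposition} the restriction $\CH^*_{\GL_3}(\P(V_m))\hookrightarrow\CH^*_{\Gm^3}(\P(V_m))$ is injective, so I may work $\Gm^3$-equivariantly and integrate over $\P(V_1)$ by localization:
\[
{\pi_2'}_*(\xi_2^2)=\sum_{i=1}^{3}\frac{t_i^2\,[I]|_{f=X_i}}{\prod_{j\neq i}(t_j-t_i)},
\]
up to the sign conventions for the linearization of $\cO_{\P(V_1)}(1)$. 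Here $[I]|_{f=X_i}$ is the class of $\{D:X_i^2\mid D\}$, i.e. exactly the monomial class $[W_{m;2,0}]$ for $i=1$ and its images under permuting the $t_j$ for $i=2,3$, all given explicitly by Proposition~\ref{prop: computation W[m;2,0,0]}.

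I would then substitute these formulas, perform the sum, and simplify using $c_1=t_1+t_2+t_3=0$, $c_2=t_1t_2+t_1t_3+t_2t_3$ and $c_3=t_1t_2t_3$; the symmetric part reproduces $\xi_{2m}^4+(c_2+2m(m-1)c_2)\xi_{2m}^2+c_3\xi_{2m}+m^2(m-1)^2c_2^2$. In practice, and entirely in the spirit of the proof of Proposition~\ref{prop: computation W[m;2,0,0]}, I find it cleaner to avoid clearing denominators and instead verify the final congruence stratum by stratum: restricting to $\mathcal{S}_{0,2,0}$, $\mathcal{S}_{0,1,1}$ and $\mathcal{S}_{0,0,2}$ amounts, by Equation~\eqref{eqn: Chow P(Vm) restricted to Si}, to reducing modulo $2t_2$, $t_2+t_3$ and $2t_3$ respectively, and on each stratum the restricted pushforward is computed from Equation~\eqref{eqn: restriction class W}. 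After subtracting the appropriate global combination $a\,{\pi_1}_*(1)+b\,{\pi_1}_*(\tau)$ of the lower pushforwards (with ${\pi_1}_*(1)=2(2m-1)\xi_{2m}$ and ${\pi_1}_*(\tau)=2\xi_{2m}^2-2m(m-1)c_2$), the difference ${\pi_2}_*(\xi_2^2)-\text{RHS}$ vanishes modulo all three of $2t_2,\,t_2+t_3,\,2t_3$, and Lemma~\ref{lemma: Pvmunicitymod} then forces it to be zero; this applies since every class involved has degree $4\le 2m$.

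The main obstacle is precisely the bookkeeping hidden in that last reduction. The localization sum implicitly treats $I$ as a reduced cycle of the expected codimension $4$, but along the locus where $f$ is tangent to the conic the square $f^2$ meets the conic with multiplicity four, so $I$ is non-reduced there and the naive formula acquires an excess correction. Identifying this correction with an explicit combination of ${\pi_1}_*(1)$ and ${\pi_1}_*(\tau)$—so that the answer is congruent to the stated symmetric expression modulo exactly these two classes, rather than modulo the larger ideal appearing in \cite[Corollary 6.3]{DL18}—is the delicate point, and it is what the present refinement over \emph{loc.\ cit.} really contributes.
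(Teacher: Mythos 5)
Your first step---passing to the $\GL_3$-counterpart $\pi_2'\colon\P(V_1)\times_{\mathcal S}\P(V_{m-2})\to\P(V_m)$ and recognizing that the fibrewise contributions over the torus-fixed linear forms are exactly the class $[W_{m;2,0}]$ of Proposition~\ref{prop: computation W[m;2,0,0]} and its permutations---is the same geometric input the paper uses. But the mechanism you propose for extracting ${\pi_2'}_*(\xi_2^2)$ from these classes has a genuine gap. The localization sum over the three fixed sections of $\P(V_1)$ requires inverting the differences $t_i-t_j$, so a priori it only computes the pushforward in a localized (in particular rationalized) ring; in degree $4$ the class $c_3$ is $2$-torsion, so you cannot recover the integral statement by the torsion-freeness arguments the paper uses for classes of degree at most $2$. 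Your fallback---verifying the congruence modulo $2t_2$, $t_2+t_3$ and $2t_3$ and invoking Lemma~\ref{lemma: Pvmunicitymod}---does not close this gap, because Equation~\eqref{eqn: restriction class W} computes the restrictions to the strata $\mathcal S_{\underline{i}}$ of the \emph{classes} $[W_{m;r,0}]$, not of the \emph{pushforward} ${\pi_2}_*(\xi_2^2)$; restricting to a stratum trivializes the bundle $\P(V_m)$ but still leaves a nontrivial torus-equivariant pushforward along $\P^2\times\P^{2m-4}\to\P^{2m}$ to be carried out with the correct linearizations, which you do not do. Your closing paragraph explicitly defers the ``delicate point,'' so the argument is not complete as written. (Incidentally, the excess-intersection worry there is misplaced: the fibre of the incidence locus $I$ over $f=X_i$ is a copy of $\P(V_{m-2})$ of codimension $4$ in $\P(V_m)$, which is the expected codimension, so the fixed-point restrictions of $[I]$ are the honest classes $[W_{m;2,0}]$ and its permutations; the real obstruction is the denominators.)

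The paper's proof is a denominator-free version of the same idea. It starts from the identity $[W_{m;2,0}]={\pi_2'}_*([W_{1;1,0}])$, expands $[W_{1;1,0}]=\xi_2^2+t_1\xi_2+(c_2+t_1^2)$ by Remark~\ref{rmk: computation W[m;1,0,0]}, so that
$$
[W_{m;2,0}]={\pi_2'}_*(\xi_2^2)+t_1\,{\pi_2'}_*(\xi_2)+(c_2+t_1^2)\,{\pi_2'}_*(1),
$$
and then uses the free $\CH^*_{\GL_3}$-module decomposition of Proposition~\ref{prop:generalizeddecomposition} to read off ${\pi_2'}_*(\xi_2^2)$ as the coefficient of the basis element $1$ in the explicit expression of Proposition~\ref{prop: computation W[m;2,0,0]}, after rewriting it with $t_1+t_2+t_3=0$. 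If you want to salvage your approach, replacing the localization sum by this module-theoretic extraction is the missing ingredient.
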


In this section, we will only use this when $m = g$ is even. However, in~\S\ref{sec: Chow n>1}, this assumption will no longer hold. 

\begin{proof}
    When $r$ is even, the $\GL_3$-counterpart of $\pi_r$ is:  
    \[
    \pi'_r=\psi'_{r,m-r} \circ (\psi'_{r/2} \times 1) : \P(V_{r/2}) \times_{\mathcal{S}} \P(V_{m-r}) \to \P(V_m),
    \]  
    where $\psi'_{r,m-r}$ and $\psi'_{r/2}$ are the maps defined in \S\ref{sec: GL3-counterparts}. The main observation is the following obvious equality 
   $$
   [W_{m;2,0}]={\pi'_2}_*([W_{1;1,0}]).
   $$
   (see \S\ref{sec: GL3-counterparts} for the definition of $W_{m;h,\ell}$).
   By Remark \ref{rmk: computation W[m;1,0,0]}, the right-hand-side is equal to
   \begin{equation}\label{eqn: push of W[1;1,0,0]}
       {\pi'_2}_*(\xi_2^2)+t_1 {\pi'_2}_*(\xi_2)+(c_2+t_1^2) {\pi'_2}_*(1)
   \end{equation}
   and we wish to compute ${\pi'_2}_*(\xi_2^2)$. On the other hand, Proposition \ref{prop: computation W[m;2,0,0]} provides an expression for the left-hand side. We now rewrite this expression in terms of the basis $1,t_1,t_2,t_1^2,t_1t_2, t_1^2 t_2$ in Proposition \ref{prop:generalizeddecomposition} and extract the coefficient of $1$. By Proposition \ref{prop:generalizeddecomposition} and Equation \eqref{eqn: push of W[1;1,0,0]}, this coefficient equals ${\pi'_2}_*(\xi_2^2)$. First, using the relation $t_1+t_2+t_3=0$, we obtain
   \begin{align*}
            [W_{m;2,0}]=&(\xi_{2m}+m t_1+t_2)(\xi_{2m}+(m-2)t_1)(\xi_{2m}+(m-1)t_1-t_2)(\xi_{2m}+(m-3)t_1) \\
            &+2(c_2+t_1^2)\bigg( m(m-1)\xi_{2m}^2-\frac{m^{2}(m-1)^2}{2}t_2^{2} \bigg)
    \end{align*}
    when $m$ is odd, and
    \begin{align*}
            [W_{m;2,0}]=&(\xi_{2m}+m t_1)(\xi_{2m}+(m-2)t_1+t_2)(\xi_{2m}+(m-1)t_1)(\xi_{2m}+(m-3)t_1-t_2) \\
            &+2(c_2+t_1^2)\bigg( m(m-1)\xi_{2m}^2-\frac{m^{2}(m-1)^2}{2}t_2^{2} \bigg)
    \end{align*}
    when $m$ is even.
    By forming the products and extracting the coefficient of $1$ we obtain the claimed result. Specifically, the coefficients of $\xi_{2m}^4$ and $\xi_{2m}^3$ are clearly $1$ and $0$ in both cases. The coefficient of $\xi_{2m}^2$, derived from the product, is a sum of the monomials $t_1^2$, $t_1 t_2$ and $t_2^2$. Replacing $t_2^2$ using the identity $t_2^2=-t_1^2-t_1 t_2 -c_2$ and discarding terms involving $t_1^2,t_1t_2$, we are left in both cases with $2 m (m-1) c_2 +c_2$. Similarly, the coefficient of $\xi_{2m}$ is sum of monomials $t_1^2 t_2, t_1^3$ and $t_1 t_2^2$. Substituting the last two with $c_3- t_1 c_2$ and $-c_3-t_1 ^2 t_2$ respectively, discarding terms involving $t_1^2 t_2, t_1,$ and using $2c_3=0$, we are left with $c_3$, independently of the parity of $m$. Finally, the coefficient of $\xi_{2m}^0$ is a sum of monomials $t_1^4, t_1^3t_2,t_1^2t_2^2$, along with the term $c_2^2 m^2(m-1)^2$. We perform the following substitutions: $t_1^2 t_2^2=-t_1^3t_2-c_2t_1^2-t_1^4$, $t_1^3t_2=c_3t_2-c_2t_1t_2$, and $t_1^4=-t_1^2c_2+t_1c_3$. After discarding the monomials where $t_1$ or $t_2$ appear with positive power, we are left with $c_2^2 m^2(g-1)^2$. This concludes the proof.
\end{proof}

\begin{proof}[Proof of Proposition \ref{prop: image i_2}]
    We treat the case $g$ even and $g$ odd differently. 

    First, suppose $g$ is even. Since $\pi= \sqcup_r \pi_r$ is a $G$-equivariant envelope and $\pi_r^*(\xi_{2g})=\xi_r + \xi_{2g-2r}$, it is enough to compute the pushforward of ${\pi_{r*}}(\xi_r^\ell)$ for $\ell=0,\ldots, r$. Note that $G \subseteq \GL_2$, the action on each projective space is the restriction of an action from $\GL_2$ and $\pi_r$ is $\GL_2$-equivariant. Denote by ${\pi_{r*}^{G}}$ and ${\pi_{r*}^{\GL_2}}$ the $G$ and $\GL_2$-equivariant pusforwards respectively, and by $\xi_r^{G}$ and $\xi_r^{\GL_2}$ the equivariant first Chern classes of $\cO_{\P^r}(1)$. Then, the restriction of  
    $
    {\pi_{r*}^{\GL_2}}((\xi_r^{\GL_2})^\ell)
    $
    to the $G$-equivariant Chow ring of $\P^{2g}$ agrees with ${\pi_{r*}^{G}}((\xi_r^{G})^\ell)$. Thus, the statement follows from \cite[Proposition  4.2 and Lemma 4.3]{EF09}.

    Suppose now that $g$ is odd. The pusforwards ${\pi_1}_*(1), {\pi_1}_*(\tau), {\pi_2}_*(\xi_{2}^2) $ can be computed $\Gm \times \PGL_2$-equivariantly and thus the stated formulas follow from \cite[Proposition 5.2]{FV11} and Lemma \ref{lemma: pi2(xi2)} above.
    
    Let $I$ be the ideal in $\CH^*(\P^{2g})$ generated by the classes ${\pi_1}_*(1), {\pi_1}_*(\tau), {\pi_2}_*(\xi_{2}^2) $ and $J$ the image of ${i_2}_*$. We will show that $I \otimes_{\Z} \Z[1/2]= J \otimes_{\Z} \Z[1/2]$ and $I \otimes_{\Z} \Z_{(2)}= J \otimes_{\Z} \Z_{(2)}$. 
    
    After inverting $2$, the natural map 
    \begin{equation}\label{eqn: aux map}
        \begin{tikzcd}
            \CH^*_{\Gm \times \PGL_2}(\P^{2g} \smallsetminus \uD_2)[1/2]\arrow[r] & \CH^*_{\GII}(\P^{2g} \smallsetminus \uD_2)[1/2]
        \end{tikzcd}
    \end{equation}
    becomes an isomorphism. Indeed, by Equation \eqref{eqn: Chow PGL2}, Lemma \ref{thm: Chow BGII} and the projective bundle formula for Chow rings, the map 
    $$
    \begin{tikzcd}
        \CH^*_{\Gm \times \PGL_2}(\P^{2g})[1/2]\arrow[r] & \CH^*_{\GII}(\P^{2g})[1/2]
    \end{tikzcd}
    $$
    is an isomorphism, and thus \eqref{eqn: aux map} is surjective. Injectivity on the other hand is always true: consider the commutative diagram
    \[
        \begin{tikzcd}
            {\CH^*_{\Gm \times \PGL_2}(\P^{2g} \smallsetminus \uD_2)[1/2]}\arrow[d, hook]\arrow[r]  & {\CH^*_{\GII}(\P^{2g} \smallsetminus \uD_2)[1/2]}\arrow[d] &\\
            { \CH^*_{\Gm \times \SL_2}(\P^{2g} \smallsetminus \uD_2)[1/2]}\arrow[r] & {\CH^*_{\Gm \times \widetilde{G}}(\P^{2g} \smallsetminus \uD_2)[1/2]} \arrow[r] & {\CH^*_{\Gm \times \Gm}(\P^{2g} \smallsetminus \uD_2)[1/2]}
        \end{tikzcd}
        \]
        where $\widetilde{G} \subseteq \SL_2$ is the preimage of $\Gm \rtimes \mu_2$ under $\mathrm{SL}_2\rightarrow\mathrm{PGL}_2$, and $\Gm \subseteq \widetilde{G}$ is the subgroups of diagonal matrices. Injectivity of the left vertical arrow follows from \cite[Lemma 5.4]{FV11}. In conclusion, the injectivity of the top horizontal arrow follows if we can prove the injectivity of the composition of the two bottom horizontal arrows. This conclusion holds because $\SL_2$ is special.

        The Chow ring of $\CH^*_{\Gm \times \PGL_2}(\P^{2g} \smallsetminus \uD_2)$ is known by \cite[Proposition 5.2]{GV08} and \cite[Corollary 6.3]{DL18} \footnote{There is a typo in the statement. The correct computation of the image of ${\pi_1}_*$ is Proposition 4.2 of the Arxiv version arXiv:1802.04519}, up to the pushforward of ${\pi_2}_*(\xi_2^2)$ which is computed in Lemma \ref{lemma: pi2(xi2)}. This shows that $I \otimes_{\Z} \Z[\frac{1}{2}]= J \otimes_{\Z} \Z[\frac{1}{2}]$.

        Finally, we show after tensoring with $ \Z_{(2)}$. For $r$ odd, the map $\P^1 \times \P^{r-1} \times \P^{2g-2r} \to \P^{r} \times \P^{2g-2r} $ has odd degree, thus the induced pushforward is surjective. Since the composition of this map with $\pi_r$ factors through $\pi_1$, we conclude that the image of ${\pi_r}_*$ lies in the image of ${\pi_1}_*$ (which is generated by ${\pi_1}_*(1)$ and ${\pi_1}_*(\tau)$ because $\tau^2=-c_2$ in $\CH^*_{\GII}(\P^1)$). If instead $r$ is even, then $\P^r=\P(W_{r/2})$ and, as done above in the case when $g$ is even, the pushforward of $\xi_r^\ell$ can be computed $\Gm \times \PGL_2$-equivariantly and again lies in $I \otimes \Z_{(2)}$ by \cite[Corollary 6.3]{DL18}. This concludes the proof.
\end{proof}

We can finally prove Theorems \ref{thm: Chow for g even n=1} and \ref{thm: Chow for g odd n=1}.

\begin{proof}[Proof of Theorem \ref{thm: Chow for g even n=1} ]
    We have shown that the Chow ring of $\RH_g^1$ is the quotient of 
    $$
    \CH^*(B(\GI))[\xi_{2g}]=\frac{\Z[\beta_1,\beta_2,\gamma,\xi_{2g}]}{(2\gamma,\gamma(\gamma+\beta_1))}
    $$
    modulo the relations:
    \begin{itemize}
        \item from Lemma \ref{lemma: reduction to prjective spaces}: $\xi_{2g}-(g-1)\beta_1+\gamma$;
        \item from $\uD_1$: $2\xi_{2g}-2g \beta_1$, $\xi_{2g}^2+ \xi_{2g}(\gamma-2g \beta_1) + 4 g^2 \beta_2$ and $(\beta_1+\gamma)\xi_{2g}-4g\beta_2$;
        \item from $\uD_2$: $2(2g-1) \xi_{2g}-2g(2g-1)\beta_1$ and 
        $\xi_{2g}^2-\beta_1 \xi_{2g}-2g(2g-2)\beta_2$;
        \item the projective bundle relation, which is a polynomial $p(\xi_{2g})$ of degree $2g+1$ in $\xi_{2g}$ with coefficients in $\CH^*(B\GI)$.
    \end{itemize}
    The check that the relations in the first three bullets are in the ideal 
    \begin{equation}\label{eqn: ideal for g even and n=1}
    (2\beta_1,2\gamma, 4g \beta_2, \gamma(\beta_1+\gamma), \beta_1(\beta_1+\gamma)).
    \end{equation}
    is a simple calculation. We show that also $p(\xi_{2g})=0$ modulo the above ideal. Let $V_i \subseteq \Sym^{2g}(V^\vee)$ be the subspace $k X^{2g-i} Y^i \oplus k X^{i} Y^{2g-i}$ for $i=1,\ldots,g$ ,$i\neq g/2$ and $V_{g/2}=k X^{g/2} Y^{g/2}$. For $f \in \Sym^{2g}(V^\vee)$, write $f= \sum_{i=0}^{2g} a_i X^{2g-i} Y^i$. Then, $\uD_{1,1}$ is the zero locus of a section of $\cO_{\P^{2g}}(1) \otimes V_0$, namely $\uD_{1,1}=\{ a_0 a_{2g}=0 \}$. Thus, we have (formally)
    $$
    [\uD_{1,1}] =c_2^{G}( \cO_{\P^{2g}}(1) \otimes V_0)= \xi_{2g}^2 c_t^G(\cO_{\P^{2g}}(1) \otimes V_0)_{|t=\xi_{2g}^{-1}}
    $$
    while
    $$
    p(\xi_{2g})=\xi_{2g} c_t^G(V_{g/2})_{|t=\xi_{2g}^{-1}} \prod_{i=0, i\neq g/2}^{g} \xi_{2g}^2 c_t^G(V_i )_{|t=\xi_{2g}^{-1}}
    $$
    Here, for any vector bundle $E$ of rank $r$ we set $c_t(E)=1+tc_1(E)+t^2c_2(E)+\ldots+ t^r c_r(E)$. In particular, we conclude that $p(\xi_{2g})$ is in the ideal \eqref{eqn: ideal for g even and n=1}.

    Finally, we show that the listed relations generate the ideal \eqref{eqn: ideal for g even and n=1}. Replacing $\xi_{2g}$ with $(g-1)\beta_1+\gamma$, we get:
    \begin{itemize}
        \item from the second relation: $2\beta_1=0$;
        \item  from the third relation (and using $2\beta_1=0$): $\beta_1^2-\beta_1\gamma+4g^2\beta_2=0$;
        \item from the fifth relation (and using $2\beta_1=0$): $4g(g-1)\beta_2=0$.
    \end{itemize} 
    Multiplying the second bullet by $g-1$ and using $2 \beta_1=0$ yields $\beta_1(\beta_1+\gamma)=0$ and thus also $4g^2\beta_2=0$ and, again from the very last bullet, $4g\beta_2=0$. This concludes the proof.
\end{proof}

\begin{proof}[Proof of Theorem \ref{thm: Chow for g odd n=1} ]
    The proof is similar to the one of Theorem \ref{thm: Chow for g even n=1}. We sketch it here.
    
    The Chow ring of $\RH_g^1$ is the quotient of 
    $$
    \CH^*(B(\GII))[\xi_{2g}]=\frac{\Z[c_2,\gamma,t, \xi_{2g}]}{(2\gamma)}
    $$
    modulo the relations:
    \begin{itemize}
        \item from Lemma \ref{lemma: reduction to prjective spaces}: $\xi_{2g}=2t+\gamma$;
        \item from $\uD_1$: $2\xi_{2g}$, $\xi_{2g}^2+ \gamma \xi_{2g}+g^2 c_2 + \gamma^2$ and $2g c_2$;
        \item from $\uD_2$: $2(2g-1) \xi_{2g}$, 
        $2 \xi_{2g}^2-2g(g-1) c_2$ and $\xi_{2g}^4+(c_2+2g(g-1)c_2)\xi_{2g}^2+c_3\xi_{2g}+ g^2(g-1)^2 c_2^2$;
        \item the projective bundle relation, which is a polynomial $p(\xi_{2g})$ of degree $2g+1$ in $\xi_{2g}$ with coefficients in $\CH^*(B(\GII))$.
    \end{itemize}
    We leave to the reader to check that the relations in the first three bullets generate the ideal 
    \begin{equation}\label{eqn: ideal for g odd and n=1}
    ( 2\gamma, 4 t, \gamma^2+ g c_2).
    \end{equation}
    The fact that $p(\xi_{2g})=0$ modulo the ideal \eqref{eqn: ideal for g odd and n=1} follows exactly as in the proof of Theorem \ref{thm: Chow for g even n=1}.
\end{proof}

\subsection{Geometric interpretation of the generators when $n=1$}\label{subsec: interpretation generators n=1}

In this subsection we describe explicit vector bundles on $\RH_g^1$ whose Chern classes generate the Chow rings; in general, they will depend on the parity of $g$, except for the generator $\gamma$.

First, we show that there exists a representable, finite, étale cover of degree 2
\begin{equation}\label{eq:mu2torsor}
    \begin{tikzcd}
    \Phi_1:\mathcal{H}_{g,2}^w\arrow[r] & \RH_g^1,
\end{tikzcd}
\end{equation}
where the first stack parametrizes double pointed hyperelliptic curves whose markings are Weierstrass points, see~\cite{EH22}. Explicitly, an object of $\mathcal{H}_{g,2}^w$ over a scheme $S$ is a commutative diagram
\[
\begin{tikzcd}
    C\arrow[rd,"f"]\arrow[rr,"q"] & & P\arrow[ld,"\pi"]\\
    & S\arrow[lu,bend left=40,"{\sigma_1,\sigma_2}"]
\end{tikzcd}
\]
where $C\rightarrow S$ is an hyperelliptic curve of genus $g$ realized as a 2 to 1 cover of the Brauer-Severi scheme $P$ of relative dimension 1 over $S$, and $\sigma_1$, $\sigma_2$ are two Weierstrass sections. Notice that $P$ is then equal to the projectivization of the rank-2 vector bundle $\pi_*(\O_P(q\circ\sigma_1))$, in particular it is endowed with a natural $\O(1)$. Then, the map~\eqref{eq:mu2torsor} associates to every object as above the line bundle $\eta=q^*\O(1)\otimes\O(-\sigma_1-\sigma_2)$ on $C$. The isomorphism $\beta:\eta^{\otimes2}\rightarrow\cO$ is ètale locally defined as follows. Trivialize $P$ locally on $S$, so that the first two sections map to $\infty$ and $0$, and then take $\beta$ to be the isomorphism induced by $XY$ as in~\S\ref{subsec:presn=1}.

The map $\Phi_1$ can be described in terms of morphisms between quotient stacks as follows. Recall the presentation $\RH_g^1 \cong [(\mathbb{A}^{2g+1} \setminus \Delta)/G]$, where $G \cong \GI$ if $g$ is even, and $G \cong \GII$ if $g$ is odd (see Theorems~\ref{thm: presentation g even n=1} and~\ref{thm: presentation g odd n=1}). In~\cite[Proposition 4.1]{EH22}, Edidin and Hu proved that $\mathcal{H}_{g,2}^w$ is isomorphic to the quotient of $\mathbb{A}^{2g+1} \setminus \Delta$ by a two-dimensional split torus $\Gm^2$, where the torus action depends on the parity of $g$.

In both cases, the inclusion of $\Gm^2$ as the maximal torus in $G$ and the identity map on $\mathbb{A}^{2g+1} \setminus \Delta$ together induce the morphism~\eqref{eq:mu2torsor}. Hence, $\Phi_1$ is a $\mu_2$-torsor. As such, it induces a 2-torsion element of $\Pic(\RH_g^1)$ given by the Kummer sequence
\[
\begin{tikzcd}
    0\arrow[r] & \mu_2\arrow[r] & \Gm\arrow[r,"x\mapsto x^2"] & \Gm\arrow[r] & 0
\end{tikzcd}
\]
and the isomorphism $\mathrm{H}_{\text{ét}}^1(\RH_g^1,\mathbb{G}_m)\cong\Pic(\RH_g^1)$.
\begin{proposition}\label{prop:geometricgamma}
    The element in $\Pic(\RH_g^1)$ associated to the $\mu_2$-torsor $\Phi_1:\mathcal{H}_{g,2}^w\rightarrow\RH_g^1$ is $\gamma$.
\end{proposition}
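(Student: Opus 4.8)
The plan is to compute directly the line bundle that the Kummer sequence attaches to $\Phi_1$ and to recognise it as the representation $\Gamma$. Recall first the general mechanism: the map $\mathrm{H}^1_{\text{ét}}(\RH_g^1,\mu_2)\to\mathrm{H}^1_{\text{ét}}(\RH_g^1,\Gm)=\Pic(\RH_g^1)$ in the Kummer long exact sequence is induced by the inclusion $\mu_2\hookrightarrow\Gm$, and on torsors this map is extension of structure group; since $\mu_2\hookrightarrow\Gm$ is precisely the sign character of $\mu_2$, the element attached to a $\mu_2$-torsor $P$ is the line bundle $P\times^{\mu_2}\mathbb{A}^1$ associated to $P$ through the sign representation. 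Thus the whole proposition reduces to identifying this associated line bundle. For this I would record the group-theoretic input already used to define $\Phi_1$: writing $X:=\mathbb{A}^{2g+1}\setminus\Delta$ and $T\cong\Gm^2$ for the two-dimensional split torus of~\cite{EH22}, in both parities $T$ is the maximal torus of $G$, it is normal, and the quotient map $\rho\colon G\to G/T\cong\mu_2$ is exactly the one defining $\Gamma$. Under $\RH_g^1\cong[X/G]$ and $\mathcal{H}_{g,2}^w\cong[X/T]$ the morphism $\Phi_1$ is the projection $[X/T]\to[X/G]$, and the residual action of $\mu_2=G/T$ on the source over the target is its $\mu_2$-torsor structure.

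The core step is then the explicit computation. With $P=[X/T]$, the associated line bundle is $(P\times\mathbb{A}^1)/\mu_2$, where $\mu_2=G/T$ acts on $P$ by the torsor structure and on $\mathbb{A}^1$ by the sign character. Unwinding, $[X/T]\times\mathbb{A}^1\cong[(X\times\mathbb{A}^1)/T]$ with $T$ acting trivially on the $\mathbb{A}^1$-factor, and taking the further quotient by $G/T$ yields $[(X\times\mathbb{A}^1)/G]$, where $G$ acts on $X$ as before and on $\mathbb{A}^1$ through $G\xrightarrow{\rho}\mu_2\hookrightarrow\Gm$. By construction this last is the line bundle on $[X/G]=\RH_g^1$ associated to the character $\mathrm{sign}\circ\rho=\Gamma$, whose first Chern class is $\gamma$; as $\gamma$ is $2$-torsion there is no sign ambiguity to worry about. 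This would conclude the argument.

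The step that requires the most care is the final identification of quotients, namely that the two-step quotient $(P\times\mathbb{A}^1)/\mu_2$ agrees, as a line bundle over $[X/G]$, with the character-induced bundle $[(X\times\mathbb{A}^1_\Gamma)/G]$. This is a formal consequence of the canonical isomorphism $[[Y/T]/(G/T)]\cong[Y/G]$ for a normal subgroup $T\trianglelefteq G$ applied to $Y=X\times\mathbb{A}^1$, but it is the one point where normality of $T$ and the precise definition of $\Gamma$ through $\rho$ must be combined, so I would spell it out. As an independent check I would also phrase the argument through classifying maps: $\Phi_1$ is classified by the composite $[X/G]\to BG\xrightarrow{B\rho}B\mu_2$, and postcomposing with $B\mu_2\to B\Gm$ gives the map classifying $\Gamma$, so the Kummer image is again $\gamma$. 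This cross-check makes the identification transparent and guards against a misidentification of the character.
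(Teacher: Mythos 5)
Your proposal is correct, and its closing ``cross-check'' is in fact exactly the paper's proof: the paper observes that $\Phi_1$ sits in a tower of cartesian squares over $\mathcal{H}_{g,2}^w\to\RH_g^1\to BG\to B\mu_2$, notes that the universal $\mu_2$-torsor $\mathrm{Spec}(k)\to B\mu_2$ has associated class the generator of $\Pic(B\mu_2)\cong\Z/2\Z$, and concludes by functoriality of the Kummer boundary that the class pulls back to $\gamma=c_1(\Gamma)$ on $\RH_g^1$. Your main argument is a more hands-on version of the same idea: rather than invoking functoriality from $B\mu_2$, you unwind the associated line bundle directly as the two-step quotient $\bigl[[(X\times\mathbb{A}^1)/T]/(G/T)\bigr]\cong[(X\times\mathbb{A}^1)/G]$ with $G$ acting on the $\mathbb{A}^1$-factor through $G\to\mu_2\xrightarrow{\mathrm{sign}}\Gm$, i.e.\ through $\Gamma$. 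This buys a completely explicit identification of the character (useful if one distrusts sign conventions, even though $2$-torsion makes them irrelevant here), at the cost of having to justify the normality of $T$ in $G$ and the iterated-quotient isomorphism, both of which you correctly flag and which do hold for $\GI$ and $\GII$. The paper's route is shorter because it never leaves the level of classifying stacks; yours is self-contained at the level of atlases. Either is acceptable.
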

\begin{proof}
    By the above description of $\Phi_1$, there exists a commutative diagram with cartesian squares
    \begin{equation}\label{eq: diag mu2 torsor}
    \begin{tikzcd}
        \mathcal{H}_{g,2}^w\arrow[d]\arrow[r] & \RH_g^1\arrow[d]\\
        B(\Gm\times\Gm)\arrow[d]\arrow[r] & BG\arrow[d]\\
        \mathrm{Spec}(k)\arrow[r] & B\mu_2
    \end{tikzcd}
    \end{equation}
    where $BG\rightarrow B\mu_2$ is induced by the quotient $G \rightarrow\mu_2$. Notice that the class in $\Pic(B\mu_2)\cong\Z/2\Z$ induced by the universal $\mu_2$-torsor is the generator $\gamma$. Since taking the 2-torsion line bundle associated to a $\mu_2$-torsor is functorial, the class in $\Pic(\RH_g^1)$ associated to $\mathcal{H}_{g,2}^w\rightarrow\RH_g^1$ is $\gamma$.
\end{proof}

There is an alternative description of $\gamma$, analogous to~\cite[Lemma 41]{CIL24}. Let $U= \mathbb{A}^{2g+1} \smallsetminus \Delta$. Over $U$ we have the following commutative diagram of $G$ equivariant maps

\begin{equation*}\label{eqn: commutative triangles}
    \adjustbox{scale=0.95,center}{
        \begin{tikzcd}
         D=\sigma_0(U) \sqcup \sigma_{\infty}(U) \arrow[hookrightarrow]{r} \arrow{dr}{\delta} & C =\underline{\mathrm{Spec}}_{\P_U}( \mathcal{O}_{\P^1_U} \oplus \mathcal{O}_{\P_U}(-g-1))  \arrow{r}{q} \arrow{d}{\pi}  & \P^1_U   \arrow{dl}{\rho}  \\
        & U  & &
        \end{tikzcd}
    }
\end{equation*}

The action of $G$ on $C$ is given by Lemma \ref{lemma: group G} and we see that $D$ is $G$-equivariant. Moreover, $\delta$ descends to a degree $2$ cover
\[
\begin{tikzcd}
    \delta: [D/G] \arrow[r] & \RH_g^1
\end{tikzcd}
\]
which, from the description $\cH^w_{g,2}=[(\mathbb{A}^{2g+1} \smallsetminus \Delta)/ \Gm\times\Gm]$, we may identify with $\Phi_1$.

\begin{lemma}\label{lem: alternative description gamma}
    We have $c_1^G(\delta_*\mathcal{O}_D)=c_1({\Phi_1}_*\mathcal{O}_{\mathcal{H}_{g,2}^w})=\gamma$.
\end{lemma}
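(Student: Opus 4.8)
The plan is to reduce the statement to Proposition~\ref{prop:geometricgamma} together with the eigenbundle decomposition of the pushforward along an \'etale double cover. To begin, I would observe that the two Chern classes in the statement are literally the same class: under the identification $[D/G]\cong\mathcal{H}_{g,2}^w$ matching $\delta$ with $\Phi_1$ (established just before the lemma), the $G$-equivariant rank-$2$ sheaf $\delta_*\mathcal{O}_D$ on $U=\mathbb{A}^{2g+1}\smallsetminus\Delta$ descends to ${\Phi_1}_*\mathcal{O}_{\mathcal{H}_{g,2}^w}$ on $\RH_g^1=[U/G]$. Hence $c_1^G(\delta_*\mathcal{O}_D)=c_1({\Phi_1}_*\mathcal{O}_{\mathcal{H}_{g,2}^w})$ holds tautologically, and it remains only to compute this single class.

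Next I would exploit that $\Phi_1$ is a $\mu_2$-torsor, i.e.\ finite \'etale of degree $2$, and that $2$ is invertible in our base field. The trace map then splits the unit inclusion $\mathcal{O}_{\RH_g^1}\hookrightarrow{\Phi_1}_*\mathcal{O}_{\mathcal{H}_{g,2}^w}$, producing a decomposition into $\mu_2$-eigenbundles
\[
{\Phi_1}_*\mathcal{O}_{\mathcal{H}_{g,2}^w}\cong\mathcal{O}_{\RH_g^1}\oplus L,
\]
where $L$ is the $(-1)$-eigenbundle for the deck involution. By the standard description of a $\mu_2$-torsor as $\underline{\mathrm{Spec}}(\mathcal{O}\oplus L)$, with algebra structure furnished by a trivialization of $L^{\otimes 2}$, the line bundle $L$ is precisely the $2$-torsion element of $\Pic(\RH_g^1)$ attached to the torsor $\Phi_1$ via the Kummer sequence. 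Proposition~\ref{prop:geometricgamma} identifies that element with $\gamma$, so $c_1(L)=\gamma$ and therefore
\[
c_1({\Phi_1}_*\mathcal{O}_{\mathcal{H}_{g,2}^w})=c_1(\mathcal{O}_{\RH_g^1})+c_1(L)=\gamma,
\]
as claimed.

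The only delicate point, and the main (mild) obstacle, is to be sure that the eigenbundle $L$ is canonically the torsor's associated line bundle rather than its dual or some twist. This is harmless here, since $L^{\otimes 2}\cong\mathcal{O}$ forces $L\cong L^\vee$ and $\gamma$ is $2$-torsion ($2\gamma=0$), so any such ambiguity cannot affect the first Chern class. As a sanity check one could instead compute $\delta_*\mathcal{O}_D$ directly as a $G$-representation: writing $D=\sigma_0(U)\sqcup\sigma_\infty(U)$, the maximal torus of $G$ fixes both sections while the quotient $G\to\mu_2$ exchanges them, so $\delta_*\mathcal{O}_D$ is the pullback of the permutation representation of $\mu_2$ on $k^{\{0,\infty\}}=\mathbb{1}\oplus\mathrm{sgn}$, giving $\delta_*\mathcal{O}_D\cong\mathcal{O}\oplus\Gamma$ and hence $c_1=c_1(\Gamma)=\gamma$ again. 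I would nonetheless present the eigenbundle argument as primary, since it reuses the work already carried out in Proposition~\ref{prop:geometricgamma} and avoids re-checking that the torus acts trivially on $\mathcal{O}_D$ at the fixed sections.
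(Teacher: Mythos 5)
Your proof is correct and follows essentially the same route as the paper: both decompose ${\Phi_1}_*\mathcal{O}_{\mathcal{H}_{g,2}^w}$ into $\mu_2$-eigen-line-bundles $\mathcal{O}\oplus L$, identify $L$ with the line bundle attached to the torsor via the Kummer sequence, and invoke Proposition~\ref{prop:geometricgamma} to conclude $c_1(L)=\gamma$. The only cosmetic difference is that the paper packages this as a general statement for $\mu_n$-torsors proved by base change to $B\mu_n$ (where $p_*k$ is the regular representation), whereas you split off the unit directly with the trace map; the content is the same.
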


This is an immediate consequence of the following general fact, and diagram~\eqref{eq: diag mu2 torsor}.

\begin{lemma}
    Let $f:\mathcal{Y}\rightarrow\mathcal{X}$ be a $\mu_n$-torsor between algebraic stacks over a field $k$ of characteristic not dividing $n$, and let $\alpha\in\Pic(\mathcal{X})$ be the associated class. Then,
    \[
    c_1(f_*\cO_{\mathcal{Y}})=
    \begin{cases}
    0 & \text{if }n\text{ is odd},\\
    \frac{n}{2}\alpha & \text{ if }n\text{ is even}.
    \end{cases}
    \]
    In particular, if $n=2$ then $c_1(f_*\cO_{\mathcal{Y}})=\alpha$.
\end{lemma}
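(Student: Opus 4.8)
The plan is to exploit the diagonalizability of $\mu_n$: I would decompose $f_*\cO_{\mathcal{Y}}$ into character eigenbundles and then read off its total first Chern class. Since $\mathrm{char}(k)\nmid n$, the group scheme $\mu_n$ is étale, and a $\mu_n$-representation is the same datum as a $\Z/n$-graded vector space; this grading is inherited by the quasicoherent $\cO_{\mathcal{X}}$-algebra $f_*\cO_{\mathcal{Y}}$ (note that $f$ is finite, hence affine), giving a canonical splitting $f_*\cO_{\mathcal{Y}}=\bigoplus_{i\in\Z/n}\mathcal{M}_i$ into isotypic pieces, where $\mu_n$ acts on $\mathcal{M}_i$ through the character $\chi^i$. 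Because $f$ is a $\mu_n$-torsor, étale-locally on $\mathcal{X}$ one has $\mathcal{Y}\cong\mathcal{X}\times\mu_n$ and $f_*\cO_{\mathcal{Y}}$ is the regular representation $\cO_{\mathcal{X}}\otimes k[\mu_n]$; hence each $\mathcal{M}_i$ is a line bundle, $\mathcal{M}_0=\cO_{\mathcal{X}}$, and the algebra structure yields isomorphisms $\mathcal{M}_i\cong\mathcal{M}_1^{\otimes i}$ together with $\mathcal{M}_1^{\otimes n}\cong\mathcal{M}_0=\cO_{\mathcal{X}}$, so $\mathcal{M}_1$ is $n$-torsion in $\Pic(\mathcal{X})$.

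Next I would identify $\mathcal{M}_1$ with the Kummer class $\alpha$ up to sign. By definition $\alpha$ is the image of the torsor class $[f]\in\mathrm{H}^1_{\text{ét}}(\mathcal{X},\mu_n)$ under the map induced by the inclusion $\mu_n\hookrightarrow\Gm$ from the Kummer sequence; concretely this is the line bundle attached to the $\Gm$-torsor obtained by contracting $\mathcal{Y}$ along the standard character $\chi\colon\mu_n\hookrightarrow\Gm$, and the sheaf of sections of that line bundle is exactly the character-$1$ eigenspace $\mathcal{M}_1$ (or its dual, depending on the convention identifying characters with elements of $\Pic$). In either convention $c_1(\mathcal{M}_1)=\pm\alpha$ and $n\alpha=0$.

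It then remains a formal computation:
\[
c_1(f_*\cO_{\mathcal{Y}})=\sum_{i=0}^{n-1}c_1\!\left(\mathcal{M}_1^{\otimes i}\right)=\Big(\sum_{i=0}^{n-1}i\Big)\,c_1(\mathcal{M}_1)=\tfrac{n(n-1)}{2}\,c_1(\mathcal{M}_1).
\]
If $n$ is odd then $\tfrac{n-1}{2}$ is an integer and the right-hand side equals $\tfrac{n-1}{2}\,(n\,c_1(\mathcal{M}_1))=0$. If $n$ is even, I would rewrite $\tfrac{n(n-1)}{2}\,c_1(\mathcal{M}_1)=\tfrac{n}{2}(n-1)\,c_1(\mathcal{M}_1)=-\tfrac{n}{2}\,c_1(\mathcal{M}_1)$ using $n\,c_1(\mathcal{M}_1)=0$; since $\tfrac{n}{2}\,c_1(\mathcal{M}_1)$ is $2$-torsion (twice it is $n\,c_1(\mathcal{M}_1)=0$), both the minus sign and the ambiguity $c_1(\mathcal{M}_1)=\pm\alpha$ are irrelevant, and the value is $\tfrac{n}{2}\alpha$. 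Setting $n=2$ gives $c_1(f_*\cO_{\mathcal{Y}})=\alpha$, which is what is applied in Lemma~\ref{lem: alternative description gamma}.

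The only genuinely delicate point, and the step I would write out most carefully, is the clean matching between the eigenline $\mathcal{M}_1$ and the Kummer class $\alpha$ (including the choice of sign/convention and checking its compatibility with the identification $\mathrm{H}^1_{\text{ét}}(\mathcal{X},\Gm)\cong\Pic(\mathcal{X})$ used throughout). Everything else is formal once the isotypic splitting is available, and the final torsion bookkeeping is arranged precisely so that the answer does not depend on this convention.
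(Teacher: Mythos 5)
Your proof is correct and follows essentially the same route as the paper's: the paper reduces by base change to the universal torsor $\mathrm{Spec}(k)\to B\mu_n$ and invokes Maschke's theorem to split $p_*k$ as $\mathds{1}\oplus\Lambda\oplus\cdots\oplus\Lambda^{\otimes(n-1)}$, which is exactly your isotypic decomposition $f_*\cO_{\mathcal{Y}}=\bigoplus_i\mathcal{M}_1^{\otimes i}$ carried out directly on $\mathcal{X}$. The final computation $\sum_{i=0}^{n-1}i\cdot\alpha=\tfrac{n(n-1)}{2}\alpha$ and the torsion bookkeeping match what the paper leaves implicit.
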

\begin{proof}
    We have a cartesian diagram
    \[
    \begin{tikzcd}
        \mathcal{Y}\arrow[r,"f"]\arrow[d,"\psi"] & \mathcal{X}\arrow[d,"\phi"]\\
        \mathrm{Spec}(k)\arrow[r,"p"] & B\mu_n.
    \end{tikzcd}
    \]
    with $\alpha=\phi^*(\beta)$, where $\beta\in\Pic(B\mu_n)\cong\Z/n\Z$ is the generator associated to the universal $\mu_n$-torsor $p$. Together with the isomorphism $\phi^*p_*k\cong f_*\psi^*k=f_*\cO_{\mathcal{Y}}$, this reduces to the case $f=p$ and $\alpha=\beta$. In this case, we have $p_*k=\mathds{1}\oplus\Lambda \oplus\ldots\oplus\Lambda^{\otimes (n-1)}$, where $c_1(\Lambda)=\beta$.
    This holds because for every finite group $G$ and universal $G$-torsor $p:\mathrm{Spec}(k)\rightarrow BG$, the coherent sheaf $p_*k$ is the regular representation, which splits as above by Maschke's Theorem.
\end{proof}

Next, we identify the other generators. There is a morphism $\RH_g^1 \to \cH_g$ that remembers only the hyperelliptic curve. This morphism can be described as a map of quotient stacks as follows. By Theorems \ref{thm: presentation g even n=1} and \ref{thm: presentation g odd n=1}, together with~\cite[Corollary 4.7]{AV04}, the map is given as  
\[
\RH_g^1 = \bigg[ \frac{\mathbb{A}^{2g+1} \smallsetminus \Delta }{G}\bigg] \to \cH_g = \bigg[ \frac{\mathbb{A}^{2g+3} \smallsetminus \Delta}{G'} \bigg],
\]  
where $G \subseteq G'$ depends on the parity of $g$. Specifically:  
\begin{itemize}
    \item For $g$ even, $G = \GI$ and $G' = \GL_2$,
    \item For $g$ odd, $G = \GII$ and $G' = \Gm \times \PGL_2$.
\end{itemize}

This map associates to a degree $2g$ homogeneous polynomial $f(X,Y)$ the degree $2g+2$ homogeneous polynomial $XY f(X,Y)$, inducing the above morphisms thanks to Equations \eqref{eqn: map to Hg g even} and \eqref{eqn: map to Hg g odd}.  

It follows that the classes $\beta_1, \beta_2$ in Theorem \ref{thm: Chow for g even n=1} and $c_1, c_2, c_3, t$ in Theorem \ref{thm: Chow for g odd n=1} are pulled back from $\cH_g$. These classes are already known to be the Chern classes of certain vector bundles.  

Let $\pi: \mathcal{C}_g \to \cH_g$ be the universal curve and $\mathcal{W} \subseteq \mathcal{C}_g$ the universal Weierstrass divisor. For $g$ even, we have  
\[
\beta_i = (-1)^i c_i(\mathcal{V}_g) \footnote{In ~\cite[Proposition 5.2]{EF09}, the authors interpret $\GL_2$ as $\mathrm{Aut}(\P^1,\cO_{\P^1}(1))$, while for us it is $\mathrm{Aut}(\P^1,\cO_{\P^1}(-1))$. In particular, the standard $\GL_2$ representation of $\GL_2$ differ by a dual and this justifies the difference of signs. },
\]  
where $\mathcal{V}_g$ is the rank $2$ vector bundle $\pi_* \omega_\pi^{\otimes g/2}((1-g/2) \mathcal{W})$. See~\cite[Proposition 5.2]{EF09} for a proof.  

For $g$ odd,  
\[
c_i = c_i(\mathcal{E}_g),
\]  
where $\mathcal{E}_g$ is the rank $3$ vector bundle $\pi_* \omega_\pi^{\vee}(\mathcal{W})$, and  
\[
t = c_1(\mathcal{L}_g),
\]  
where $\mathcal{L}_g = \pi_* \omega_\pi^{\otimes (g+1)/2}(\frac{(1-g)}{2} \mathcal{W})$. See the discussion after ~\cite[Theorem 7.2]{DL18} or \cite[Theorem 7.2]{FV11}.

\section{The integral Chow rings for $1<n<(g+1)/2$ and $g$ odd } \label{sec: Chow n>1}
In this section we prove Theorem~\ref{thm: Chow Dab} and obtain from that Theorem~\ref{thm: Chow n>1}. 

We will always assume $n>1$ and $b\geq a\geq1$. For the computation of the Chow ring of $\cD_{2a,2b}$ we will restrict to $b\geq a>1$; most of the arguments work for $1=a\leq b$ as well, but the result is slightly different and we will not need it, as in the end we will set $a=n>1$.

Thanks to the presentation of $\cD_{2a,2b}$ found in Equation~\eqref{eqn: presentation Dab}, we need to compute the $\mathrm{PGL}_2$-equivariant Chow ring of $\P(W_a)\times\P(W_{b})\setminus\underline{\Delta}$.
We start with the $\PGL_2$-equivariant Chow ring of $\P(W_{a})\times\P(W_{b})$.
\begin{lemma}\label{lem: Chow product proj spaces}
    We have
    \begin{equation}
        \CH_{\PGL_2}^*(\P(W_{a})\times\P(W_{b}))\cong\frac{\mathbb{Z}[c_2,c_3,\xi_{2a},\xi_{2b}]}{(2c_3,p_{a}(\xi_{2a}),p_{b}(\xi_{2b}))}
    \end{equation}
    where $p_{j}(\xi_{2j})$ are the monic polynomials of degree $2j+1$ coming from the projective bundle formula applied to the single factors.
\end{lemma}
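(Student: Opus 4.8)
The plan is to realize $[(\P(W_a)\times\P(W_b))/\PGL_2]$ as an iterated projective bundle over $B\PGL_2$ and to apply the projective bundle formula for equivariant Chow rings twice. The essential structural point is that $W_a$ and $W_b$ are $\PGL_2$-representations, so that $\P(W_a)$ and $\P(W_b)$ are literally the projectivizations of the associated vector bundles on $B\PGL_2$, and the diagonal $\PGL_2$-action on the product corresponds to the fiber product $[\P(W_a)/\PGL_2]\times_{B\PGL_2}[\P(W_b)/\PGL_2]$.

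First I would apply the projective bundle formula to the structure map $\pi:[\P(W_a)/\PGL_2]\to B\PGL_2$. Since $W_a$ has rank $2a+1$, this gives
$$\CH_{\PGL_2}^*(\P(W_a))\cong\frac{\CH^*(B\PGL_2)[\xi_{2a}]}{(p_a(\xi_{2a}))},$$
where $\xi_{2a}=c_1^{\PGL_2}(\cO_{\P(W_a)}(1))$ and $p_a(\xi_{2a})=\sum_{i=0}^{2a+1}c_i(W_a)\,\xi_{2a}^{\,2a+1-i}$ is the monic degree $2a+1$ relation; in particular this ring is free over $\CH^*(B\PGL_2)$ with basis $1,\xi_{2a},\dots,\xi_{2a}^{2a}$.

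Next I would note that the projection $[(\P(W_a)\times\P(W_b))/\PGL_2]\to[\P(W_a)/\PGL_2]$ is the projective bundle $\P(\pi^*W_b)$ attached to the pullback of the rank $2b+1$ bundle $W_b$. Applying the projective bundle formula a second time over the base $[\P(W_a)/\PGL_2]$ yields
$$\CH_{\PGL_2}^*(\P(W_a)\times\P(W_b))\cong\frac{\CH_{\PGL_2}^*(\P(W_a))[\xi_{2b}]}{(p_b(\xi_{2b}))},$$
with $p_b(\xi_{2b})=\sum_{i=0}^{2b+1}c_i(W_b)\,\xi_{2b}^{\,2b+1-i}$, where the $c_i(W_b)$ are pulled back from $B\PGL_2$; this is exactly the projective bundle relation attached to the factor $\P(W_b)$. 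Substituting the presentation from the first step and inserting $\CH^*(B\PGL_2)=\Z[c_2,c_3]/(2c_3)$ from Equation~\eqref{eqn: Chow PGL2} then gives the claimed description.

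There is no serious obstacle here: the only point requiring attention is the bookkeeping that the diagonal $\PGL_2$-linearizations on the two relative $\cO(1)$'s are precisely those whose first Chern classes are the $\xi_{2a},\xi_{2b}$ fixed in the Notation, so that $p_a$ and $p_b$ are literally the relations produced by the projective bundle formula on the individual factors. With this identification in hand the result is immediate, and one never needs explicit formulas for $p_a$ and $p_b$ beyond their being the monic relations of degrees $2a+1$ and $2b+1$.
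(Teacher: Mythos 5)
Your proof is correct and follows essentially the same route as the paper, which simply applies the projective bundle formula to the map $[(\P(W_{a})\times\P(W_{b}))/\PGL_2]\rightarrow[\P(W_{a})/\PGL_2]$; you merely spell out explicitly the first application of the formula to $[\P(W_a)/\PGL_2]\to B\PGL_2$ as well. The observation that $\P(W_a)$ and $\P(W_b)$ are genuine projectivizations of $\PGL_2$-representations, so the relevant maps are honest projective bundles, is exactly the point the paper relies on.
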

\begin{proof}
    Apply the projective bundle formula to the projective bundle map $[(\P(W_{a})\times\P(W_{b}))/\PGL_2]\rightarrow[\P(W_{a})/\PGL_2]$.
\end{proof}

\subsection{Equivariant Chow Envelopes and First Relations}\label{sec: Chow envelope}
By Lemma~\ref{lem: Chow product proj spaces} and the excision sequence for Chow groups, we are left with computing the image of the pushforward along the inclusion
\[
\begin{tikzcd}
    i:\uD\arrow[r,hookrightarrow] & \P(W_{a})\times\P(W_{b}).
\end{tikzcd}
\]
Recall that $\uD$ is the closed subscheme parametrizing pairs $(f,g)$ of homogeneous polynomials of degree $a$ and $b$ respectively such that $FG$ admits a square factor. It follows that $\uD$ has three $\PGL_2$-equivariant irreducible components $\uD_1$, $\uD_2$ and $\uD_{1,2}$, parametrizing pairs $(F,G)$ such that $F$ is not square free, $F$ is not square free, and that $F,F$ share a common factor, respectively. In particular, it is enough to compute the image of the $\PGL_2$-equivariant pushforward along the inclusions $i_1:\uD_1\hookrightarrow\P(W_a)\times\P(W_b)$, $i_2:\uD_2\hookrightarrow\P(W_a)\times\P(W_b)$ and $i_{1,2}:\uD_{1,2}\hookrightarrow\P(W_a)\times\P(W_b)$. This is done by finding Chow envelopes for every component. Let $r$ be a positive integer. We interpret $\P^r$ as the space of binary forms of degree $r$ up to scalar and that $\PGL_2$ acts on it by precomposition by the inverse of the matrix. When $r$ is even, this action coincides with the one on $\P(W_{r/2})$; so this is not in contrast with Notation \ref{not: xi j various groups}. Note instead that when $r$ is odd, there is no $\PGL_2$-representation whose projectivization is $\P^r$. There are squaring and multiplication maps
\[
\begin{tikzcd}[row sep=tiny]
    F_r:\P^r\times(\P^{2a-2r}\times\P^{2b})\arrow[r] & \P^{2a}\times\P^{2b}, & (h,(f,g))\arrow[r,mapsto] & (h^2f,g),\\
    G_r:\P^r\times(\P^{2a}\times\P^{2b-2r})\arrow[r] & \P^{2a}\times\P^{2b}, & (h,(f,g))\arrow[r,mapsto] & (f,h^2g),\\
    M_r:\P^r\times(\P^{2a-r}\times\P^{2b-r})\arrow[r] & \P^{2a}\times\P^{2b}, & (h,(f,g))\arrow[r,mapsto] & (hf,hg).
\end{tikzcd}
\]
These maps are clearly $\PGL_2$-equivariant, with the usual action of $\PGL_2$ on products of projective spaces, and factor through $\uD_1$, $\uD_2$ and $\uD_{1,2}$, respectively. Denote by $F$, $G$ and $M$ the disjoint union of the maps $F_r$, $G_r$ and $M_r$, respectively.
\begin{lemma}\label{lem: Chow envelopes n>1}
    The morphisms $F$, $G$ and $M$ form $\PGL_2$-equivariant Chow envelopes of $\uD_1$, $\uD_2$ and $\uD_{1,2}$, respectively, and the $\PGL_2$-equivariant pushforward is surjective.
\end{lemma}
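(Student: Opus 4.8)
The plan is to verify the three envelope statements by the standard recipe for Chow envelopes. Recall from \cite[Page 603]{EG98} that a proper $\PGL_2$-equivariant map $p\colon Y\to X$ is an equivariant Chow envelope once every $\PGL_2$-invariant integral closed subscheme of $X$ admits an invariant integral closed subscheme of $Y$ mapping birationally onto it, and that surjectivity of the equivariant pushforward is then automatic (one applies the non-equivariant fact to the Borel approximations). So I would reduce the whole lemma, via Noetherian induction, to the following local task: stratify each of $\uD_1,\uD_2,\uD_{1,2}$ into $\PGL_2$-invariant locally closed pieces and, over a dense invariant open subset of each piece, write down a $\PGL_2$-equivariant rational section of the relevant map $F_r$, $G_r$ or $M_r$. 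Such a section is exactly what produces the birational lift of any subvariety whose generic point lands in that stratum.

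For $\uD_1$ (and symmetrically $\uD_2$) I would first observe that $\uD_1=\Delta_1\times\P^{2b}$, with $\Delta_1\subseteq\P^{2a}$ the non-square-free locus, and that $F_r=\pi_r\times\mathrm{id}$ for the squaring-multiplication map $\pi_r\colon\P^r\times\P^{2a-2r}\to\P^{2a}$, $(u,v)\mapsto u^2v$. Since Chow envelopes are stable under base change, it suffices to show $\sqcup_r\pi_r$ is an envelope of $\Delta_1$. This is precisely the situation of \cite[Lemma 3.3]{Vis98} and \cite[Proposition 4.1]{EF09}: I would stratify $\Delta_1=\sqcup_{r\geq1}(\Delta_r\smallsetminus\Delta_{r+1})$ by the degree of the maximal square factor, and use the canonical factorization $F=u^2v$ (with $v$ square-free and $\deg u=r$) as the rational section $F\mapsto(u,v)$. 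It is generically injective by uniqueness of the decomposition and $\PGL_2$-equivariant because that decomposition is canonical.

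For $\uD_{1,2}$ the same scheme is easier to run: I would stratify by the degree $r$ of $\gcd(F,G)$, note that $M_r\colon\P^r\times(\P^{2a-r}\times\P^{2b-r})\to\P^{2a}\times\P^{2b}$ dominates the stratum $\{\deg\gcd=r\}$, and take as rational section $(F,G)\mapsto\bigl(\gcd(F,G),\,F/\gcd(F,G),\,G/\gcd(F,G)\bigr)$. This is generically injective, since $\gcd(hf,hg)=h$ once $\gcd(f,g)=1$, so each fibre of $M_r$ over the stratum is a single reduced point, and it is manifestly equivariant. Here no delicate issue arises, because the greatest common divisor of two binary forms over a field commutes with arbitrary field extensions.

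The hard part, and the only place the hypothesis on $\mathrm{char}(k)$ enters, will be guaranteeing that the rational sections in the $\uD_1,\uD_2$ cases are defined over the function field of the stratum and not merely geometrically. The generic point of $\Delta_r\smallsetminus\Delta_{r+1}$ has a non-perfect residue field in positive characteristic, and the factorization $F=u^2v$ descends to it only when the square-free part $v$ stays square-free after field extension, i.e.\ when the relevant factors are separable. The standing assumption $\mathrm{char}(k)=0$ or $>2g+2$ forces all the degrees occurring (which are at most $2a+2b=2g+2$) to be smaller than the characteristic, securing separability and hence the descent; Example~\ref{ex: characteristic} shows that without this the section, and thus the envelope property, genuinely fails. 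Once the sections are in place over every stratum, Noetherian induction upgrades them to the required invariant birational lifts, and the surjectivity of the three equivariant pushforwards follows.
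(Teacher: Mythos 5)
Your proposal is correct and follows essentially the same route as the paper, which simply declares the proof ``standard'' and cites \cite[Lemmas 3.2, 3.3]{Vis98}, \cite[Proposition 4.1]{EF09} and \cite[Section 5]{DL18} for $F$ and $G$ --- precisely the stratification-by-square-factor argument with canonical rational sections that you spell out, with the characteristic hypothesis entering exactly where you say it does (descent of the square-free decomposition to non-perfect residue fields, cf.\ Example~\ref{ex: characteristic} and the Convention section). Your gcd-stratification treatment of $M$, which the paper leaves implicit, is the intended standard argument and is correct since the gcd of binary forms commutes with field extension.
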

\begin{proof}
    The proof is standard. For the maps $F$ and $G$, see~\cite[Lemmas 3.2, 3.3]{Vis98},~\cite[Proposition 4.1]{EF09} and~\cite[Section 5]{DL18}.
\end{proof}

The pushforward along $F_r$ and $G_r$ is essentially already known, with the only missing component provided by Lemma~\ref{lemma: pi2(xi2)}.  
We denote by $\tau$ both the $\PGL_2$-equivariant first Chern class $c_1^{\PGL_2}(\mathcal{O}_{\mathbb{P}^1}(2))$ on $\mathbb{P}^1$ and its pullback along projections from products of projective spaces.

\begin{lemma}\label{lem: computation i1 i2}
    The image of ${i_1}_*$ in $\CH_{\PGL_2}^*(\mathbb{P}^{2a} \times \mathbb{P}^{2b})$ is the ideal generated by the following elements:
    \begin{align*}
        {F_1}_*(1) &= 2(2a-1)\xi_{2a}, \qquad {F_1}_*(\tau) = 2\xi_{2a}^2 - 2a(a-1)c_2, \\
        {F_2}_*(\xi_2^2) &= \xi_{2a}^4 + (c_2 + 2a(a-1)c_2)\xi_{2a}^2 + c_3\xi_{2a} + a^2(a-1)^2c_2^2.
    \end{align*}
    Similarly, the image of ${i_2}_*$ is the ideal generated by:
    \begin{align*}
        {G_1}_*(1) &= 2(2b-1)\xi_{2b}, \qquad {G_1}_*(\tau) = 2\xi_{2b}^2 - 2b(b-1)c_2, \\
        {G_2}_*(\xi_2^2) &= \xi_{2b}^4 + (c_2 + 2b(b-1)c_2)\xi_{2b}^2 + c_3\xi_{2b} + b^2(b-1)^2c_2^2.
    \end{align*}
\end{lemma}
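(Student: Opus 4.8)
The plan is to exploit the product structure of the envelope maps. Observe that $F_r$ factors as $\pi_r\times\mathrm{id}_{\P^{2b}}$, where $\pi_r\colon\P^r\times\P^{2a-2r}\to\P^{2a}$ is the single-factor squaring--multiplication map $(h,f)\mapsto h^2f$; symmetrically, $G_r=\mathrm{id}_{\P^{2a}}\times\pi_r'$ with $\pi_r'\colon\P^r\times\P^{2b-2r}\to\P^{2b}$. By Lemma~\ref{lem: Chow envelopes n>1}, $F=\sqcup_r F_r$ is a $\PGL_2$-equivariant Chow envelope of $\uD_1$, so the image of ${i_1}_*$ equals $\sum_r\mathrm{Im}(F_{r*})$, and likewise for ${i_2}_*$ with the $G_r$. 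Thus the whole computation reduces to the two single-factor maps into $\P^{2a}$ and $\P^{2b}$ respectively.

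Next I would reduce each $\mathrm{Im}(F_{r*})$ to that single-factor computation. Writing $\mathrm{pr}_1\colon\P^{2a}\times\P^{2b}\to\P^{2a}$ for the projection, the projective bundle formula presents $\CH_{\PGL_2}^*(\P^{2a}\times\P^{2b})$ as a free module over $\CH_{\PGL_2}^*(\P^{2a})$ with basis $1,\xi_{2b},\ldots,\xi_{2b}^{2b}$, and the source of $F_r$ carries the analogous basis over $\CH^*_{\PGL_2}(\P^r\times\P^{2a-2r})$. Since $F_r^*\xi_{2b}$ is the hyperplane class of the spectator factor, $F_r$ sends one basis to the other, and base change along the flat map $\mathrm{pr}_1$ together with the projection formula yields $F_{r*}(q^*\gamma\cdot\xi_{2b}^k)=\mathrm{pr}_1^*(\pi_{r*}\gamma)\cdot\xi_{2b}^k$ for $\gamma$ pulled back from $\P^r\times\P^{2a-2r}$. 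Consequently $\mathrm{Im}(F_{r*})$ is exactly the ideal generated by $\mathrm{pr}_1^*(\mathrm{Im}(\pi_{r*}))$, and summing over $r$ shows that $\mathrm{Im}({i_1}_*)$ is the ideal generated by the $\mathrm{pr}_1$-pullback of the full $\PGL_2$-equivariant image of $\sqcup_r\pi_r$ inside $\CH^*_{\PGL_2}(\P^{2a})$.

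It then remains to identify this single-factor image, which is precisely the set of relations cutting out $\CH^*(\cD_{2a})$ and is therefore already available: by \cite[Corollary 6.3]{DL18} (with the footnote correction, and \cite[Proposition 5.2]{FV11} for the pushforwards along $\pi_1$) it is generated by $\pi_{1*}(1)=2(2a-1)\xi_{2a}$, by $\pi_{1*}(\tau)=2\xi_{2a}^2-2a(a-1)c_2$, and by $\pi_{2*}(\xi_2^2)$, the last of which is supplied for every $a$ by Lemma~\ref{lemma: pi2(xi2)} with $m=a$. Pulling these three classes back along $\mathrm{pr}_1$ reproduces verbatim the asserted generators ${F_1}_*(1)$, ${F_1}_*(\tau)$, ${F_2}_*(\xi_2^2)$, and the statement for ${i_2}_*$ follows by the same argument after exchanging $a\leftrightarrow b$ and $\xi_{2a}\leftrightarrow\xi_{2b}$.

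The only genuinely new input is the value of $\pi_{2*}(\xi_2^2)$, which is exactly Lemma~\ref{lemma: pi2(xi2)} and has already been established through the $\GL_3$-counterparts of \S\ref{sec: GL3-counterparts}; everything else is the known $\PGL_2$-equivariant branch-divisor computation. I therefore expect the main point to check carefully to be not a calculation but the module-theoretic reduction of the second paragraph, namely the claim that the second factor $\P^{2b}$ contributes nothing beyond multiplication of the single-factor relations by powers of $\xi_{2b}$. In particular one must confirm that no mixed relation involving both $\xi_{2a}$ and $\xi_{2b}$ nontrivially arises from $\uD_1$ or $\uD_2$; such relations instead come only from the third component $\uD_{1,2}$, which is treated separately via the maps $M_r$.
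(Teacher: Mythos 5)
Your proposal is correct and takes essentially the same route as the paper: the paper's proof is a one-line citation of \cite[Corollary 5.3]{DL18} together with Lemma~\ref{lemma: pi2(xi2)}, implicitly using exactly the reduction you spell out, namely that $F_r=\pi_r\times\mathrm{id}_{\P^{2b}}$ acts only on the first factor, so by the projective bundle and projection formulas the image of ${i_1}_*$ is the ideal generated by the $\mathrm{pr}_1$-pullback of the known single-factor image. Your more explicit treatment of the module-theoretic reduction (and of why no mixed relations arise from $\uD_1$ or $\uD_2$) is a faithful expansion of what the paper leaves implicit.
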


\begin{proof}
    The proof follows directly from~\cite[Corollary 5.3]{DL18} and Lemma~\ref{lemma: pi2(xi2)}.
\end{proof}

\subsection{Computation of ${M_r}_*$}

We are left with computing the image of ${i_{1,2}}_*$, and it suffices to do so modulo the relations we have already established. The strategy is as follows. First, we compute some low-degree pushforwards and define $J$ to be the ideal generated by them. Next, we show that the image of ${M_r}*$ is contained in $J$ after inverting $2$, as well as after inverting all integers except $2$. This will imply that the image of ${M_r}*$ is contained in $J$.

\subsubsection{Low degree classes and computation $\otimes\Z[1/2]$}

We begin by computing the pushforwards of $1$ and $\tau$ along $M_1$.

\begin{lemma}\label{lemma: M(1) and M(tau)}
    We have the equalities in $\CH^*_{\PGL_2}(\P^{2a} \times \P^{2b})$
    \begin{equation}
        {M_1}_*(1)=2b\xi_{2a}+2a\xi_{2b},\qquad {M_1}_*(\tau)=2\xi_{2a}\xi_{2b}-2abc_2.
    \end{equation}
\end{lemma}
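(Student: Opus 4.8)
The plan is to compute ${M_1}_*(1)$ and ${M_1}_*(\tau)$ by reducing everything to the $\PGL_2$-equivariant Chow ring of products of projective lines, where the relevant pushforward formulas are already available through Equation~\eqref{eqn: rho(tau)} and Lemma~\ref{lem: PGL2 Chow prod proj lines}. Recall that $M_1\colon \P^1\times(\P^{2a-1}\times\P^{2b-1})\to\P^{2a}\times\P^{2b}$ sends $(h,(f,g))$ to $(hf,hg)$. The key difficulty is that $M_1$ records the \emph{same} linear form $h$ dividing both factors simultaneously, so the image is genuinely a ``diagonal'' type locus and a direct projective-bundle computation is not available; this is why I pass to the full product of $\P^1$'s where the multiplication maps split into elementary ones whose pushforwards are understood.

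**Reducing to products of $\P^1$.**
First I would form the commutative diagram of $\PGL_2$-equivariant multiplication maps
\[
\begin{tikzcd}
    \P^1\times(\P^1)^{2a-1}\times(\P^1)^{2b-1}\arrow[d,"="]\arrow[r,"\theta"] & \P^1\times(\P^{2a-1}\times\P^{2b-1})\arrow[d,"M_1"]\\
    (\P^1)^{2a}\times(\P^1)^{2b}\arrow[r,"\rho"] & \P^{2a}\times\P^{2b}
\end{tikzcd}
\]
where $\rho=\rho_a\times\rho_b$ is the product of the two symmetrization maps and $\theta$ multiplies together the appropriate groups of linear forms while keeping the first $\P^1$ factor separate. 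As in the proof of Lemma~\ref{lemma: class of uD11}, I would use that $\theta$ is generically finite of degree $(2a-1)!(2b-1)!$ and that $\theta^*(\tau)=\tau_1$ (the pullback of $\tau$ from the distinguished first factor), together with $\theta_*\theta^*=\deg\theta$ on the image of $\theta^*$. Thus ${M_1}_*(\tau)=\frac{1}{(2a-1)!(2b-1)!}\rho_*(\tau_1)$ and ${M_1}_*(1)=\frac{1}{(2a-1)!(2b-1)!}\rho_*(\theta_*(1))$; the class $\theta_*(1)$ is the fundamental class of the image of $\theta$, which records that one linear form (the first coordinate) coincides with one coordinate in each of the two blocks, i.e.\ a sum of diagonal classes $[\Delta_{1,i}]$.

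**Computing the pushforwards.**
For ${M_1}_*(\tau)$ I would apply Equation~\eqref{eqn: rho(tau)} (and its two-factor analogue) to evaluate $\rho_*(\tau_1)$; since $\tau_1$ involves only a single factor in the first block, the computation reduces to the identity $\rho_{a*}(\tau_i)=2(2a-1)!\,\xi_{2a}$ combined with the class of the second block, and after multiplying through by the correct combinatorial constant and using $\tau_i^2=-c_2$ for the cross terms, I expect to land on $2\xi_{2a}\xi_{2b}-2abc_2$. For ${M_1}_*(1)$ the fundamental class $\theta_*(1)$ unwinds (via Lemma~\ref{lem: PGL2 Chow prod proj lines}, in particular the relation $2[\Delta_{i,j}]=\tau_i+\tau_j$) into a sum of $\tau$-classes, whose $\rho$-pushforwards are again given by Equation~\eqref{eqn: rho(tau)}; counting the $b$ copies of $\xi_{2a}$ and $a$ copies of $\xi_{2b}$ that arise yields $2b\xi_{2a}+2a\xi_{2b}$.

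**Main obstacle.**
The main subtlety I anticipate is the careful bookkeeping of the combinatorial factors in $\theta_*(1)$: because the shared form $h$ must be matched against one coordinate in each block, the diagonal class pulls back to a symmetric sum over the $(2a-1)(2b-1)$ ways of choosing which coordinates coincide, and one must verify that after dividing by $(2a-1)!(2b-1)!$ the integers $2b$ and $2a$ emerge cleanly rather than some denominator-laden expression. I would organize this by noting that all cross-diagonal contributions either vanish after $\rho_*$ or collapse via $\tau_i^2=-c_2$, leaving exactly the two leading terms; since $\CH^*_{\PGL_2}(\P^{2a}\times\P^{2b})$ is torsion-free in the relevant low degrees (degrees $1$ and $2$), I am free to work with $\Q$-coefficients throughout this step, exactly as in Lemma~\ref{lemma: push of varphi}, which removes any worry about the divisions being well-defined.
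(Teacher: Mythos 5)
Your overall strategy---reduce to products of $\P^1$ via the multiplication maps and compute there using Equation~\eqref{eqn: rho(tau)}---is the same as the paper's, but the commutative square at the heart of your argument is mis-drawn, and this propagates into formulas that give the wrong answer. The top-left corner $\P^1\times(\P^1)^{2a-1}\times(\P^1)^{2b-1}$ has $2a+2b-1$ factors while the bottom-left corner $(\P^1)^{2a}\times(\P^1)^{2b}$ has $2a+2b$, so the left vertical map cannot be the identity: for the square to commute with $M_1(h,(f,g))=(hf,hg)$ it must be the closed immersion $\delta$ duplicating the shared form $h$, namely $(h,f_\bullet,g_\bullet)\mapsto(h,f_\bullet,h,g_\bullet)$, whose image is the single diagonal $\Delta_{1,2a+1}$ (not ``a sum of diagonal classes''). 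In Lemma~\ref{lemma: class of uD11}, which you cite as a model, the two inputs of $\mathrm{mult}$ are independent, so the corresponding vertical map really is the identity; here the sharing of $h$ is exactly what prevents this, as you yourself observe at the outset. The correct reduction is $(2a-1)!(2b-1)!\,{M_1}_*(\alpha)=\rho_*\bigl(\delta_*(\theta^*\alpha)\bigr)$, i.e.\ one must multiply by $\delta_*(1)=[\Delta_{1,2a+1}]$ before pushing forward along $\rho$. As written, your formula ${M_1}_*(\tau)=\tfrac{1}{(2a-1)!(2b-1)!}\rho_*(\tau_1)$ evaluates to $4b\,\xi_{2a}$, a class of degree $1$, which cannot equal the degree-$2$ class $2\xi_{2a}\xi_{2b}-2abc_2$; and the expression $\rho_*(\theta_*(1))$ does not typecheck, since $\theta_*(1)$ lives on $\P^1\times\P^{2a-1}\times\P^{2b-1}$, where $\rho$ is not defined (moreover $\theta$ is surjective, so its image is not a diagonal locus and $\theta_*(1)=\deg\theta$).

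The repair is precisely the ingredient your prose gestures at but your formulas omit: insert the diagonal class and use $2[\Delta_{1,2a+1}]=\tau_1+\tau_{2a+1}$ from Lemma~\ref{lem: PGL2 Chow prod proj lines}, so that $2(2a-1)!(2b-1)!\,{M_1}_*(1)=\rho_*(\tau_1+\tau_{2a+1})$ and $2(2a-1)!(2b-1)!\,{M_1}_*(\tau)=\rho_*(\tau_1^2+\tau_1\tau_{2a+1})$. The first gives $2b\xi_{2a}+2a\xi_{2b}$ by Equation~\eqref{eqn: rho(tau)} adjusted for the degree $(2a)!(2b)!$ of $\rho$; for the second, $\rho_*(\tau_1^2)=-(2a)!(2b)!\,c_2$ and $\rho_*(\tau_1\tau_{2a+1})=4(2a-1)!(2b-1)!\,\xi_{2a}\xi_{2b}$ (the two factors live in different blocks, so the cross pushforward splits), yielding $2\xi_{2a}\xi_{2b}-2abc_2$. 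Your appeal to torsion-freeness in degrees $\leq2$ to justify working with $\Q$-coefficients is correct and is also what the paper does.
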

\begin{proof}
     Since the $\PGL_2$-equivariant Chow ring of $\P^{2a}\times\P^{2b}$ is torsion free in degree $\leq2$, we can work $\otimes_{\Z} \Q$ and then clear denominators. Consider the following commutative diagram
    \[
    \begin{tikzcd}
        \P^1\times(\P^1)^{2a-1}\times(\P^1)^{2b-1}\arrow[d,"\sigma"]\arrow[r,"\delta"] & (\P^1)^{2a+2b}\arrow[d,"\rho"]\\
        \P^1\times(\P^{2a-1}\times\P^{2b-1})\arrow[r,"M_1"] & \P^{2a}\times\P^{2b}
    \end{tikzcd}
    \]
    where $\sigma$ and $\rho$ are (products of) multiplication maps, and $\delta(h,(f_1,\ldots,f_{2a-1}),(g_1,\ldots,g_{2b-1}))=(h,f_1,\ldots,f_{2a-1},h,g_1,\ldots,g_{2b-1}))$.
    Then, similarly to Lemma \ref{lemma: class of uD11}, we have
    \begin{align*}
        2(2a-1)!(2b-1)!{M_1}_*(1)&=\rho_*\delta_*(2) \\
        &=\rho_*(2\Delta_{1,2a+1}) \\
        &=\rho_*(\tau_1+\tau_{2a+1})\\
        &=2(2a-1)!(2b-1)!(2b\xi_{2a}+2a\xi_{2b})
    \end{align*}
    where in the first equality, we used the fact that $\sigma$ has degree $(2a-1)!(2b-1)!$. Here, $\Delta_{i,j}$ denotes the pullback of the diagonal of $\P^1 \times \P^1$ along the $(i,j)$-projection. The last equality follows from Equation \eqref{eqn: rho(tau)}, taking into account that $\rho$ now has degree $(2a)!(2b)!$.
    Simplifying one gets the first equation in the statement. Similarly,
    \begin{align*}
        2(2a-1)!(2b-1)!{M_1}_*(\tau)&=\rho_*\delta_*(2\tau) \\
        &=\rho_*(2\tau_1\Delta_{1,2a+1}) \\
        &=\rho_*(\tau_1^2+\tau_1\tau_{2a+1})\\
        &=-(2a)!(2b)!c_2+\rho_*(\tau_1\tau_{2a+1})\\
        &=-(2a)!(2b)!c_2+4(2a-1)!(2b-1)!\xi_{2a}\xi_{2b}.
    \end{align*}
    where in the last equality we used again Equation \eqref{eqn: rho(tau)}, but kept into account that $\rho$ now has degree $(2a)!(2b)!$. The second equation follows.
\end{proof}

To handle the case ${M_r}_*\otimes\Z[1/2]$ an important tool is~\cite[Lemma 5.4]{FV11}, which in particular implies that for every $r\geq0$ the pullback
    \[
    \begin{tikzcd}
        \CH_{\PGL_2}^*(\P^r\times\P^{2a-r}\times\P^{2b-r})\otimes\Z[1/2]\arrow[r] & \CH_{\SL_2}^*(\P^{r}\times\P^{2a-r}\times\P^{2b-r})\otimes\Z[1/2]
    \end{tikzcd}
    \]
    is injective. Here, $\mathbb{P}^j = \mathbb{P}(\Sym^j(V^\vee))$ on the right-hand side, and $V$ is regarded as an $\SL_2$ representation via the inclusion $\SL_2 \subseteq \GL_2$.

    In fact, the above is an isomorphism. To see this, denote by $\xi_j = c_1^{\SL_2}(\mathcal{O}_{\mathbb{P}^j}(1))$. The ring on the right is generated as a $\CH^*(B\SL_2)$-algebra by $\xi_{2r}$, $\xi_{2a-r}$, and $\xi_{2b-r}$. Since the map $\CH^*(B\PGL_2)[1/2] \to \CH^*(B\SL_2)[1/2]$ is an isomorphism, and $\mathcal{O}_{\mathbb{P}^r}(2)$ always admits a $\PGL_2$-linearization, the morphism is also surjective. Hence, it is an isomorphism.
    
    This allows us to perform computations $\SL_2$-equivariantly, which is more straightforward since one can reduce to the maximal torus, given that $\SL_2$ is a special group scheme.

\begin{proposition}\label{prop: Chow n>1 inverting 2}
    The ideal generated by the image of all the maps
    \[
    \begin{tikzcd}
        {M_r}_*:\CH_{\PGL_2}^*(\P^r\times\P^{2a-r}\times\P^{2b-r})\otimes\Z[1/2]\arrow[r] & \CH_{\PGL_2}^*(\P^{2a}\times\P^{2b})\otimes\Z[1/2]
    \end{tikzcd}
    \]
    is contained in the ideal generated by the image of ${F_1}_*$, ${G_1}_*$ and ${M_1}_*$. Moreover, inverting $2$, the image of ${M_1}_*$ is generated by ${M_1}_*(1)$ and ${M_1}_*(\tau)$.
\end{proposition}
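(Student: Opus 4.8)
The plan is to invert $2$, pass to $\SL_2$-equivariant Chow groups via the isomorphism $\CH^*_{\PGL_2}(-)[1/2]\cong\CH^*_{\SL_2}(-)[1/2]$ recalled above, and carry out the whole argument there; the maximal torus $T\subset\SL_2$ enters only as a computational device, since $\SL_2$ is special and $\CH^*_{\SL_2}(-)[1/2]$ is a direct summand of $\CH^*_T(-)[1/2]$, so any single class may be evaluated after restriction to $T$. Over $\Z[1/2]$ every $\P^j$ becomes $\P(\Sym^j V)$ for $V$ the standard representation; write $h_j=c_1^{\SL_2}(\cO_{\P^j}(1))$, so that $h_{2a}=\xi_{2a}$, $h_{2b}=\xi_{2b}$, $h_1^2=-c_2$ and $\tau=2h_1$. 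All the structural claims below are statements in $\CH^*_{\SL_2}(\P^{2a}\times\P^{2b})[1/2]$, and only the explicit pushforward computations are delegated to $T$ via the diagonal trick already used in Lemmas~\ref{lemma: push of varphi}, \ref{lemma: class of uD11} and \ref{lemma: M(1) and M(tau)}.

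Write $J=(\mathrm{im}\,{F_1}_*,\mathrm{im}\,{G_1}_*,\mathrm{im}\,{M_1}_*)$. The key structural input is that the tautological subbundle of a product pulls back under multiplication to the external product of the tautological subbundles, giving
$$M_r^*\xi_{2a}=h_r+h_{2a-r},\qquad M_r^*\xi_{2b}=h_r+h_{2b-r}.$$
Thus $h_{2a-r}\equiv-h_r$ and $h_{2b-r}\equiv-h_r$ modulo $M_r^*\CH^*_{\SL_2}(\P^{2a}\times\P^{2b})$, and using the projective bundle relation for $h_r$ (monic of degree $r+1$ with coefficients in $\CH^*(B\SL_2)$) one sees that $\CH^*_{\SL_2}(\P^r\times\P^{2a-r}\times\P^{2b-r})$ is generated as a module over $M_r^*\CH^*_{\SL_2}(\P^{2a}\times\P^{2b})$ by $1,h_r,\dots,h_r^{\,r}$. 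By the projection formula $\mathrm{im}\,{M_r}_*$ is therefore the ideal generated by the classes ${M_r}_*(h_r^{\,j})$, $0\le j\le r$. For $r=1$ the module generators are $1$ and $h_1$, and since $\tau=2h_1$ is a unit multiple of $h_1$ after inverting $2$, this already yields the second assertion $\mathrm{im}\,{M_1}_*=({M_1}_*(1),{M_1}_*(\tau))$.

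For the first assertion it remains to place each ${M_r}_*(h_r^{\,j})$ in $J$. For $j\ge1$ I would use the factorization $M_r\circ\mu_r=M_1\circ\lambda_r$, where $\mu_r(\ell,h',f,g)=(\ell h',f,g)$ splits off a linear factor and $\lambda_r(\ell,h',f,g)=(\ell,h'f,h'g)$. The multiplication $m\colon\P^1\times\P^{r-1}\to\P^r$ is generically finite and satisfies $m_*(h_1)=h_r$ (the same computation as in Lemma~\ref{lemma: push of varphi}, via pullback to $(\P^1)^r$), so $h_r\in\mathrm{im}\,\mu_{r*}$ and hence $h_r^{\,j}\in\mathrm{im}\,\mu_{r*}$ for every $j\ge1$. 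Pushing such a class forward gives ${M_r}_*(h_r^{\,j})\in\mathrm{im}({M_r}_*\mu_{r*})=\mathrm{im}({M_1}_*\lambda_{r*})\subseteq\mathrm{im}\,{M_1}_*\subseteq J$.

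The only surviving generator is ${M_r}_*(1)=[Z_r]$, the class of the locus $Z_r$ of pairs sharing a common factor of degree $\ge r$, and this is the crux. Factoring instead through the degree-$\binom{r}{k}$ map $\P^k\times\P^{r-k}\to\P^r$ shows $\binom{r}{k}[Z_r]\in\mathrm{im}\,{M_k}_*$ for $1\le k\le r-1$; arguing by induction on $r$, with $\mathrm{im}\,{M_k}_*\subseteq J$ for $k<r$ as inductive hypothesis, we get $\binom{r}{k}[Z_r]\in J$ for all such $k$. Since $\gcd_{1\le k\le r-1}\binom{r}{k}$ equals $1$ unless $r$ is a prime power, and equals the prime $p$ when $r=p^e$, and since $2$ is a unit, this immediately gives $[Z_r]\in J$ whenever $r$ is not an odd prime power. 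The delicate case is $r=p^e$ with $p$ odd, where these correspondences only produce $p\,[Z_r]\in J$; here I would fall back on the explicit torus computation of $[Z_r]$ (reducing via the full multiplication maps to a symmetric-function computation on $(\P^1)^{2a}\times(\P^1)^{2b}$) and check directly that the resulting polynomial lies in $J$, whose generators of degree $\le 2$ after inverting $2$ are $(2a-1)\xi_{2a}$, $(2b-1)\xi_{2b}$, $b\xi_{2a}+a\xi_{2b}$, $\xi_{2a}^2-a(a-1)c_2$, $\xi_{2b}^2-b(b-1)c_2$ and $\xi_{2a}\xi_{2b}-abc_2$. I expect this membership verification for odd prime power $r$ to be the main obstacle.
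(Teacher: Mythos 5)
Your reduction is sound up to the last step: passing to $\SL_2$ after inverting $2$, using the projective bundle formula to reduce $\mathrm{im}\,{M_r}_*$ to the classes ${M_r}_*(h_r^j)$ (which also gives the second assertion since $\tau=2h_1$), and then disposing of all $j\geq1$ via the factorization $M_r\circ\mu_r=M_1\circ\lambda_r$ together with $m_*(h_1)=h_r$ is correct, and in fact cleaner than the paper's treatment of the positive powers (the paper instead relates $M_r$ to $M_{r-1}$ through the torus-equivariant coordinate-hyperplane maps $(h,f,g)\mapsto(Xh,f,g)$ and inducts on $r$). The problem is the class ${M_r}_*(1)=[Z_r]$, which you correctly identify as the crux but do not actually handle. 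Your binomial-coefficient argument gives $[Z_r]\in J$ only when $\gcd_k\binom{r}{k}$ is a power of $2$, i.e.\ when $r$ is not an odd prime power; for $r=p^e$ with $p$ odd (already $r=3$) you obtain only $p\,[Z_r]\in J$, and the sentence ``I would fall back on the explicit torus computation \dots I expect this membership verification to be the main obstacle'' is a declaration of intent, not a proof. Since these values of $r$ occur for all $a\geq2$, the first assertion of the proposition is not established.

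For comparison, the paper does not attempt any gcd trick: after reducing (mod $2$-inversion and induction) to ${M_r}_*(1)$ for $r>2$, it computes this class in closed form torus-equivariantly, by pushing forward from $(\P^1)^r\times(\P^1)^{2a-r}\times(\P^1)^{2b-r}$ to $(\P^1)^{2a}\times(\P^1)^{2b}$, where the image is a product of diagonals, and expanding via \cite[Lemma 2.5]{EF09} into an explicit double sum in the classes $t^s\,h_0^{(2a)}\cdots h_{k-1}^{(2a)}\boxtimes h_0^{(2b)}\cdots h_{r-s-k-1}^{(2b)}$. Membership in $I_1\otimes\Z[1/2]$ is then checked term by term, using that $h_0^{(2a)}h_1^{(2a)}$, $h_0^{(2b)}h_1^{(2b)}$, $h_0^{(2a)}h_0^{(2b)}$, $b\xi_{2a}+a\xi_{2b}$ and $\frac{(2a)!(2b)!}{r!(2a-r)!(2b-r)!}t^r$ all lie in that ideal (the last one using $r>2$ and $2a(2a-1)(2a-2)t^2={F_1}_*(1)\xi_{2a}$). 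This uniform computation is exactly the content your proposal defers, so as written the argument has a genuine gap for every odd prime power $r$.
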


\begin{proof}
    The above discussion explains how to reduce to the $\SL_2$-equivariant setting. In this context, every projective space is obtained as the projectivization of an $\SL_2$-representation. This allows us to apply the projective bundle formula (along with the projection formula) to conclude that, for every $r$, the image of ${M_r}_*$ is the ideal generated by ${M_r}_*(\xi_r^i)$ for $0 \leq i \leq r$, where $\xi_r = c_1^{\SL_2}(\mathcal{O}_{\mathbb{P}^r}(1))$. Since $\tau = 2\xi_1$, the final assertion of the proposition is immediate.

    Let $\Gm \subseteq \SL_2$ be the maximal torus consisting of diagonal matrices with determinant $1$. Since $\SL_2$ is special, for every smooth $\SL_2$-scheme $X$ and ideal $I \subseteq \CH^*_{\SL_2}(X)$, one has 
    \[
    ( I \cdot \CH^*_{\Gm}(X) ) \cap \CH^*_{\SL_2}(X) = I,
    \]
    see~\cite[Lemma 2.1]{FuVi}.
    In particular, it is enough to show that all ${M_r}_*(\xi_r^i)$ lie in the ideal generated by the images of ${F_1}_*$, ${G_1}_*$, and ${M_1}_*$, working $\Gm$-equivariantly. 
    
    Use the coordinates $a_0, \ldots, a_r$ on $\P^r$, so that every point $g \in \P^r$ can be written as $g=\sum_ia_iX^iY^{r-i}$. Notice that the coordinate hyperplanes $H_i^{(r)}=\{a_i=0\}$ are $\Gm$-invariant, thus they define $\Gm$-equivariant classes $h_i^{(r)}$. Let $t$ be the generator of the Chow ring of $B\Gm$ given by the character associated to 
    $\begin{bmatrix}
        \lambda & 0\\
        0 & \lambda^{-1}
    \end{bmatrix}\mapsto\lambda$. From~\cite[Lemma 2.4]{EF09} it follows that $h_i^{(r)}=\xi_r-(r-2i)t$.
    
    Let $I_1$ be the ideal generated by the image of the pushforward along the $\mathbb{G}_m$-equivariant maps $F_1$, $G_1$ and $M_1$. For every $r\geq2$ there exist commutative diagrams of $\Gm$-equivariant maps
    \[
    \begin{tikzcd}
        \P^{r-1}\times\P^{2a-r}\times\P^{2b-r}\arrow[rd]\arrow[r] & \P^r\times\P^{2a-r}\times\P^{2b-r}\arrow[r,"M_r"] & \P^{2a}\times\P^{2b}\\
        & \P^{r-1}\times\P^{2a-r+1}\times\P^{2b-r+1}\arrow[ru,"M_{r-1}"']
    \end{tikzcd}
    \]
    where the first horizontal map sends $(h,f,g)$ to $(Xh,f,g)$, and the other non-labeled map sends $(h,f,g)$ to $(h,Xf,Xg)$. It follows that it is enough to prove that ${M_r}_*(1)\in I_1$ for every $r\geq2$. Moreover, using the degree $2$ multiplication map $\P^1\times\P^1\rightarrow\P^2$, we can actually assume that $r>2$. From the commutative diagram
    \[
    \begin{tikzcd}
        (\P^1)^r\times(\P^1)^{2a-r}\times(\P^1)^{2b-r}\arrow[r,"\widetilde{M}_r"]\arrow[d] & (\P^1)^{2a}\times(\P^1)^{2b}\arrow[d,"\rho"]\\
        \P^r\times\P^{2a-r}\times\P^{2b-r}\arrow[r,"M_r"] & \P^{2a}\times\P^{2b}
    \end{tikzcd}
    \]
    one deduces that $r!(2a-r)!(2b-r)!{M_r}_*(1)=\rho_*(\widetilde{M}_{r*}(1))$. Now, the image of $\widetilde{M}_{r}$ is the locus where the first $r$ components of $(\P^1)^{2a}$ are equal to the first $r$ components of $(\P^1)^{2b}$, which is then a product of diagonals of copies of $\P^1\times\P^1$. A computation using~\cite[Lemma 2.5]{EF09} shows that ${M_r}_*(1)$ is equal to
    \begin{equation*}
        \sum_{s=0}^r\sum_{k=0}^{r-s}2^{s}\binom{r-k}{s}\binom{2a-k}{a-r}\binom{2b-r+k}{2b-r}\frac{(2b-r+s+k)!}{(2b-r+k)!}t^{s}h_0^{(2a)}\cdots h_{k-1}^{(2a)}\boxtimes h_0^{(2b)}\cdots h_{r-s-k-1}^{(2b)}
    \end{equation*}
    where empty products are considered to be equal to 1. Note that $h_0^{(2a)}h_1^{(2a)}$, $h_0^{(2b)}h_1^{(2b)}$, $h_0^{(2a)}h_0^{(2b)}$, $b\xi_{2a}+a\xi_{2b}$ and also
    \[
        \frac{(2a)!(2b)!}{r!(2a-r)!(2b-r)!}t^{r}
    \]
    are contained in $ I_1 \otimes \Z[1/2]$. For the last one use that $r>2$ and that
    $$
    2a (2a-1)(2a-2)t^2 =2(2a-1)\xi_{2a}^2= {F_1}_*(1) \xi_{2a} \in I_1\otimes\mathbb{Z}[1/2]. 
    $$
    Using these facts one concludes.
\end{proof}

This Proposition allows us to prove the following Lemma as well.

\begin{lemma}\label{lem: pushforward M1}
    The image of ${M_1}_*$ is contained in the ideal $I_1$ generated by the classes ${M_1}_*(1)$, ${M_1}_*(\tau)$ and the image of ${F_1}_*$ and ${G_1}_*$.
\end{lemma}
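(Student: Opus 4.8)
The plan is to establish the inclusion $\mathrm{im}({M_1}_*)\subseteq I_1$ one prime at a time: I will check it after inverting $2$ and after localizing at $2$, then glue. The gluing is elementary: if $y\in\mathrm{im}({M_1}_*)$ satisfies $2^N y\in I_1$ and $m\,y\in I_1$ for some odd $m$, then $y\in I_1$ because $\gcd(2^N,m)=1$; as $\mathrm{im}({M_1}_*)$ is a finitely generated ideal it suffices to prove $\mathrm{im}({M_1}_*)\otimes\Z[1/2]\subseteq I_1\otimes\Z[1/2]$ and $\mathrm{im}({M_1}_*)\otimes\Z_{(2)}\subseteq I_1\otimes\Z_{(2)}$. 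The first is immediate from the last assertion of Proposition~\ref{prop: Chow n>1 inverting 2}, which says that after inverting $2$ the image of ${M_1}_*$ is generated by ${M_1}_*(1)$ and ${M_1}_*(\tau)$, both of which lie in $I_1$ by definition. Thus the entire content is the $2$-local statement.

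For the $2$-local estimate I would introduce an odd-degree cover. Factoring one linear form out of each odd-dimensional projective space gives a $\PGL_2$-equivariant multiplication map
\[
\pi\colon P':=\P^1_h\times\P^1_{\ell_a}\times\P(W_{a-1})\times\P^1_{\ell_b}\times\P(W_{b-1})\longrightarrow \P^1\times\P^{2a-1}\times\P^{2b-1},
\]
$(h,\ell_a,f_a,\ell_b,f_b)\mapsto(h,\ell_a f_a,\ell_b f_b)$, of degree $(2a-1)(2b-1)$, which is odd. Hence $\pi_*$ becomes surjective after $\otimes\Z_{(2)}$ (since $\pi_*\pi^*$ is multiplication by the degree), so that $\mathrm{im}({M_1}_*)\otimes\Z_{(2)}=\mathrm{im}(N_*)\otimes\Z_{(2)}$, where $N:=M_1\circ\pi\colon P'\to Q:=\P^{2a}\times\P^{2b}$ is the map $(h,\ell_a,f_a,\ell_b,f_b)\mapsto(h\ell_a f_a,\,h\ell_b f_b)$. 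It then suffices to prove $\mathrm{im}(N_*)\subseteq I_1$.

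Next I would describe $\CH^*_{\PGL_2}(P')$. By Lemma~\ref{lem: PGL2 Chow prod proj lines} applied to the three $\P^1$-factors, together with the projective bundle formula for the two factors $\P(W_{a-1})$ and $\P(W_{b-1})$, this ring is generated by $c_2,c_3,\tau_h,[\Delta_{h,\ell_a}],[\Delta_{h,\ell_b}],\xi_{2a-2},\xi_{2b-2}$. Since the pullback of $\cO(1)$ along a multiplication map is the exterior tensor product of the tautological bundles, and $\cO(1,1)$ on $\P^1\times\P^1$ has $\PGL_2$-equivariant first Chern class $[\Delta]$, one gets $N^*\xi_{2a}=[\Delta_{h,\ell_a}]+\xi_{2a-2}$ and $N^*\xi_{2b}=[\Delta_{h,\ell_b}]+\xi_{2b-2}$. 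Consequently $\CH^*_{\PGL_2}(P')$ is generated, as a module over $N^*\CH^*_{\PGL_2}(Q)$, by the monomials $\tau_h^{\varepsilon}\xi_{2a-2}^{i}\xi_{2b-2}^{j}$, and by the projection formula $\mathrm{im}(N_*)$ is the ideal they generate. Substituting $\xi_{2a-2}=N^*\xi_{2a}-[\Delta_{h,\ell_a}]$ and $\xi_{2b-2}=N^*\xi_{2b}-[\Delta_{h,\ell_b}]$, expanding, and pulling the $N^*\xi$-factors out by the projection formula, each such pushforward becomes a $\CH^*_{\PGL_2}(Q)$-linear combination of the classes $N_*\!\big(\tau_h^{\varepsilon}[\Delta_{h,\ell_a}]^{s}[\Delta_{h,\ell_b}]^{t}\big)$.

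Finally I would check that each of these last classes lies in $I_1$, which is where the geometry enters. When $s=t=0$ we have $\tau_h^{\varepsilon}=\pi^*\tau^{\varepsilon}$, so $N_*(\tau_h^{\varepsilon})=(2a-1)(2b-1)\,{M_1}_*(\tau^{\varepsilon})$ is a multiple of ${M_1}_*(1)$ or ${M_1}_*(\tau)$. When $s\geq 1$ the class is supported on the diagonal $\{h=\ell_a\}$, on which $N$ reads $(h,f_a,\ell_b,f_b)\mapsto(h^2 f_a,\,h\ell_b f_b)$; this factors as $F_1\circ\kappa$ for the evident map $\kappa$, so by the projection formula $N_*\big([\Delta_{h,\ell_a}]\cdot z\big)\in\mathrm{im}({F_1}_*)\subseteq I_1$ for every class $z$. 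Symmetrically, every term with $t\geq 1$ lies in $\mathrm{im}({G_1}_*)\subseteq I_1$. This shows $\mathrm{im}(N_*)\subseteq I_1$, completing the $2$-local estimate and, with the gluing above, the lemma. I expect the only delicate point to be the bookkeeping of the module generators of $\CH^*_{\PGL_2}(P')$ and the verification that $N^*\xi_{2a},N^*\xi_{2b}$ split off the diagonal classes exactly as claimed; once that is in place, the geometric observation that the diagonals of $P'$ map into the non-squarefree loci $\uD_1,\uD_2$ routes every remaining term into $\mathrm{im}({F_1}_*)$, $\mathrm{im}({G_1}_*)$, or the span of ${M_1}_*(1),{M_1}_*(\tau)$.
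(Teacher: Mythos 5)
Your argument is correct and essentially identical to the paper's proof: the same splitting into the $\Z[1/2]$ case (handled by Proposition~\ref{prop: Chow n>1 inverting 2}) and the $\Z_{(2)}$ case, the same odd-degree cover $(\P^1)^3\times\P(W_{a-1})\times\P(W_{b-1})\to\P^1\times\P^{2a-1}\times\P^{2b-1}$ of degree $(2a-1)(2b-1)$, the same module generators of $\CH^*_{\PGL_2}((\P^1)^3)$ from Lemma~\ref{lem: PGL2 Chow prod proj lines}, and the same final observation that the diagonal classes push forward through $F_1$ or $G_1$. The only difference is that you make the bookkeeping via $N^*\xi_{2a}=[\Delta_{h,\ell_a}]+\xi_{2a-2}$ explicit where the paper simply invokes the projective bundle formula.
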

\begin{proof}
    Thanks to Proposition~\ref{prop: Chow n>1 inverting 2}, we know that the containment holds after inverting $2$. Therefore, it is enough to show the containment after applying $\otimes \Z_{(2)}$; for the rest of the proof we will then assume to have inverted every odd integer. Notice that the morphism
    \[
    \begin{tikzcd}
        \psi:(\P^1)^3\times\P^{2a-2}\times\P^{2b-2}\arrow[r] & \P^1\times\P^{2a-1}\times\P^{2b-1}
    \end{tikzcd}
    \]
    given by $(h,h_1,h_2,f,g)\mapsto(h,h_1f,h_2g)$ has odd degree, hence it induced a surjective pushforward in Chow. In particular, it is enough to compute the image of the pushforward along $\psi_1:=M_1\circ\psi$. The advantage is that $2a-2$ and $2b-2$ are now even and the projective bundle formula applied to $\P^{2a-2}$ and $\P^{2b-2}$ shows that it is enough to compute the pushforward along $\psi_1$ of classes coming from $(\P^1)^3$. The $\PGL_2$-equivariant Chow ring of $(\P^1)^3$ is computed in Lemma ~\ref{lem: PGL2 Chow prod proj lines}, where we showed that it is generated as a $\CH^*(B\PGL_2)$-module by 1, $\tau_1$, $[\Delta_{1,2}]$, $[\Delta_{1,3}]$, $[\Delta_{1,2}]\tau_1$, $[\Delta_{1,3}]\tau_1$, $[\Delta_{1,2}][\Delta_{1,3}]$ and $[\Delta_{1,2}][\Delta_{1,3}]\tau_1$. The pushforwards ${\psi_1}_*(1)= (2a-1) (2b-1) {M_1}_*(1)$ and ${\psi_1}_*(\tau_1)= (2a-1)(2b-1) {M_1}_*(\tau)$ are computed in Lemma \ref{lemma: M(1) and M(tau)}. All the other classes can be regarded as pushforward along some diagonal map. As the composite of a diagonal together with the map $\psi_1$ factors through $F_1$ or $G_1$, this concludes.
\end{proof}

Next, we compute the pushforward ${M_2}_*$. Notice that in this case we are working with projective spaces of even dimension, which are the projectivization of a $\PGL_2$-representation.

First, we need a couple of lemmas that will be useful also later.
We will need to work with $\GL_3$-counterparts $F_r'$, $G_r'$ and $M_r'$ to $F_r$, $G_r$ and $M_r$, respectively. For $F$, the $\GL_3$-counterparts are
\[
\begin{tikzcd}[row sep=tiny]
    F_{2r}':=\pi_{2r}'\times1:\P(V_r)\times_{\mathcal{S}}\P(V_{a-2r})\times_{\mathcal{S}}\P(V_{b})\arrow[r] & \P(V_a)\times_{\mathcal{S}}\P(V_b),\\
    F_{2r+1}':=\pi_{2r+1}'\times1:\mathcal{Y}_{2r+1}\times_{\mathcal{S}}\P(V_{a-2r-1})\times_{\mathcal{S}}\P(V_{b})\arrow[r] & \P(V_a)\times_{\mathcal{S}}\P(V_b),
\end{tikzcd}
\]
and similarly for $G$; see~\cite[Section 5]{DL18} for the definition of $\pi_r'$. For $M$, the $\GL_3$-counterparts
\[
\begin{tikzcd}[row sep=tiny]
    M_{2r}'= (\psi'_{r,r-a} \times \psi'_{r,r-b}) \circ \Delta_{\P(V_r)}:\P(V_r)\times_{\mathcal{S}}\P(V_{a-r})\times_{\mathcal{S}}\P(V_{b-r})\arrow[r] & \P(V_{a})\times_{\mathcal{S}}\P(V_{b}),\\
    M_{2r+1}'= (\phi'_{r,a-r-1} \times \phi'_{r,b-r-1}) \circ \Delta_{\mathcal{Y}_{2r+1}}:\mathcal{Y}_{2r+1}\times_{\mathcal{S}}\mathcal{Y}_{2a-2r-1}\times_{\mathcal{S}}\mathcal{Y}_{2b-2r-1}\arrow[r] & \P(V_{a})\times_{\mathcal{S}}\P(V_{b}),
\end{tikzcd}
\]
where the maps $\psi'_{i,j}$ and $\phi'_{i,j}$ are defined in \S\ref{sec: GL3-counterparts}.

Recall that, by Remark~\ref{rmk: computation W[m;1,0,0]}, for every $r>0$, we have $[W_{r;1,0}]=\xi_{2r}^2+r^2c_2+(2r-1)t_1\xi_{2r}+r(2r-1)t_1^2$ in $\CH_{\Gm^3}^*(\P(V_r))$.

\begin{lemma}\label{lem: deriving from W}
    For $r>0$ and $i \geq 0$, write ${M_{2r}'}_*( \xi_{2r}^i \cdot [W_{r;1,0}])=\alpha_0^{(i)}+\alpha_1^{(i)} t_1+\alpha_2^{(i)} t_1^2$, with $\alpha_j^{(i)} \in \CH^*_{\PGL_2}(\P(V_r))$.
    Then, we have
    \begin{align*}
        r(2r-1){M_{2r}}_*(\xi_{2r}^i)=\alpha_2^{(i)},\qquad (2r-1){M_{2r}}_*(\xi_{2r}^{i+1})=\alpha_1^{(i)},\qquad {M_{2r}}_*(\xi_{2r}^{i+2})=\alpha_0^{(i)}-r^2c_2{M_{2r}}_*(\xi_{2r}^i).
    \end{align*}
\end{lemma}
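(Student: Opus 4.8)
The plan is to feed the explicit formula for $[W_{r;1,0}]$ from Remark~\ref{rmk: computation W[m;1,0,0]} into the pushforward ${M_{2r}'}_*$ and then read off the three pushforwards of powers of $\xi_{2r}$ by matching coefficients in the free-module decomposition of Proposition~\ref{prop:generalizeddecomposition}. Concretely, I would start from
\[
[W_{r;1,0}]=\xi_{2r}^2+r^2c_2+(2r-1)t_1\xi_{2r}+r(2r-1)t_1^2\in\CH^*_{\Gm^3}(\P(V_r)),
\]
multiply by $\xi_{2r}^i$, and regroup by powers of $t_1$:
\[
\xi_{2r}^i[W_{r;1,0}]=\big(\xi_{2r}^{i+2}+r^2c_2\,\xi_{2r}^i\big)+(2r-1)\,\xi_{2r}^{i+1}\,t_1+r(2r-1)\,\xi_{2r}^i\,t_1^2 .
\]

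Next I would apply ${M_{2r}'}_*$ term by term. The classes $\xi_{2r}$ and $c_2$ are pulled back from the $\GL_3$-equivariant Chow ring and $M_{2r}'$ is $\GL_3$-equivariant (in particular $\Gm^3$-equivariant), while $t_1$ is pulled back from $B\Gm^3$; hence ${M_{2r}'}_*$ is $\CH^*(B\Gm^3)$-linear and the projection formula lets the factors $t_1,t_1^2$ pass through, giving
\[
{M_{2r}'}_*\!\big(\xi_{2r}^i[W_{r;1,0}]\big)=\big({M_{2r}'}_*(\xi_{2r}^{i+2})+r^2c_2\,{M_{2r}'}_*(\xi_{2r}^i)\big)+(2r-1)\,{M_{2r}'}_*(\xi_{2r}^{i+1})\,t_1+r(2r-1)\,{M_{2r}'}_*(\xi_{2r}^i)\,t_1^2 .
\]
The crucial observation is that each ${M_{2r}'}_*(\xi_{2r}^{j})$ is itself $\GL_3$-equivariant, being the pushforward of a $\GL_3$-equivariant class along a $\GL_3$-equivariant map; thus it lies in the $\underline{d}=\underline{0}$ summand of the decomposition of Proposition~\ref{prop:generalizeddecomposition} (taken with $n=3$). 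This is precisely what rules out any $t_2$, $t_1t_2$ or $t_1^2t_2$ contribution and explains why the left-hand side has the asserted shape $\alpha_0^{(i)}+\alpha_1^{(i)}t_1+\alpha_2^{(i)}t_1^2$.

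Finally, by the uniqueness of the free-module expansion (second point of Proposition~\ref{prop:generalizeddecomposition}), comparing coefficients of $1$, $t_1$ and $t_1^2$ identifies $\alpha_2^{(i)}=r(2r-1){M_{2r}'}_*(\xi_{2r}^i)$, $\alpha_1^{(i)}=(2r-1){M_{2r}'}_*(\xi_{2r}^{i+1})$ and $\alpha_0^{(i)}={M_{2r}'}_*(\xi_{2r}^{i+2})+r^2c_2\,{M_{2r}'}_*(\xi_{2r}^i)$, which rearrange into the three stated formulas once we replace ${M_{2r}'}_*$ by ${M_{2r}}_*$. That last replacement is immediate from the definition of the $\GL_3$-counterpart: the isomorphism $\CH^*_{\GL_3}(\P(V_a)\times_{\mathcal{S}}\P(V_b))\cong\CH^*_{\PGL_2}(\P^{2a}\times\P^{2b})$ sends $\xi_{2r}$ to $\xi_{2r}$ and intertwines ${M_{2r}'}_*$ with ${M_{2r}}_*$. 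The only genuinely delicate points are verifying $\CH^*(B\Gm^3)$-linearity of the pushforward (to justify the projection formula) and that ${M_{2r}'}_*(\xi_{2r}^j)$ is $\GL_3$-equivariant (to force the $t_2$-coefficients to vanish); both are structural rather than computational, so no new calculation beyond Remark~\ref{rmk: computation W[m;1,0,0]} is needed.
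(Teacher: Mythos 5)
Your proposal is correct and is essentially the paper's argument: the paper's proof is the one-line "It follows immediately from Proposition~\ref{prop:generalizeddecomposition}", and your write-up simply spells out the same steps — substituting the expression for $[W_{r;1,0}]$ from Remark~\ref{rmk: computation W[m;1,0,0]}, using the projection formula over $B\Gm^3$, noting that ${M_{2r}'}_*(\xi_{2r}^j)$ lands in the $\underline{d}=\underline{0}$ summand, and matching coefficients in the free-module decomposition with basis $1,t_1,t_1^2,t_2,t_1t_2,t_1^2t_2$. No discrepancy with the paper's route.
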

\begin{proof}
    It follows immediately from Proposition~\ref{prop:generalizeddecomposition}.
\end{proof}

\begin{lemma}\label{lem: pushforward W1100}
    We have ${M_2'}_*([W_{1;1,0}])=[W_{a;1,0}]\boxtimes[W_{b;1,0}]=\alpha_0+\alpha_1t_1+\alpha_2t_1^2$, with
    \begin{align*}
        \alpha_0=&\ \xi_{2a}^2\xi_{2b}^2+c_2(b^2\xi_{2a}^2+a^2\xi_{2b}^2)+a^2b^2c_2^2+c_3(b\xi_{2a}+a\xi_{2b}),\\
        \alpha_1=&\ \xi_{2a}\xi_{2b}((2b-1)\xi_{2a}+(2a-1)\xi_{2b})+c_2((2a-1)b^2\xi_{2a}+(2b-1)a^2\xi_{2b})\\
        &\ -(2a-1)(2b-1)c_2(b\xi_{2a}+a\xi_{2b})+abc_3,\\
        \alpha_2=&\ (2a-1)(2b-1)\xi_{2a}\xi_{2b}+b(2b-1)\xi_{2a}^2+a(2a-1)\xi_{2b}^2+ab(a+b-1)c_2.
    \end{align*}
\end{lemma}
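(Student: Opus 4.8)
The plan is to separate the lemma into one geometric identity and one routine algebraic reduction. The geometric heart is the claim ${M_2'}_*([W_{1;1,0}])=[W_{a;1,0}]\boxtimes[W_{b;1,0}]$, which I would prove at the level of cycles. Recall that $W_{1;1,0}\subset\P(V_1)$ is by definition the image of $j_{1;1,0}\colon\P(V_0)\hookrightarrow\P(V_1)$, i.e.\ the section $q\mapsto(q,[X_1])$ picking out the distinguished coordinate form. Hence its preimage under the projection $\mathrm{pr}_1$ from the domain of $M_2'$ is $W_{1;1,0}\times_{\mathcal{S}}\P(V_{a-1})\times_{\mathcal{S}}\P(V_{b-1})$, on which the first factor $h$ is fixed equal to $X_1$. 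Since $M_2'$ sends $(q,h,f,g)$ to $(q,hf,hg)$, its restriction to this locus is $j_{a;1,0}\times_{\mathcal{S}}j_{b;1,0}\colon(q,f,g)\mapsto(q,X_1f,X_1g)$, a closed immersion with image exactly $W_{a;1,0}\boxtimes W_{b;1,0}$. As this restriction is an isomorphism onto its image (in particular of degree $1$ and dimension-preserving), the flatness of $\mathrm{pr}_1$ identifies $\mathrm{pr}_1^*[W_{1;1,0}]$ with the fundamental class of the reduced preimage, and the projection formula then yields ${M_2'}_*(\mathrm{pr}_1^*[W_{1;1,0}])=[W_{a;1,0}\boxtimes W_{b;1,0}]$. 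A dimension count confirms that both sides have the expected codimension $4$, so no excess-intersection correction enters.

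Second, I would make the right-hand side explicit. By Remark~\ref{rmk: computation W[m;1,0,0]} each factor reads $[W_{m;1,0}]=\xi_{2m}^2+m^2c_2+(2m-1)t_1\xi_{2m}+m(2m-1)t_1^2$ for $m=a,b$, so multiplying them out produces a polynomial in $t_1$ of degree up to $4$ with coefficients in $\Z[\xi_{2a},\xi_{2b},c_2]$. To bring this into the claimed normal form $\alpha_0+\alpha_1t_1+\alpha_2t_1^2$ with $\alpha_i$ polynomials in $\xi_{2a},\xi_{2b},c_2,c_3$, I would reduce the higher powers of $t_1$ using $\prod_{i}(t_1-t_i)=0$. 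Since $c_1=t_1+t_2+t_3=0$ in $\CH^*_{\Gm^3}(\P(V_a)\times_{\mathcal{S}}\P(V_b))$ by~\cite[Lemma 4.4]{DL18}, this identity becomes $t_1^3=c_3-c_2t_1$, whence also $t_1^4=c_3t_1-c_2t_1^2$. Substituting and collecting by powers of $t_1$ produces $\alpha_0,\alpha_1,\alpha_2$; here I would use $2c_3=0$ to rewrite the odd multiple $(2a-1)(2b-1)c_3$ simply as $c_3$, which is what makes the $c_3$-terms in $\alpha_0$ and $\alpha_1$ carry the stated unit coefficients $b\xi_{2a}+a\xi_{2b}$ and $ab$.

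Finally I would record the elementary identities verifying that the $c_2$-coefficients match, the representative one being $a^2b(2b-1)+ab^2(2a-1)-ab(2a-1)(2b-1)=ab(a+b-1)$; the remaining coefficient comparisons are of exactly the same nature and I would not grind through them. I do not expect a genuine obstacle here: the only conceptual step is the cycle-level identity of the first paragraph, and once $M_2'$ is recognized as restricting to the product immersion $j_{a;1,0}\times_{\mathcal{S}}j_{b;1,0}$ over the fixed section $h=X_1$, everything else is the bookkeeping of the $t_1$-reduction modulo $c_1=0$ and $2c_3=0$. The one point deserving care is the justification that $\mathrm{pr}_1^*[W_{1;1,0}]$ is the fundamental class of the reduced preimage and that the pushforward carries no multiplicity, which follows from flatness of $\mathrm{pr}_1$ together with the restriction being a closed immersion of the expected codimension.
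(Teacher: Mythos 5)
Your proposal is correct and follows essentially the same route as the paper, whose proof is the one-line ``immediate by definition and Remark~\ref{rmk: computation W[m;1,0,0]}, together with $t_1^3+c_2t_1+c_3=0$ and $2c_3=0$''; you have simply filled in the cycle-level justification of ${M_2'}_*([W_{1;1,0}])=[W_{a;1,0}]\boxtimes[W_{b;1,0}]$ and the $t_1$-reduction, and your coefficient identities (e.g.\ $a^2b(2b-1)+ab^2(2a-1)-ab(2a-1)(2b-1)=ab(a+b-1)$) check out. The only nitpick is notational: by the paper's definition of $j_{m;r,\ell}$ the section $W_{1;1,0}$ is $q\mapsto(q,[X_0])$ rather than $(q,[X_1])$, which does not affect the argument.
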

\begin{proof}
    Immediate by definition and Remark~\ref{rmk: computation W[m;1,0,0]}, together with the relations $t_1^3+c_2t_1+c_3=0$ and $2c_3=0$.
\end{proof}

Putting everything together, we have:

\begin{corollary}\label{cor: computation M2}
    The image of ${M_2}_*$ is generated by
    \begin{align*}
        {M_2}_*(1)=&\ (2a-1)(2b-1)\xi_{2a}\xi_{2b}+b(2b-1)\xi_{2a}^2+a(2a-1)\xi_{2b}^2+ab\left(a+b-1\right)c_2,\\
        {M_2}_*(\xi_2)=&\ \xi_{2a}\xi_{2b}((2b-1)\xi_{2a}+(2a-1)\xi_{2b})+c_2\left((2a-1)b^2\xi_{2a}+(2b-1)a^2\xi_{2b}\right)\\
        &\ -(2a-1)(2b-1)c_2\left(b\xi_{2a}+a\xi_{2b}\right)+abc_3,\\
        {M_2}_*(\xi_2^2)=&\ \xi_{2a}^2\xi_{2b}^2-c_2\left(b(b-1)\xi_{2a}^2+a(a-1)\xi_{2b}^2\right)-(2a-1)(2b-1)c_2\xi_{2a}\xi_{2b}\\
        &\ +ab\left(ab-a-b+1\right)c_2^2+c_3\left(b\xi_{2a}+a\xi_{2b}\right).
    \end{align*}
\end{corollary}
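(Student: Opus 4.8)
The plan is to deduce the corollary directly from Lemmas~\ref{lem: deriving from W} and~\ref{lem: pushforward W1100} by specializing to $r=1$, after first checking that the three listed classes do generate the image of ${M_2}_*$ as an ideal. The conceptual work has already been done in those two lemmas, so what remains is a generation argument followed by a short algebraic assembly.

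First I would establish the generation statement. The source factor $\P^2=\P(W_1)$ is the projectivization of a $\PGL_2$-representation, so the projective bundle formula applies: $\CH^*_{\PGL_2}(\P^2\times\P^{2a-2}\times\P^{2b-2})$ is generated over $\CH^*_{\PGL_2}(\P^{2a-2}\times\P^{2b-2})$ by $1,\xi_2,\xi_2^2$, subject to the monic relation $\xi_2^3+c_2\xi_2+c_3=0$ coming from $\CH^*_{\PGL_2}(\P(W_1))$. Since $M_2$ is a multiplication map we have $M_2^*\xi_{2a}=\xi_2+\xi_{2a-2}$ and $M_2^*\xi_{2b}=\xi_2+\xi_{2b-2}$, so $\xi_{2a-2}$ and $\xi_{2b-2}$ differ from pullbacks of classes on the target by multiples of $\xi_2$, while $c_2,c_3$ are themselves pulled back from $B\PGL_2$. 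Hence the source ring is generated as an algebra by $\xi_2$ together with the image of $M_2^*$, and as a module over that image by $1,\xi_2,\xi_2^2$ (higher powers reducing via $\xi_2^3=-c_2\xi_2-c_3$). By the projection formula, the ideal generated by the image of ${M_2}_*$ is therefore generated by ${M_2}_*(\xi_2^i)$ for $i=0,1,2$.

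Next I would compute the three pushforwards. Specializing Lemma~\ref{lem: deriving from W} to $r=1$, $i=0$ gives
\begin{align*}
    {M_2}_*(1)=\alpha_2,\qquad {M_2}_*(\xi_2)=\alpha_1,\qquad {M_2}_*(\xi_2^2)=\alpha_0-c_2\,{M_2}_*(1)=\alpha_0-c_2\alpha_2,
\end{align*}
where $\alpha_0,\alpha_1,\alpha_2$ are the coefficients in the $t_1$-expansion of ${M_2'}_*([W_{1;1,0}])=[W_{a;1,0}]\boxtimes[W_{b;1,0}]$ computed in Lemma~\ref{lem: pushforward W1100}. Substituting the expressions for $\alpha_2$ and $\alpha_1$ reproduces the stated formulas for ${M_2}_*(1)$ and ${M_2}_*(\xi_2)$ verbatim.

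The only remaining computation is the simplification of ${M_2}_*(\xi_2^2)=\alpha_0-c_2\alpha_2$. Subtracting $c_2\alpha_2$ from $\alpha_0$ and collecting terms, the coefficient of $\xi_{2a}^2$ becomes $c_2\big(b^2-b(2b-1)\big)=-c_2\,b(b-1)$ and, symmetrically, that of $\xi_{2b}^2$ becomes $-c_2\,a(a-1)$; the $\xi_{2a}\xi_{2b}$ term acquires $-(2a-1)(2b-1)c_2$; the $c_2^2$ coefficient becomes $a^2b^2-ab(a+b-1)=ab(ab-a-b+1)$; and the $c_3(b\xi_{2a}+a\xi_{2b})$ and $\xi_{2a}^2\xi_{2b}^2$ terms of $\alpha_0$ are unchanged. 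This yields exactly the claimed expression. Since all the real content is packaged in the two preceding lemmas, the main (and only mildly tedious) step is this final bookkeeping; there is no conceptual obstacle.
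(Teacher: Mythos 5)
Your proposal is correct and follows the paper's own route exactly: the generation statement is the paper's appeal to the projective bundle and projection formulas (with the relation $\xi_2^3+c_2\xi_2+c_3=0$ reducing higher powers), and the three pushforwards are obtained precisely by specializing Lemma~\ref{lem: deriving from W} to $r=1$, $i=0$ and reading off the coefficients $\alpha_0,\alpha_1,\alpha_2$ from Lemma~\ref{lem: pushforward W1100}. Your bookkeeping for ${M_2}_*(\xi_2^2)=\alpha_0-c_2\alpha_2$ checks out and reproduces the stated formula.
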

\begin{proof}
    From the projective bundle formula and the projection formula we know that the image of ${M_2}_*$ is generated by the classes in the statement,which are computed in the above lemmas.
\end{proof}

\subsubsection{End of Computation}

In this section we conclude the proof of Theorems \ref{thm: Chow Dab} and \ref{thm: Chow n>1} by showing that the ideal generated by the classes in Lemmas \ref{lem: computation i1 i2}, \ref{lemma: M(1) and M(tau)} and Corollary \ref{cor: computation M2} generate the image of $i_*$. 

\begin{notation}
    Denote by $I_2$ the ideal in $\CH^*_{\GL_3}(\P(V_a) \times_\mathcal{S} \P(V_b))$ generated by the image of $F_*$, $G_*$, ${M_1}_*$ and ${M_2}_*$,and by $\widetilde{I}_2$ its extension in $\CH^*_{\Gm^3}(\P(V_a) \times_\mathcal{S} \P(V_b))$.
\end{notation}

\begin{lemma}\label{lem: projective polynomials n>1}
    Let $2\leq a\leq b$. Regard $p_a(\xi_{2a})$ and $p_b(\xi_{2b})$ of Lemma~\ref{lem: Chow product proj spaces} as elements of the ring $\Z[c_2,c_3,\xi_{2a},\xi_{2b}]/(2c_3)$, and $I_2$ as the ideal in $\Z[c_2,c_3,\xi_{2a},\xi_{2b}]/(2c_3)$ generated by the classes computed in Lemmas~\ref{lem: computation i1 i2},~\ref{lem: pushforward M1} and Corollary~\ref{cor: computation M2}. Then, $p_{a},p_{b}\in I_2$.
\end{lemma}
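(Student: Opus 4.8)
The plan is to prove $p_a\in I_2$; the assertion for $p_b$ then follows by the symmetry $(a,\xi_{2a})\leftrightarrow(b,\xi_{2b})$, which carries the generators of $I_2$ coming from $F$ to those coming from $G$ and fixes the $M$-relations of Lemma~\ref{lem: pushforward M1} and Corollary~\ref{cor: computation M2}, while interchanging $p_a$ and $p_b$. Since $p_a$ involves only $c_2,c_3,\xi_{2a}$, all of the geometry will take place on the single factor $\P(V_a)$, and the classes I will need are precisely the pushforwards from the $F$-envelope. The first reduction is to pass to the $\GL_3$-counterpart $\P(V_a)$ and extend scalars to the maximal torus $\Gm^3\subseteq\GL_3$, writing $\widetilde R$ for the extension of $R=\Z[c_2,c_3,\xi_{2a},\xi_{2b}]/(2c_3)$ and $\widetilde{I_2}$ for the extension of $I_2$. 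By Proposition~\ref{prop:generalizeddecomposition} we have $\widetilde{I_2}=\bigoplus_{\underline d}\underline t^{\underline d}I_2$, and since $p_a$ is a $\GL_3$-class it lies in the summand $\underline t^{\underline 0}R=R$; hence $p_a\in I_2$ if and only if $p_a\in\widetilde{I_2}$. This turns the problem into a torus computation, where every bundle splits.

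The key identity I would establish over the torus is
$$p_a=[W_{a;2,0}]\cdot p_{a-2}(\xi_{2a}-2t_1)\qquad\text{in }\widetilde R,$$
where on the right $p_{a-2}$ is the projective-bundle polynomial of $V_{a-2}$ evaluated at $\xi_{2a}-2t_1$, and $[W_{a;2,0}]$ is the explicit degree-$4$ polynomial of Proposition~\ref{prop: computation W[m;2,0,0]}. To prove it I would restrict both sides to the three strata $\mathcal{S}_{0,2,0}$, $\mathcal{S}_{0,1,1}$, $\mathcal{S}_{0,0,2}$, i.e. reduce modulo $2t_2$, $t_2+t_3$ and $2t_3$. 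On each such stratum $V_a$ splits with weights the degree-$a$ monomials $X^{\underline k}$ not divisible by the corresponding $X^{\underline i}$, so $p_a$ becomes $\prod_{\underline k}(\xi_{2a}-\underline k\cdot\underline t)$; grouping the monomials according to whether $k_1\leq 1$ or $k_1\geq 2$, and reindexing $\underline k=(2,0,0)+\underline k'$ in the second group, splits this product exactly as $[W_{a;2,0}]\cdot p_{a-2}(\xi_{2a}-2t_1)$, the first factor being identified through the restriction formula~\eqref{eqn: restriction class W}. Since the difference of the two sides then vanishes modulo $2t_2$, $t_2+t_3$ and $2t_3$, the ring-theoretic argument in the proof of Lemma~\ref{lemma: Pvmunicitymod}—which shows vanishing in $\Z[\xi_{2a},t_2,t_3]/(2t_2t_3(t_2+t_3))$ with no degree restriction—yields the identity in $\widetilde R$.

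Finally I would show $[W_{a;2,0}]\in\widetilde{I_2}$, which together with the identity gives $p_a\in\widetilde{I_2}$ and hence $p_a\in I_2$. By the observation used in the proof of Lemma~\ref{lemma: pi2(xi2)} we have $[W_{a;2,0}]={\pi'_2}_*([W_{1;1,0}])$, and by Remark~\ref{rmk: computation W[m;1,0,0]} we have $[W_{1;1,0}]=\xi_2^2+t_1\xi_2+(c_2+t_1^2)$, so
$$[W_{a;2,0}]={\pi'_2}_*(\xi_2^2)+t_1\,{\pi'_2}_*(\xi_2)+(c_2+t_1^2)\,{\pi'_2}_*(1).$$
Each ${\pi'_2}_*(\xi_2^j)$ equals ${F_2}_*(\xi_2^j)$, hence lies in the image of $i_{1,*}$ (equivalently of $F_*$) and therefore in $I_2$ by Lemma~\ref{lem: computation i1 i2}; thus $[W_{a;2,0}]$ is an $\widetilde R$-combination of elements of $I_2$ and lies in $\widetilde{I_2}$, as required.

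The main obstacle is the factorization identity over the torus: carrying out the weight bookkeeping correctly on each of the three strata, and above all justifying the uniqueness argument of Lemma~\ref{lemma: Pvmunicitymod} in the top degree $2a+1$. This is legitimate precisely because we work in $\widetilde R$, \emph{before} imposing the projective-bundle relations, so that $\widetilde R$ is literally $\Z[\xi_{2a},t_2,t_3]/(2t_2t_3(t_2+t_3))$ in all degrees and the degree bound $k\leq 2a$ of Lemma~\ref{lemma: Pvmunicitymod} may be dropped.
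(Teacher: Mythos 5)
Your proof is correct, and its skeleton coincides with the paper's: both pass to the maximal torus $\Gm^3\subseteq\GL_3$ and use Proposition~\ref{prop:generalizeddecomposition} to reduce $p_a\in I_2$ to $p_a\in\widetilde{I}_2$; both exploit the strata $\mathcal{S}_{\underline{i}}$, on which $p_a$ factors as the product of all coordinate hyperplane classes with $[W_{a;2,0}]$ accounting for the four factors having $k_1<2$; and both feed in $[W_{a;2,0}]=F'_{2*}([W_{1;1,0}])\in\widetilde{I}_2$ via Remark~\ref{rmk: computation W[m;1,0,0]} and the projection formula. The one place you genuinely diverge is the descent from stratum-level information to $\widetilde{R}$: the paper verifies the factorization only on $\mathcal{S}_{0,0,2}$, writes $p_a=\alpha+2t_3\beta$ with $\alpha\in\widetilde{I}_2$, multiplies by $t_1t_2$ to kill the error term using $2t_1t_2t_3=0$, and then extracts $p_a\in I_2$ from the $t_1t_2$-component of the decomposition of Lemma~\ref{lemma:consbasisalgebraicversion}; you instead check all three strata to obtain the exact identity $p_a=[W_{a;2,0}]\cdot p_{a-2}(\xi_{2a}-2t_1)$ in $\widetilde{R}$, invoking the divisibility argument of Lemma~\ref{lemma: Pvmunicitymod}, whose degree bound you rightly observe is only needed to avoid the projective-bundle relation and can therefore be dropped when working in $\widetilde{R}$. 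Your route costs two extra stratum computations but yields a clean closed-form factorization of $p_a$; the paper's is shorter. One small point you should make explicit in your final step: ${F_2}_*(1)$ and ${F_2}_*(\xi_2)$ are a priori only known to lie in the ideal of Lemma~\ref{lem: computation i1 i2} inside the Chow ring, i.e.\ modulo $(p_a,p_b)$, whereas you need them in $I_2\subseteq\Z[c_2,c_3,\xi_{2a},\xi_{2b}]/(2c_3)$ before those relations are known to hold; this is fine because they live in degrees $2$ and $3$, which are at most $2a$, where the ideal $(p_a,p_b)$ vanishes, but since the whole lemma is about not yet knowing $p_a,p_b\in I_2$, the degree remark deserves a sentence.
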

\begin{proof}
    We prove this for $p_a(\xi_{2a})$, as the argument for $p_b(\xi_{2b})$ is completely analogous. Let $\widetilde{I}_2$ be the extension of $I_2$ in $\Z[t_1, t_2, t_3, \xi_{2a}, \xi_{2b}]/(t_1 + t_2 + t_3, 2t_1t_2t_3)$, where the $c_i$ are regarded as elementary symmetric functions in the $t_i$. 

    First, we prove that $p_a(\xi_{2a})$ lies in $\widetilde{I}_2$ modulo $2t_3$.
    By Equation~\eqref{eqn: Chow P(Vm) restricted to Si}, working modulo $2t_3$ is equivalent to restricting to $\mathcal{S}_{0,0,2}$, and $p_a(\xi_{2a})$ restricts to the usual projective relation in the Chow ring of $\P(V_a)|_{\mathcal{S}_{0,0,2}} \cong \P^{2a}$. Therefore, $p_a(\xi_{2a})$ modulo $2t_3$ coincides with the polynomial obtained as the product of all coordinate hyperplanes. The restriction of ${F_2}_*([W_{1;1,0}]) = W_{a;2,0}$ to $\P(V_a)|_{\mathcal{S}_{0,0,2}} \cong \P^{2a}$ can be seen as the intersection of $4$ coordinate hyperplanes. 
    
    At this point, write $p_a(\xi_{2a})=\alpha+2t_3\beta$ for some $\alpha\in\widetilde{I_2}$ and $\beta\in \Z[t_1,t_2,t_3, \xi_{2a}, \xi_{2b}]/(t_1+t_2+t_3,2t_1t_2t_3)$. Multiplying by $t_1t_2$ and using the relation $2t_1t_2t_3=0$, we get $t_1t_2p_a(\xi_{2a})\in\widetilde{I}_2$. By Lemma~\ref{lemma:consbasisalgebraicversion}, it follows that $p_a(\xi_{2a})\in I_2$.
\end{proof}

\begin{remark}\label{rmk: a=1 case}
    When $a=1$ the map $F_2$ is not defined (same for $G_2$ if $b=1$), and the above proof does not apply.
    On the other end, in this case, only $M_1$ and $M_2$ appear, so the identity
    $$
    \CH^*(\cD_{2a,2b})= \frac{\CH^*_{\PGL_2}(\P(W_a) \times \P(W_b))}{I_2}
    $$
    is clear when $1=a \leq b$.
    
    When $g$ is odd, together with Lemma \ref{lemma: cartesian diagram for RH} and Proposition~\ref{prop: Chow of root gerbes}, this gives another method to compute $\CH^*(\RH_g^1)$. We leave to the reader to check the result agrees with that in Theorem \ref{thm: Chow for g odd n=1} after the substitutions $\xi_2=\gamma, \xi_{2g}=2t-\gamma$ (and the relation $2t=\xi_2+\xi_{2g}$).
\end{remark}

After, Proposition \ref{prop: Chow n>1 inverting 2}, we have only left to show that the images of the pushforwards ${M_r}_*$ lie in $I_2$ working $\otimes \Z_{(2)}$. From now on, unless otherwise specified, we work with $\Z_{(2)}$ coefficients.

\begin{notation}
    Let $a,b,r$ be integers such that $2 \leq r \leq 2a \leq 2b$. We say that the property $P(a,b,r)$ is satisfied if the image of ${M_r}_*$ in the $\PGL_2$-equivariant Chow ring of $(\P(W_a)\times\P(W_b))$ is contained in $I_2$. Given $l$-tuples $(n_1',\ldots,n_l')$ and $(n_1,\ldots,n_l)$, we write $(n_1',\ldots,n_l')\prec(n_1,\ldots,n_l)$ if the first $l$-tuple is smaller than the second with respect to the lexicographic ordering (starting from the left), and similarly for the non-strict inequality $\preceq$.
\end{notation}

We will prove by lexicographic induction on $(a, b, r)$ that the property $P(a, b, r)$ is always satisfied, starting from $(1, 1, 1)$; see Remark~\ref{rmk: a=1 case} for the case $a = 1$. Before starting the induction, we make a further reduction, showing that it suffices to consider integers $r$ that are powers of $2$, and in particular, even.

\begin{lemma}\label{lem: power of 2}
    Suppose that $r>2$ is not a power of 2. Then, $\mathrm{Im}({M_r}_*)$ is contained in the image of ${M_{r_1}}_*$ for some $r_1<r$.
\end{lemma}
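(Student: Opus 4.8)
The plan is to recognize the hypothesis "$r$ is not a power of $2$" as precisely the statement that some binomial coefficient $\binom{r}{r_1}$ with $0<r_1<r$ is \emph{odd}, and then to use an odd-degree splitting map to factor ${M_r}_*$ through ${M_{r_1}}_*$. The conceptual heart of the argument is thus the classical fact (an immediate consequence of Kummer's, or Lucas', theorem) that $r$ is a power of $2$ if and only if $\binom{r}{k}$ is even for every $0<k<r$. Since $r>2$ is assumed not to be a power of $2$, its binary expansion has at least two nonzero digits; taking $r_1$ to be the value of its lowest nonzero digit (a proper nonzero submask of $r$) produces an integer $1\le r_1<r$ with $\binom{r}{r_1}$ odd.

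Next I would introduce the $\PGL_2$-equivariant multiplication map
\[
N:\P^{r_1}\times\P^{r-r_1}\times\P^{2a-r}\times\P^{2b-r}\longrightarrow\P^{r}\times\P^{2a-r}\times\P^{2b-r},\qquad (h',h'',f,g)\mapsto(h'h'',f,g),
\]
which is proper and, being a dominant morphism between irreducible varieties of the same dimension $2a+2b-r$, is generically finite; a general square-free binary form of degree $r$ splits into factors of degrees $r_1$ and $r-r_1$ in exactly $\binom{r}{r_1}$ ways, so $\deg N=\binom{r}{r_1}$. Because this degree is odd and we are working with $\Z_{(2)}$-coefficients, the projection formula gives $N_*N^*=\binom{r}{r_1}\cdot\mathrm{id}$, hence $N_*$ is surjective. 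This is exactly the odd-degree device already used in Lemma~\ref{lem: pushforward M1} and Proposition~\ref{prop: Chow n>1 inverting 2}. In particular $\mathrm{Im}({M_r}_*)=\mathrm{Im}\big(({M_r}\circ N)_*\big)$.

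The final step is the factorization identity: writing $h=h'h''$ one has $hf=h'\,(h''f)$ and $hg=h'\,(h''g)$, where $\deg(h''f)=(r-r_1)+(2a-r)=2a-r_1$ and $\deg(h''g)=2b-r_1$, so
\[
M_r\circ N=M_{r_1}\circ\widetilde N,\qquad \widetilde N(h',h'',f,g)=(h',\,h''f,\,h''g).
\]
Therefore $\mathrm{Im}({M_r}_*)=\mathrm{Im}\big(({M_{r_1}}\circ\widetilde N)_*\big)\subseteq\mathrm{Im}({M_{r_1}}_*)$ with $r_1<r$, which is the claim. I do not anticipate a genuine obstacle: once the binomial parity reduction is in place, the only points demanding care are the verification that the splitting map has degree exactly $\binom{r}{r_1}$ (so that the oddness argument applies) and the degree bookkeeping $2a-r_1$, $2b-r_1$ guaranteeing that $\widetilde N$ lands in the source of $M_{r_1}$; both are routine.
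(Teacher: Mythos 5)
Your proof is correct and follows essentially the same route as the paper: both arguments pick an $r_1<r$ with $\binom{r}{r_1}$ odd (the paper takes $r_1=2^\ell$ where $r=2^\ell+u$, $0<u<2^\ell$; you take the lowest nonzero binary digit — either works by Kummer's theorem), use the resulting odd-degree multiplication map $\P^{r_1}\times\P^{r-r_1}\to\P^r$ to get surjectivity of the pushforward over $\Z_{(2)}$, and conclude via the factorization $M_r\circ N=M_{r_1}\circ\widetilde N$.
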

\begin{proof}
    Write $r = 2^\ell + u$, with $0 \leq u < 2^\ell$ an integer.  
    If $u > 0$, set $m = 2^\ell$ and let $\phi: \P^m \times \P^{r-m} \to \P^r$ be the multiplication map, which has degree $\binom{r}{m}$. Since $\binom{r}{m}$ is odd, and we are working with $ \Z_{(2)}$ coefficients, the $\PGL_2$-equivariant pushforward $\phi_*$ is surjective. 

    The conclusion follows by observing that $M_r \circ \phi$ factors through $M_m$.
\end{proof}

The next Lemma shows that it is enough to prove that ${M_r}_*(1)\in I_2$ for every even $r$. 

\begin{lemma}\label{lem: enough Mr1}
    Let $1<r \leq a \leq b$ and suppose that ${M_{2l}}_*(1)\in I_2$ for every $l<r$. 
    Then, ${M_{2r}'}_*([W_{r;1,0}]) \in \widetilde{I}_2$ and ${M_{2r}}_*(\xi_{2r}^i) \in I_2$ for all $i>0$.
\end{lemma}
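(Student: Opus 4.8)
The plan is to reduce the pushforward of $[W_{r;1,0}]$ along $M_{2r}'$ to the multiplication map with common factor of one lower degree for the \emph{same} pair $(a,b)$, and then to feed the outcome back into Lemma~\ref{lem: deriving from W}. First I would establish the $\Gm^3$-equivariant factorization
\[
M_{2r}'\circ j_{r;1,0}=M'_{2(r-1)}\circ\big(\mathrm{id}\times j_{a-r+1;1,0}\times j_{b-r+1;1,0}\big),
\]
where $M'_{2(r-1)}$ is the $\GL_3$-counterpart of the multiplication map $M_{2(r-1)}$ for $(a,b)$ (common factor of $V$-degree $r-1$); both composites send $(q,h',f,g)$ to $(q,X_0h'f,X_0h'g)$. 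Since $[W_{r;1,0}]={j_{r;1,0}}_*(1)$ and $j_{r;1,0}^{*}\xi_{2r}=\xi_{2r-2}$, the projection formula yields, for every $i\geq0$,
\[
{M_{2r}'}_*\big(\xi_{2r}^{i}\,[W_{r;1,0}]\big)={M'_{2(r-1)}}_*\Big(\xi_{2r-2}^{i}\cdot 1\boxtimes[W_{a-r+1;1,0}]\boxtimes[W_{b-r+1;1,0}]\Big).
\]

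Next I would show the right-hand side lies in $\widetilde I_2$. Expanding the two factors $[W_{a-r+1;1,0}]$ and $[W_{b-r+1;1,0}]$ by Remark~\ref{rmk: computation W[m;1,0,0]} turns the argument into a polynomial in the cofactor classes $\xi_{2(a-r+1)},\xi_{2(b-r+1)}$ and the equivariant parameters $c_2,t_1$. Writing $\xi_{2(a-r+1)}={M'_{2(r-1)}}^{*}\xi_{2a}-\xi_{2r-2}$ and $\xi_{2(b-r+1)}={M'_{2(r-1)}}^{*}\xi_{2b}-\xi_{2r-2}$ and applying the projection formula, I can pull the parameters and the pulled-back target classes out of the pushforward, reducing the whole expression to a $\CH^*_{\Gm^3}(\P(V_a)\times_{\mathcal S}\P(V_b))$-linear combination of the classes ${M'_{2(r-1)}}_*(\xi_{2r-2}^{p})$. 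By the inductive hypothesis $P(a,b,2r-2)$ (equivalently, combining the assumption ${M_{2(r-1)}}_*(1)\in I_2$ with the $(r-1)$-case of claim (b), established earlier in the induction on $r$), each ${M_{2r-2}}_*(\xi_{2r-2}^{p})$, hence each ${M'_{2(r-1)}}_*(\xi_{2r-2}^{p})$, lies in $\widetilde I_2$; since $\widetilde I_2$ is an ideal, so does the combination. Setting $i=0$ gives claim (a).

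For claim (b) I would use the generalized identity above together with Lemma~\ref{lem: deriving from W}. For each $i\geq0$, write ${M_{2r}'}_*(\xi_{2r}^{i}[W_{r;1,0}])=\alpha_0^{(i)}+\alpha_1^{(i)}t_1+\alpha_2^{(i)}t_1^2$. Since the left-hand side lies in $\widetilde I_2=\bigoplus_{\underline d}\underline t^{\,\underline d}I_2$ by Proposition~\ref{prop:generalizeddecomposition}, and $1,t_1,t_1^2$ are among the basis monomials $\underline t^{\,\underline d}$, each $\alpha_j^{(i)}$ lies in $I_2$. Lemma~\ref{lem: deriving from W} then gives $(2r-1){M_{2r}}_*(\xi_{2r}^{i+1})=\alpha_1^{(i)}\in I_2$, and since we work with $\Z_{(2)}$-coefficients $2r-1$ is a unit, so ${M_{2r}}_*(\xi_{2r}^{i+1})\in I_2$. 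Letting $i$ range over all nonnegative integers covers every positive power of $\xi_{2r}$, which is exactly (b).

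The main obstacle is the combinatorial bookkeeping in the first two steps: one must keep the two gradings straight (the $V$-degree versus the degree of the induced divisor on the conic) when identifying the composite with $M'_{2(r-1)}$, and must verify that the projection-formula reduction really lands inside $\widetilde I_2$ rather than only in some localization. I also emphasize that proving (b) genuinely needs the version of (a) carrying the extra factor $\xi_{2r}^{i}$, not merely the $i=0$ statement; fortunately the single factorization handles all $i$ uniformly. Finally, it is worth recording that each map $j_{\bullet;1,0}$ (multiplication by $X_0$) is only $\Gm^3$-equivariant and not $\GL_3$-equivariant, which is precisely the reason the natural target of claim (a) is the extension $\widetilde I_2$ and not $I_2$ itself.
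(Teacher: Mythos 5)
Your argument is correct and is essentially the paper's own proof: the same commutative square identifying $M_{2r}'\circ(j_{r;1,0}\times1\times1)$ with $M'_{2(r-1)}\circ(1\times j_{a-r+1;1,0}\times j_{b-r+1;1,0})$, the same reduction of the resulting class to the image of ${M'_{2(r-1)}}_*$ (which lies in $\widetilde{I}_2$ by the hypothesis together with the inner induction on $r$, the case $r=2$ following from the definition of $I_2$), and the same extraction of $(2r-1){M_{2r}}_*(\xi_{2r}^{i+1})$ as the $t_1$-coefficient via Lemma~\ref{lem: deriving from W} and Proposition~\ref{prop:generalizeddecomposition}, using that $2r-1$ is a unit in $\Z_{(2)}$. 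The only slip is the identity $j_{r;1,0}^*\xi_{2r}=\xi_{2r-2}$, which should read $\xi_{2r-2}+t_1$ because multiplication by $X_0$ twists the tautological subbundle by a character of weight $t_1$; this is harmless, since the extra terms are pulled back from the base and $\widetilde{I}_2$ is an ideal.
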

\begin{proof}
    We prove the statement by induction, the case $r=2$ being trivial.
    Consider the following commutative diagram of $\Gm^2$-equivariant maps:
    \[
    \begin{tikzcd}
        \P(V_{r-1})\times_{\mathcal{S}}\P(V_{a-r})\times_{\mathcal{S}}\P(V_{b-r})\arrow[rr,"\psi"]\arrow[d,"\phi"] & & \P(V_{r})\times_{\mathcal{S}}\P(V_{a-r})\times_{\mathcal{S}}\P(V_{b-r})\arrow[d,"M_{2r}'"]\\
        \P(V_{r-1})\times_{\mathcal{S}}\P(V_{a-r+1})\times_{\mathcal{S}}\P(V_{b-r+1})\arrow[rr,"M_{2r-2}'"] & & \P(V_{a})\times_{\mathcal{S}}\P(V_b)
    \end{tikzcd}
    \]
    where $\phi=1\times j_{a-r;1,0}\times j_{b-r;1,0}$ and $\psi=j_{r;1,0}\times1\times1$; see~\S\ref{sec: GL3-counterparts} for the definition of $j_{l;1,0}$. Since by definition ${j_{r;1,0}}_*(1)=[W_{r;1,0}]$, the commutativity of the diagram shows that $M_{2r*}'([W_{r;1,0}])$ is contained in the image of ${M_{2r-2}'}_*$, hence in $\widetilde{I}_2$ by hypothesis. By the projection formula, the same holds for $M_{2r*}'(\xi_r^i[W_{r;1,0}])$, for every $i\geq0$. Using Lemma~\ref{lem: deriving from W} and part $1$ in Proposition \ref{prop:generalizeddecomposition}, we obtain also ${M_{2r}}_*(\xi_{2r}^i) \in I_2$ for all $i>0$. Notice that this is also exploiting the fact that $2r-1$ is odd.
\end{proof}

We are left to show that ${M_{2^\ell}}_*(1)\in I_2\otimes\Z{(2)}$ for all $l\geq0$, with the cases $\ell=0,1$ obvious by definition of $I_2$.

\begin{lemma}
    Let $1\leq r\leq a\leq b$, and suppose that $P(a',b',r')$ holds for every $(1,1,1)\preceq(a',b',r')\prec(a,b,2r)$, with $a'\leq b'$. If $r<a$, then $M_{2r*}'([W_{a-r;1,0}])\in \widetilde{I}_2$; if $r\leq a<b$, then $M_{2r*}'([W_{b-r;1,0}])\in \widetilde{I}_2$.
\end{lemma}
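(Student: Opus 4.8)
The final lemma asserts: for $1 \le r \le a \le b$, assuming the inductive hypothesis $P(a',b',r')$ for all $(a',b',r') \prec (a,b,2r)$, we have $M'_{2r*}([W_{a-r;1,0}]) \in \widetilde{I}_2$ (when $r < a$), and similarly $M'_{2r*}([W_{b-r;1,0}]) \in \widetilde{I}_2$ (when $r \le a < b$). Here $M'_{2r}$ is the $\GL_3$-counterpart of the multiplication map $M_{2r}$, and $[W_{a-r;1,0}]$ is the $\Gm^3$-equivariant class of the locus in $\P(V_{a-r})$ where the degree-$(a-r)$ polynomial is divisible by $X_1$. The purpose is to feed into the previous lemma (Lemma~\ref{lem: enough Mr1}): once we know these pushforwards land in $\widetilde{I}_2$, we can extract the coefficients $\alpha_j^{(i)}$ via Proposition~\ref{prop:generalizeddecomposition} and conclude $M_{2r*}(\xi_{2r}^i) \in I_2$.

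**The approach.**

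Let me focus on the first claim, $M'_{2r*}([W_{a-r;1,0}]) \in \widetilde{I}_2$, with $r < a$; the second is symmetric in the roles of $a$ and $b$. The key is that $[W_{a-r;1,0}]$ is, by definition, the pushforward ${j_{a-r;1,0*}}(1)$ along the inclusion $j_{a-r;1,0}: \P(V_{a-r-1}) \hookrightarrow \P(V_{a-r})$ (sending $(q,f) \mapsto (q, f \cdot X_1)$). So the plan is to build a commutative diagram, exactly in the spirit of Lemma~\ref{lem: enough Mr1}, in which $M'_{2r}$ precomposed with $j_{a-r;1,0}$ (applied in the middle factor $\P(V_{a-r})$) factors through a multiplication map $M'_{2r'}$ of strictly smaller index, which by the inductive hypothesis $P(a',b',r')$ has image inside $I_2$ (equivalently $\widetilde{I}_2$ after extension). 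Concretely, multiplying the middle factor by an extra $X_1$ and reorganizing, the composite $M'_{2r} \circ (\text{insert } X_1 \text{ in middle})$ should equal $M'_{2(r')}$ for an appropriate smaller triple, because absorbing the monomial $X_1$ into the product $h \cdot f$ changes the degree bookkeeping in a way that reduces one of the parameters $(a,b,r)$ in the lexicographic order. Then $M'_{2r*}([W_{a-r;1,0}]) = M'_{2r*}({j_{a-r;1,0*}}(1))$ equals the image under the smaller multiplication pushforward of $1$, which lies in $\widetilde{I}_2$ by hypothesis.

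**Key steps in order.**

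First I would write $[W_{a-r;1,0}] = j_{a-r;1,0*}(1)$ and set up the $\Gm^3$-equivariant commutative square relating $M'_{2r} \circ (1 \times j_{a-r;1,0} \times 1)$ to a suitable $M'_{2r-?}$ or $M'$ with a smaller parameter, analogous to the diagram in the proof of Lemma~\ref{lem: enough Mr1}. Second, I would verify commutativity by chasing the effect on points $(q, h, f, g)$: inserting $X_1$ into the middle factor and then forming $h^2 \cdot (X_1 f)$ versus first absorbing $X_1$ and forming a product map of lower degree; the two routes agree because multiplication is associative and the diagonal structure of $M'$ is compatible with the inclusions $j$. Third, I would identify the precise smaller triple $(a',b',r')$ appearing as the source/target and check it satisfies $(a',b',r') \prec (a,b,2r)$ in lexicographic order — here the drop in the first or third coordinate is what makes the induction go through. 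Finally, invoke $P(a',b',r')$ to conclude the image lands in $I_2$, hence in $\widetilde{I}_2$ after extension of scalars from $\CH^*_{\PGL_2}$ (equivalently $\CH^*_{\GL_3}$) to $\CH^*_{\Gm^3}$.

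**Main obstacle.**

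The delicate part will be identifying the correct commutative diagram so that the factorization genuinely lands in a strictly smaller multiplication map, and confirming the lexicographic inequality $(a',b',r') \prec (a,b,2r)$ rather than merely $\preceq$. The combinatorics of how inserting the monomial $X_1$ (via $j_{a-r;1,0}$) interacts with the $\GL_3$-counterpart structure — in particular with the diagonal $\Delta_{\P(V_r)}$ built into $M'_{2r}$ and the maps $\psi'_{i,j}$, $\phi'_{i,j}$ — requires care, since $M'_{2r}$ is not simply a product but involves the conic $q$ and the geometry of $\mathcal{Y}_{2s+1}$ when odd indices intervene. I would need to make sure that after absorbing $X_1$, the resulting map is again of the form $M'$ (and not some degenerate variant), and that only the parameters being reduced are the ones controlled by the induction. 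Once the diagram is in place, the projection formula and the inductive hypothesis finish the argument essentially formally, exactly as in Lemma~\ref{lem: enough Mr1}.
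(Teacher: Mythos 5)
Your first step is exactly the paper's: write $[W_{a-r;1,0}]=j_{a-r;1,0*}(1)$ and fit $M'_{2r}\circ(1\times j_{a-r;1,0}\times 1)$ into a commutative square. But there is a genuine gap in how you close the argument. Inserting $X_1$ into the middle factor and multiplying produces a pair $(h\cdot X_1 f,\, hg)$ whose first coordinate is divisible by $X_1$; the composite therefore does \emph{not} factor through another multiplication map into $\P(V_a)\times_{\mathcal{S}}\P(V_b)$ with a smaller $r$. What it factors through is $\varphi\circ M_{2r}^{\prime(a-1,b)}$, where $M_{2r}^{\prime(a-1,b)}$ is the multiplication map with the \emph{smaller target} $\P(V_{a-1})\times_{\mathcal{S}}\P(V_b)$ and $\varphi=j_{a;1,0}\times 1$ is the inclusion of $W_{a;1,0}\times_{\mathcal{S}}\P(V_b)$. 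The inductive hypothesis $P(a-1,b,2r)$ then only tells you that $M_{2r*}^{\prime(a-1,b)}(1)$ lies in the ideal $I_2$ \emph{for the pair $(a-1,b)$}, living in the Chow ring of $\P(V_{a-1})\times_{\mathcal{S}}\P(V_b)$ --- not in $\widetilde{I}_2$ for $(a,b)$. Your claim that ``the projection formula and the inductive hypothesis finish the argument essentially formally, exactly as in Lemma~\ref{lem: enough Mr1}'' breaks down here: in that lemma both multiplication maps share the same target, so no transfer of ideals is needed, whereas here one must still prove that $\varphi_*$ carries the ideal at level $(a-1,b)$ into $\widetilde{I}_2$ at level $(a,b)$. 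Pushforward along a closed immersion does not send an ideal of the source ring into a prescribed ideal of the target ring for free.

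That transfer is the substantive half of the paper's proof and is absent from your proposal: one decomposes $M_{2r*}^{\prime(a-1,b)}(1)$ as a sum of pushforwards along the envelope maps $M_1',M_2',F_1',F_2',G_1',G_2'$ at level $(a-1,b)$, and for each of these exhibits a further commutative square showing that its $\varphi_*$-image lands in the image of the corresponding envelope map at level $(a,b)$. For the even-index maps this is a routine diagram with the inclusions $j_{m;1,0}$, but for $M_1'$ (and $F_1'$, $G_1'$) the sources are the schemes $\mathcal{Y}_{2m+1}$, and one needs the maps $k_{m;1,0}$ of Definition~\ref{def: k morphism} to make the square commute. Without this step the induction does not close, so as written the proposal does not constitute a proof.
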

\begin{proof}
    When $a=1$, the assertion is clear by Remark~\ref{rmk: a=1 case}, hence we assume $a\geq2$. Suppose that $r<a$ (the other case is analogous). The commutative diagram of $\Gm^3$-equivariant maps
    \begin{equation}\label{eq: diag Mra-2}
        \begin{tikzcd}
            \P(V_{r})\times_{\mathcal{S}}\P(V_{a-r-1})\times_{\mathcal{S}}\P(V_{b-r})\arrow[rr,"M_{2r}^{'(a-1,b)}"]\arrow[d,"\psi"] &&\P(V_{a-1})\times_{\mathcal{S}}\P(V_{b})\arrow[d,"\varphi"]\\
            \P(V_{r})\times_{\mathcal{S}}\P(V_{a-r})\times_{\mathcal{S}}\P(V_{b-r})\arrow[rr,"M_{2r}^{'(a,b)} "] && \P(V_{a})\times_{\mathcal{S}}\P(V_{b})
        \end{tikzcd}
    \end{equation}
    with $\psi=1\times j_{a-r;1,0}\times1$ and $\varphi=j_{a;1,0}\times1$, gives $M_{2r*}^{'(a,b)}([W_{a-r;1,0}])=\varphi_*(M_{2r*}^{'(a-1,b)}(1))$. Here, we use superscripts to indicate the targets of the various maps. By the inductive hypothesis, we can write
    \[
        M_{2r*}^{'(a-1,b)}(1)={M_{1*}^{'(a-1,b)}}(\alpha_1)+M_{2*}^{'(a-1,b)}(\alpha_2)+F_{1*}^{'(a-1,b)}(\beta_1)+F_{2*}^{'(a-1,b)}(\beta_2)+G_{1*}^{'(a-1,b)}(\gamma_1)+G_{2*}^{'(a-1,b)}(\gamma_2)
    \]
    for some classes $\alpha_i$, $\beta_i$ and $\gamma_i$. Also, the diagram above with $r=1$ shows that $\varphi_*(M_{2*}^{'(a-1,b)}(\alpha_2))=M_{2*}^{'(a,b)}(\psi_*(\alpha_2))$. Similarly, the commutative diagram
    \begin{equation}\label{eq: diag M1a-2}
    \begin{tikzcd}
        \mathcal{Y}_1\times_{\mathcal{S}}\mathcal{Y}_{2a-3}\times_{\mathcal{S}}\mathcal{Y}_{2b-1}\arrow[r,"M_1^{'(a-1,b)}"]\arrow[d,"\tau"] & \P(V_{a-1})\times_{\mathcal{S}}\P(V_{b})\arrow[d,"\varphi"]\\
        \mathcal{Y}_1\times_{\mathcal{S}}\mathcal{Y}_{2a-1}\times_{\mathcal{S}}\mathcal{Y}_{2b-1}\arrow[r,"M_1^{'(a,b)}"] & \P(V_{a})\times_{\mathcal{S}}\P(V_{b})
    \end{tikzcd}   
    \end{equation}
    with $\tau=1\times k_{a-1;1,0}\times1$, gives $\varphi_*({M_1'}_*(\alpha_1))={M_1'}_*(\tau_*(\alpha_1))$. We have completely analogous diagrams for the maps $F_i$ and $G_i$, which yield $\varphi_*(F_{i*}^{'(a-1,b)}(\beta_i))\in\mathrm{Im}(F_{i*})$ and $\varphi_*(G_{i*}^{'(a-1,b)}(\gamma_i))\in\mathrm{Im}(G_{i*})$. It follows that ${M_{2r}'}_*([W_{a;1,0}])=\varphi_*(M_{2r*}^{'(a-1)}(1))\in \widetilde{I}_2$.
\end{proof}

\begin{lemma}\label{lem: induction odd case}
    Let $1\leq r\leq a\leq b$, and suppose that $P(a',b',r')$ holds for every $(1,1,1)\preceq(a',b',r')\prec(a,b,2r)$, with $a'\leq b'$.
    If $a$ is odd and $r<a$ is even, or if $b$ is odd and $r<b$ is even, then $P(a,b,2r)$ is satisfied.
    Moreover, independently from the parity of $a$ and $b$, if $r<a$ then we have ${M_{2r}}_*(\xi_{2a-2r})\in I_2$, and if $r\leq a<b$ then ${M_{2r}}_*(\xi_{2b-2r})\in I_2$.
\end{lemma}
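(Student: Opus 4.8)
The plan is to dispatch the \emph{moreover} statement first, since the computation behind it is also the engine that produces $P(a,b,2r)$, and then to upgrade to the full containment. Throughout I work with $\Z_{(2)}$-coefficients, as set up before the lemma, and I freely identify the $\GL_3$-counterpart pushforwards ${M_{2r}'}_*$ with the $\PGL_2$-equivariant ones ${M_{2r}}_*$ under the stack isomorphism.

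For the \emph{moreover} part, assume $r<a$. The previous Lemma gives ${M_{2r}'}_*([W_{a-r;1,0}])\in\widetilde{I}_2$. I would expand, via Remark~\ref{rmk: computation W[m;1,0,0]} with $m=a-r$,
$$[W_{a-r;1,0}]=\xi_{2a-2r}^2+(2(a-r)-1)t_1\xi_{2a-2r}+\big((a-r)^2c_2+(a-r)(2(a-r)-1)t_1^2\big),$$
and push forward. Since $t_1,c_2$ and the classes ${M_{2r}'}_*(\xi_{2a-2r}^i)$ are all of the appropriate ($\GL_3$-equivariant, resp.\ pulled-back) type, the result is manifestly $\alpha_0+\alpha_1t_1+\alpha_2t_1^2$ with $\alpha_j\in\CH^*_{\GL_3}$, which is therefore already its expansion in the free basis of Proposition~\ref{prop:generalizeddecomposition}; this is the same bookkeeping as Lemma~\ref{lem: deriving from W}, now applied to the $\P(V_{a-r})$ factor, and yields
$$\alpha_1=(2(a-r)-1)\,{M_{2r}}_*(\xi_{2a-2r}),\qquad \alpha_2=(a-r)(2(a-r)-1)\,{M_{2r}}_*(1).$$
By part~(1) of Proposition~\ref{prop:generalizeddecomposition} each $\alpha_j\in I_2$. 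As $2(a-r)-1$ is odd, hence a unit in $\Z_{(2)}$, I conclude ${M_{2r}}_*(\xi_{2a-2r})\in I_2$. The identical argument applied to ${M_{2r}'}_*([W_{b-r;1,0}])\in\widetilde{I}_2$ (valid when $r\le a<b$, again by the previous Lemma) gives ${M_{2r}}_*(\xi_{2b-2r})\in I_2$.

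To prove $P(a,b,2r)$, the parity hypothesis enters precisely through $\alpha_2$. Reading off the relation above, $(a-r)(2(a-r)-1)\,{M_{2r}}_*(1)\in I_2$; if $a$ is odd and $r$ is even then $a-r$ is odd, so the scalar is a $2$-adic unit and ${M_{2r}}_*(1)\in I_2$. When instead $b$ is odd and $r$ is even, I use $[W_{b-r;1,0}]$ (noting that $a=b$ odd is already covered by the first case, while $a<b$ ensures $r\le a<b$ so the previous Lemma applies, and $b-r$ is odd) to reach the same conclusion. Then Lemma~\ref{lem: enough Mr1}, whose hypothesis ``${M_{2l}}_*(1)\in I_2$ for $l<r$'' is exactly the inductively assumed $P(a,b,2l)$, gives ${M_{2r}}_*(\xi_{2r}^i)\in I_2$ for all $i>0$; combined with ${M_{2r}}_*(1)\in I_2$ this yields ${M_{2r}}_*(\xi_{2r}^i)\in I_2$ for every $i\ge0$. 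Finally I would invoke the structural identities $M_{2r}^*\xi_{2a}=\xi_{2r}+\xi_{2a-2r}$ and $M_{2r}^*\xi_{2b}=\xi_{2r}+\xi_{2b-2r}$: setting $u:=M_{2r}^*\xi_{2a}$, $v:=M_{2r}^*\xi_{2b}$, the domain's equivariant Chow ring is generated over $\CH^*(B\PGL_2)$ by $\xi_{2r},u,v$, so by the projection formula every generator ${M_{2r}}_*(\xi_{2r}^i\xi_{2a-2r}^j\xi_{2b-2r}^k)$ is a $\CH^*(B\PGL_2)$-combination of the classes $\xi_{2a}^p\xi_{2b}^q\,{M_{2r}}_*(\xi_{2r}^s)$. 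Hence $\mathrm{Im}({M_{2r}}_*)$ is the ideal generated by $\{{M_{2r}}_*(\xi_{2r}^i)\}_{i\ge0}\subseteq I_2$, which is $P(a,b,2r)$.

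The step I expect to be delicate is the first: one must verify that the pushforward of $[W_{a-r;1,0}]$ genuinely has no $t_2$-monomial contributions, so that its naive expression \emph{is} its basis expansion, and must isolate $\alpha_1,\alpha_2$ with their exact integer coefficients — it is the parity of $a-r$ (equivalently $b-r$) that makes $(a-r)(2(a-r)-1)$ a unit in $\Z_{(2)}$ and thereby frees ${M_{2r}}_*(1)$. By contrast, the concluding generation step is essentially formal once one observes that the two pullback relations collapse the three $\xi$-generators of the domain to the single free generator $\xi_{2r}$.
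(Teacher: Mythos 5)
Your proof is correct and follows essentially the same route as the paper: push forward the identity for $[W_{a-r;1,0}]$ (resp.\ $[W_{b-r;1,0}]$) from the preceding lemma, read off the coefficients of $1,t_1,t_1^2$ via Proposition \ref{prop:generalizeddecomposition} to get $(2a-2r-1){M_{2r}}_*(\xi_{2a-2r})$ and $(a-r)(2a-2r-1){M_{2r}}_*(1)$ in $I_2$, and use the parity hypothesis to make the latter scalar a unit in $\Z_{(2)}$. Your concluding reduction of $P(a,b,2r)$ to ${M_{2r}}_*(\xi_{2r}^i)\in I_2$ via Lemma \ref{lem: enough Mr1} and the pullback identities is exactly the argument the paper records in the remark immediately following the lemma.
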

\begin{proof}
    By Remark~\ref{rmk: computation W[m;1,0,0]}, we know that
    \[
    [W_{a-r;1,0}]=\xi_{2a-2r}^2+(a-r)^2c_2+(2a-2r-1)t_1\xi_{2a-2r}+(a-r)(2a-2r-1)t_1^2
    \]
    and the previous Lemma showed that the pushforward of this class along $M_{2r}'$ is in $\widetilde{I}_2$. Then, Proposition~\ref{prop:generalizeddecomposition} implies that $(a-r)(2a-2r-1){M_{2r}}_*(1)\in I_2$ and $(2a-2r-1){M_{2r}}_*(\xi_{2a-2r})\in I_2$. Since $(2a-2r-1)$ is always odd, and we are working with $\Z_{(2)}$ coefficients, we get that ${M_{2r}}_*(\xi_{2a-2r})\in I_2\otimes\Z_{(2)}$. Moreover, if $a$ is odd, then $(a-r)(2a-2r-1)$ is also odd, so also ${M_{2r}}_*(1)\in I_2$. The same argument works for the case $b$ odd.
\end{proof}

Notice that if $a$ or $b$ is odd, and $P(a',b',r')$ is satisfied for every $(1,1,1)\preceq(a',b',r')\prec(a,b,2r)$, with $a'\leq b'$, then $P(a,b,2r)$ holds. Indeed, by Lemma~\ref{lem: enough Mr1} it is enough to show that ${M_{2r}}_*(1)\in I_2$ for $r>1$ a power of 2, which in turn follows from Lemma~\ref{lem: induction odd case}, as $r\not=a$ or $r\not=b$.

We next deal with the case when both $a$ and $b$ are even.

\begin{remark}\label{rmk: reduction generators I2 even case}
    Suppose that both $a$ and $b$ are even. Then, the ideal $I_2\otimes\Z_{(2)}$ is generated by
    \[
        2\xi_{2a},\quad 2\xi_{2b},\quad 2ac_2,\quad 2bc_2,\quad \xi_{2a}^4+c_2\xi_{2a}^2+c_3\xi_{2a},\quad \xi_{2b}^4+c_2\xi_{2b}^2+c_3\xi_{2b},\quad \xi_{2a}\xi_{2b}
    \]
    where the first six classes are obtained from Lemma~\ref{lem: computation i1 i2}, and the last one is ${M_2}_*(1)$ modulo the other relations. Notice that $M_{1*}$, ${M_2}_*(\xi_2)$ and ${M_2}_*(\xi_2^2)$ are superfluous.
\end{remark}

\begin{lemma}\label{lem: induction even case}
    Suppose that $2\leq a\leq b$ are even, and suppose that $P(a',b',r')$ holds for $(1,1,1)\preceq(a',b',r')\prec(a,b,r)$. Then $P(a,b,r)$ is satisfied.
\end{lemma}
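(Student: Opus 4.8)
The plan is to run the lexicographic induction exactly as set up, collapsing everything to a single pushforward of $1$ and then producing enough relations to control it. First I would invoke Lemma~\ref{lem: power of 2} to assume $r=2^{\ell}$ is a power of $2$; the cases $r=1,2$ lie in $I_2$ by the very definition of $I_2$, so I may take $\ell\geq 2$ and write $r=2s$ with $s=2^{\ell-1}$, which is \emph{even}. Since $r$ is even, $\P^{r}=\P(W_{s})$ is the projectivization of a $\PGL_2$-representation, so by the projective bundle formula and the projection formula the image of ${M_r}_*$ is generated as an ideal by the classes ${M_r}_*(\xi_{2s}^{i})$, $i\geq 0$. The inductive hypothesis provides ${M_{2l}}_*(1)\in I_2$ for all $l<s$, so Lemma~\ref{lem: enough Mr1} gives ${M_r}_*(\xi_{2s}^{i})\in I_2$ for every $i>0$. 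Hence the whole statement reduces to proving that $N:={M_r}_*(1)\in I_2$.

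The key new input for the even--even case is the relation $2N\in I_2$, which I would obtain as follows. Let $\mu\colon\P^{s}\times\P^{s}\to\P^{2s}$ be the multiplication map $(h_1,h_2)\mapsto h_1h_2$; it is $\PGL_2$-equivariant, finite and surjective of degree $\binom{2s}{s}$, so $\mu_*(1)=\binom{2s}{s}$. The composite $M_r\circ(\mu\times\mathrm{id})$ sends $(h_1,h_2,f,g)$ to $(h_1h_2f,h_1h_2g)$ and therefore factors through $M_{s}$ (take $h_1$ as the common factor). Because $(a,b,s)\prec(a,b,r)$, the inductive hypothesis gives $P(a,b,s)$, i.e. $\mathrm{Im}(M_{s*})\subseteq I_2$; pushing forward $1$ along the factorization yields $\binom{2s}{s}N\in I_2$. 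Finally, by Kummer's theorem $v_2\binom{2^{\ell}}{2^{\ell-1}}=1$, so $\binom{2s}{s}$ is $2$ times an odd number and, working with $\Z_{(2)}$-coefficients, $2N\in I_2$.

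To finish I would add the two relations $\xi_{2a}N\in I_2$ and $\xi_{2b}N\in I_2$. The ``moreover'' part of Lemma~\ref{lem: induction odd case} gives ${M_r}_*(\xi_{2a-2s})\in I_2$ (and its analogue with $b$), and combining this with $\xi_{2a-2s}=M_r^*\xi_{2a}-\xi_{2s}$, the projection formula, and ${M_r}_*(\xi_{2s})\in I_2$ (the $i=1$ case above) gives $\xi_{2a}N\in I_2$, and similarly for $b$; the boundary cases $s=a$ or $a=b$ are handled directly (when $a=b$ the second relation follows by the symmetry exchanging the two factors). With $2N,\ \xi_{2a}N,\ \xi_{2b}N\in I_2$ in hand, I would conclude by reducing $N$ to a normal form modulo $I_2$ using the explicit even--even generators of $I_2\otimes\Z_{(2)}$ recorded in Remark~\ref{rmk: reduction generators I2 even case} (in particular $2\xi_{2a}$, $2\xi_{2b}$, $\xi_{2a}\xi_{2b}$ and the two quartics), and checking that an element killed by $2$, $\xi_{2a}$ and $\xi_{2b}$ in the relevant codimension must vanish.

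\emph{This last module computation is where I expect the main difficulty to lie}, precisely because $\mathrm{CH}^*/I_2$ carries $2$-torsion (coming from $c_3$ and from the $\xi$'s): the $\binom{2s}{s}$-trick only ever produces \emph{even} multiples of $N$, so the vanishing of $N$ itself genuinely rests on the interplay of the three annihilation relations. An alternative, perhaps cleaner, way to organize the endgame is to prove directly that the $\GL_3$-counterpart class $M_{2s}'{}_*(1)=N$ lies in $\widetilde{I}_2$ by restricting to the strata $\mathcal{S}_{0,2,0}$, $\mathcal{S}_{0,1,1}$, $\mathcal{S}_{0,0,2}$ and invoking a uniqueness statement in the spirit of Lemma~\ref{lemma: Pvmunicitymod}, together with Proposition~\ref{prop:generalizeddecomposition}(1) to descend from $\widetilde{I}_2$ back to $I_2$.
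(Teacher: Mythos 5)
Your reduction steps are sound and match the paper's: passing to $r$ a power of $2$ via Lemma~\ref{lem: power of 2}, collapsing everything to $N={M_r}_*(1)$ via Lemma~\ref{lem: enough Mr1}, and producing the relations $\xi_{2a}N,\ \xi_{2b}N\in I_2$ from Lemma~\ref{lem: induction odd case}. Your $\binom{2s}{s}$-trick giving $2N\in I_2$ is correct and is a nice (slightly different) way to control the $2$-divisibility of the coefficients; the paper gets essentially the same information from $2\xi_{2a},2\xi_{2b}\in I_2$ and $2c_3=0$.

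The endgame, however, has a genuine gap: the claim that an element of $\CH^*_{\PGL_2}(\P(W_a)\times\P(W_b))/I_2$ annihilated by $2$, $\xi_{2a}$ and $\xi_{2b}$ must vanish is \emph{false} in the relevant degrees once $r\geq 8$. After imposing exactly your three annihilation relations, the paper reduces $N$ modulo $I_2$ to a class of the form
\[
\alpha=p(c_2,c_3)\bigl(\xi_{2a}^3+c_2\xi_{2a}+\xi_{2b}^3+c_2\xi_{2b}+c_3\bigr),\qquad \deg p=r-3,
\]
and one checks directly (using $2\xi_{2a},2\xi_{2b},\xi_{2a}\xi_{2b}$ and the two quartics of Remark~\ref{rmk: reduction generators I2 even case}) that every such $\alpha$ is killed by $2$, $\xi_{2a}$ and $\xi_{2b}$ modulo $I_2$, yet it is \emph{not} in $I_2$ for $p\neq0$ --- indeed the paper shows it is nonzero even after restricting to the complement of the discriminant. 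So no purely algebraic module computation in $\CH^*/I_2$ can finish the proof; for $r=4$ the degree count forces $p=0$, but already for $r=8$ one has candidates like $p=c_2c_3$. The missing ingredient is geometric: since $N$ and all generators of $I_2$ are supported on the discriminant, $\alpha$ must restrict to zero on the open complement, and the paper rules out $p\neq0$ by restricting to the residual gerbe $BG'$, $G'\cong\mu_2^{\times2}$, of an explicitly constructed point $(q,f,g)$ with that stabilizer; there $\xi_{2a},\xi_{2b}$ restrict to $0$ (this uses that $a,b$ are even) and $\alpha$ restricts to $p(c_2,c_3)\,c_3\neq0$. Your alternative suggestion of restricting to the strata $\mathcal{S}_{\underline{i}}$ and invoking a uniqueness statement in the spirit of Lemma~\ref{lemma: Pvmunicitymod} is not developed enough to substitute for this: that lemma identifies a class from its restrictions, but you would still need to compute ${M'_{2s}}_*(1)$ on each stratum and show membership in the restricted ideal, which is not easier than the original problem.
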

\begin{proof}
    We aim to show that ${M_r}_*(1) = 0$ modulo the ideal $I_2 \otimes \mathbb{Z}{(2)}$ computed above. By Lemma~\ref{lem: power of 2}, we can assume that $r > 1$ is a power of $2$, and, in particular, that it is even.

    Thus, we can write ${M_r}_*(1) \in \mathbb{Z}[c_2, c_3, \xi_{2a}, \xi_{2b}]/(2c_3) + I_2$ as
    \begin{equation}\label{eq:m2r1general} p_{a^3}\xi_{2a}^3 + p_{b^3}\xi_{2b}^3 + p_{a^2}\xi_{2a}^2 + p_{b^2}\xi_{2b}^2 + p_a\xi_{2a} + p_b\xi_{2b} + p_1, \end{equation}
    where $p_j \in \mathbb{Z}_{(2)}[c_2, c_3]/(ac_2, bc_2, 2c_3)$ are polynomials with coefficients in ${0, 1}$, except possibly for $p_1$.
    By Lemma~\ref{lem: enough Mr1} and Lemma~\ref{lem: induction odd case}, for every $r$ we have ${M_{r}}_*(\xi_{r})\in I_2$ and for every $r<2a$ we have ${M_{r}}_*(\xi_{2a-r})\in I_2$. It follows that also $\xi_{2a}\cdot{M_{r}}_*(1)={M_{r}}_*(\xi_{r}+\xi_{2a-r})\in I_2$, if $r<2a$. If $r=2a$, then $M_{2a}^*(\xi_{2a})=\xi_{2a}=\xi_{r}$, hence we get also in this case $\xi_{2a}\cdot{M_{r}}_*(1)\in I_2$. Similarly, we have that $\xi_{2b}\cdot{M_{r}}_*(1)\in I_2$ for every $r\leq 2b$ as in the statement. Multiplying the equation~\eqref{eq:m2r1general} by $\xi_{2a}$ (and continuing working modulo $I_2$), we get
    \begin{equation}\label{eq:m2r1timesxia}
        p_{a^2}\xi_{2a}^3+(p_a-p_{a^3}c_2)\xi_{2a}^2+(p_1-c_3p_{a^3})\xi_{2a}\in I_2,
    \end{equation}
    while multiplying by $\xi_b$ we get
    \begin{equation}\label{eq:m2r1timesxib}
        p_{b^2}\xi_{2b}^3+(p_b-p_{b^3}c_2)\xi_{2b}^2+(p_1-c_3p_{b^3})\xi_{2b}\in I_2.
    \end{equation}
    Therefore, we can assume
    \begin{align*}
        p_{a^2}=p_{b^2}=0, && p_a=c_2p_{a^3}, && p_b=c_2p_{b^3}, && p_1=c_3p_{a^3}, && c_3p_{a^3}=c_3p_{b^3}
    \end{align*}
    in $\mathbb{Z}[c_2,c_3,\xi_{2a},\xi_{2b}]/(2c_3)+I_2$. Since the annihilator of $c_3$ in the ring $\mathbb{Z}[c_2, c_3, \xi_{2a}, \xi_{2b}]/(2c_3, I_2)$ is the ideal generated by $2$, and $p_{a^3}, p_{b^3} \in \mathbb{Z}_{(2)}[c_2, c_3]/(ac_2, bc_2, 2c_3)$ are polynomials with coefficients in ${0, 1}$, we can also assume that $p_{a^3} = p_{b^3}$. Finally, we show that
    \begin{equation}\label{eq:onlyextrarelation}
        \alpha:=p(c_2,c_3)(\xi_{2a}^3+c_2\xi_{2a}+\xi_{2b}^3+c_2\xi_{2b}+c_3)
    \end{equation}
    is non-zero in $\mathrm{CH}^*_{\Gm^3}((\mathbb{P}(V_a) \times_{\mathcal{S}} \mathbb{P}(V_b)) \setminus \Delta')$, for any polynomial $p(c_2, c_3)$ of degree $r - 3$ in $c_2$ and $c_3$ with coefficients in ${0, 1}$, except when $p$ is identically $0$.

    We can assume $r > 4$ and that it is a power of $2$, since for $r = 2$, the statement of the lemma is already known to hold, and for $r = 4$, the degree of $p(c_2, c_3)$ is $r - 3 = 1$, which forces it to be $0$, as both $c_2$ and $c_3$ have degrees greater than $1$. 
    
    Consider a point $(q, f, g) \in (\mathbb{P}(V_a) \times_{\mathcal{S}} \mathbb{P}(V_b)) \setminus \Delta'$ with a stabilizer containing the subgroup $G'$ of $\mathbb{G}_m^3$ consisting of triples $(\pm 1, \pm 1, \pm 1)$ whose coordinates multiply to $1$. The group $G'$ can be viewed as a subgroup of $\mu_2^{\times 3}$, isomorphic to $\mu_2^{\times 2}$.  

    For example, let the conic $q$ be given by $x_0^2 + x_1^2 + x_2^2$ and take $f$ and $g$ of the form 
    \[
    f = q^{a/2} + f_1(x_1^2, x_2^2), \quad 
    g = q^{b/2} + g_1(x_1^2, x_2^2),
    \]  
    where $f_1$ and $g_1$ are general polynomials of degree $a/2$ and $b/2$, respectively. Then, the intersection of the zero locus of $fg$ with $q$ is smooth, and the coefficients of $f$ and $g$ relative to $x_0^a$ and $x_0^b$ are non-zero, respectively.
    Then, the stabilizer of $(q,f,g)$ is exactly the group $G'$ described above. This induces the monomorphism of the residual gerbe
    \[
    \begin{tikzcd}
        \mathrm{BG'}\arrow[r,hookrightarrow,"\iota"] & \left[\frac{(\P(V_{a})\times_{\mathcal{S}}\P(V_{b}))\setminus\Delta'}{\Gm^3}\right]
    \end{tikzcd}
    \]
    which in turn defines a pullback map (see~\cite[Lemma 2.4]{MRV06} for the Chow ring of $\mathrm{BG}$)
    \[
    \begin{tikzcd}
        \mathrm{CH}_{\Gm^3}^*((\P(V_{a})\times_{\mathcal{S}}\P(V_{b}))\setminus\Delta')\arrow[r,"\iota^*"] & \mathrm{CH}^*(\mathrm{BG'})\cong\frac{\mathbb{Z}[\mu_1,\mu_2,\mu_3]}{(2\mu_0,2\mu_1,2\mu_2,\mu_0+\mu_1+\mu_2)}.
    \end{tikzcd}
    \]
    Since the composite of $\iota$ with the projection to $\mathrm{B\Gm^3}$ is the map induced by the inclusion $G'\subset \Gm^3$, it is clear that $\iota^*$ sends $\lambda_i$ to $\mu_i$. Moreover, the immersion $\iota$ factors through the immersion of the open subscheme where the coefficient of the conic $q$ relative to $x_2^2$ is non-zero and the coefficients of the polynomial $f$ and $g$ relative to $x_0^{a}$ and $x_0^{b}$ respectively are also non-zero. On such a locus, $\xi_{2a}$ and $\xi_{2b}$ restrict to $at_1$ and $bt_1$, respectively; and, since $a$ and $b$ are even, it follows that they restrict to $0$ on $\mathrm{BG'}$. From this discussion, we get that the restriction of $\alpha$ in Equation \eqref{eq:onlyextrarelation} to $\mathrm{CH}^*(\mathrm{BG'})$ is equal to the restriction of $p(c_2,c_3)c_3$. However, that class is non-zero in $\mathrm{CH}^*(\mathrm{BG'})$ unless $p=0$, as $\CH^*(B\Gm^3)\rightarrow\CH^*(BG')$ has kernel contained in the ideal generated by $2$, while $p$ has coefficients 0 and 1. It follows that $p=0$, and we are done.
\end{proof}

Theorems ~\ref{thm: Chow Dab} and \ref{thm: Chow n>1} are now immediate.

\begin{proof}[Proof of Theorem~\ref{thm: Chow Dab}]
    We show that $P(a,b,r)$ is satisfied by induction on $(1,1,1)\preceq(a,b,r)$ with the lexicographic ordering and $r\leq2a\leq2b$. Notice that in this setting $I_2$ contains also the polynomials $p_a(\xi_{2a})$ and $p_b(\xi_{2a})$ by Lemma \ref{lem: projective polynomials n>1}. The base case is with $a=1$, where it is obvious. Therefore, it is enough to prove the inductive step with $a>1$; from now on assume that $P(a',b',r')$ holds for every $(1,1,1)\preceq(a',b',r')\prec(a,b,r)$ with $a'\leq b'$. By Lemma~\ref{lem: power of 2} we may assume that $r\geq4$ and that it is a power of 2, while by Lemma~\ref{lem: enough Mr1} it is enough to show that ${M_r}_*(1)\in I_2$. If $a$ or $b$ are odd, then Lemma~\ref{lem: induction odd case} concludes, while if they are both even then $P(a,b,r)$ is satisfied by Lemma~\ref{lem: induction even case}. This shows that $P(a,b,r)$ is satisfied for every $r$.
\end{proof}

\begin{proof}[Proof of Theorem \ref{thm: Chow n>1}]

This follows from Theorem \ref{thm: Chow Dab}, Lemma \ref{lemma: cartesian diagram for RH} and the formula for the Chow rings of root stacks in Proposition \ref{prop: Chow of root gerbes}.
    
\end{proof}

\subsection{Geometric interpretation of the generators when $g$ is odd}\label{subsec: interpretation generators n>1}

We saw in Theorem \ref{thm: Chow n>1} and Remark \ref{rmk: a=1 case} that $\CH^*(\RH_g^n)$ for odd $g$ is generated by the classes $c_2, c_3, t, \xi_{2n}, \xi_{2g+2-2n}$. In this section, we provide an interpretation of these classes as Chern classes of certain natural vector bundles on $\RH_g^n$.  

With reference to the diagram in Lemma \ref{lemma: cartesian diagram for RH}, all these classes, except for $\xi_{2n}$ and $\xi_{2g+2-2n}$, are pulled back from $\cH_g$, where their interpretations are already known. For the reader's convenience, we recall them here.  

The class $t$ is the pullback of the first Chern class of the line bundle $\mathcal{L}_g$, while the classes $c_i$ are the Chern classes of the vector bundle $\mathcal{E}_g$. Both $\mathcal{L}_g$ and $\mathcal{E}_g$ are the bundles introduced in \S\ref{subsec: interpretation generators n=1}.  We refer to  \cite[Theorem 7.2]{FV11} or to the discussion after ~\cite[Theorem 7.2]{DL18} for the proof.

The classes $\xi_{2n}$ and $\xi_{2g+2-2n}$ are instead pulled-back from $\mathcal{D}_{2n}$ and $\mathcal{D}_{2g+2-2n}$ respectively.
For all $ m \geq 1$, let $\mathcal{N}_m$ be the line bundle on $\mathcal{D}_{2m}$ defined by $\pi_* \omega_\pi^{\otimes m}(\mathcal{F}_{2m})$ where $\mathcal{F}_{2m} \subseteq \mathcal{P} \xrightarrow{\pi} \mathcal{D}_{2m}$ are the universal divisor and the universal Brauer Severi variety of relative dimension $1$ over $\mathcal{D}_{2m}$. 

\begin{lemma}
    We have 
    $$
    c_1(\mathcal{N}_m)=\xi_{2m}.
    $$
\end{lemma}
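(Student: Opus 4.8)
The goal is to identify the first Chern class of the line bundle $\mathcal{N}_m = \pi_* \omega_\pi^{\otimes m}(\mathcal{F}_{2m})$ on $\mathcal{D}_{2m}$ with the class $\xi_{2m} = c_1(\mathcal{O}_{\P(W_m)}(1))$. My plan is to work directly with the quotient-stack presentation $\mathcal{D}_{2m} \cong [\P(W_m)/\PGL_2]$ from Equation~\eqref{eqn: map in EH} and compute the $\PGL_2$-equivariant first Chern class of the universal object.

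First I would unwind the universal data on $[\P(W_m)/\PGL_2]$: the universal Brauer--Severi scheme $\pi \colon \mathcal{P} \to \mathcal{D}_{2m}$ is the quotient $[\P^1 \times \P(W_m)/\PGL_2] \to [\P(W_m)/\PGL_2]$ (where $\P^1$ is the space of linear forms), its relative dualizing sheaf $\omega_\pi$ restricts on each fiber to $\mathcal{O}_{\P^1}(-2)$, and the universal divisor $\mathcal{F}_{2m} \subseteq \mathcal{P}$ is the tautological degree-$2m$ divisor whose associated line bundle is the pullback of $\mathcal{O}_{\P^1}(2m)$ twisted by $\mathcal{O}_{\P(W_m)}(1)$. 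The key point is to track the equivariant linearizations carefully: on the total space $\P^1 \times \P(W_m)$ one has the $\PGL_2$-equivariant class $\tau = c_1^{\PGL_2}(\mathcal{O}_{\P^1}(2))$ from Equation~\eqref{eqn: PGL2 Chow of P1}, and the universal divisor $\mathcal{F}_{2m}$ is cut out by a section of $\mathcal{O}_{\P^1}(2m) \boxtimes \mathcal{O}_{\P(W_m)}(1)$, so that $[\mathcal{F}_{2m}] = m\tau + \xi_{2m}$ in $\CH^*_{\PGL_2}(\P^1 \times \P(W_m))$.

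The next step is the pushforward computation. Writing $\mathcal{L} := \omega_\pi^{\otimes m}(\mathcal{F}_{2m})$, its restriction to a fiber $\P^1$ is $\mathcal{O}_{\P^1}(-2m) \otimes \mathcal{O}_{\P^1}(2m) = \mathcal{O}_{\P^1}$, so $\pi_* \mathcal{L}$ is indeed a line bundle (the higher pushforward vanishes, and formation commutes with base change). To extract $c_1(\pi_* \mathcal{L})$ I would use the Grothendieck--Riemann--Roch formula, or more elementarily the fact that for $\pi$ a $\P^1$-bundle and $\mathcal{L}$ a line bundle of fiberwise degree $0$, one has $c_1(\pi_* \mathcal{L}) = \pi_*\!\big( c_1(\mathcal{L}) \cdot (\text{appropriate correction in } \tau)\big)$; concretely I would compute $\pi_*$ of the relevant polynomial in $c_1^{\PGL_2}(\mathcal{L}) = -m\tau + (m\tau + \xi_{2m}) = \xi_{2m}$ together with the first Chern class of $\omega_\pi$. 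Using the projective bundle relation $\tau^2 = -c_2$ and the fact that $\pi_*(\tau) = 2$ and $\pi_*(1) = 0$ in the $\P^1$-bundle, the $\tau$-dependent contributions should cancel, leaving exactly $\xi_{2m}$.

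The main obstacle I anticipate is pinning down the equivariant linearizations and the normalization of $\omega_\pi$ so that the computation is not off by a multiple of $\tau$ (equivalently, so that no spurious term survives after pushforward). The cleanest route avoiding any GRR bookkeeping would be to observe that $\mathcal{L}$ restricts trivially on fibers, hence $\pi^* \pi_* \mathcal{L} \cong \mathcal{L}$, so that $c_1(\pi_*\mathcal{L})$ is the unique class on $\mathcal{D}_{2m}$ pulling back to $c_1(\mathcal{L}) = \xi_{2m}$; since $\pi^*$ is injective and $\xi_{2m}$ already lives on the base, this forces $c_1(\mathcal{N}_m) = \xi_{2m}$ immediately. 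I would present this pullback-injectivity argument as the primary proof and use the fiberwise-degree-zero observation $\omega_\pi^{\otimes m}(\mathcal{F}_{2m})|_{\text{fiber}} \cong \mathcal{O}_{\P^1}$ as its crux, keeping the GRR computation only as a cross-check.
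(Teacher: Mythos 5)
Your argument is correct, but it takes a genuinely different route from the paper. The paper pulls everything back along the atlas $\phi\colon \P(W_m)\smallsetminus\uD \to \cD_{2m}$, where the Brauer--Severi fibration becomes the trivial $\P^1$-bundle, does the non-equivariant computation $\omega_\rho^{\otimes m}(F_{2m})=\cO(0,1)$ and $\rho_*\cO(0,1)=\cO(1)$ there, and then descends using the fact that $\phi^*$ is an \emph{isomorphism} on Picard groups --- which requires the computation of $\Pic(\cD_{2m})\cong\Z/(4m-2)$ imported from~\cite[Theorem 1.2]{EH22}. You instead stay equivariant upstairs: you compute $c_1^{\PGL_2}(\omega_\pi^{\otimes m}(\mathcal{F}_{2m}))=-m\tau+(m\tau+\xi_{2m})=\pi^*\xi_{2m}$, observe that fiberwise triviality gives $\pi^*\pi_*\mathcal{L}\cong\mathcal{L}$, and conclude by injectivity of $\pi^*$ on $\CH^1=\Pic$ (which follows from $\pi_*\cO_{\mathcal{P}}=\cO_{\cD_{2m}}$ and the projection formula, with no external input). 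Your route is more self-contained in that it avoids citing the Picard group of $\cD_{2m}$; the paper's route is more elementary in that all linearization bookkeeping disappears after trivializing over the atlas. Two small points to make explicit in your write-up: the identity $[\mathcal{F}_{2m}]=m\tau+\xi_{2m}$ has no linearization ambiguity precisely because $\CH^1(B\PGL_2)=0$, so any two $\PGL_2$-linearizations of the same line bundle have equal equivariant first Chern class; and the computation should be carried out over $\P(W_m)\smallsetminus\uD$ rather than all of $\P(W_m)$, though nothing changes under this restriction. Note also that injectivity of $\pi^*$ holds in degree $1$ but not on the whole Chow ring (e.g.\ $c_3\mapsto 0$ in $\CH^*_{\PGL_2}(\P^1)$), so the argument as stated is specific to first Chern classes --- which is all you need.
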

\begin{proof}
    Consider the commutative diagram with cartesian squares
    \[
    \begin{tikzcd}
        F_{2m}\arrow[d]\arrow[r,hookrightarrow] & \P^1\times(\P(W_m)\setminus\uD)\arrow[r,"\rho"]\arrow[d] & \P(W_m)\setminus\uD\arrow[d,"\phi"]\arrow[r] & \mathrm{Spec}(k)\arrow[d]\\
        \mathcal{F}_{2m}\arrow[r,hookrightarrow] & \mathcal{P}\arrow[r,"\pi"] & \mathcal{D}_{2m}\cong[(\P(W_m)\setminus\uD)/\PGL_2]\arrow[r] & B\PGL_2
    \end{tikzcd}
    \]
    Note that the pullback of $\mathcal{P} \to \mathcal{D}_{2m}$ along $\P(W_m) \smallsetminus \uD \to \mathcal{D}_{2m}$ is a trivial $\P^1$-fibration. Indeed, as shown above, the map $\P(W_m) \smallsetminus \uD \to B\PGL_2$ that defines it factors through $\mathrm{Spec}(k) \to B\PGL_2$.

    Also, $\cO(F_{2m})=\cO(2m,1)$ and $\omega_{\rho}^{\otimes m}= \cO(-2m,0)$. Thus, $\omega_{\rho}^{\otimes m}(F_{2m})= \cO(0,1)$ and 
    $$
    \phi^* \mathcal{N}_m=\rho_*(\omega_{\rho}^{\otimes m}(F_{2m}))=\cO_{\P(W_m) \smallsetminus \uD}(1).
    $$
    The conclusion follows from the fact that the pullback 
    \[
    \begin{tikzcd}
        \phi^*: \mathrm{Pic}(\mathcal{D}_{2m})= \frac{\Z[\xi_{2m}]}{((4m-2)\xi_{2m})}\arrow[r] & \mathrm{Pic}(\P(W_m) \smallsetminus \uD)=\frac{\Z[\xi_{2m}]}{((4m-2)\xi_{2m})}
    \end{tikzcd}
    \]
    is an isomorphism by~\cite[Theorem 1.2]{EH22} and the projection bundle formula.
\end{proof}

In particular, $\xi_{2n}=c_1(\mathcal{N}_n)$ and $\xi_{2g+2-2n}=c_1(\mathcal{N}_{g+1-n})$.

\bibliographystyle{amsalpha}
\bibliography{library}

\providecommand{\bysame}{\leavevmode\hbox to3em{\hrulefill}\thinspace}
\providecommand{\MR}{\relax\ifhmode\unskip\space\fi MR }
\providecommand{\MRhref}[2]{%
  \href{http://www.ams.org/mathscinet-getitem?mr=#1}{#2}
}
\providecommand{\href}[2]{#2}
\begin{thebibliography}{MRV06}

\bibitem[AOA23]{AreObAbr}
Veronica Arena, Stephen Obinna, and Dan Abramovich, \emph{The integral {C}how
  ring of weighted blow-ups}, arXiv preprint arXiv:2307.01459 (2023).

\bibitem[AV04]{AV04}
Alessandro Arsie and Angelo Vistoli, \emph{Stacks of cyclic covers of
  projective spaces}, Compos. Math. \textbf{140} (2004), no.~3, 647--666.
  \MR{2041774}

\bibitem[Bea77a]{Bea}
Arnaud Beauville, \emph{Prym varieties and the {S}chottky problem}, Inventiones
  mathematicae \textbf{41} (1977), no.~2, 149--196.

\bibitem[Bea77b]{Beabis}
\bysame, \emph{Vari\'et\'es de {Prym} et jacobiennes interm\'ediaires}, Annales
  scientifiques de l'\'Ecole Normale Sup\'erieure \textbf{4e s{\'e}rie, 10}
  (1977), no.~3, 309--391 (fr).

\bibitem[Bri97]{Bri97}
Michel Brion, \emph{Equivariant {C}how groups for torus actions},
  Transformation Groups \textbf{2} (1997), 225--267.

\bibitem[CIL24]{CIL24}
Alessio Cela and Aitor Iribar~Lopez, \emph{The integral {C}how ring of
  $\mathcal{R}_2$}, arXiv preprint arXiv:2406.07309 (2024).

\bibitem[CL23]{SamHannah}
Samir Canning and Hannah Larson, \emph{The {C}how rings of the moduli spaces of
  curves of genus 7, 8, and 9}, J. Algebraic Geom. \textbf{33} (2023), no.~1,
  55--116 (en).

\bibitem[Dem73]{Dem73}
Michel Demazure, \emph{Invariants symétriques entiers des groupes de {W}eyl et
  torsion}, Inventiones mathematicae \textbf{21} (1973), 287--302.

\bibitem[DL18]{DL18}
Andrea {D}i {L}orenzo, \emph{The {C}how ring of the stack of hyperelliptic
  curves of odd genus}, International Mathematics Research Notices (2018).

\bibitem[DLV21]{DLVis-M2}
Andrea Di~Lorenzo and Angelo Vistoli, \emph{Polarized twisted conics and moduli
  of stable curves of genus two}, arXiv preprint arXiv:2103.13204 (2021).

\bibitem[DS81]{DonagiI}
Ron Donagi and Roy~Campbell Smith, \emph{{The structure of the {P}rym map}},
  Acta Mathematica \textbf{146} (1981), no.~none, 25 -- 102.

\bibitem[EF09]{EF09}
Dan Edidin and Damiano Fulghesu, \emph{The integral {C}how ring of the stack of
  hyperelliptic curves of even genus}, Math. Res. Lett. \textbf{16} (2009),
  no.~1, 27--40. \MR{2480558}

\bibitem[EG98]{EG98}
Dan Edidin and William Graham, \emph{Equivariant intersection theory}, Invent.
  Math. \textbf{131} (1998), no.~3, 595--634. \MR{1614555}

\bibitem[EH22]{EH22}
Dan Edidin and Zhengning Hu, \emph{The integral {C}how rings of the stacks of
  hyperelliptic {W}eierstrass points}, arXiv preprint arXiv:2208.00556 (2022).

\bibitem[Fab90]{Faber}
Carel Faber, \emph{{C}how rings of moduli spaces of curves {II}: Some results
  on the {C}how ring of $\mathcal{M}_4$}, Annals of Mathematics \textbf{132}
  (1990), no.~2, 421--449.

\bibitem[FR70]{Farkas}
Hershel~M. Farkas and Harry~E. Rauch, \emph{Period relations of {S}chottky type
  on {R}iemann surfaces}, Annals of Mathematics \textbf{92} (1970), no.~3,
  434--461.

\bibitem[FV11]{FV11}
Damiano Fulghesu and Filippo Viviani, \emph{The {C}how ring of the stack of
  cyclic covers of the projective line}, Ann. Inst. Fourier (Grenoble)
  \textbf{61} (2011), no.~6, 2249--2275. \MR{2976310}

\bibitem[FV18]{FuVi}
Damiano Fulghesu and Angelo Vistoli, \emph{The {C}how ring of the stack of
  smooth plane cubics}, Michigan Mathematical Journal \textbf{67} (2018),
  no.~1, 3--29.

\bibitem[GV08]{GV08}
Sergey Gorchinskiy and Filippo Viviani, \emph{Picard group of moduli of
  hyperelliptic curves}, Math. Z. \textbf{258} (2008), no.~2, 319--331.
  \MR{2357639}

\bibitem[Iza95]{Izadi}
E.~Izadi, \emph{The {C}how ring of the moduli space of curves of genus 5}, The
  Moduli Space of Curves (Boston, MA) (Robbert~H. Dijkgraaf, Carel~F. Faber,
  and Gerard B.~M. van~der Geer, eds.), Birkh{\"a}user Boston, 1995,
  pp.~267--303.

\bibitem[Kre99]{kresch1999cycle}
Andrew Kresch, \emph{Cycle groups for {A}rtin stacks}, Inventiones Mathematicae
  \textbf{138} (1999), no.~3, 495.

\bibitem[Lan23]{Lan23}
Alberto Landi, \emph{The {P}icard group of the stack of pointed smooth cyclic
  covers of the projective line}, arXiv preprint arXiv:2310.20045 (2023).

\bibitem[Lan24]{Lan24}
\bysame, \emph{The integral {C}how ring of the stack of pointed hyperelliptic
  curves}, arXiv preprint arXiv:2404.15873 (2024).

\bibitem[Lar21]{Lar19}
Eric Larson, \emph{The integral {C}how ring of {$\overline M_2$}}, Algebr.
  Geom. \textbf{8} (2021), no.~3, 286--318. \MR{4206438}

\bibitem[MRV06]{MRV06}
Luis~Alberto Molina~Rojas and Angelo Vistoli, \emph{On the {C}how rings of
  classifying spaces for classical groups}, Rend. Sem. Mat. Univ. Padova
  \textbf{116} (2006), 271--298. \MR{2287351}

\bibitem[Mum74]{Mumford}
David Mumford, \emph{Prym varieties {I}.}, Contributions to Analysis A
  Collection of Papers Dedicated to Lipman Bers, 1974.

\bibitem[Mum83]{Mumford1983}
\bysame, \emph{Towards an enumerative geometry of the moduli space of curves},
  pp.~271--328, Birkh{\"a}user Boston, Boston, MA, 1983.

\bibitem[Oes18]{Oe18}
Jakob Oesinghaus, \emph{Quasisymmetric functions and the {C}how ring of the
  stack of expanded pairs}, Research in the Mathematical Sciences \textbf{6}
  (2018), no.~1, 5.

\bibitem[Pan96]{pandharipande1996chow}
Rahul Pandharipande, \emph{The {C}how ring of the {H}ilbert scheme of rational
  normal curves}, arXiv preprint alg-geom/9607025 (1996).

\bibitem[Per22]{Per22}
Michele Pernice, \emph{The integral {C}how ring of the stack of 1-pointed
  hyperelliptic curves}, Int. Math. Res. Not. IMRN (2022), no.~15,
  11539--11574. \MR{4458558}

\bibitem[Per23]{Per23}
\bysame, \emph{The (almost) integral {C}how ring of
  {$\overline{\mathcal{M}}_3$}}, 03 2023.

\bibitem[PV15]{PenevVakil}
Nikola Penev and Ravi Vakil, \emph{The {C}how ring of the moduli space of
  curves of genus six}, Algebr. Geom \textbf{2} (2015), no.~1, 123--136.

\bibitem[Ver13]{Ver13}
Alessandro Verra, \emph{Rational parametrizations of moduli spaces of curves},
  Handbook of moduli. {V}ol. {III}, Adv. Lect. Math. (ALM), vol.~26, Int.
  Press, Somerville, MA, 2013, pp.~431--506. \MR{3135442}

\bibitem[Vez98]{Vez98}
Gabriele Vezzosi, \emph{The {C}how ring of $\mathrm{PGL}_2$ is generated by
  {C}hern classes of the adjoint representation}, 1998.

\bibitem[Vis98]{Vis98}
Angelo Vistoli, \emph{The {C}how ring of {$\mathcal{M}_2$}. {A}ppendix to
  ``{E}quivariant intersection theory'' [{I}nvent. {M}ath. {\bf 131} (1998),
  no. 3, 595--634; {MR}1614555 (99j:14003a)] by {D}. {E}didin and {W}.
  {G}raham}, Invent. Math. \textbf{131} (1998), no.~3, 635--644. \MR{1614559}

\end{thebibliography}

$\,$\
\noindent

$\,$\
\noindent
\textsc{Department of Pure Mathematics {\it \&} Mathematical Statistics, 
University of Cambridge, Cambridge, UK}

\textit{e-mail address:} \href{mailto:au270@cam.ac.uk}{ac2758@cam.ac.uk}

$\,$\
\noindent

$\,$\
\noindent
\textsc{Department of Pure Mathematics, Brown University, 151 Thayer Street, Providence, RI 02912, USA}

\textit{e-mail address:} \href{mailto:alberto_landi@brown.edu}{alberto\_landi@brown.edu}

\end{document}